\def\red{\color{red}}
\def\rr{{\mathbb R}}
\def\rn{{\mathbb{R}^n}}
\def\zz{{\mathbb Z}}
\def\cc{{\mathbb C}}
\def\nn{{\mathbb N}}
\def\ca{{\mathcal A}}
\def\cf{{\mathcal F}}
\def\cl{{\mathcal L}}
\def\cm{{\mathcal M}}
\def\cn{{\mathcal N}}
\def\cp{{\mathcal P}}
\def\cq{{\mathcal Q}}
\def\cs{{\mathcal S}}
\def\fz{\infty }
\def\lf{\left}
\def\r{\right}
\def\hs{\hspace{0.25cm}}
\def\ls{\lesssim}
\def\noz{\nonumber}
\def\wh{\widehat}
\def\gfz{\genfrac{}{}{0pt}{}}
\def\dist{\mathop\mathrm{\,dist\,}}
\def\BMO{\mathop\mathrm{\,BMO\,}}
\def\loc{{\mathop\mathrm{\,loc\,}}}
\def\supp{\mathop\mathrm{\,supp\,}}
\def\atom{\mathop\mathrm{\,atom\,}}
\def\Xint#1{\mathchoice
{\XXint\displaystyle\textstyle{#1}}%
{\XXint\textstyle\scriptstyle{#1}}%
{\XXint\scriptstyle\scriptscriptstyle{#1}}%
{\XXint\scriptscriptstyle\scriptscriptstyle{#1}}%
\!\int}
\def\XXint#1#2#3{{\setbox0=\hbox{$#1{#2#3}{\int}$ }
\vcenter{\hbox{$#2#3$ }}\kern-.6\wd0}}
\def\dashint{\Xint-}
\def\({\left(}
\def \){ \right)}
\def\BB{{\mathbb B}}
\newtheorem{theorem}{Theorem}[section]
\newtheorem{lemma}[theorem]{Lemma}
\newtheorem{assumption}[theorem]{Assumption}
\newtheorem{proposition}[theorem]{Proposition}
\theoremstyle{definition}
\newtheorem{remark}[theorem]{Remark}
\newtheorem{definition}[theorem]{Definition}
\renewcommand{\appendix}{\par
   \setcounter{section}{0}%
   \setcounter{subsection}{0}%
   \setcounter{subsubsection}{0}%
   \gdef\thesection{\@Alph\c@section}%
   \gdef\thesubsection{\@Alph\c@section.\@arabic\c@subsection}%
   \gdef\theHsection{\@Alph\c@section.}%
   \gdef\theHsubsection{\@Alph\c@section.\@arabic\c@subsection}%
   \csname appendixmore\endcsname
 }
\numberwithin{equation}{section}
\begin{document}

\title{\bf\Large Weak Hardy-Type Spaces Associated with Ball Quasi-Banach Function Spaces
I: Decompositions with Applications to Boundedness of Calder\'on--Zygmund Operators
\footnotetext{\hspace{-0.35cm} 2010 \emph{Mathematics Subject Classification}. Primary 42B30;
Secondary 42B25, 42B20, 42B35, 46E30.
\endgraf \emph{Key words and phrases}. ball quasi-Banach function space, weak Hardy space,
Orlicz-slice space, maximal function, atom, molecule, Calder\'on--Zygmund operator.
\endgraf This project is supported
by the National Natural Science Foundation of China (Grant Nos.
11571039, 11761131002, 11671185, 11726622, 11726621 and 11871100).}}
\author{Yangyang Zhang, Songbai Wang, Dachun Yang\,\footnote{Corresponding author,
E-mail: \texttt{dcyang@bnu.edu.cn}/{\red June 28, 2019}/Final version.}
\  and Wen Yuan}
\date{}
\maketitle

\vspace{-0.7cm}

\begin{center}

\begin{minipage}{13cm}
{\small {\bf Abstract}\quad Let $X$ be a ball quasi-Banach function space on ${\mathbb R}^n$.
In this article, the authors introduce the weak Hardy-type space $WH_X({\mathbb R}^n)$, associated with $X$, via the radial
maximal function. Assuming that the powered Hardy--Littlewood maximal operator
satisfies some Fefferman--Stein vector-valued maximal inequality on $X$ as well as it is
bounded on both the weak ball quasi-Banach function space $WX$ and the associated space,
the authors then establish several real-variable characterizations of $WH_X({\mathbb R}^n)$, respectively, in terms of
various maximal functions, atoms and molecules. As an application,
the authors obtain the boundedness of Calder\'on--Zygmund operators
from the Hardy space $H_X({\mathbb R}^n)$ to $WH_X({\mathbb R}^n)$, which includes the critical
case. All these results are of wide applications. Particularly,
when $X:=M_q^p({\mathbb R}^n)$ (the Morrey space), $X:=L^{\vec{p}}({\mathbb R}^n)$ (the mixed-norm Lebesgue space)
and $X:=(E_\Phi^q)_t({\mathbb R}^n)$ (the Orlicz-slice space), which are all ball quasi-Banach function spaces
but not quasi-Banach function spaces, all these results are even new.
Due to the generality, more applications of these results are predictable.
}
\end{minipage}
\end{center}

\tableofcontents

\section{Introduction\label{s1}}

It is well known that the classical Hardy space $H^p(\rn)$ with $p\in (0,1]$, which was introduced
by Stein and Weiss \cite{SW} and further developed by Fefferman and Stein \cite{FS0}, plays
a key role in harmonic analysis and partial differential equations. These works
\cite{FS0,SW} inspire many new ideas for the real-variable theory of function spaces. It is worth
to pointing out that the real-variable characterizations of classical Hardy spaces reveal the
intrinsic connections among some important notions in harmonic analysis, such as harmonic
functions, maximal functions and square functions. In
recent decades, various variants of classical Hardy spaces have been introduced
and their real-variable theories have been well developed; these variants include weighted Hardy
spaces (see \cite{ST}), (weighted) Herz--Hardy spaces (see, for instance,
\cite{CL,GC1,GCH,LY1,LY2}),  (weighted) Hardy--Morrey spaces (see, for instance
\cite{JW,Sa,H}), Hardy--Orlicz spaces (see, for instance, \cite{IV,Se,V,NS1,YY}),
Lorentz Hardy spaces (see, for instance, \cite{AST}), Musielak--Orlicz Hardy spaces
(see, for instance, \cite{K,YLK}) and variable Hardy spaces (see, for instance, \cite{CUW,NS, YZN}).
Observe that these elementary spaces on which the aforementioned Hardy spaces were built,
such as (weighted) Lebesgue spaces, (weighted) Herz spaces, (weighted) Morrey spaces,
mixed-norm Lebesgue spaces, Orlicz spaces, Lorentz spaces, Musielak--Orlicz spaces and variable Lebesgue spaces,
are all included in a generalized framework called ball quasi-Banach function spaces
which were introduced, very recently, by Sawano et al. \cite{SHYY}.
Moreover, Sawano et al. \cite{SHYY} and Wang et al. \cite{wyy19} established a unified real-variable theory
for Hardy spaces associated with ball quasi-Banach function spaces on ${\mathbb R}^n$
and gave some applications of these Hardy-type spaces to the boundedness of
Calder\'on--Zygmund operators and pseudo-differential operators.

Recall that ball quasi-Banach function spaces are a generalization of
quasi-Banach function spaces. Compared with quasi-Banach function spaces,
ball quasi-Banach function spaces contain more function spaces.
For instance, the Morrey spaces are ball quasi-Banach function spaces,
which are not quasi-Banach function spaces and hence the class
of quasi-Banach function spaces is a proper subclass of ball quasi-Banach function spaces;
see \cite{SHYY} for more details. Let $X$ be a ball quasi-Banach function space
(see \cite{SHYY} or Definition \ref{Debqfs} below). Sawano et al. \cite{SHYY} introduced
the Hardy space $H_X(\rn)$ via the grand maximal function
(see \cite{SHYY} or Definition \ref{DeHX} below). Assuming that the Hardy--Littlewood maximal function is bounded on
the $p$-convexification of $X$, Sawano et al. \cite{SHYY} established
several different  maximal function characterizations of $H_X(\rn)$.
On another hand, Coifman \cite{C} and Latter \cite{La} found the most useful atomic characterization of
classical Hardy spaces $H^p(\rn)$, which plays an important role in
developing the real-variable theory of Hardy spaces. Sawano et al. \cite{SHYY}
found that these atomic characterizations strongly depend on
the Fefferman--Stein vector-valued maximal inequality and the boundedness on the associated
space of the powered Hardy--Littlewood maximal operator. Another key tool used
in \cite{SHYY} to deal with the convergence of the atomic decomposition is the
Herz space $K_{p,q}^{-n/p}(\rn)$ with $p,\ q\in(0,1)$.
Roughly speaking, one can embed $X$ into $K_{p,q}^{-n/p}(\rn)$ and, moreover, $K_{p,q}^{-n/p}(\rn)$
does not contain the constant function $1$, which is crucial to the application of the Whitney
decomposition theorem in the proof of \cite[Proposition 4.9]{SHYY}.

Recall that, to find the biggest function space $\ca$ such that Calder\'on--Zygmund operators
are bounded from $\ca$ to $WL^1(\rn)$, Fefferman and Soria \cite{FSo} originally introduced the weak Hardy space
$WH^1(\rn)$ and they did obtain the boundedness of the convolutional Calder\'on--Zygmund operator
with kernel satisfying the Dini condition
from $WH^1(\rn)$ to $WL^1(\rn)$ by using the $\infty$-atomic characterization of $WH^1(\rn)$.
It is well known that the classic Hardy spaces  $H^p(\rn)$, with $p\in(0,1]$,
are good substitutes of Lebesgue spaces $L^p(\rn)$  when studying the boundedness
of some Calder\'on--Zygmund operators. For instance, if $\delta\in(0,1]$ and $T$ is
a convolutional $\delta$-type Calder\'on--Zygmund operator, then $T$ is bounded on
$H^p(\rn)$ for any given $p\in(n/(n+\delta),1]$ (see \cite{AM1986}).  However, this is not
true when
$$p=n/(n+\delta)$$
which is called the \emph{critical case} or the \emph{endpoint case}.
Liu \cite{L} introduced the weak Hardy spaces $WH^p(\rn)$ with $p\in(0,1]$ and
proved that the aforementioned operator $T$ is bounded from $H^{n/(n+\delta)}(\rn)$ to $WH^{n/(n+\delta)}(\rn)$
via first establishing the $\infty$-atomic characterization of the weak Hardy space $WH^p(\rn)$.
Thus, the classical weak Hardy spaces $WH^p(\rn)$ play an irreplaceable role in the study
of the boundedness of operators in the critical case. Recently, He \cite{He} and Grafakos
and He \cite{GH} further studied vector-valued weak Hardy space
$H^{p,\infty}(\rn,\ell^2)$ with $p\in(0,\infty)$. In 2016, Liang et al. \cite{LYJ} (see also \cite{YLK})
considered the weak Musielak--Orlicz type Hardy space $WH^\varphi(\rn)$, which covers
both the weak Hardy space $WH^p(\rn)$ and the weighted weak Hardy space
$WH^p_\omega(\rn)$ from \cite{QY},  and obtained various equivalent characterizations of
$WH^\varphi(\rn)$, respectively, in terms of maximal functions, atoms, molecules and Littlewood--Paley functions,
as well as the boundedness of Calder\'on--Zygmund operators in the critical case. Meanwhile,
Yan et al. \cite{YYYZ} developed a real-variable theory of variable weak Hardy spaces
$WH^{p(\cdot)}(\rn)$ with $p(\cdot)\in C^{\log}(\rn)$.

Let $X$ be a ball quasi-Banach function space on ${\mathbb R}^n$ introduced by Sawano et al. in \cite{SHYY}.
In this article, we introduce the weak Hardy-type space $WH_X({\mathbb R}^n)$, via the radial
maximal function, associated with $X$. Assuming that the powered Hardy--Littlewood maximal operator
satisfies some Fefferman--Stein vector-valued maximal inequality on $X$ as well as it is
bounded on both the weak ball quasi-Banach function space $WX$ and the associated space,
we then establish some real variable characterizations of $WH_X({\mathbb R}^n)$, respectively,
in terms of various maximal functions, atoms and molecules. Using the atomic
characterization of $H_X({\mathbb R}^n)$, we further obtain the boundedness of Calder\'on--Zygmund operators
from the Hardy space $H_X({\mathbb R}^n)$ to $WH_X({\mathbb R}^n)$, which includes the critical
case. All these results are of wide applications and, particularly,
when $X:=M_q^p({\mathbb R}^n)$ (the Morrey space) introduced by Morrey \cite{m38}
(or see Definition \ref{mmus} below),
$X:=L^{\vec{p}}(\rn)$ (the mixed-norm Lebesgue space) (see, for instance, \cite{bp,H60}
or Definition \ref{mix} below) and $X:=(E_\Phi^q)_t(\rn)$ (the Orlicz-slice space)
introduced in \cite{ZYYW} (or see Definition \ref{DeOs} below),
all these results are even new.

To establish the atomic characterization of the weak Hardy-type space $WH_X(\rn)$,
similarly to \cite{SHYY}, we find that it strongly depends on the Fefferman--Stein
vector-valued maximal inequality (see Assumption \ref{a2.15} below)
and the boundedness on the associate space of the powered Hardy--Littlewood
maximal operator [see \eqref{Eqdm}]. Then, using the atomic characterization of
$WH_X(\rn)$, we further establish the molecular characterization
of $WH_X(\rn)$. As applications,
when $X$ further satisfies \eqref{Eqfs11} or \eqref{Eqfs12} (the Fefferman--Stein
vector-valued maximal inequality from $X$ to $WX$), we prove that the convolutional
$\delta$-type and the non-convolutional $\gamma$-order Calder\'on--Zygmund
operators are bounded form $H_X(\rn)$ to $WH_X(\rn)$ including the critical case
$p_-= n/(n+\delta)$ or $p_-=n/(n+\gamma)$, with $p_-$ as in Assumption \ref{a2.15} below.
Moreover, when $X$ is the Morrey space $M_q^p({\mathbb R}^n)$, the mixed-norm Lebesgue
space $L^{\vec{p}}({\mathbb R}^n)$ or the Orlicz-slice space $(E_\Phi^q)_t(\rn)$,
we find that all assumptions on $X$ of this article in these cases hold true
and hence all results obtained in this article hold true and new even for these spaces.

Also, to limit the length of this article, applying these characterizations of
$WH_X({\mathbb R}^n)$ in this article, we in \cite{wyyz} establish various Littlewood--Paley
function characterizations of $WH_X(\rn)$ and prove that the real interpolation
intermediate space $(H_{X}({\mathbb R}^n),L^\infty({\mathbb R}^n))_{\theta,\infty}$,
between $H_{X}({\mathbb R}^n)$ and $L^\infty({\mathbb R}^n)$, is
$WH_{X^{{1}/{(1-\theta)}}}({\mathbb R}^n)$, where $\theta\in (0, 1)$.
These results in \cite{wyyz} are also of wide applications; particularly,
when $X:=M_q^p({\mathbb R}^n)$ (the Morrey space), $X:=L^{\vec{p}}({\mathbb R}^n)$ (the mixed-norm Lebesgue space)
and $X:=(E_\Phi^q)_t({\mathbb R}^n)$ (the Orlicz-slice space), all these results are even new;
when $X:=L_\omega^\Phi({\mathbb R}^n)$ (the weighted Orlicz space),
the result on the real interpolation is new
and, when  $X:=L^{p(\cdot)}({\mathbb R}^n)$ (the variable Lebesgue space) and
$X:=L_\omega^\Phi({\mathbb R}^n)$, the Littlewood--Paley function characterizations
of $WH_X({\mathbb R}^n)$ obtained in \cite{wyyz} improve the existing results
via weakening the assumptions on the Littlewood--Paley
functions; see \cite{wyyz} for more details. It is easy to see
that, due to the generality, more applications of these results obtained
in both the present paper and \cite{wyyz} are predictable.

To be precise, this article is organized as follows.

In Section \ref{s2}, we recall some notions concerning the ball (quasi)-Banach function space $X$ and
the weak ball (quasi)-Banach function space $WX$. Then we state the assumptions of the Fefferman--Stein vector-valued
maximal inequality on $X$ (see Assumption \ref{a2.15} below)
and the boundedness on the $p$-convexification of $WX$ for the Hardy--Littlewood maximal operator
(see Assumption \ref{xinm}). Finally, in Definition \ref{DewSH} below, we introduce the weak
Hardy space $WH_X(\rn)$ via the radial grand maximal function.

Under the assumption about the boundedness on the $p$-convexification of $WX$ for the Hardy--Littlewood
maximal operator [see \eqref{Eqdm}], we establish various real-variable characterizations of $WH_X(\rn)$ in
Theorem \ref{Thmc} below of Section \ref{s3},
respectively, in terms of the radial maximal function, the grand maximal function, the non-tangential maximal function,
the maximal function of Peetre type and the grand maximal function of Peetre type
(see Definition \ref{Dem} below).
If $WX$ satisfies an additional assumption \eqref{Eqinf} (namely, the $WX$-norm of the
characteristic function of any unit ball of $\rn$ has a low bound),
we then characterize $WH_X(\rn)$ by means of the non-tangential maximal function
with respect to Poisson kernels in Theorem \ref{Thmp} below. Moreover, the relations
between $WX$ and $WH_X(\rn)$ are also clarified in this section.

Section \ref{s4} is devoted to establishing the atomic characterization of $WH_X(\rn)$.
Under the assumption that $X$ satisfies the Fefferman--Stein vector-valued inequality
and is $\vartheta$-concave for some $\vartheta\in(1,\infty)$, we show that any $f\in WH_X(\rn)$
has an atomic decomposition in terms of $(X,\infty,d)$-atoms in Theorem \ref{Thad} below.
Recall that the atomic decomposition of $H^p(\rn)$ with $p\in (0,1]$ was obtained
via a dense argument which does not work for the atomic decomposition of $WH^p(\rn)$
due to the lack of a suitable dense subset of $WH^p(\rn)$. We have the same problem for
$WH_X(\rn)$. To overcome this difficulty, we obtain the atomic decomposition
of $WH_X(\rn)$ via using some ideas from \cite{C1977,LYJ,YYYZ}, namely, in the proof of Theorem \ref{Thad},
we need to use the global Calder\'on reproducing formula in $\mathcal{S}'(\rn)$
(see Lemma \ref{Le47} below),
the generalized Campanato space,
and the Alaoglu theorem. To obtain the reconstruction theorem in terms of $(X,q,d)$-atoms
(see Theorem \ref{Thar}), we need to further assume that $X$ is strictly $r$-convex for any $r\in(0,p_-)$,
where $p_-$ is as in Assumption \ref{a2.15}, and the boundedness on the associate space of the powered Hardy--Littlewood
maximal operator \eqref{Eqdm}, besides the Fefferman--Stein vector-valued inequality.

In Section \ref{s5}, we establish the molecular characterization of $WH_X(\rn)$
in Theorems \ref{Thmolcha0} and \ref{Thmolcha} below with all the same assumptions as
in the atomic decomposition theorem (Theorem \ref{Thad}) and
the reconstruction theorem (Theorem \ref{Thar}). Since each atom of $WH_X(\rn)$ is also
a molecule of $WH_X(\rn)$, to prove
Theorem \ref{Thmolcha}, it suffices to show that the weak molecular Hardy
space $WH_{\mathrm{mol}}^{X,q,d,\epsilon}(\rn)$
is continuously embedded into $WH_X(\rn)$ due to Theorems \ref{Thad} and \ref{Thar}.
To this end, a key step is to prove that an $(X,q,d,\epsilon)$-molecule can be divided
into an infinite linear combination of $(X,q,d)$-atoms. We show this via borrowing some ideas
from the proof of \cite[Theorem 5.3]{YYYZ}.

Section \ref{s6} is devoted to proving that both the convolutional $\delta$-type Calder\'on--Zygmund operator
and the non-convolutional
$\gamma$-order Calder\'on--Zygmund operator are bounded from $H_X(\rn)$ to $WH_X(\rn)$ in the critical case
when $p_-=\frac n{n+\delta}$ or when $p_-=\frac n{n+\gamma}$
(see Theorems \ref{Thcz} and \ref{Thcz1} below). In this case,
any convolutional $\delta$-type or any non-convolutional $\gamma$-order Calder\'on--Zygmund operator may
not be bounded on $H_X(\rn)$ even when $X=L^p(\rn)$ with $p\in(0,1]$. In this sense, the space $WH_X(\rn)$
is a proper substitution of $H_X(\rn)$ in the critical case for the study on the boundedness of
some operators.

In Section \ref{s7}, we apply the above results to
the Morrey space, the mixed-norm Lebesgue space and the
Orlicz-slice space, respectively, in Subsections
\ref{s7.1}, \ref{s7.2} and \ref{s7.3}.

Recall that, due to the applications in elliptic partial differential equations,
the Morrey space $M_q^p(\rn)$ with $0<q\leq p<\infty$
was introduced by Morrey \cite{m38} in 1938.
In recent decades, there exists an increasing interest in applications
of Morrey spaces to various areas of analysis,
such as partial differential equations, potential theory and
harmonic analysis (see, for instance, \cite{a15,a04,cf,JW,l16,lyy,ysy}).
Particularly,  Jia and Wang \cite{JW} introduced the Hardy--Morrey spaces and established their atomic
characterizations.
Later, based on the Morrey space, various variants of
Hardy--Morrey spaces have been introduced and
developed, such as weak Hardy--Morrey spaces (see Ho \cite{H17}),  variable Hardy--Morrey spaces
(see \cite{H15}) and
Besov--Morrey spaces and Triebel--Lizorkin--Morrey spaces (see \cite{Sa}).
Observe that, as was pointed out in \cite[p.\,86]{SHYY}, $M_q^p(\rn)$ with $1\le q<p<\infty$, which violates
\eqref{Eqbv} below (see \cite[Example 3.3]{ST}), is not a Banach function space
as in Definition \ref{bn}, but it does be a ball
Banach function space as in Definition \ref{Debqfs}.
In Subsection \ref{s7.1}, We first recall some of the useful properties of Morrey spaces.
Borrowing some ideas from \cite{tx}, we establish a weak-type vector-valued inequality of the Hardy--Littlewood
maximal operator $\cm$ from the Morrey space $M_1^p(\rn)$ to the weak Morrey space $WM_1^p(\rn)$
with $p\in[1,\infty)$ (see Proposition \ref{pro822} below).
From this and the results in \cite{cf,H15,H17}, we can easily show that
all the assumptions of main theorems in Sections \ref{s3} through \ref{s6} are satisfied.
Thus, applying these theorems, we obtain the atomic and the molecular characterizations of weak Hardy--Morrey spaces
and the boundedness of Calder\'on--Zygmund
operators from the Hardy--Morrey spaces
to the weak Hardy--Morrey spaces including the critical case.

The study of mixed-norm Lebesgue spaces $L^{\vec{p}}(\rn)$ with
$\vec{p}\in (0,\infty]^n$ originated from Benedek and Panzone
\cite{bp} in the early 1960's, which can be traced back to H\"ormander \cite{H60}.
Later on, in 1970, Lizorkin \cite{l70} further developed both the theory of
multipliers of Fourier integrals and estimates of convolutions
in the mixed-norm Lebesgue spaces.
Particularly,
in order to meet the requirements arising in the study of the boundedness of operators,
partial differential equations and some other fields, the real-variable theory of mixed-norm
function spaces, including mixed-norm Morrey spaces,
mixed-norm Hardy spaces, mixed-norm Besov
spaces and mixed-norm Triebel--Lizorkin spaces, has rapidly been developed
in recent years (see, for instance,
\cite{cgn17bs,gjn17,tn,hy,hlyy,hlyy19}).
Observe that $L^{\vec{p}}(\rn)$ when
$\vec{p}\in (0,\infty]^n$ is a ball quasi-Banach function space, but, it
is not a quasi-Banach function space (see Remark \ref{mbfs-not} below).
In Subsection \ref{s7.2}, to establish a vector-valued inequality of the Hardy--Littlewood
maximal operator $\cm$ on the weak mixed-norm Lebesgue space $WL^{\vec{p}}(\rn)$ with
$\vec{p}\in (1,\infty)^n$ (see Theorem \ref{mixProfs} below), we first establish
an interpolation theorem of sublinear operators on the
space $WL^{\vec{p}}(\rn)$. Then, via an extrapolation theorem (see Lemma \ref{waicha} below)
which is a slight variant of a special case of \cite[Theorem 4.6]{CMP},
we establish a vector-valued inequality of the Hardy--Littlewood
maximal operator $\cm$ from $L^{\vec{p}}(\rn)$
to $WL^{\vec{p}}(\rn)$ with
$\vec{p}\in [1,\infty)^n$ (see Proposition \ref{mixPro} below). Since
all the assumptions of main theorems in Sections \ref{s3} through \ref{s6} are satisfied,
applying these theorems,
we obtain the atomic and the molecular characterizations of weak Hardy--Morrey spaces
and the boundedness of Calder\'on--Zygmund
operators from the mixed-norm Hardy spaces
to the weak mixed-norm Hardy spaces including the critical case.

In Subsection \ref{s7.3}, let $q,\ t\in(0,\infty)$ and $\Phi$ be an Orlicz function. Recall that the
Orlicz-slice space $(E_\Phi^q)_t(\rn)$ introduced in \cite{ZYYW}
generalizes both the slice space $E_t^p(\rn)$ [in this case, $\Phi(\tau):=\tau^2$ for any $\tau\in[0,\infty)$], which
was originally introduced by Auscher and
Mourgoglou \cite{AM2014} and has been applied to study the classification of weak solutions in the natural classes
for the boundary value problems of a $t$-independent elliptic system in the upper plane,
and $(E_r^p)_t(\rn)$ [in this case, $\Phi(\tau):=\tau^r$ for any $\tau\in[0,\infty)$ with $r\in(0,\infty)$],
which was originally introduced by Auscher and Prisuelos-Arribas \cite{APA}
and has been applied to study the boundedness of operators such
as the Hardy--Littlewood maximal operator, the Calder\'on--Zygmund operator and the Riesz potential.
The Orlicz-slice space $(E_\Phi^q)_t(\rn)$ is a ball quasi-Banach function space, however,
they may not be a quasi-Banach function space [see Remark \ref{Re25}(i) for more details].
Moreover, Zhang et al. \cite{ZYYW} introduced the Orlicz-slice Hardy space $(HE_\Phi^q)_t(\rn)$
and obtained real-variable characterizations of $(HE_\Phi^q)_t(\rn)$, respectively,
in terms of various maximal functions, atoms, molecules and Littlewood--Paley functions, and the boundedness
on $(HE_\Phi^q)_t(\rn)$ for convolutional $\delta$-order and non-convolutional $\gamma$-order Calder\'on--Zygmund
operators. Naturally, this new scale of Orlicz-slice Hardy spaces contains the variant of the Hardy-amalgam space
[in this case, $t=1$ and $\Phi(\tau):=\tau^p$ for any $\tau\in [0,\fz)$ with $p\in (0,\fz)$] of Abl\'e and Feuto \cite{AF}
as a special case. Moreover, the results in \cite{ZYYW} indicate that, similarly to the classical
Hardy space $H^p(\rn)$ with $p\in (0,1]$, $(HE_\Phi^q)_t(\rn)$ is a good substitute of
$(E_\Phi^q)_t(\rn)$ in the study on the boundedness of operators.
On another hand, observe that $(E_\Phi^p)_t(\rn)$ when $p=t=1$ goes back to the amalgam space $(L^\Phi,\ell^1)(\rn)$
introduced by Bonami and Feuto \cite{BF}, where
$$\Phi(t):=\frac{t}{\log(e+t)}$$
for any $t\in[0,\infty),$
and the Hardy space $H_*^\Phi(\rn)$
associated with the amalgam space $(L^\Phi,\ell^1)(\rn)$
was applied by Bonami and Feuto \cite{BF} to study the linear decomposition of the product of
the Hardy space $H^1(\rn)$ and its dual space $\BMO(\rn)$. Another main
motivation to introduce $(HE_\Phi^q)_t(\rn)$ in \cite{ZYYW} exists in that it is
a natural generalization of $H_*^\Phi(\rn)$ in \cite{BF}.
In the last part of this section, we focus on the weak Orlicz-slice Hardy space $(WHE_\Phi^q)_t(\rn)$
built on the Orlicz-slice space $(E_\Phi^q)_t(\rn)$, which is actually the starting point of this article.
We first recall some of the useful properties of Orlicz-slice spaces.
To obtain the atomic characterization of  $(WHE_\Phi^q)_t(\rn)$, we only need
to show that the powered Hardy--Littlewood maximal operator is bounded on the weak Orlicz-slice space
$(WE_\Phi^q)_t(\rn)$ (see Definition \ref{DewOs} below),
because $(E_\Phi^q)_t(\rn)$, as a ball quasi-Banach space, has been proved, in \cite{ZYYW},
to satisfy all the other assumptions appeared in Theorems \ref{Thad} and \ref{Thar}.
To this end, we first establish an interpolation theorem
of Marcinkiewicz type for sublinear operators on $(WE_\Phi^q)_t(\rn)$
(see Theorem \ref{Th1} below). As a corollary, we immediately obtain
the vector-valued inequality of the Hardy--Littlewood maximal operator $\cm$ on $(WE_\Phi^q)_t(\rn)$.
To prove Theorem \ref{Th1}, differently from the proofs of \cite[Theorem 2.5]{LYJ} and \cite[Theorem 3.1]{YYYZ},
we cannot directly apply the Fubini theorem. We overcome this difficulty by establishing a Minkowski type
inequality mixed with the norms of both the Lebesgue space $L^1(\rn)$
and the Orlicz space $L^\Phi(\rn)$ with the lower type $p_\Phi^-\in(1,\infty)$ (see Lemma \ref{LeMI} below).
As an application, we obtain the boundedness of Calder\'on--Zygmund
operators from the Orlicz-slice Hardy space $(HE_\Phi^q)_t(\rn)$
to $(WHE_\Phi^q)_t(\rn)$ in the critical case. To this end, applying Theorems \ref{Thcz}
and \ref{Thcz1}, we only need to establish the Fefferman--Stein vector-valued inequality for the Hardy--Littlewood
maximal operator from $(E_\Phi^q)_t(\rn)$ to $(WE_\Phi^q)_t(\rn)$. We do this by borrowing
some ideas from \cite{ZYYW}.

Finally, we make some conventions on notation. Let $\nn:=\{1,2,\ldots\}$, $\zz_+:=\nn\cup\{0\}$
and $\zz_+^n:=(\zz_+)^n$.
We always denote by $C$ a \emph{positive constant} which is independent of the main parameters,
but it may vary from line to line. We also use $C_{(\alpha,\beta,\ldots)}$ to denote a positive constant depending
on the indicated parameters $\alpha,\beta,\ldots.$ The \emph{symbol} $f\lesssim g$ means that $f\le Cg$.
If $f\lesssim g$ and $g\lesssim f$, then we write $f\sim g$. We also use the following
convention: If $f\le Cg$ and $g=h$ or $g\le h$, we then write $f\ls g\sim h$
or $f\ls g\ls h$, \emph{rather than} $f\ls g=h$
or $f\ls g\le h$. The \emph{symbol} $\lfloor s\rfloor$ (resp., $\lceil s\rceil$) for any $s\in\mathbb{R}$
denotes the maximal (resp., minimal) integer not greater (resp., less)
than $s$. We use $\vec0_n$ to denote the \emph{origin} of $\rn$ and let
$\mathbb{R}^{n+1}_+:=\rn\times(0,\infty)$.
If $E$ is a subset of $\rn$, we denote by $\mathbf{1}_E$ its
characteristic function and by $E^\complement$ the set $\rn\setminus E$.
For any cube $Q:=Q(x_Q,l_Q)\subset\rn$,
with center $x_Q\in\rn$ and side length $l_Q\in(0,\infty)$,
and $\alpha\in(0,\infty)$, let $\alpha Q:=Q(x_Q,\alpha l_Q)$.
Denote by $\cq$ the set of all cubes having their edges parallel to the coordinate axes. For any
$\theta:=(\theta_1,\ldots,\theta_n)\in\zz_+^n$, let $|\theta|:=\theta_1+\cdots+\theta_n$. Furthermore,
for any cube $Q$ in $\rn$ and $j\in\zz_+$, let $S_j(Q):=(2^{j+1}Q)\setminus(2^jQ)$ with $j\in\nn$
and $S_0(Q):=2Q$. Finally, for any $q\in[1,\infty]$, we denote by $q'$ its \emph{conjugate exponent},
namely, $1/q+1/q'=1$.

\section{Preliminaries\label{s2}}

In this section, we present some notions and preliminary facts on ball quasi-Banach function spaces.

\subsection{Ball quasi-Banach function spaces}
Denote by the \emph{symbol} $\mathscr M(\rn)$ the set of
all measurable functions on $\rn$. Let us first recall
the notion of Banach function spaces;  see, for instance, \cite[Chapter 1, Definitions 1.1 and 1.3]{BS}.

\begin{definition}\label{bn}
A Banach space $Y\subset\mathscr M(\rn)$ is called a \emph{Banach function space} if the norm $\|\cdot\|_Y$
is a \emph{Banach function norm}, that is, for all measurable functions $f,\ g$ and $\{f_m\}_{m\in\nn}$, the following
properties hold true:
\begin{itemize}
\item[(i)] $\|f\|_Y=0$ if and only if $f=0$ almost everywhere;
\item[(ii)] $|g|\le |f|$ almost everywhere implies that $\|g\|_Y\le\|f\|_Y$;
\item[(iii)] $0\le f_m\uparrow f$ almost everywhere implies that $\|f_m\|_Y\uparrow\|f\|_Y$;
\item[(iv)] $\mathbf{1}_E\in Y$ for any measurable set $E\subset\rn$ with finite measure;
\item[(v)] for any measurable set $E\subset\rn$ with finite measure, there exists a positive constant $C_{(E)}$,
depending on $E$, such that, for any $f\in Y$,
\begin{equation}\label{Eqbv}
\int_E|f(x)|\,dx\le C_{(E)}\|f\|_Y.
\end{equation}
\end{itemize}
\end{definition}
\begin{remark}
It was pointed out in \cite[p.\,9]{SHYY} that we sometimes describe the quality of
functions via some function spaces beyond Banach function spaces,
for instance, Morrey spaces $M_q^p(\rn)$ with $1\le q<p<\infty$, which violates
\eqref{Eqbv} (see \cite[Example 3.3]{ST}).
It is the point which motivated Sawano et al. \cite{SHYY} to introduce a more general
framework than Banach function spaces, ball quasi-Banach function spaces.
\end{remark}

For any $x\in\rn$ and $r\in(0,\infty)$, let $B(x,r):=\{y\in\rn:\ |x-y|<r\}$ and
\begin{equation}\label{Eqball}
\BB:=\lf\{B(x,r):\ x\in\rn\quad\text{and}\quad r\in(0,\infty)\r\}.
\end{equation}

\begin{definition}\label{Debqfs}
A quasi-Banach space $X\subset\mathscr M(\rn)$ is called a \emph{ball quasi-Banach function space} if it satisfies
\begin{itemize}
\item[(i)] $\|f\|_X=0$ implies that $f=0$ almost everywhere;
\item[(ii)] $|g|\le |f|$ almost everywhere implies that $\|g\|_X\le\|f\|_X$;
\item[(iii)] $0\le f_m\uparrow f$ almost everywhere implies that $\|f_m\|_X\uparrow\|f\|_X$;
\item[(iv)] $B\in\BB$ implies that $\mathbf{1}_B\in X$, where $\BB$ is as in \eqref{Eqball}.
\end{itemize}

Moreover, a ball quasi-Banach function space $X$ is called a
\emph{ball Banach function space} if the norm of $X$
satisfies the triangle inequality: for any $f,\ g\in X$,
\begin{equation}\label{eq22x}
\|f+g\|_X\le \|f\|_X+\|g\|_X
\end{equation}
and, for any $B\in \BB$, there exists a positive constant $C_{(B)}$, depending on $B$, such that, for any $f\in X$,
\begin{equation}\label{eq2.3}
\int_B|f(x)|\,dx\le C_{(B)}\|f\|_X.
\end{equation}
\end{definition}

Recall that a quasi-Banach space $X\subset\mathscr M(\rn)$ is called a \emph{quasi-Banach function space} if it
is a ball quasi-Banach function space and it satisfies Definition \ref{Debqfs}(iv) with ball
replaced by any measurable set of finite measure.

It is easy to see that every Banach function space is a ball Banach function space.
As was mentioned in \cite[p.\,9]{SHYY}, the family of ball Banach function spaces includes Morrey type spaces,
which are not necessarily Banach function spaces.

For any ball Banach function space $X$, the \emph{associate space} (\emph{K\"othe dual}) $X'$ is defined by setting
\begin{equation}\label{asso}
X':=\lf\{f\in\mathscr M(\rn):\ \|f\|_{X'}:=\sup\lf\{\|fg\|_{L^1(\rn)}:\ g\in X,\ \|g\|_X=1\r\}<\infty\r\},
\end{equation}
where $\|\cdot\|_{X'}$ is called the \emph{associate norm} of $\|\cdot\|_X$
(see, for instance, \cite[Chapter 1, Definitions 2.1 and 2.3]{BS}).

\begin{remark}
\begin{itemize}
\item[(i)] By \cite[Proposition 2.3]{SHYY}, we know that, if $X$ is a ball Banach function space,
then its associate space $X'$ is also a ball Banach function space.
\item[(ii)] A ball quasi-Banach function space $Y\subset\mathscr M(\rn)$ is called a \emph{quasi-Banach
function space} (see, for instance, \cite[Definition 2.4.7]{SHYY}) if,
for any measurable set $E\subset\rn$ with finite measure, $\mathbf{1}_E\in Y$.
\end{itemize}
\end{remark}

The following H\"older inequality is a direct corollary of both Definition \ref{Debqfs}(i)
and \eqref{asso} (see also \cite[Theorem 2.4]{BS}); we omit the details.

\begin{lemma}[{\bf the H\"older inequality}]\label{LeHolder}
Let $X$ be a ball Banach function space with the associate space $X'$.
If $f\in X$ and $g\in X'$, then $fg$ is integrable and
\begin{equation}\label{12}
\int_\rn|f(x)g(x)|\,dx\le \|f\|_X\|g\|_{X'}.
\end{equation}
\end{lemma}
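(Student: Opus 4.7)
The plan is to reduce \eqref{12} to a direct application of the definition \eqref{asso} of the associate norm after a homogeneity rescaling. First, by Definition \ref{Debqfs}(i), $\|f\|_X=0$ forces $f=0$ almost everywhere, in which case both sides of \eqref{12} vanish; hence I may assume $\|f\|_X\in(0,\infty)$.

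Next, I would set $h:=f/\|f\|_X$. Since the quasi-norm on the ball quasi-Banach space $X$ is positively homogeneous, $\|h\|_X=1$, so $h$ is an admissible test function in the supremum defining $\|g\|_{X'}$ in \eqref{asso}. Note that in \eqref{asso} the element of $X'$ sits in the first slot of the product and the admissible competitor (of unit $X$-norm) sits in the second slot; here the roles of the symbols $f$ and $g$ in the statement of the lemma are therefore swapped relative to those in \eqref{asso}. Carrying this out gives
$$\int_\rn |h(x)g(x)|\,dx=\|hg\|_{L^1(\rn)}\le \|g\|_{X'}<\infty,$$
which in particular shows that $hg$, and consequently $fg=\|f\|_X\,hg$, is integrable on $\rn$. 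Multiplying the last inequality by the positive scalar $\|f\|_X$ then yields \eqref{12}, as desired.

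The argument is entirely formal and I do not anticipate any substantive obstacle; the only point deserving care is the bookkeeping of which function lies in $X$ and which in $X'$, because the symbol $f$ in the lemma plays the role that the dummy $g$ plays in \eqref{asso}, while the symbol $g$ of the lemma plays the role of the outer variable in \eqref{asso}. Once this correspondence is fixed, the rest is a one-line rescaling.
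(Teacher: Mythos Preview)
Your proof is correct and matches the paper's approach exactly: the paper simply states that the lemma is a direct corollary of Definition \ref{Debqfs}(i) and the definition \eqref{asso} of the associate norm, and omits the details. Your rescaling argument is precisely the one-line verification the authors have in mind.
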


Similarly to \cite[Theorem 2.7]{BS}, we have the following conclusion, whose
proof is a slight modification of the one of \cite[Theorem 2.7]{BS}.

\begin{lemma}[{\bf G. G. Lorentz, W. A. J. Luxembourg}]\label{Lesdual}
Every ball Banach function space $X$ coincides with its second associate space $X''$.
In other words, a function $f$ belongs to $X$ if and only if it belongs to $X''$ and,
in that case,
$$
\|f\|_X=\|f\|_{X''}.
$$
\end{lemma}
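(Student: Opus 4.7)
The plan is to adapt the classical Lorentz--Luxemburg argument in \cite[Theorem 2.7]{BS} to the present ball setting, where only characteristic functions of balls (rather than of arbitrary finite-measure sets) are guaranteed to lie in $X$. The conclusion splits into the easy inclusion $X\subset X''$ with $\|f\|_{X''}\le\|f\|_X$, which is immediate from the H\"older inequality in Lemma \ref{LeHolder} applied to the definition \eqref{asso} of $X''$ (noting that $X'$ is itself a ball Banach function space by \cite[Proposition 2.3]{SHYY}), and the harder reverse inclusion $X''\subset X$ with $\|f\|_X\le\|f\|_{X''}$, on which the remainder of this plan focuses.

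For the reverse direction, fix $f\in X''$ and, by property (ii) of Definition \ref{Debqfs}, assume $f\ge 0$. The first move is to truncate: set $f_m:=\min\{f,m\}\mathbf{1}_{B(\vec0_n,m)}$ for each $m\in\nn$, so that $f_m\uparrow f$ almost everywhere and $f_m\le m\mathbf{1}_{B(\vec0_n,m)}\in X$ by Definition \ref{Debqfs}(iv). Granting for the moment that $\|h\|_X\le\|h\|_{X''}$ holds for every non-negative bounded $h\in X$ supported in a ball, property (ii) applied in $X''$ yields $\|f_m\|_{X''}\le\|f\|_{X''}$, hence $\sup_m\|f_m\|_X\le\|f\|_{X''}<\infty$, and the Fatou property (iii) of $X$ then forces $f\in X$ with $\|f\|_X\le\|f\|_{X''}$, as required.

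So the crux is to prove $\|h\|_X\le\|h\|_{X''}$ whenever $h\in X$ is bounded, non-negative and supported in a ball $B\in\BB$. The plan is to invoke the Hahn--Banach theorem in the Banach space $(X,\|\cdot\|_X)$ (genuinely Banach thanks to \eqref{eq22x}) to obtain $L\in X^*$ with $L(h)=\|h\|_X$ and $\|L\|_{X^*}\le1$, and then to represent $L$, restricted to the subspace of bounded functions supported in $B$, as integration against some $g\ge0$ in $X'$ with $\|g\|_{X'}\le1$; granted such a $g$, the chain $\|h\|_X=L(h)=\int_\rn h(x)g(x)\,dx\le\|h\|_{X''}\|g\|_{X'}\le\|h\|_{X''}$ closes the argument via Lemma \ref{LeHolder} and \eqref{asso}. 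The representing $g$ is produced by standard Banach-lattice machinery: \eqref{eq2.3} continuously embeds the subspace of bounded functions supported in $B$ into $L^1(B)$; a Yosida--Hewitt decomposition splits $L$ into an order-continuous and a singular part; the Fatou property (iii) together with a monotone truncation argument annihilates the singular part on this subspace; and a Radon--Nikodym step represents the order-continuous part as integration against a non-negative measurable $g$ whose $X'$-norm is controlled by $\|L\|_{X^*}\le1$. The \emph{main obstacle} is exactly this representation step, since ball Banach function spaces lack the global $L^1$-majorization \eqref{Eqbv} enjoyed by Banach function spaces; the workaround is to carry out the representation \emph{locally} on each ball, where \eqref{eq2.3} provides the necessary $L^1$-comparison, and then to assemble a global representing function by a monotone limit along an exhausting sequence of balls. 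This local-to-global passage is the only substantive modification of the Bennett--Sharpley proof required here.
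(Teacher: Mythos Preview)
Your overall structure—the easy inclusion via Lemma~\ref{LeHolder}, the truncations $f_m=\min\{f,m\}\mathbf{1}_{B(\vec0_n,m)}$, and the reduction via Fatou to bounded $h\ge0$ supported in a ball—matches the paper's instruction to repeat \cite[Theorem~2.7]{BS} with balls replacing arbitrary finite-measure sets. The gap is in your step~5: the claim that ``the Fatou property together with a monotone truncation argument annihilates the singular part'' of the Hahn--Banach functional $L\in X^*$ on bounded ball-supported functions is false. Take $X=L^\infty(\rn)$, which is a ball Banach function space with the Fatou property, fix a ball $B$, and let $\phi\in(L^\infty)^*$ be any weak-$*$ cluster point of the norm-one functionals $f\mapsto|A_k|^{-1}\int_{A_k}f$, where $A_k\subset B$ is a decreasing sequence with $|A_k|\downarrow0$. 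Then $\phi(\mathbf{1}_{A_j})=1$ for every $j$; writing $\phi=\phi_n+\phi_s$ (order-continuous plus singular), order-continuity forces $\phi_n(\mathbf{1}_{A_j})\to0$, so $\phi_s(\mathbf{1}_{A_j})\to1$ and the singular part is nonzero on bounded functions supported in $B$. Your chain $\|h\|_X=L(h)=\int hg$ therefore breaks.

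The correct move—and this is what \cite[Theorem~2.7]{BS} actually does—is to apply Hahn--Banach separation \emph{in $L^1(B)$}, not in $X$. For any $\lambda<\|h\|_X$, the set $S_\lambda:=\{u\in L^1(B):\|u\|_X\le\lambda\}$ is convex and closed in $L^1(B)$: if $u_k\to u$ in $L^1(B)$, pass to an a.e.-convergent subsequence and use property~(iii) exactly as in the proof of Lemma~\ref{LeFatou} to get $\|u\|_X\le\liminf_k\|u_k\|_X\le\lambda$. Since $h\in L^1(B)\setminus S_\lambda$ (via \eqref{eq2.3}), separation produces a \emph{genuine function} $g\in(L^1(B))^*=L^\infty(B)$ with $\int hg>\sup_{u\in S_\lambda}\int ug$. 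One checks $\sup_{u\in S_\lambda}\int ug=\lambda\|g\|_{X'}$ by testing against $u=\lambda\,\mathrm{sgn}(g)\,|k|\mathbf{1}_B$ for $\|k\|_X\le1$ and using Lemma~\ref{LeHolder} for the reverse inequality; here $g\in X'$ because $|g|\le\|g\|_{L^\infty}\mathbf{1}_B$ and $\mathbf{1}_B\in X'$ by \cite[Proposition~2.3]{SHYY}. Normalizing $g$ gives $\|h\|_{X''}\ge\lambda$ for every $\lambda<\|h\|_X$, hence $\|h\|_{X''}\ge\|h\|_X$. The only change from the Banach-function-space argument is that the exhausting finite-measure sets are taken to be balls, exactly as the paper says.
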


\begin{proof} Let $X$ be a ball Banach function space.
From this and \cite[Proposition 2.3]{SHYY}, we deduce that
$X'$ and $X''$ are both ball Banach function spaces.
Using this and Lemma \ref{LeHolder} and repeating the proof of \cite[Theorem 2.7]{BS} via
replacing Definition \ref{bn}(iv) by Definition \ref{Debqfs}(iv), we then complete
the proof of Lemma \ref{Lesdual}.
\end{proof}

We still need to recall the notions of the convexity and the concavity of ball quasi-Banach
function spaces,
which come from, for instance, \cite[Definition 1.d.3]{LT}.
\begin{definition}\label{Debf}
Let $X$ be a ball quasi-Banach function space and $p\in(0,\infty)$.
\begin{itemize}
\item[(i)] The $p$-\emph{convexification} $X^p$ of $X$ is defined by setting $X^p:=\{f\in\mathscr M(\rn):\ |f|^p\in X\}$
equipped with the quasi-norm $\|f\|_{X^p}:=\||f|^p\|_X^{1/p}$.
\item[(ii)] The space $X$ is said to be $p$-\emph{concave} if there exists a positive constant $C$ such that,
for any sequence $\{f_j\}_{j\in\nn}$ of $X^{1/p}$,
$$
\sum_{j\in\nn}\|f_{j}\|_{X^{1/p}}\le C\lf\|\sum_{j\in\nn}|f_j|\r\|_{X^{1/p}}.
$$
Particularly, $X$ is said to be \emph{strictly $p$-concave} when $C=1$.
\end{itemize}
\end{definition}

Now we introduce the notion of weak ball quasi-Banach function spaces as follows.

\begin{definition}\label{2.8}
Let $X$ be a ball quasi-Banach function space. The \emph{weak ball quasi-Banach function
space} $WX$ is defined to be the set of all measurable functions $f$ satisfying
\begin{equation}\label{Eqde1}
\|f\|_{WX}:=\sup_{\alpha\in(0,\infty)}
\lf\{\alpha\lf\|\mathbf{1}_{\{x\in\rn:\ |f(x)|>\alpha\}}\r\|_X\r\}<\infty.
\end{equation}
\end{definition}

\begin{remark}\label{Rews}
\begin{itemize}
\item[(i)] Let $X$ be a ball quasi-Banach function space. For any $f\in X$ and $\alpha\in(0,\infty)$, we have
$\mathbf{1}_{\{x\in\rn:\ |f(x)|>\alpha\}}(x)\leq|f(x)|/\alpha$ for any $x\in\rn$, which, together with Definition \ref{Debqfs}(ii),
further implies that $\sup_{\alpha\in(0,\infty)}\lf\{\alpha\|\mathbf{1}_{\{x\in\rn:
\ |f(x)|>\alpha\}}\|_X\r\}\leq\|f\|_X$. This shows
that $X\subset WX$.

\item[(ii)] Let $f,\ g\in WX$ with $|f|\le|g|$. By Definition \ref{Debqfs}(ii), we conclude that $\|f\|_{WX}\le\|g\|_{WX}$.
\end{itemize}
\end{remark}

\begin{lemma}\label{Leqn}
Let $X$ be a ball quasi-Banach function space. Then $\|\cdot\|_{WX}$ is a quasi-norm on $WX$, namely,
\begin{itemize}
\item[\rm(i)] $\|f\|_{WX}=0$ if and only if $f=0$ almost everywhere;
\item[\rm(ii)] for any $\lambda\in\cc$ and
$f\in WX,\ \|\lambda f\|_{WX}=|\lambda|\|f\|_{WX}$;
\item[\rm(iii)] for any $f,\ g\in WX$, there exists a positive constant $C$
such that $\|f+g\|_{WX}\le C[\|f\|_{WX}+\|g\|_{WX}]$.
Moreover, if $p\in(0,\infty)$ and $X^{1/p}$
is a ball Banach function space, then
$$
\|f+g\|_{WX}^{1/p}\le 2^{\max\{1/p,1\}}\lf[\|f\|_{WX}^{1/p}+\|g\|_{WX}^{1/p}\r].
$$
\end{itemize}
\end{lemma}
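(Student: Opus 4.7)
The plan is to establish each of the three properties by unwrapping the supremum in \eqref{Eqde1} and converting statements about $WX$ into statements about indicator-function quasi-norms in $X$.

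For (i), the implication from $f=0$ almost everywhere to $\|f\|_{WX}=0$ is immediate from Definition \ref{Debqfs}(i) and (ii) applied to the super-level sets $\{x\in\rn:\ |f(x)|>\alpha\}$. Conversely, $\|f\|_{WX}=0$ forces $\|\mathbf{1}_{\{|f|>\alpha\}}\|_X=0$ for every $\alpha\in(0,\infty)$, whence Definition \ref{Debqfs}(i) yields $|\{x\in\rn:\ |f(x)|>\alpha\}|=0$ for every $\alpha\in(0,\infty)$, i.e.\ $f=0$ almost everywhere. Part (ii) is similarly routine: the case $\lambda=0$ follows from (i), and for $\lambda\neq 0$ the elementary set identity $\{x\in\rn:\ |\lambda f(x)|>\alpha\}=\{x\in\rn:\ |f(x)|>\alpha/|\lambda|\}$ combined with the change of variables $\beta:=\alpha/|\lambda|$ inside the supremum \eqref{Eqde1} immediately yields $\|\lambda f\|_{WX}=|\lambda|\|f\|_{WX}$.

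For (iii), the central geometric input is the level-set inclusion
\begin{equation*}
\{x\in\rn:\ |f(x)+g(x)|>\alpha\}\subset\{x\in\rn:\ |f(x)|>\alpha/2\}\cup\{x\in\rn:\ |g(x)|>\alpha/2\},
\end{equation*}
which via Definition \ref{Debqfs}(ii) produces $\mathbf{1}_{\{|f+g|>\alpha\}}\le\mathbf{1}_{\{|f|>\alpha/2\}}+\mathbf{1}_{\{|g|>\alpha/2\}}$. Applying the quasi-triangle inequality on the quasi-Banach space $X$ (with some constant $C\in[1,\infty)$), multiplying by $\alpha$, and taking the supremum over $\alpha\in(0,\infty)$ produce the general quasi-triangle inequality for $\|\cdot\|_{WX}$. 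For the refined bound, I use that since $X^{1/p}$ is a ball Banach function space, the honest triangle inequality holds in $X^{1/p}$, while the identity $\|\mathbf{1}_E\|_{X^{1/p}}=\|\mathbf{1}_E\|_X^p$ (immediate from Definition \ref{Debf}(i), since every indicator is idempotent under any pointwise power) converts this back into
\begin{equation*}
\|\mathbf{1}_{\{|f+g|>\alpha\}}\|_X^p\le\|\mathbf{1}_{\{|f|>\alpha/2\}}\|_X^p+\|\mathbf{1}_{\{|g|>\alpha/2\}}\|_X^p.
\end{equation*}
Multiplying this by $\alpha^p$, taking the supremum over $\alpha\in(0,\infty)$, and then invoking the elementary numerical inequality $(a+b)^s\le 2^{\max\{s-1,0\}}(a^s+b^s)$ with $s=1/p$ during the $(1/p)$-th-root extraction yields the desired bound with the stated constant $2^{\max\{1/p,1\}}$.

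The main obstacle lies in the bookkeeping of constants in this last step of (iii): one has to track how the factor of $2$ coming from halving $\alpha$ in the level-set inclusion combines with the $p$-th-power triangle inequality inherited from $X^{1/p}$ and with the subsequent conversion from a sum of $p$-th powers to a sum of $(1/p)$-th powers. The two regimes $p\in(0,1]$, where $t\mapsto t^{1/p}$ fails to be subadditive and an additional factor of $2^{1/p-1}$ is needed, and $p\in[1,\infty)$, where the subadditivity of $t\mapsto t^{1/p}$ applies directly, must be treated separately, and the single exponent $\max\{1/p,1\}$ appearing in the final constant succinctly packages these two cases.
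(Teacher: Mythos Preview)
Your proposal is correct and follows essentially the same approach as the paper's own proof: both arguments handle (i) and (ii) as routine, and for (iii) both use the level-set inclusion $\{|f+g|>\alpha\}\subset\{|f|>\alpha/2\}\cup\{|g|>\alpha/2\}$, the identity $\|\mathbf{1}_E\|_{X^{1/p}}=\|\mathbf{1}_E\|_X^p$ for indicators, the genuine triangle inequality in the ball Banach space $X^{1/p}$, and the numerical inequality $(a+b)^{1/p}\le 2^{\max\{1/p-1,0\}}(a^{1/p}+b^{1/p})$, assembling the factor of $2$ from halving $\alpha$ with the $2^{\max\{1/p-1,0\}}$ from the root extraction to obtain $2^{\max\{1/p,1\}}$.
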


\begin{proof}
It is easy to show (i) and (ii) and the details are omitted. We now show (iii).
We first assume that $X^{1/p}$ is a ball Banach function space for some given $p\in(0,\infty)$.
Then, for any $f,\ g\in WX$ and $\alpha\in(0,\infty)$, by Definition \ref{Debf}(i), \eqref{eq22x}
with $X$ replaced by $X^{1/p}$
and the well-known inequality that $(a+b)^{1/p}\le2^{\max\{1/p-1,0\}}(a^{1/p}+b^{1/p})$ for any $a,\ b\in(0,\infty)$, we have
\begin{align*}
\|f+g\|_{WX}&\leq\sup_{\alpha\in(0,\infty)}\lf\{
\alpha\lf\|\mathbf{1}_{\{x\in\rn:\ |f(x)|+|g(x)|>\alpha\}}\r\|_X\r\}
=\sup_{\alpha\in(0,\infty)}\lf\{
\alpha\lf\|\mathbf{1}_{\{x\in\rn:\ |f(x)|+|g(x)|>\alpha\}}\r\|_{X^{1/p}}^{1/p}\r\}\\
&\le\sup_{\alpha\in(0,\infty)}\lf\{\alpha\lf[\lf\|\mathbf{1}_{\{x\in\rn:\ |f(x)|>\alpha/2\}}\r\|_{X^{1/p}}
+\lf\|\mathbf{1}_{\{x\in\rn:\ |g(x)|>\alpha/2\}}\r\|_{X^{1/p}}\r]^{1/p}\r\}\\
&\le2^{\max\{1/p-1,0\}}\sup_{\alpha\in(0,\infty)}
\lf\{\alpha\lf[\lf\|\mathbf{1}_{\{x\in\rn:\ |f(x)|>\alpha/2\}}
\r\|_{X^{1/p}}^{1/p}+\lf\|\mathbf{1}_{\{x\in\rn:\ |g(x)|>\alpha/2\}}\r\|_{X^{1/p}}^{1/p}\r]\r\}\\
&\leq2^{\max\{1/p,1\}}\lf[\sup_{\alpha\in(0,\infty)}\lf\{\alpha\lf\|\mathbf{1}_{\{x\in\rn:\ |f(x)|>\alpha\}}
\r\|_{X^{1/p}}^{1/p}\r\}+\sup_{\alpha\in(0,\infty)}\lf\{\alpha\lf\|\mathbf{1}_{\{x\in\rn:\ |g(x)|>\alpha\}}\r\|_{X^{1/p}}^{1/p}\r\}\r]\\
&=2^{\max\{1/p,1\}}\lf[\|f\|_{WX}+\|g\|_{WX}\r].
\end{align*}
For the ball quasi-Banach function space, the same procedure as above leads us to
the desired estimate with the positive constant
$C$ depending on the positive constant appearing in the quasi-triangular inequality of
the quasi-norm $\|\cdot\|_X$. This finishes the proof of Lemma \ref{Leqn}.
\end{proof}

\begin{remark}\label{Re213}
Let $X$ be a ball quasi-Banach function space. Then, by the Aoki--Rolewicz theorem
(see, for instance, \cite[Exercise 1.4.6]{G1}), one finds a positive constant $\nu\in(0,1)$ such that,
for any $N\in\nn$ and $\{f_j\}_{j=1}^N\subset\mathscr M(\rn)$,
\begin{align*}
\lf\|\sum_{j=1}^N|f_j|\r\|_{WX(\rn)}^\nu\le4\sum_{j=1}^N\lf[\lf\||f_j|\r\|_{WX}\r]^\nu.
\end{align*}
\end{remark}

\begin{lemma}\label{LeFatou}
Let $X$ be a ball quasi-Banach function space and $\{f_m\}_{m\in\nn}\subset WX$. If $f_m\rightarrow f$ as $m\rightarrow\infty$
almost everywhere in $\rn$ and if $\liminf_{m\rightarrow\infty}\|f_m\|_{WX}<\infty$, then
$f\in WX$ and
$$
\lf\|f\r\|_{WX}\le\liminf_{m\rightarrow\infty}\|f_m\|_{WX}.
$$
\end{lemma}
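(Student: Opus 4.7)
My plan is to transfer the Fatou-type property (iii) of Definition \ref{Debqfs}, available for the underlying ball quasi-Banach function space $X$, across to the weak quasi-norm $\|\cdot\|_{WX}$ defined in \eqref{Eqde1}. The subtlety is that the distribution sets $\{|f|>\alpha\}$ and $\{|f_m|>\alpha\}$ need not be nested or directly comparable even when $f_m\to f$ a.e., so I would first pass through a slightly smaller threshold $\alpha-\varepsilon$ and only at the end let $\varepsilon\to 0^+$.

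Fix $\alpha\in(0,\infty)$ and $\varepsilon\in(0,\alpha)$. Set $E_\alpha:=\{x\in\rn:\ |f(x)|>\alpha\}$ and, for each $m\in\nn$, $F_m:=\{x\in\rn:\ |f_m(x)|>\alpha-\varepsilon\}$. The a.e. convergence $f_m\to f$ implies that, for almost every $x\in E_\alpha$, one has $|f_m(x)|>\alpha-\varepsilon$ for all sufficiently large $m$; equivalently,
$$
\mathbf{1}_{E_\alpha}(x)\le \liminf_{m\to\infty}\mathbf{1}_{F_m}(x)\quad\text{for a.e. } x\in\rn.
$$
For each $N\in\nn$, let $g_N:=\inf_{m\ge N}\mathbf{1}_{F_m}$. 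Then $0\le g_N\uparrow \liminf_{m\to\infty}\mathbf{1}_{F_m}$ a.e., while $0\le g_N\le \mathbf{1}_{F_m}$ for every $m\ge N$.

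Applying Definition \ref{Debqfs}(ii) to the latter inequality gives $\|g_N\|_X\le \|\mathbf{1}_{F_m}\|_X$ for all $m\ge N$, hence $\|g_N\|_X\le \inf_{m\ge N}\|\mathbf{1}_{F_m}\|_X$. The Fatou property of Definition \ref{Debqfs}(iii), applied to $\{g_N\}_{N\in\nn}$, then yields
$$
\lf\|\liminf_{m\to\infty}\mathbf{1}_{F_m}\r\|_X=\lim_{N\to\infty}\|g_N\|_X\le \liminf_{m\to\infty}\|\mathbf{1}_{F_m}\|_X.
$$
Combining this with the pointwise bound $\mathbf{1}_{E_\alpha}\le \liminf_{m\to\infty}\mathbf{1}_{F_m}$ and another application of Definition \ref{Debqfs}(ii), I obtain
$$
\alpha\|\mathbf{1}_{E_\alpha}\|_X\le \frac{\alpha}{\alpha-\varepsilon}\liminf_{m\to\infty}\lf\{(\alpha-\varepsilon)\|\mathbf{1}_{F_m}\|_X\r\}\le \frac{\alpha}{\alpha-\varepsilon}\liminf_{m\to\infty}\|f_m\|_{WX},
$$
where the last step uses the definition \eqref{Eqde1} with parameter $\alpha-\varepsilon$.

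Letting $\varepsilon\to 0^+$ removes the prefactor, and finally taking the supremum over $\alpha\in(0,\infty)$ gives $\|f\|_{WX}\le \liminf_{m\to\infty}\|f_m\|_{WX}$; in particular, since this last quantity is finite by hypothesis, $f\in WX$. The only genuine obstacle is the one already addressed: the level sets of the limit are not contained in the limits of level sets at the same height, which is handled cleanly by the $\varepsilon$-shift. Everything else reduces to the monotone-convergence-type axiom (iii) and the lattice property (ii) from Definition \ref{Debqfs}, which the ball quasi-Banach function space structure directly provides.
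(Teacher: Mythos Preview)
Your proof is correct and follows essentially the same route as the paper's: both invoke Definition \ref{Debqfs}(iii) on an increasing sequence built from $\inf_{m\ge N}$ (the paper applies it to $h_N:=\inf_{m\ge N}|f_m|$ and then passes to level sets, which is exactly your $g_N$ at threshold $\alpha$, since $\{h_N>\alpha\}=\bigcap_{m\ge N}\{|f_m|>\alpha\}$). The $\varepsilon$-shift is in fact unnecessary: because $|f(x)|>\alpha$ and $|f_m(x)|\to|f(x)|$ already force $|f_m(x)|>\alpha$ for all large $m$, one has $\mathbf{1}_{\{|f|>\alpha\}}\le\liminf_{m}\mathbf{1}_{\{|f_m|>\alpha\}}$ a.e.\ directly, so your argument runs without the detour through $\alpha-\varepsilon$.
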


\begin{proof}
For any $k\in\nn$, letting $h_k:=\inf_{m\geq k}|f_m|$, then $0\le h_k\uparrow|f|$,
$k\rightarrow\infty$, almost everywhere in $\rn$ and hence, for any $\alpha\in(0,\infty)$,
$\mathbf{1}_{\{x\in\rn:\ |h_k(x)|>\alpha\}}\uparrow\mathbf{1}_{\{x\in\rn:\ |f(x)|>\alpha\}}$.
Moreover, by Definition \ref{Debqfs}(iii) and the definition of $h_k$, for any
$\alpha\in(0,\infty)$, we have
$$
\lf\|\mathbf{1}_{\{x\in\rn:\ |f(x)|>\alpha\}}\r\|_X
=\lim_{k\rightarrow\infty}\lf\|\mathbf{1}_{\{x\in\rn:\ |h_k(x)|>\alpha\}}\r\|_X
\le\liminf_{m\rightarrow\infty}\lf\|\mathbf{1}_{\{x\in\rn:\ |f_m(x)|>\alpha\}}\r\|_X.
$$
This further implies that, for any $\alpha\in(0,\infty)$,
\begin{align*}
\alpha\lf\|\mathbf{1}_{\{x\in\rn:\ |f(x)|>\alpha\}}\r\|_X&\le
\alpha\liminf_{m\rightarrow\infty}\lf\|\mathbf{1}_{\{x\in\rn:\ |f_m(x)|>\alpha\}}\r\|_X\\
&\le\liminf_{m\rightarrow\infty}\sup_{\alpha\in(0,\infty)}\lf\{\alpha\lf\|\mathbf{1}_{\{x\in\rn:\ |f_m(x)|>\alpha\}}\r\|_X\r\}
=\liminf_{m\rightarrow\infty}\|f_m\|_{WX},
\end{align*}
which completes the proof of Lemma \ref{LeFatou}.
\end{proof}

From the definition
of $WX$, Remark \ref{Re213}, Lemmas \ref{Leqn} and \ref{LeFatou}, it is easy to deduce
the following lemma and we omit the details.

\begin{lemma}\label{bal}
Let $X$ be a ball quasi-Banach function space.
Then the
space $WX$ is also a ball quasi-Banach function space.
\end{lemma}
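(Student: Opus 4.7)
We verify the four conditions of Definition \ref{Debqfs} for $WX$, together with completeness of the quasi-norm $\|\cdot\|_{WX}$.

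Conditions (i) and (ii) of Definition \ref{Debqfs} are already contained in Lemma \ref{Leqn}(i) and Remark \ref{Rews}(ii). Condition (iv) is immediate: if $B\in\BB$, then $\mathbf{1}_B\in X$ by Definition \ref{Debqfs}(iv) applied to $X$, and hence $\mathbf{1}_B\in WX$ by Remark \ref{Rews}(i). It remains to check the lattice monotone convergence in (iii) and the completeness needed for $WX$ to be a quasi-Banach space.

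For (iii), suppose $0\le f_m\uparrow f$ almost everywhere. For each fixed $\alpha\in(0,\infty)$ we have the set inclusion $\{x\in\rn:\ f_m(x)>\alpha\}\uparrow\{x\in\rn:\ f(x)>\alpha\}$, so $\mathbf{1}_{\{f_m>\alpha\}}\uparrow\mathbf{1}_{\{f>\alpha\}}$ almost everywhere. Applying Definition \ref{Debqfs}(iii) for $X$ itself gives $\|\mathbf{1}_{\{f_m>\alpha\}}\|_X\uparrow\|\mathbf{1}_{\{f>\alpha\}}\|_X$. By Remark \ref{Rews}(ii) the sequence $\{\|f_m\|_{WX}\}_{m\in\nn}$ is increasing and bounded above by $\|f\|_{WX}$, so $\lim_{m\to\infty}\|f_m\|_{WX}\le\|f\|_{WX}$. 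The reverse inequality follows from Lemma \ref{LeFatou} applied to $f_m\to f$ a.e., which gives $\|f\|_{WX}\le\liminf_{m\to\infty}\|f_m\|_{WX}$. Combining the two yields $\|f_m\|_{WX}\uparrow\|f\|_{WX}$.

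For completeness, which is the main obstacle since $WX$ is only quasi-normed, I would use the standard extraction trick based on the Aoki--Rolewicz estimate in Remark \ref{Re213}. Let $\{f_m\}_{m\in\nn}\subset WX$ be Cauchy. Pass to a subsequence (still denoted $\{f_m\}$) satisfying $\|f_{m+1}-f_m\|_{WX}\le 2^{-m/\nu}$ for the exponent $\nu\in(0,1)$ of Remark \ref{Re213}. Set $g_N:=\sum_{m=1}^N|f_{m+1}-f_m|$; then $g_N\uparrow g:=\sum_{m=1}^\infty|f_{m+1}-f_m|$ almost everywhere, and by Remark \ref{Re213},
\begin{equation*}
\|g_N\|_{WX}^\nu\le 4\sum_{m=1}^N\|f_{m+1}-f_m\|_{WX}^\nu\le 4\sum_{m=1}^\infty 2^{-m}<\infty.
\end{equation*}
Hence by the already-proved property (iii) applied to $\{g_N\}$, $g\in WX$; in particular $g<\infty$ a.e., so the telescoping series $f_1+\sum_{m=1}^\infty(f_{m+1}-f_m)$ converges absolutely a.e. to some measurable $f$, and $f_m\to f$ a.e. Lemma \ref{LeFatou} then gives $f-f_m\in WX$ with $\|f-f_m\|_{WX}\le\liminf_{k\to\infty}\|f_k-f_m\|_{WX}\to 0$ as $m\to\infty$, so $f_m\to f$ in $WX$. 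The original Cauchy sequence also converges because it has a convergent subsequence and the quasi-norm topology is metrizable. This establishes completeness, and combined with Lemma \ref{Leqn}, shows that $WX$ is a quasi-Banach space satisfying all parts of Definition \ref{Debqfs}, completing the proof.
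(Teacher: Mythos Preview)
Your proof is correct and follows precisely the route the paper indicates: the paper omits all details and merely says that the result follows from the definition of $WX$, Remark \ref{Re213} (Aoki--Rolewicz), Lemma \ref{Leqn}, and Lemma \ref{LeFatou}, which are exactly the ingredients you use to verify Definition \ref{Debqfs}(i)--(iv) and completeness. You have simply supplied the omitted details, with the completeness argument via the Aoki--Rolewicz subadditive exponent being the standard one.
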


\begin{remark}\label{Rews-1} Let $X$ be a ball quasi-Banach function space.
By Lemma \ref{bal}, we know that $WX$ is also a ball quasi-Banach function space.
For any given $s\in(0,\infty)$, it is easy to show that $X^s$ is also
a ball quasi-Banach function space. Thus, $(WX)^s$ and $W(X^s)$ make sense and coincide
with equal quasi-norms. Indeed, for any $f\in (WX)^s$, by Definitions \ref{Debf}(i) and \ref{2.8},
we have
$$
\|f\|_{(WX)^s}^s=\||f|^s\|_{WX}=\|f\|_{W(X^s)}^s.
$$
\end{remark}

\subsection{Assumptions on the Hardy--Littlewood maximal operator}

Denote by the \emph{symbol $L_{\loc}^1(\rn)$} the set of all locally integrable functions on $\rn$.
The \emph{Hardy--Littlewood maximal operator} $\cm$
is defined by setting, for any $f\in L_{\loc}^1(\rn)$ and $x\in\rn$,
\begin{equation}\label{mm}
\cm(f)(x):=\sup_{B\ni x}\frac1{|B|}\int_B|f(y)|\,dy,
\end{equation}
where the supremum is taken over all balls $B\in\BB$ containing $x$.

For any $\theta\in(0,\infty)$, the \emph{powered Hardy--Littlewood
maximal operator} $\cm^{(\theta)}$ is defined by setting,
for any $f\in L_{\loc}^1(\rn)$ and $x\in\rn$,
\begin{equation}\label{mmx}
\cm^{(\theta)}(f)(x):=\lf\{\cm\lf(|f|^\theta\r)(x)\r\}^{1/\theta}.
\end{equation}

To establish atomic characterizations of weak Hardy spaces associated with ball quasi-Banach
function spaces $X$,
the approach used in this article heavily depends on the following assumptions
on the boundedness of the Hardy--Littlewood maximal function on $X^{1/p}$,
which is stronger than \cite[(2.8)]{SHYY}.
\begin{assumption}\label{a2.15}
Let $X$ be a ball quasi-Banach function space and there exists a $p_-\in(0,\infty)$ such that,
for any given $p\in(0,p_-)$ and $s\in(1,\infty)$, there exists a positive constant $C$ such that,
for any $\{f_j\}_{j=1}^\infty\subset\mathscr M(\rn)$,
\begin{equation}\label{m215}
\lf\|\lf\{\sum_{j\in\nn}\lf[\cm(f_j)\r]^s\r\}^{1/s}\r\|_{X^{1/p}}
\le C\lf\|\lf\{\sum_{j\in\nn}|f_j|^s\r\}^{1/s}\r\|_{X^{1/p}}.
\end{equation}
\end{assumption}

\begin{remark}\label{Refs}
\begin{itemize}

\item[(i)] Let $X$ and $p_-$ be the same as in Assumption \ref{a2.15}. Let
\begin{equation}\label{Eqpll}
\underline{p}:=\min\{p_-,\ 1\}.
\end{equation}
Then, for any given $r\in(0,\underline{p})$ and for any sequence $\{B_j\}_{j\in\nn}\subset\BB$
and $\beta\in[1,\infty)$, by Definition \ref{Debqfs}(ii),
the fact that $\mathbf{1}_{\beta B_j}\le[\beta^n\cm(\mathbf{1}_{B_j})]^{1/r}$
almost everywhere on $\rn$ for any $j\in\nn$, Definition \ref{Debf}(i) and
Assumption \ref{a2.15}, we have
\begin{align}\label{EqHLMS}
\lf\|\sum_{j\in\nn}\mathbf{1}_{\beta B_j}\r\|_{X}&\le\lf\|\sum_{j\in\nn}
\lf[\beta^n\cm(\mathbf{1}_{B_j})\r]^\frac1r\r\|_{X}
=\beta^\frac{n}{r}\lf\|\lf\{\sum_{j\in\nn}\lf[\cm(\mathbf{1}_{B_j})\r]^\frac1r\r\}^r\r\|_{X^{1/r}}^{1/r}\\\noz
&\le C\beta^\frac{n}{r}\lf\|\lf[\sum_{j\in\nn}\mathbf{1}_{B_j}\r]^r\r\|_{X^{1/r}}^{1/r}
=C\beta^\frac{n}{r}\lf\|\sum_{j\in\nn}\mathbf{1}_{B_j}\r\|_{X},
\end{align}
where the positive constant $C$ is independent of $\{B_j\}_{j\in\nn}$ and $\beta$.
\item[(ii)] In Assumption \ref{a2.15}, let
$X:=L^{\widetilde{p}}(\rn)$ with any given $\widetilde{p}\in(0,\infty)$. In this case,
$p_-=\widetilde{p}$ and the inequality \eqref{m215} becomes the well-known
Fefferman--Stein vector-valued maximal inequality,
which was originally established by Fefferman and Stein
in \cite[Theorem 1(a)]{FS}.
\end{itemize}
\end{remark}

\begin{assumption}\label{xinm}
Let $X$ be a ball quasi-Banach function space.
Assume that there exists $r\in(0,\infty)$
such that $\cm$ in \eqref{mm} is bounded on $(WX)^{1/r}$.
\end{assumption}

\subsection{Weak Hardy type spaces }

In what follows, we denote by $\cs(\rn)$ the \emph{space of all Schwartz functions},
equipped with the well-known topology determined by a countable family of
seminorms, and by $\cs'(\rn)$ its \emph{topological dual space},
equipped with the weak-$\ast$ topology. For any $N\in\nn$, let
\begin{equation}\label{EqcfN}
\cf_N(\rn):=\lf\{\varphi\in\cs(\rn):\sum_{\beta\in\zz_+^n,|\beta|\le N}
\sup_{x\in\rn}\lf[\lf(1+|x|\r)^{N+n}\lf|\partial_x^\beta\varphi(x)\r|\r]\le1\r\},
\end{equation}
here and hereafter, for any $\beta:=(\beta_1,\ldots,\beta_n)\in\zz_+^n$
and $x\in\rn$,
$|\beta|:=\beta_1+\cdots+\beta_n$ and
$\partial_x^\beta:=(\frac{\partial}{\partial x_1})^{\beta_1}
\cdots(\frac{\partial}{\partial x_n})^{\beta_n}$.
For any given $f\in\cs'(\rn)$, the \emph{radial grand maximal function} $M_N^0(f)$
of $f$ is defined by setting, for any $x\in\rn$,
\begin{equation}\label{EqMN0}
M_N^0(f)(x):=\sup\lf\{|f\ast\varphi_t(x)|:\ t\in(0,\infty)\ \text{and}\ \varphi\in\cf_N(\rn)\r\},
\end{equation}
where, for any $t\in(0,\infty)$ and $\xi\in\mathbb R^n,\varphi_t(\xi):=t^{-n}\varphi(\xi/t)$.

\begin{definition}\label{DewSH}
Let $X$ be a ball quasi-Banach function space.
Then the \emph{weak Hardy-type space}
$WH_X(\rn)$ associated with $X$ is defined by setting
$$
WH_X(\rn):=\lf\{f\in\cs'(\rn):\ \|f\|_{WH_X(\rn)}:=\lf\|M_N^0(f)\r\|_{WX}<\infty\r\},
$$
where $M_N^0(f)$ is as in \eqref{EqMN0} with $N\in\nn$ sufficiently large.
\end{definition}

\begin{remark}
\begin{itemize}
\item[(i)] When $X:=L^p(\rn)$ with $p\in(0,1]$, the Hardy-type space $WH_X(\rn)$
coincides with the classical weak Hardy space $WH^p(\rn)$ (see, for instance, \cite[p.\,114]{L}).
\item[(ii)] By Theorem \ref{Thmc}(ii) below, we know that, if
the Hardy-Littlewood maximal operator $\cm$ in \eqref{mm} is bounded on $(WX)^{1/r}$ and
$N\in[\lfloor\frac nr\rfloor+1,\infty)\cap\nn$,
then $WH_X(\rn)$ in Definition \ref{DewSH} is independent of
the choice of $N$.
\end{itemize}
\end{remark}

\section{Maximal function characterizations and
relations between $WX$ and $WH_X(\rn)$\label{s3}}

The aim of this section is to characterize $WH_X(\rn)$ via radial or non-tangential maximal functions.
We begin with the following notions of the radial functions and the non-tangential maximal functions.
\begin{definition}\label{Dem}
Let $\psi\in\cs(\rn)$, $a,\ b\in(0,\infty)$, $N\in\nn$ and $f\in\cs'(\rn)$.
\begin{itemize}
\item[(i)] The \emph{radial maximal function $M(f,\psi)$} of $f$ associated to $\psi$
is defined by setting, for any $x\in\rn$,
$$
M(f,\psi)(x):=\sup_{t\in(0,\infty)}|f\ast\psi_t(x)|.
$$
\item[(ii)] The \emph{non-tangential maximal function $M_{a}^\ast(f,\psi)$} of $f$ associated to
$\psi$ is defined by setting, for any $x\in\rn$,
$$
M_{a}^\ast(f,\psi)(x):=\sup_{t\in(0,\infty),|y-x|<at}|f\ast\psi_t(y)|.
$$
\item[(iii)] The \emph{maximal function of Peetre type, $M_{b}^{\ast\ast}(f,\psi)$,}
is defined by setting, for any $x\in\rn$,
$$
M_{b}^{\ast\ast}(f,\psi)(x):=\sup_{(y,t)\in\rr_+^{n+1}}\frac{|(\psi_t\ast f)(x-y)|}{(1+t^{-1}|y|)^b}.
$$
\item[(iv)] The \emph{non-tangential grand maximal function $M_{N,a}(f)$} of
$f$ is defined by setting, for any $x\in\rn,$
$$
M_{N,a}(f)(x):=\sup_{\varphi\in\cf_N(\rn)}\sup_{t\in(0,\infty),|y-x|<at}|f\ast\varphi_t(y)|.
$$
\item[(v)] The \emph{grand maximal function of Peetre type, $M_{N,b}^{\ast\ast}(f)$,}
is defined by setting, for any $x\in\rn$,
$$
M_{N,b}^{\ast\ast}(f)(x):=\sup_{\varphi\in\cf_N(\rn)}
\lf\{\sup_{(y,t)\in\rr_+^{n+1}}\frac{|\varphi_t\ast f(x-y)|}{(1+t^{-1}|y|)^b}\r\},
$$
where $\cf_N(\rn)$ is as in \eqref{EqcfN}. When $a=1$, we simply denote $M_{N,a}(f)$ by $M_N(f)$.
\end{itemize}
\end{definition}

The following theorem is the main result of this section,
which presents the maximal function characterizations of the space $WH_X(\rn)$.

\begin{theorem}\label{Thmc}
Let $a,\ b\in(0,\infty)$ and $X$ be a ball quasi-Banach function space.
Let $\psi\in\cs(\rn)$ satisfy $\int_\rn\psi(x)\ dx\neq0$.
\begin{itemize}
\item[{\rm(i)}] Let $N\geq\lfloor b+1\rfloor$ be an integer. Then, for any $f\in\cs'(\rn)$,
\begin{equation}\label{Eqmc1}
\|M(f,\psi)\|_{WX}\lesssim\|M_a^\ast(f,\psi)\|_{WX}\lesssim\|M_b^{\ast\ast}(f,\psi)\|_{WX},
\end{equation}
\begin{equation}\label{Eqmc2}
\|M(f,\psi)\|_{WX}\lesssim\|M_N(f)\|_{WX}\lesssim\|M_{\lfloor b+1\rfloor}(f)\|_{WX}\lesssim\|M_b^{\ast\ast}(f,\psi)\|_{WX},
\end{equation}
\begin{equation}\label{Eqmc3}
\|M_b^{\ast\ast}(f,\psi)\|_{WX}\sim\|M_{b,N}^{\ast\ast}(f)\|_{WX}
\end{equation}
and
\begin{equation}\label{Eqmc30}
\|M_N^0(f)\|_{WX}\sim\|M_{N}(f)\|_{WX},
\end{equation}
where the implicit positive constants are independent of $f$.
\item[{\rm(ii)}] Let $r\in(0,\infty)$. Assume that $b\in(n/r,\infty)$ and
the Hardy-Littlewood maximal operator $\cm$ in \eqref{mm} is bounded on $(WX)^{1/r}$.
Then, for any $f\in\cs'(\rn)$,
\begin{equation}\label{Eqmc4}
\|M_b^{\ast\ast}(f,\psi)\|_{WX}\lesssim\|M(f,\psi)\|_{WX},
\end{equation}
where the implicit positive constant is independent of $f$. In particular, when $N\geq \lfloor b+1\rfloor$,
if one of the quantities
$$
\|M_N^0(f)\|_{WX},\quad\|M(f,\psi)\|_{WX},\quad \|M_a^\ast(f,\psi)\|_{WX},\quad \|M_N(f)\|_{WX},
$$
$$
\|M_b^{\ast\ast}(f,\psi)\|_{WX}\quad\text{and}\quad\|M_{b,N}^{\ast\ast}(f)\|_{WX}
$$
is finite, then the others are also finite and mutually equivalent with the positive
equivalence constants
independent of $f$.
\end{itemize}
\end{theorem}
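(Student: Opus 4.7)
The strategy for part (i) is to reduce each $WX$-norm inequality to a pointwise comparison of the corresponding maximal functions, after which the monotonicity of $\|\cdot\|_{WX}$ from Remark \ref{Rews}(ii) applies. The chain \eqref{Eqmc1} is immediate: setting $y=x$ in the definition of $M_a^\ast(f,\psi)$ yields $M(f,\psi)\le M_a^\ast(f,\psi)$ pointwise, and on the cone $|y-x|<at$ one has $(1+t^{-1}|y-x|)^{-b}\ge(1+a)^{-b}$, giving $M_a^\ast(f,\psi)(x)\le (1+a)^b M_b^{\ast\ast}(f,\psi)(x)$. For the first two bounds of \eqref{Eqmc2}, since $\int_\rn\psi\neq 0$, a suitable positive multiple $c\psi$ lies in $\cf_N(\rn)$, yielding $M(f,\psi)\lesssim M_N(f)$; and the inclusion $\cf_N(\rn)\subset\cf_{\lfloor b+1\rfloor}(\rn)$ for $N\ge\lfloor b+1\rfloor$, direct from \eqref{EqcfN}, gives $M_N(f)\le M_{\lfloor b+1\rfloor}(f)$. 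The equivalence \eqref{Eqmc30} between the radial and non-tangential grand maximal functions follows from a translation-and-rescaling trick: for $\varphi\in\cf_N(\rn)$ and $|y-x|<t$, the shifted function $z\mapsto\varphi(z-(x-y)/t)$ still lies in $C\cdot\cf_N(\rn)$ with $C\lesssim(1+t^{-1}|x-y|)^{N+n}\le 2^{N+n}$, reducing $|f\ast\varphi_t(y)|$ to a constant multiple of $M_N^0(f)(x)$.

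The substantive step of (i) is the pointwise bound $M_{\lfloor b+1\rfloor}(f)(x)\lesssim M_b^{\ast\ast}(f,\psi)(x)$, which completes the last inequality of \eqref{Eqmc2}, together with its Peetre analogue $M_{N,b}^{\ast\ast}(f)(x)\lesssim M_b^{\ast\ast}(f,\psi)(x)$, which gives the nontrivial half of \eqref{Eqmc3} (the other half being obvious by specializing $\varphi=c\psi\in\cf_N(\rn)$). I would prove both via a Calder\'on reproducing formula: since $\widehat{\psi}(0)=\int_\rn\psi\neq 0$, a standard construction (as in the proof of \cite[Theorem 3.1]{SHYY}) produces $\eta\in\cs(\rn)$ such that any $\varphi\in\cf_{\lfloor b+1\rfloor}(\rn)$ admits a representation of the form $\varphi=\int_0^1\eta^{(s)}\ast\psi_s\,\frac{ds}{s}+\psi\ast\Phi$, with $\Phi$ and $\eta^{(s)}$ in $\cs(\rn)$ having Schwartz-decay estimates uniform in $s\in(0,1]$. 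Convolving with $f$ and bounding the factor $|(\psi_{st}\ast f)(\cdot)|$ by $M_b^{\ast\ast}(f,\psi)(x)\cdot(1+(st)^{-1}|\cdot-x|)^b$ under the integrals, the integrals converge thanks to the extra unit of smoothness $\lfloor b+1\rfloor>b$ built into the test-function normalization. Since this argument is entirely pointwise, it is independent of the underlying function space and transfers verbatim from the $H_X$ framework of \cite{SHYY} to our $WH_X$ setting.

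For part (ii), the crux is the classical pointwise Peetre-type inequality: whenever $br>n$, for any $x\in\rn$,
$$
M_b^{\ast\ast}(f,\psi)(x)\lesssim \lf\{\cm\lf(|M(f,\psi)|^r\r)(x)\r\}^{1/r},
$$
with implicit constant independent of $f$. I would establish this in two steps: first, using a smoothing lemma on $\psi$ combined with a dyadic-annulus decomposition around $x-y$, derive the bound $|(\psi_t\ast f)(x-y)|^r\lesssim (1+t^{-1}|y|)^n\cm(|M(f,\psi)|^r)(x)$ uniformly in $(y,t)\in\urn$; second, divide by $(1+t^{-1}|y|)^{br}$ and take $1/r$-th roots, the condition $br>n$ absorbing the polynomial loss. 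Given this pointwise estimate, \eqref{Eqmc4} follows by applying $\|\cdot\|_{WX}$ and invoking the identity $(WX)^{1/r}=W(X^{1/r})$ of Remark \ref{Rews-1} together with Assumption \ref{xinm}:
$$
\|M_b^{\ast\ast}(f,\psi)\|_{WX}\lesssim\lf\|\lf[\cm\lf(|M(f,\psi)|^r\r)\r]^{1/r}\r\|_{WX}=\lf\|\cm\lf(|M(f,\psi)|^r\r)\r\|_{W(X^{1/r})}^{1/r}\lesssim\|M(f,\psi)\|_{WX}.
$$
The ``in particular'' assertion then follows at once by combining \eqref{Eqmc1}--\eqref{Eqmc30} with \eqref{Eqmc4}: the six quantities are pairwise bounded, so finiteness and equivalence become equivalent. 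The main technical obstacle is the pointwise Peetre inequality above, whose proof, though standard, requires $br>n$ to make the geometric series $\sum_k 2^{-k(br-n)}$ summable; everything else reduces to monotonicity of $\|\cdot\|_{WX}$ and the quasi-Banach pointwise arguments already carried out in \cite{SHYY}.
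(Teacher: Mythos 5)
Your proposal is correct and follows essentially the same route as the paper: every inequality in part (i) and the key bound of part (ii) is reduced to a pointwise estimate between maximal functions, after which the lattice property of $\|\cdot\|_{WX}$ (Remark \ref{Rews}(ii)) and, for \eqref{Eqmc4}, the boundedness of $\cm$ on $(WX)^{1/r}$ finish the proof. The only difference is cosmetic: where the paper simply cites the pointwise estimates from \cite{G20142}, \cite{SHYY} and \cite{YYYZ}, you re-sketch their derivation (Calder\'on reproducing formula for the lower bound of $M_b^{\ast\ast}$ against the grand maximal function, translation/rescaling for \eqref{Eqmc30}, smoothing plus dyadic annuli for the Peetre inequality); this is the standard origin of those cited results, so the content is identical.
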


\begin{proof}
The proof of this theorem is similar to that of \cite[Theorem 3.1]{SHYY}.
For the convenience of the reader, we present some details.

Let $f\in\cs'(\rn)$. We first prove (i).
From (i), (ii) and (iii) of Definition \ref{Dem}, it follows that, for any $x\in\rn$,
$$
M(f,\psi)(x)\le M_a^\ast(f,\psi)(x)\lesssim M_b^{\ast\ast}(f,\psi)(x),
$$
which, together with Remark \ref{Rews}(ii), implies \eqref{Eqmc1}.

Moreover, by (i) and (iv) of Definition \ref{Dem} again, we have, for any $x\in\rn$,
\begin{equation}\label{Eqmc5}
M(f,\psi)(x)\lesssim M_N(f)(x)\lesssim M_{\lfloor b+1\rfloor}(f)(x).
\end{equation}
In addition, from the proof of \cite[Theorem 2.1.4(d)]{G20142},
we deduce that, for any $x\in\rn$,
$$M_{\lfloor b+1\rfloor}(f)(x)\lesssim M_b^{\ast\ast}(f,\psi)(x),$$
which, together with \eqref{Eqmc5} and Remark \ref{Rews}(ii), implies \eqref{Eqmc2}.

It is easy to see that, for any $x\in\rn$, $M_b^{\ast\ast}(f,\psi)(x)\lesssim M_{b,N}^{\ast\ast}(f)(x)$,
which, combined with \cite[Lemma 2.13]{SHYY}, implies \eqref{Eqmc3}. By \cite[Remark 3.6(i)]{YYYZ},
we know that there exists a positive constant $C$ such that, for any $x\in\rn$, $C^{-1}M_N(f)(x)\leq M_N^0(f)(x)\leq CM_N(f)(x)$,
which, together with Remark \ref{Rews}(ii),
implies that \eqref{Eqmc30} holds true. This finishes the proof of (i).

Now we prove (ii). It was proved in \cite[P.\,35]{SHYY} that, if $r\in(0,\infty)$ and $br>n$, then, for any $x\in\rn$,
$$
M_b^{\ast\ast}(f,\psi)(x)\lesssim\cm^{(r)}\lf[\sup_{t\in(0,\infty)}|\psi_t\ast f|\r](x)\sim\cm^{(r)}(M(f,\psi))(x),
$$
which, combined with Remark \ref{Rews}(ii) and
the assumption that $\cm$ is bounded on $WX^{1/r}$, further implies that
$$
\lf\|M_b^{\ast\ast}(f,\psi)\r\|_{WX}\lesssim
\lf\|\cm^{(r)}(M(f,\psi))\r\|_{WX}\lesssim\lf\|M(f,\psi)\r\|_{WX}.
$$
Thus, \eqref{Eqmc4} holds true. This finishes the proof Theorem \ref{Thmc}.
\end{proof}

For any $t\in(0,\infty)$, the \emph{Poisson kernel} $P_t$ is defined by setting, for any $x\in\rn$,
$$
P_t(x):=\frac{\Gamma([n+1]/2)}{\pi^{(n+1)/2}}\frac{t}{(t^2+|x|^2)^{(n+1)/2}},
$$
where $\Gamma$ denotes the \emph{Gamma function}.

Recall that $f\in\cs'(\rn)$ is called a \emph{bounded distribution} if,
for any $\varphi\in\cs(\rn),$ $f\ast\varphi\in L^\infty(\rn)$.
For any given bounded distribution $f$, its \emph{non-tangential maximal function $\cn(f)$},
with respect to Poisson kernels $\{P_t\}_{t\in(0,\infty)},$ is defined by setting,
for any $x\in\rn,$
$$
\cn(f)(x):=\sup_{t\in(0,\infty),|y-x|<t}|f\ast P_t(y)|.
$$

\begin{theorem}\label{Thmp}
Let $X$ be a ball quasi-Banach function space satisfying Assumption \ref{xinm}.
Assume that there exists a positive constant $C_0$ such that
\begin{equation}\label{Eqinf}
\inf_{x\in\rn}\|\mathbf{1}_{B(x,1)}\|_{WX}\geq C_0.
\end{equation}
Then $f\in WH_X(\rn)$ if and only if $f$ is a bounded distribution and $\cn(f)\in WX$.
Moreover, for any $f\in WH_X(\rn)$, $\|f\|_{WH_X(\rn)}\sim\|\cn(f)\|_{WX}$ with the
positive equivalence constants independent of $f$.
\end{theorem}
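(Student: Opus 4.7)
The plan is to mirror the proof of \cite[Theorem 3.1(d)]{SHYY} (the analogous Poisson characterization for the strong Hardy space $H_X(\rn)$), replacing $\|\cdot\|_X$ by $\|\cdot\|_{WX}$ throughout and using Theorem \ref{Thmc} together with Assumption \ref{xinm} to chain between the various Schwartz-based maximal functions. The low-bound hypothesis \eqref{Eqinf} is used at exactly one place, namely to promote a $WH_X(\rn)$-function to a bounded distribution.

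For the necessity, suppose $f\in WH_X(\rn)$. To show that $f$ is a bounded distribution, fix $\varphi\in\cf_N(\rn)$ and $x_0\in\rn$; taking $t=1$, $a=1$ in Definition \ref{Dem}(iv) yields $|f\ast\varphi(x_0)|\le M_N(f)(x)$ for every $x\in B(x_0,1)$. Theorem \ref{Thmc} and Assumption \ref{xinm} give $\|M_N(f)\|_{WX}\lesssim\|f\|_{WH_X(\rn)}=:A$; pick $\alpha_0>A/C_0$ and set $E:=\{M_N(f)>\alpha_0\}$, so that $\|\mathbf{1}_E\|_X\le A/\alpha_0<C_0$. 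If $B(x_0,1)$ were contained in $E$, then Definition \ref{Debqfs}(ii) together with the identity $\|\mathbf{1}_{B(x_0,1)}\|_{WX}=\|\mathbf{1}_{B(x_0,1)}\|_X$ (immediate from Definition \ref{2.8} applied to an indicator, since $\{\mathbf{1}_{B(x_0,1)}>\alpha\}=B(x_0,1)$ for $\alpha\in(0,1)$ and is empty otherwise) would force $\|\mathbf{1}_{B(x_0,1)}\|_{WX}<C_0$, contradicting \eqref{Eqinf}; hence some $x\in B(x_0,1)$ satisfies $M_N(f)(x)\le\alpha_0$ and therefore $|f\ast\varphi(x_0)|\le\alpha_0$, uniformly in $x_0$. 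A rescaling argument extends this to an arbitrary $\varphi\in\cs(\rn)$, so $f$ is a bounded distribution. Now fix $\psi\in\cs(\rn)$ with $\int_\rn\psi\neq 0$ and $b>n/r$, where $r$ is as in Assumption \ref{xinm}. The classical pointwise Poisson--Schwartz domination $\cn(f)(x)\lesssim M_b^{\ast\ast}(f,\psi)(x)$ (valid because $f$ is now a bounded distribution), combined with Theorem \ref{Thmc}(ii), chains to $\|\cn(f)\|_{WX}\lesssim\|M_b^{\ast\ast}(f,\psi)\|_{WX}\lesssim\|M(f,\psi)\|_{WX}\lesssim\|M_N^0(f)\|_{WX}=\|f\|_{WH_X(\rn)}$.

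For the sufficiency, assume $f$ is a bounded distribution and $\cn(f)\in WX$. The heart of the matter is the classical Fefferman--Stein pointwise inequality $M_N^0(f)(x)\lesssim\cm^{(r)}(\cn(f))(x)$ for every $x\in\rn$, valid for bounded distributions when $N$ is taken large enough and $r$ is as in Assumption \ref{xinm}; this follows from a Calder\'on-type reproducing identity expressing $f\ast\varphi_t$ as an average of the harmonic extension $(x,s)\mapsto(f\ast P_s)(x)$, together with routine Poisson tail estimates. By Remark \ref{Rews-1}, Assumption \ref{xinm} implies that $\cm^{(r)}$ is bounded on $WX$; taking $\|\cdot\|_{WX}$ on both sides of the pointwise bound therefore produces $\|f\|_{WH_X(\rn)}=\|M_N^0(f)\|_{WX}\lesssim\|\cn(f)\|_{WX}$. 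The main obstacle I anticipate is precisely the Poisson--Schwartz comparison invoked in both directions, since the Poisson kernel is not itself a Schwartz function; this, however, is a classical, purely pointwise issue---exactly what underlies \cite[Theorem 3.1(d)]{SHYY} in the strong version---and once it is in place, the weak-ball-quasi-Banach structure is absorbed cleanly by Theorem \ref{Thmc}(ii) and Assumption \ref{xinm}.
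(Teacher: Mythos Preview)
Your proposal is correct and follows essentially the same approach as the paper. Two minor packaging differences are worth noting: for the bounded-distribution step, the paper uses a direct inequality chain $|f\ast\varphi(x)|\le\inf_{y\in B(x,1)}M_N(f)(y)\le\|\mathbf{1}_{B(x,1)}M_N(f)\|_{WX}/\|\mathbf{1}_{B(x,1)}\|_{WX}$ rather than your level-set contradiction, but both exploit \eqref{Eqinf} in the same way; for the sufficiency, the paper invokes the cleaner classical pointwise bound $M(f,\psi_0)\lesssim\cn(f)$ for a specific $\psi_0\in\cs(\rn)$ (from \cite[p.\,99]{S}) and then applies Theorem \ref{Thmc}(ii), whereas your route through $M_N^0(f)\lesssim\cm^{(r)}(\cn(f))$ is really that same inequality composed with the Peetre-type estimate already inside the proof of Theorem \ref{Thmc}(ii). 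Either packaging works.
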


\begin{proof} Assume that $f\in WH_X(\rn)$. By Assumption \ref{xinm} and Theorem \ref{Thmc}(ii),
we know that, for any given $N\in[\lfloor\frac nr\rfloor+1,\infty)\cap\nn$,
$$\|M_N(f)\|_{WX(\rn)}\sim\|f\|_{WH_X(\rn)}.$$
It is easy to see that,
for any fixed $\varphi\in\cs(\rn)$, there exists a positive constant
$C_{(\varphi)}$ such that $C_{(\varphi)}\varphi\in\cf_N(\rn)$
with $\cf_N(\rn)$ as in \eqref{EqcfN}. Therefore, for any $x\in\rn$,
$M_1^*(f,C_{(\varphi)}\varphi)(x)\lesssim M_N(f)(x)$,
which, together with Definition \ref{Debqfs}(ii), Remark \ref{Rews}(ii), \eqref{Eqinf}
and Theorem \ref{Thmc}(ii), further implies that, for any $x\in\rn$,
\begin{align}\label{Eqdphi}
C_{(\varphi)}|(\varphi\ast f)(x)|&\le\inf_{|y-x|<1}\ M_1^*(f,C_{(\varphi)}\varphi)(y)=
\frac{\|\mathbf{1}_{B(x,1)}\inf_{|y-x|<1}M_1^*(f,C_{(\varphi)}\varphi)(y)\|_{WX}}
{\|\mathbf{1}_{B(x,1)}\|_{WX}}\\\noz
&\le\frac{\|\mathbf{1}_{B(x,1)}M_1^*(f,C_{(\varphi)}\varphi)\|_{WX}}{\|\mathbf{1}_{B(x,1)}\|_{WX}}
\lesssim\frac{\|M_N(f)\|_{WX}}{C_{0}}<\infty.
\end{align}
This means that $f$ is a bounded distribution. Next, we show that $\cn(f)\in WX$.
From the proof of \cite[p.\,72]{G20142},
we deduce that, for any $N\in\nn$ and $x\in\rn$, $\cn(f)(x)\le C_{(n,N)}M_N(f)(x)$,
which implies that $\cn(f)\in WX$ and
$\|\cn(f)\|_{WX}\lesssim\|M_N(f)\|_{WX}\sim\|f\|_{WH_X(\rn)}$.

Now, assume that
$f$ is a bounded distribution and $\cn(f)\in WX$. Then, by \cite[p.\,99]{S} or \cite[p.\,35]{SHYY},
we know that there exists $\psi_0\in\cs(\rn)$ with $\int_{\rn}\psi_0(x)\,dx=1$ such that,
for any $x\in\rn,$ $M(f,\psi_0)(x)\lesssim\cn(f)(x)$, which, combined with $\cn(f)\in WX$, Remark \ref{Rews}(ii),
Assumption \ref{xinm} and
Theorem \ref{Thmc}(ii), implies $f\in WH_X(\rn)$ and
$\|f\|_{WH_X(\rn)}\sim\|M(f,\psi_0)\|_{WX}\lesssim\|\cn(f)\|_{WX}$.
This finishes the proof of Theorem \ref{Thmp}.
\end{proof}

Now, we discuss the relation between the spaces $WX$ and $WH_X(\rn)$.

\begin{theorem}\label{dayu2}
Let $X$ be a ball quasi-Banach function space and $\cm$ in \eqref{mm}
bounded on $(WX)^{1/r}$ for some $r\in(1,\infty)$. Then
\begin{enumerate}
\item[{\rm(i)}] $WX\ \hookrightarrow\ \mathcal{S}'(\rn)$.

\item[{\rm(ii)}] If $f\in WX$, then $f\in WH_X(\rn)$ and there exists a positive constant
$C$, independent of $f$, such that $\|f\|_{WH_X(\rn)}\le C\|f\|_{WX}$.

\item[{\rm(iii)}] If $f\in WH_X(\rn)$, then there exists a locally
integrable function $g\in WX$ such that $g$ represents $f$, which means that $f=g$ in $\mathcal{S}'(\rn)$,
$\|f\|_{WH_X(\rn)}=\|g\|_{WH_X(\rn)}$ and there exists a positive constant
$C$, independent of $f$, such that $\|g\|_{WX}\le C\|f\|_{WH_X(\rn)}$.
\end{enumerate}
\end{theorem}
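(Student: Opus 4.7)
My plan is to exploit the hypothesis $\cm\colon(WX)^{1/r}\to(WX)^{1/r}$ with $r\in(1,\infty)$ in two ways: via pointwise ball-averaging and via the dominance $\cm(f)\le\cm^{(r)}(f)$ that follows from H\"older's inequality (since $r>1$). First, by Definition \ref{Debf}(i) with $p=1/r$, the identity $\|h\|_{(WX)^{1/r}}=\||h|^{1/r}\|_{WX}^r$ translates the hypothesis into the equivalent estimate $\|\cm^{(r)}(f)\|_{WX}\lesssim\|f\|_{WX}$ for every $f\in WX$ (apply the hypothesis to $h=|f|^r$). Moreover, since $\cm$ requires locally integrable input, we have $(WX)^{1/r}\subset L^1_{\loc}(\rn)$, so any $f\in WX$ satisfies $|f|^r\in(WX)^{1/r}\subset L^1_{\loc}(\rn)$, whence $f\in L^r_{\loc}(\rn)\subset L^1_{\loc}(\rn)$.

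For (i), given $f\in WX$ and a ball $B\in\BB$, the pointwise inequality $\mathbf{1}_B(x)\fint_B|f(y)|^r\,dy\le\cm(|f|^r)(x)$, together with Definition \ref{Debqfs}(ii) applied to the ball quasi-Banach function space $(WX)^{1/r}$ and the $\cm$-boundedness, gives
\[
\Big(\fint_B|f(y)|^r\,dy\Big)^{1/r}\lesssim\frac{\|f\|_{WX}}{\|\mathbf{1}_B\|_{(WX)^{1/r}}^{1/r}}.
\]
H\"older's inequality on $B$ with exponent $r$ then yields $\int_B|f(y)|\,dy\lesssim|B|\,\|f\|_{WX}/\|\mathbf{1}_B\|_{(WX)^{1/r}}^{1/r}$. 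Decomposing $\rn$ into the unit ball $B(\vec{0}_n,1)$ and the dyadic annuli $A_k:=B(\vec{0}_n,2^k)\setminus B(\vec{0}_n,2^{k-1})$ for $k\in\nn$, using the Schwartz decay $|\varphi(x)|\lesssim(1+|x|)^{-K}$ for any $\varphi\in\cs(\rn)$ and any $K\in\nn$, and invoking the monotonicity $\|\mathbf{1}_{B(\vec{0}_n,2^k)}\|_{(WX)^{1/r}}\ge\|\mathbf{1}_{B(\vec{0}_n,1)}\|_{(WX)^{1/r}}>0$ from Definition \ref{Debqfs}(ii), I bound $|\langle f,\varphi\rangle|$ by a geometric series $\lesssim\|f\|_{WX}\sum_{k\in\nn}2^{k(n-K)}$, which converges once $K>n$; this shows that $f$ defines a tempered distribution with continuity linear in $\|f\|_{WX}$.

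For (ii), the standard Schwartz-tail argument yields $|f\ast\varphi_t(x)|\lesssim\cm(f)(x)$ for every $\varphi\in\cf_N(\rn)$ and $t\in(0,\infty)$, so by \eqref{EqMN0} we have $M_N^0(f)(x)\lesssim\cm(f)(x)\le\cm^{(r)}(f)(x)$. Combining with the boundedness of $\cm^{(r)}$ on $WX$ established above and Remark \ref{Rews}(ii) gives $\|f\|_{WH_X(\rn)}=\|M_N^0(f)\|_{WX}\lesssim\|\cm^{(r)}(f)\|_{WX}\lesssim\|f\|_{WX}$.

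For (iii), given $f\in WH_X(\rn)$, fix $\psi\in\cs(\rn)$ with $\int_{\rn}\psi(x)\,dx=1$ and set $f_k:=f\ast\psi_{1/k}$ for $k\in\nn$. Each $f_k$ is smooth and, by Definition \ref{Dem}(i) together with \eqref{EqMN0}, $|f_k(x)|\lesssim M_N^0(f)(x)$ uniformly in $k$; moreover, $M_N^0(f)\in WX\subset L^1_{\loc}(\rn)$ by (i). Hence $\{f_k\}_{k\in\nn}$ is dominated pointwise by a locally integrable function and therefore is equi-integrable on every ball. By the Dunford--Pettis theorem together with a diagonal extraction over a countable exhaustion of $\rn$ by balls, there exists a subsequence $\{f_{k_j}\}_{j\in\nn}$ converging weakly in $L^1_{\loc}(\rn)$ to some $g\in L^1_{\loc}(\rn)$; since also $f_k\to f$ in $\cs'(\rn)$ as $k\to\infty$, uniqueness forces $g=f$ in $\cs'(\rn)$. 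Finally, since $g\in L^1_{\loc}(\rn)$, Lebesgue differentiation gives $g\ast\psi_t(x)\to g(x)$ as $t\to 0^+$ at every Lebesgue point of $g$, so $|g(x)|\le\sup_{t\in(0,\infty)}|g\ast\psi_t(x)|\lesssim M_N^0(g)(x)=M_N^0(f)(x)$ almost everywhere on $\rn$; Remark \ref{Rews}(ii) then yields $\|g\|_{WX}\lesssim\|M_N^0(f)\|_{WX}=\|f\|_{WH_X(\rn)}$. The main obstacle is this weak-$L^1_{\loc}$ compactness step combined with the indirect route to the pointwise bound $|g|\lesssim M_N^0(f)$, which I handle by applying Lebesgue differentiation to the limit $g$ rather than trying to prove pointwise convergence of $\{f_k\}$ itself.
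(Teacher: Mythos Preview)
Your proof is correct and takes a genuinely different route from the paper. The paper's argument is a two-line reduction: Lemma~\ref{bal} shows that $WX$ is itself a ball quasi-Banach function space, and since the hypothesis places $\cm$ bounded on $(WX)^{1/r}$ with $r>1$, one simply applies \cite[Theorem~3.4]{SHYY} verbatim with $X$ replaced by $WX$ and $H_X(\rn)$ by $WH_X(\rn)$. Your approach is instead fully self-contained: (i) via a ball-average $L^r$ estimate and a dyadic-annulus Schwartz-tail decomposition; (ii) via the pointwise chain $M_N^0(f)\lesssim\cm(f)\le\cm^{(r)}(f)$ together with the reformulation of the hypothesis as $WX$-boundedness of $\cm^{(r)}$; (iii) via Dunford--Pettis weak-$L^1_{\loc}$ compactness of the mollified sequence $\{f_k\}$ followed by Lebesgue differentiation on the limit to recover $|g|\lesssim M_N^0(f)$ almost everywhere. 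The paper's route is much shorter and makes transparent that the result is purely structural (it holds for any ball quasi-Banach function space $Y$ with $\cm$ bounded on $Y^{1/r}$ for some $r>1$, here $Y=WX$); your route avoids the external citation entirely and is more elementary, at the price of length. Two small refinements would tighten the write-up: in (iii) it is cleanest to take the mollifier $\psi\in C_c^\infty(\rn)$, so that the convolution $g\ast\psi_t$ is unambiguously defined as an ordinary integral for $g\in L^1_{\loc}(\rn)$ and automatically coincides with the distributional convolution $f\ast\psi_t$; and in your opening paragraph, the inclusion $(WX)^{1/r}\subset L^1_{\loc}(\rn)$ deserves a one-line truncation-plus-monotone-convergence justification rather than the informal phrase ``$\cm$ requires locally integrable input''.
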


\begin{proof}
Observe that
$$
\ell_{WX}:=\sup\{r\in(0,\infty):\ \mathcal{M}\
\textup{is}\ \textup{bounded}\ \textup{on}\ (WX)^{1/r}\}>1.
$$
Moreover, by Lemma \ref{bal}, we know that  the
space $WX$ is a ball quasi-Banach function space.
Thus, all assumptions of \cite[Theorem 3.4]{SHYY} with $X$ and $H_X(\rn)$
replaced, respectively,
by $WX$ and $WH_X(\rn)$ are satisfied, from which we deduce all the desired conclusions
of Theorem \ref{dayu2}.
This finishes the proof of Theorem \ref{dayu2}.
\end{proof}

\section{Atomic characterizations\label{s4}}

In this section, we establish the atomic characterization of $WH_X(\rn)$.
Now we introduce the notion of atoms associated with $X$, which origins from \cite[Definition 3.5]{SHYY}.
\begin{definition}\label{Deatom}
Let $X$ be a ball quasi-Banach function space, $q\in(1,\infty]$ and $d\in\zz_+$.
Then a measurable function $a$ on $\rn$ is called an $(X,\,q,\,d)$-\emph{atom}
if there exists a ball $B\in\BB$ such that
\begin{itemize}
\item[(i)] $\supp a:=\{x\in\rn:\ a(x)\neq0\}\subset B$;
\item[(ii)] $\|a\|_{L^q(\rn)}\le\frac{|B|^{1/q}}{\|\mathbf{1}_B\|_X}$;
\item[(iii)] $\int_{\rn}a(x)x^\alpha\,dx=0$ for any
$\alpha:=(\alpha_1,\ldots,\alpha_n)\in\zz_+^n$ with $|\alpha|\le d$,
here and hereafter, for any $x:=(x_1,\ldots,x_n)\in\rn$,
$x^\alpha:=x_1^{\alpha_1}\cdots x_n^{\alpha_n}$.
\end{itemize}
\end{definition}

Now we first formulate a decomposition theorem as follows.
\begin{theorem}\label{Thad}
Let $X$ be a ball quasi-Banach function space satisfying that, for some
given $r\in(0,1)$ and for any $\{f_j\}_{j\in\nn}\subset\mathscr M(\rn)$,
\begin{equation}\label{eq}
\lf\|\lf\{\sum_{j\in\nn}\lf[\cm(f_j)\r]^{1/r}\r\}^{r}\r\|_{X^{1/r}}
\le C\lf\|\lf\{\sum_{j\in\nn}|f_j|^{1/r}\r\}^{r}\r\|_{X^{1/r}},
\end{equation}
where the positive constant $C$ is independent of $\{f_j\}_{j\in\nn}$.
Assume that $X$ satisfy Assumption \ref{xinm} and there exist
$\vartheta_0\in(1,\infty)$ and $p\in(0,\infty)$ such that $X$ is $\vartheta_0$-concave and $\cm$ is bounded on
$X^{1/(\vartheta_0 p)}$.
Let $d\geq \lfloor n(1/p-1)\rfloor$ be a fixed nonnegative integer and $f\in WH_X(\rn)$.
Then there exists a sequence $\{a_{i,j}\}_{i\in\zz,j\in\nn}$ of
$(X,\,\infty,\,d)$-atoms supported, respectively, in balls $\{B_{i,j}\}_{i\in\zz,j\in\nn}$
satisfying that,
for any $i\in\zz$,
$\sum_{j\in\nn}\mathbf{1}_{cB_{i,j}}\le A$ with $c\in(0,1]$ and $A$ being
a positive constant independent of $f$ and $i$,
such that $f=\sum_{i\in\zz}\sum_{j\in\nn}\lambda_{i,j}a_{i,j}$ in $\cs'(\rn)$,
where $\lambda_{i,j}:=\widetilde A2^i\|\mathbf{1}_{B_{i,j}}\|_{X}$ for any $i\in\zz$ and $j\in\nn$,
with $\widetilde A$ being a positive constant independent of $i$, $j$ and $f$, and
$$
\sup_{i\in\zz}\lf\|\sum_{j\in\nn}
\frac{\lambda_{i,j}\mathbf{1}_{B_{i,j}}}{\|\mathbf{1}_{B_{i,j}}\|_{X}}\r\|_{X}\lesssim\|f\|_{WH_X(\rn)},
$$
where the implicit positive constant is independent of $f$.
\end{theorem}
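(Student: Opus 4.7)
The plan is to follow the Fefferman--Soria--Liu scheme for atomic decompositions of weak Hardy spaces, adapted to the ball quasi-Banach setting along the lines of \cite{LYJ,YYYZ}. First I would set up the level sets $\Omega_i := \{x \in \rn: M_N(f)(x) > 2^i\}$ for each $i \in \zz$; by Theorem \ref{Thmc}(ii) (available under Assumption \ref{xinm}) and \eqref{Eqmc30} we have $\|M_N(f)\|_{WX} \sim \|f\|_{WH_X(\rn)}$, so these level sets encode the norm. Applying the Whitney decomposition to each open $\Omega_i$ produces balls $\{B_{i,j}\}_{j\in\nn}$ with the claimed bounded-overlap property $\sum_j \mathbf{1}_{cB_{i,j}} \le A$, together with a smooth partition of unity $\{\eta_{i,j}\}_j$ subordinate to a slight dilation of these balls.

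Using the global Calder\'on reproducing formula (Lemma \ref{Le47}), I would represent $f = \sum_{k\in\zz} \varphi_k \ast \psi_k \ast f$ in $\cs'(\rn)$, and for each level $i$ construct a Calder\'on--Zygmund splitting $f = g_i + b_i$ with $b_i = \sum_j b_{i,j}$, where $b_{i,j} := (f - P_{i,j})\eta_{i,j}$ and $P_{i,j}$ is a polynomial of degree at most $d$ chosen so that $b_{i,j}$ has vanishing moments up to order $d$. The atoms come from the telescoping identity $f = \sum_i (g_{i+1} - g_i)$: after splitting each difference $g_{i+1} - g_i$ via the level-$i$ partition of unity, the localised pieces are renormalised to form $a_{i,j}$ with $\lambda_{i,j} = \widetilde A 2^i \|\mathbf{1}_{B_{i,j}}\|_X$. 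The support and vanishing-moment conditions are then built into the construction, and the $L^\infty$-bound $\|a_{i,j}\|_{L^\infty} \le 1/\|\mathbf{1}_{B_{i,j}}\|_X$ follows from the pointwise estimate $|g_i| \lesssim 2^i$, itself a consequence of $M_N(f) \le 2^i$ on $\Omega_i^\complement$ together with standard kernel estimates on the good parts. The norm inequality then reduces to
\begin{align*}
\sup_{i\in\zz} \lf\| \sum_{j\in\nn} \frac{\lambda_{i,j} \mathbf{1}_{B_{i,j}}}{\|\mathbf{1}_{B_{i,j}}\|_X} \r\|_X
&\sim \sup_{i\in\zz} 2^i \lf\| \sum_{j\in\nn} \mathbf{1}_{B_{i,j}} \r\|_X \\
&\lesssim \sup_{i\in\zz} 2^i \|\mathbf{1}_{\Omega_i}\|_X \le \|M_N(f)\|_{WX} \sim \|f\|_{WH_X(\rn)},
\end{align*}
where the first inequality comes from bounded overlap of $\{cB_{i,j}\}_j$ combined with Remark \ref{Refs}(i), whose hypothesis is supplied by \eqref{eq}.

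The principal obstacle, as emphasized in the introduction, is proving that $\sum_{i,j} \lambda_{i,j} a_{i,j}$ converges to $f$ in $\cs'(\rn)$, since no convenient dense subset of $WH_X(\rn)$ is available (in contrast to the situation for $H_X(\rn)$). To overcome this, I would adopt the duality strategy of \cite{C1977,LYJ,YYYZ}: test the partial sums $F_M := \sum_{|i|\le M}\sum_{j\le M} \lambda_{i,j} a_{i,j}$ against Schwartz functions through a generalized Campanato-type space serving as a predual for the relevant Hardy-space pieces (whose availability rests on the $\vartheta_0$-concavity of $X$ and the boundedness of $\cm$ on $X^{1/(\vartheta_0 p)}$, via the framework of \cite{SHYY}), derive uniform bounds there, and invoke the Alaoglu theorem to extract a weak-$\ast$ convergent subsequence. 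The limit is then identified with $f$ by reinserting the Calder\'on reproducing formula and using continuity of convolution against Schwartz functions, together with the vanishing of $g_i$ as $i\to-\infty$ (from the $L^\infty$-control) and of $b_i$ as $i\to+\infty$ (from $|\Omega_i|\to0$).
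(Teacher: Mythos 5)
Your proposal correctly locates the main tools — the Whitney decomposition of $\Omega_i$, the global Calder\'on reproducing formula (Lemma \ref{Le47}), the Campanato space $\cl_{q,X,d}(\rn)$ with the $\vartheta_0$-concavity hypothesis, and the Alaoglu theorem — but the construction of the bad pieces and the convergence argument do not cohere. You write $b_{i,j} := (f-P_{i,j})\eta_{i,j}$, which is the Stein-style Calder\'on--Zygmund splitting of the distribution $f$ itself; the Calder\'on reproducing formula you invoke in the same sentence then plays no actual role in the construction. The paper's construction is different: it defines the bad pieces as \emph{truncated tent integrals}
$$
b^\epsilon_{i,j}(x):=\int_\epsilon^{1/\epsilon}\int_\rn\mathbf{1}_{\widetilde Q_{i,j}}(y,t)\,F(y,t)\,\widetilde\psi_t(x-y)\,\frac{dy\,dt}{t},
$$
where $F(y,t)=f\ast\widetilde\phi_t(y)$ and $\widetilde Q_{i,j}\subset\rr^{n+1}_+$ are tent regions over the Whitney cubes which tile the upper half-space. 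The $L^\infty$-bound $\|b^\epsilon_{i,j}\|_{L^\infty(\rn)}\lesssim 2^i$, the support condition and the vanishing moments all drop out of the tent geometry and the maximal estimate for $F$, and the reproducing formula (together with Lemma \ref{Le64}) is what gives $f=\lim_{\epsilon\to0^+}\sum_{i,j}b^\epsilon_{i,j}$ in $\cs'(\rn)$. The truncation parameter $\epsilon$ is essential, and it is to the bounded family $\{b^\epsilon_{i,j}\}_{\epsilon\in(0,1)}$, for each fixed $(i,j)$, that Alaoglu is applied to produce the limiting $b_{i,j}$.

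Your use of Alaoglu is applied to a different object, the partial sums $F_M$, and there it is circular: extracting a weak-$\ast$ cluster point of $F_M$ only yields \emph{some} distribution $\tilde f$, and identifying $\tilde f$ with $f$ requires already knowing $F_M\to f$ in $\cs'(\rn)$, which is the claim being proved. Conversely, if the telescoping $f=\sum_i(g_{i+1}-g_i)$ could be verified directly from the decay of $g_i$ as $i\to-\infty$ and $b_i$ as $i\to+\infty$, then Alaoglu would be superfluous — but establishing that decay for an arbitrary $f\in WH_X(\rn)$, with no dense subclass available, is precisely the difficulty you yourself flagged. The paper's route avoids it: the Campanato/concavity estimate \eqref{411} gives a tail bound on $\sum_{|i|\ge N}\sum_{j}|\langle b^{\epsilon_k}_{i,j},\zeta\rangle|$ that is \emph{uniform in $k$}, and this is what permits the interchange of the limit $\epsilon_k\to 0$ with the double sum over $(i,j)$. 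As written, your proposal does not reach this step; and the Stein-style splitting you describe would in any case require separate lemmas (an $L^\infty$-bound for $g_i$, and a re-sorting of $g_{i+1}-g_i$ into pieces with localized support and restored vanishing moments) that you have not supplied.
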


Before showing Theorem \ref{Thad}, we recall some notions and establish some
necessary lemmas.
Recall that $f\in\cs'(\rn)$ is said to \emph{vanish weakly at infinity} if, for any $\phi\in\cs(\rn)$,
$f\ast\phi_t\rightarrow0$ in $\cs'(\rn)$ as $t\rightarrow\infty$ (see, for instance, \cite[p.\,50]{FoS}).
\begin{lemma}\label{Le64}
Let $X$ be a ball quasi-Banach function space. If $f\in WH_X(\rn)$,
then $f$ vanishes weakly at infinity.
\end{lemma}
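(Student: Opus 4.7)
The plan is to show that for any $\phi, \psi \in \cs(\rn)$, the pairing $\langle f\ast\phi_t, \psi\rangle$ tends to zero as $t \to \infty$. The first step is the pointwise bound
\[
|(f\ast\phi_t)(x)| \le C_\phi M_N^0(f)(x),\qquad x\in\rn,\ t\in(0,\infty),
\]
obtained by choosing $C_\phi>0$ so that $\phi/C_\phi \in \cf_N(\rn)$; this is an immediate consequence of \eqref{EqMN0}. The rest of the argument couples this bound with the weak $X$-membership of $M_N^0(f)$.

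Via the identity $\langle f\ast\phi_t, \psi\rangle = \langle f, (\tilde\phi)_t\ast\psi\rangle$ with $\tilde\phi(y):=\phi(-y)$, I pass to the study of $\eta_t := (\tilde\phi)_t\ast\psi$. Two changes of variables give $\eta_t(y) = \int_\rn \tilde\phi(u)\psi(y-tu)\,du$ and the rescaled version $\xi_t(y) := t^n\eta_t(ty) = \int_\rn \tilde\phi(y-w/t)\psi(w)\,dw$. Splitting the integrand defining $\xi_t$ according to whether $|w|\le t|y|/2$ or not, and exploiting the Schwartz decay of $\tilde\phi$ and $\psi$, one verifies that $\xi_t \to I\tilde\phi$ in $\cs(\rn)$ as $t\to\infty$, where $I := \int_\rn \psi$. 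Consequently $\{\xi_t\}_{t\ge1}$ is uniformly bounded in $\cs(\rn)$, so there exists a constant $C_0>0$, independent of $t\ge 1$, with $\xi_t/C_0\in\cf_N(\rn)$. Writing $\eta_t = C_0(\xi_t/C_0)_t$ and using the reflection invariance of $\cf_N(\rn)$ in conjunction with \eqref{EqMN0}, one deduces the uniform estimate $|\langle f,\eta_t\rangle|\le C_0 M_N^0(f)(0)$.

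To upgrade this uniform boundedness to actual convergence, my plan is a level-set truncation in the spirit of \cite{C1977, LYJ, YYYZ}: for each $\lambda>0$, write
\[
\langle f\ast\phi_t, \psi\rangle = \int_{\{M_N^0(f)\le\lambda\}} (f\ast\phi_t)\psi\,dx + \int_{\{M_N^0(f)>\lambda\}} (f\ast\phi_t)\psi\,dx.
\]
The sublevel piece is bounded by $C_\phi\lambda\|\psi\|_{L^1(\rn)}$ directly from the pointwise bound. For the superlevel piece, I would combine the weak-$X$ estimate $\|\mathbf{1}_{\{M_N^0(f)>\lambda\}}\|_X \le \|M_N^0(f)\|_{WX}/\lambda$ with the rapid Schwartz decay of $\psi$ and the $L^\infty$-decay $\|\eta_t\|_\infty = O(t^{-n})$ visible from the scaling computation above, to show that for each fixed $\lambda$, the superlevel piece tends to zero as $t\to\infty$. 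Sending first $t\to\infty$ and then $\lambda\to0$ delivers the required convergence.

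The hard part is precisely the superlevel estimate: a direct $\cs(\rn)$-topology convergence of $\eta_t$ to zero is unavailable, since the high-order Schwartz seminorms of $\eta_t$ do not decay as $t\to\infty$, and thus one cannot close the argument through the continuity of $f\in\cs'(\rn)$ alone. The resolution is exactly the scaling decomposition $\eta_t = C_0(\xi_t/C_0)_t$ coupled with the weak-$X$ control on the level sets of $M_N^0(f)$, and verifying that these two ingredients mesh correctly is the technical heart of the proof.
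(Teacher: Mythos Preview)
Your level-set argument has a genuine gap at the superlevel piece. After splitting
\[
\langle f\ast\phi_t,\psi\rangle=\int_{\{M_N^0(f)\le\lambda\}}(f\ast\phi_t)\psi\,dx
+\int_{\{M_N^0(f)>\lambda\}}(f\ast\phi_t)\psi\,dx,
\]
the only pointwise control you have on the integrand in the second term is $|(f\ast\phi_t)(x)|\le C_\phi M_N^0(f)(x)$, which is \emph{independent of $t$}. Hence the superlevel integral is dominated by $C_\phi\int_{\{M_N^0(f)>\lambda\}}M_N^0(f)\,|\psi|\,dx$, and this bound carries no decay in $t$. Your proposed remedies do not apply here: the decay $\|\eta_t\|_\infty=O(t^{-n})$ lives in the representation $\langle f,\eta_t\rangle$, which cannot be localized to a level set of $M_N^0(f)$ since $f$ is only a tempered distribution; and the weak-$X$ estimate $\|\mathbf{1}_{\{M_N^0(f)>\lambda\}}\|_X\le \|M_N^0(f)\|_{WX}/\lambda$ controls an $X$-norm, not a Lebesgue integral against $|\psi|$, and a ball \emph{quasi}-Banach space offers no H\"older-type inequality to bridge the two. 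So ``for each fixed $\lambda$, the superlevel piece tends to zero as $t\to\infty$'' is precisely what you have not established, and your scheme of sending $t\to\infty$ first and then $\lambda\to 0$ cannot get started.

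The paper's argument avoids this entirely by exploiting the \emph{non-tangential} nature of the grand maximal function rather than your radial bound. For any $y\in B(x,t)$ one has $|f\ast\phi_t(x)|\lesssim M_N(f)(y)\lesssim M_N^0(f)(y)$, whence
\[
|f\ast\phi_t(x)|\lesssim \inf_{y\in B(x,t)}M_N^0(f)(y)
\le \frac{\|\mathbf{1}_{B(x,t)}M_N^0(f)\|_{WX}}{\|\mathbf{1}_{B(x,t)}\|_{WX}}
\le \frac{\|M_N^0(f)\|_{WX}}{\|\mathbf{1}_{B(x,t)}\|_{X}}.
\]
The crucial input you are missing is that $1\notin X$ (see \cite[Lemma 2.14]{SHYY}), so $\|\mathbf{1}_{B(x,t)}\|_X\uparrow\infty$ as $t\to\infty$ by Definition~\ref{Debqfs}(iii). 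This yields the \emph{pointwise} convergence $f\ast\phi_t(x)\to 0$ for every $x\in\rn$, which is stronger than what is needed and is obtained with no test function $\psi$, no level-set splitting, and no analysis of $\eta_t$.
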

\begin{proof}
Let $f\in WH_X(\rn)$. By \cite[Proposition 3.10]{Bo}, we know that, for any $\phi\in\cs(\rn)$,
$t\in(0,\infty)$, $x\in\rn$ and $y\in B(x,t)$, $|f\ast\phi_t(x)|\lesssim M_N(f)(y)\lesssim M_N^0(f)(y)$,
where $N\in\nn$. Thus, there exists a positive constant $C_{(N)}$, independent of $x,\ t$ and $f$, such that
\begin{equation}\label{eq451}
B(x,t)\subset\lf\{y\in\rn:\ M_N^0(f)(y)\geq C_{(N)}|f\ast\phi_t(x)|\r\}.
\end{equation}
On the other hand, by \cite[Lemma 2.14]{SHYY}, we find that $1\notin X$, which, together with the fact that $\mathbf{1}_{B(x,t)}\uparrow1$, Definition \ref{Debqfs}(iii)
and \eqref{Eqde1}, implies that, for any $x\in\rn$, $\|\mathbf{1}_{B(x,t)}\|_{WX}=\|\mathbf{1}_{B(x,t)}\|_{X}\rightarrow\infty$ as $t\rightarrow\infty$.
From this and \eqref{eq451}, it follows that, for any $x\in\rn$,
\begin{align*}
\lf|f\ast\phi_t(x)\r|&\le\inf_{y\in B(x,t)}M_N^0(f)(y)\le\frac{\|\mathbf{1}_{B(x,t)}M_N^0(f)\|_{WX}}{\|\mathbf{1}_{B(x,t)}\|_{WX}}
\le C_{(N)}\frac{\|M_N^0(f)\|_{WX}}{\|\mathbf{1}_{B(x,t)}\|_{WX}}
\rightarrow0\quad\text{as}\quad t\rightarrow\infty,
\end{align*}
which implies that $f$ vanishes weakly at infinity. This finishes the proof of Lemma \ref{Le64}.
\end{proof}

In what follows, the symbol $\vec 0_n$ denotes the \emph{origin} of $\rn$ and,
for any $\varphi\in\cs(\rn)$, $\widehat\varphi$ denotes its \emph{Fourier transform}
which is defined by setting, for any $\xi\in\rn$,
$$
\widehat\varphi(\xi):=\int_\rn e^{-2\pi ix\xi}\varphi(x)\,dx.
$$
We also use the \emph{symbol $C_c^\fz(\rn)$} to denote the set of all
infinitely differentiable functions with compact supports,
and the \emph{symbol $\epsilon\rightarrow0^+$} to denote
$\epsilon\in(0,\infty)$ and $\epsilon\rightarrow0$.

Combining Calder\'on \cite[Lemma 4.1]{C1975} and Folland and Stein \cite[Theorem 1.64]{FoS}
(see also \cite[p.\,219]{C1977} and \cite[Lemma 4.6]{YYYZ}), we immediately obtain
the following Calder\'on reproducing formula and we omit the details.

\begin{lemma}\label{Le47}
Let $\phi$ be a Schwartz function and, for any $x\in\rn\setminus\{\vec 0_n\}$,
there exists $t\in (0,\fz)$ such that $\widehat\phi(tx)\not=0$. Then there
exists a $\psi\in\cs(\rn)$ such that $\wh\psi\in C^\fz_c(\rn)$ with its support
away from $\vec 0_n$, $\wh\phi\wh\psi\ge 0$ and, for any $x\in\rn\setminus\{\vec 0_n\}$,
$$\int^\fz_0\wh\phi(tx)\wh\psi(tx)\,\frac {dt}t=1.$$
Moreover, for any $f\in\cs'(\rn)$, if $f$ vanishes  weakly at infinity,
then
$$
f=\int_0^\infty f\ast\phi_t\ast\psi_t\,\frac{dt}{t}\quad\text{in}\quad\cs'(\rn),
$$
namely,
$$
f=\lim_{\substack{\epsilon\rightarrow0^+\\ A\rightarrow\infty}}
\int_\epsilon^A f\ast\phi_t\ast\psi_t\,\frac{dt}{t}\quad\text{in}\quad\cs'(\rn).
$$
\end{lemma}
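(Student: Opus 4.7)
The argument naturally splits into the construction of the companion function $\psi$ (the static Calder\'on reproducing identity on the Fourier side) and the deduction of the distributional reproducing formula from it.

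First, I would establish the existence of $\psi$ by appealing to (a slight variant of) Calder\'on's Lemma 4.1 in \cite{C1975}. The nondegeneracy hypothesis — that for every $x\in\rn\setminus\{\vec 0_n\}$ some dilate satisfies $\wh\phi(tx)\ne 0$ — is exactly what fuels the construction. One selects a nonnegative $\eta\in C_c^\fz(\rn)$ supported in an annulus $\{r_1\le|\xi|\le r_2\}$ with $0<r_1<r_2$, chosen so that, for every direction $x\in\rn\setminus\{\vec 0_n\}$, the orbit $\{tx:t>0\}$ meets $\{\wh\phi\ne 0\}\cap\{\eta>0\}$ in a set of positive measure. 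One then defines $\wh\psi(\xi):=\overline{\wh\phi(\xi)}\,\eta(\xi)/N(\xi)$, where $N(\xi):=\int_0^\fz|\wh\phi(t\xi)|^2\eta(t\xi)\,\frac{dt}{t}$. A change of variable $u=st$ shows $N$ is constant along rays, so smoothness and strict positivity on $\mathrm{supp}\,\eta$ follow from the nondegeneracy. Hence $\wh\psi\in C_c^\fz(\rn)$ is supported away from $\vec 0_n$, $\wh\phi\wh\psi=|\wh\phi|^2\eta/N\ge 0$, and the same substitution yields $\int_0^\fz\wh\phi(tx)\wh\psi(tx)\,\frac{dt}{t}=1$ for every $x\ne\vec 0_n$. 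Since $\wh\psi\in C_c^\fz(\rn)$, its inverse Fourier transform $\psi$ lies in $\cs(\rn)$.

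Second, I would promote this pointwise Fourier identity to a distributional reproducing formula. For $\varphi\in\cs(\rn)$ and $0<\epsilon<A<\fz$, Fubini's theorem (justified by the Schwartz decay of $\wz\phi_t\ast\wz\psi_t\ast\varphi$ with uniform control in $t$ on $[\epsilon,A]$, where $\wz g(x):=g(-x)$) gives
\begin{equation*}
\lf\la\int_\epsilon^A f\ast\phi_t\ast\psi_t\,\frac{dt}{t},\varphi\r\ra
=\lf\la f,\int_\epsilon^A\wz\phi_t\ast\wz\psi_t\ast\varphi\,\frac{dt}{t}\r\ra.
\end{equation*}
Combining the Fourier identity from the first step with Folland--Stein's Theorem 1.64 in \cite{FoS} shows that $\int_0^\fz\wz\phi_t\ast\wz\psi_t\ast\varphi\,\frac{dt}{t}=\varphi$ with convergence in $\cs(\rn)$ as $\epsilon\to 0^+$, so pairing against $f$ yields the lower-endpoint convergence.

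The principal obstacle is the upper-endpoint limit $A\to\fz$. Unlike the $\epsilon\to 0^+$ case, which is a pure Schwartz-convergence statement, the tail $\int_A^\fz\wz\phi_t\ast\wz\psi_t\ast\varphi\,\frac{dt}{t}$ need not converge to zero in $\cs(\rn)$, because $\wh\varphi$ need not vanish at $\vec 0_n$. Instead I would rearrange the pairing as $\int_A^\fz\la f\ast\phi_t,\wz\psi_t\ast\varphi\r\ra\,\frac{dt}{t}$ and use two ingredients: (i) the compact Fourier support of $\wh\psi$ away from $\vec 0_n$ concentrates $\wh{\wz\psi_t\ast\varphi}$ in a ball of radius $\sim t^{-1}$ with controlled Schwartz seminorms, so that $\wz\psi_t\ast\varphi$ remains a bounded family in $\cs(\rn)$ as $t\to\fz$; and (ii) the hypothesis that $f$ vanishes weakly at infinity ensures $\la f\ast\phi_t,h\r\a\to 0$ as $t\to\fz$, uniformly for $h$ in any bounded subset of $\cs(\rn)$. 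A change of variable $s=t/A$ reduces the tail integral to a uniform-equicontinuity statement of precisely this form, from which the tail tends to zero, completing the proof of the reproducing formula in $\cs'(\rn)$.
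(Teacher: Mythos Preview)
The paper does not actually prove this lemma; it cites Calder\'on \cite[Lemma~4.1]{C1975} and Folland--Stein \cite[Theorem~1.64]{FoS} and omits the details. Your construction of $\psi$ is the standard one and correctly reproduces Calder\'on's argument, and your treatment of the lower endpoint $\epsilon\to 0^+$ is fine.

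There is, however, a genuine gap in your handling of the upper endpoint. Claim (i) --- that $\wz\psi_t\ast\varphi$ remains a bounded family in $\cs(\rn)$ as $t\to\infty$ --- is false. Writing $\wz\psi_t\ast\varphi(x)=\int_{\rn}\wz\psi(z)\varphi(x-tz)\,dz$ and expanding $x^\alpha$ via $x=(x-tz)+tz$, one sees that $\sup_{x\in\rn}|x|^M|\wz\psi_t\ast\varphi(x)|$ grows like $t^M$; equivalently, on the Fourier side, $\partial_\xi^\beta\bigl[\wh{\wz\psi}(t\xi)\wh\varphi(\xi)\bigr]$ picks up factors of $t^{|\beta|}$ from differentiating $\wh{\wz\psi}(t\xi)$, and the support $|\xi|\sim t^{-1}$ does not compensate. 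The infinite vanishing moments of $\psi$ do not help: the leading contribution to $\wz\psi_t\ast\varphi$ is $\wh\varphi(\vec 0_n)\,(\wz\psi)_t$, whose spatial-decay seminorms blow up. Since the family is unbounded in $\cs(\rn)$, you cannot feed it into the uniform convergence of (ii), and even granting (i) and (ii) you would still owe an integrability-in-$s$ bound to pass the limit through $\int_1^\infty\cdots\frac{ds}{s}$ after your substitution.

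The route the paper implicitly follows --- visible in its use of the auxiliary function $\eta$ in the proof of Theorem~\ref{Thad}, with $\wh\eta(\xi):=\int_1^\infty\wh\phi(t\xi)\wh\psi(t\xi)\,\frac{dt}{t}$ --- avoids this entirely. One checks that $\wh\eta\in C_c^\infty(\rn)$ with $\wh\eta(\vec 0_n)=1$, and that the substitution $s=t/\epsilon$ (respectively $s=t/A$) gives
\[
\int_\epsilon^A f\ast\phi_t\ast\psi_t\,\frac{dt}{t}=f\ast\eta_\epsilon-f\ast\eta_A\quad\text{in}\quad\cs'(\rn).
\]
Then $f\ast\eta_\epsilon\to f$ as $\epsilon\to 0^+$ because $\int_{\rn}\eta(x)\,dx=\wh\eta(\vec 0_n)=1$, while $f\ast\eta_A\to 0$ as $A\to\infty$ is \emph{precisely} the hypothesis that $f$ vanishes weakly at infinity, applied once with the single Schwartz function $\eta$. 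No boundedness-in-$\cs(\rn)$ or equicontinuity argument is needed.
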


Let $X$ be a ball quasi-Banach function space. For any $q\in[1,\infty)$ and $d\in\zz_+$,
a locally integrable function $f$ on $\rn$ is
said to be in the \emph{Campanato-type space} $\cl_{q,X,d}(\rn)$ if
$$
\|f\|_{\cl_{q,X,d}(\rn)}:=\sup_{Q}\lf\{\frac{|Q|}
{\|\mathbf{1}_Q\|_X}\lf[\frac1{|Q|}\int_Q\lf|f(x)-P_Q^df(x)\r|^q\,dx\r]^\frac1q\r\}<\infty,
$$
where the supremum is taken over all cubes $Q$ on $\rn$ and $P_Q^d$ denotes the \emph{unique polynomial} $P\in\cp_d(\rn)$
such that, for any polynomial $R\in\cp_d(\rn)$, $\int_Q[f(x)-P(x)]R(x)\,dx=0$
(see \cite[Definition 6.1]{NS}), here and hereafter, the
\emph{symbol $\cp_d(\rn)$} denotes the set of all polynomials with order at most $d$.

The following lemma comes from \cite[p.\,83]{TW}.

\begin{lemma}\label{Le48}
Let $d\in\zz_+$. Then there exists
a positive constant $C$ such that, for any $g\in L_{\loc}^{1}(\rn)$ and cube $Q\subset\rn$,
$$
\sup_{x\in Q}\lf|P_Q^dg(x)\r|\le\frac{C}{|Q|}\int_Q|g(x)|\,dx.
$$
\end{lemma}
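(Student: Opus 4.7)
The plan is to reduce to the unit cube by an affine change of variables and then exploit the finite-dimensionality of $\cp_d(\rn)$.

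First I would normalize. Fix $Q_0:=[-1/2,1/2]^n$ and, for a general cube $Q$ with center $x_Q$ and side length $\ell_Q$, define the affine map $\Phi_Q(y):=x_Q+\ell_Q y$, so that $\Phi_Q(Q_0)=Q$. Given $g\in L^1_{\loc}(\rn)$, set $\tilde g:=g\circ\Phi_Q$. A short computation using the definition of $P_Q^d$ (test against any $R\in\cp_d(\rn)$, change variables, and use the uniqueness of the projection for $\tilde g$ on $Q_0$) shows
\[
P_Q^d g=\bigl(P_{Q_0}^d\tilde g\bigr)\circ\Phi_Q^{-1}.
\]
Taking suprema and changing variables in the right-hand integral, it thus suffices to prove the lemma for $Q=Q_0$ with some constant $C$ independent of $g$.

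Next I would use that $\cp_d(\rn)$ is a finite-dimensional subspace of $L^2(Q_0)$. Equipping $\cp_d(\rn)$ with the inner product $\langle P,R\rangle:=\int_{Q_0}P(x)R(x)\,dx$, pick an orthonormal basis $\{\phi_\alpha\}_{|\alpha|\le d}$ of $\cp_d(\rn)$. By the defining orthogonality property of $P_{Q_0}^d$ (testing against each $\phi_\alpha\in\cp_d(\rn)$), the projection admits the explicit representation
\[
P_{Q_0}^d\tilde g(x)=\sum_{|\alpha|\le d}\biggl(\int_{Q_0}\tilde g(y)\phi_\alpha(y)\,dy\biggr)\phi_\alpha(x),\qquad x\in Q_0.
\]
Since each $\phi_\alpha$ is a polynomial of degree at most $d$ restricted to the bounded set $Q_0$, there exists a constant $M=M(n,d)$ such that $\|\phi_\alpha\|_{L^\infty(Q_0)}\le M$ for every $|\alpha|\le d$. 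Hence
\[
\sup_{x\in Q_0}\bigl|P_{Q_0}^d\tilde g(x)\bigr|\le M^2\binom{n+d}{d}\int_{Q_0}|\tilde g(y)|\,dy=:C\int_{Q_0}|\tilde g(y)|\,dy.
\]

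Finally, unwinding the normalization gives
\[
\sup_{x\in Q}\bigl|P_Q^d g(x)\bigr|=\sup_{y\in Q_0}\bigl|P_{Q_0}^d\tilde g(y)\bigr|\le C\int_{Q_0}|\tilde g(y)|\,dy=\frac{C}{|Q|}\int_Q|g(x)|\,dx,
\]
with $C$ depending only on $n$ and $d$, which is exactly the claimed bound. The only point requiring any care is the affine covariance identity $P_Q^d g=(P_{Q_0}^d\tilde g)\circ\Phi_Q^{-1}$; everything else is a short computation, so I expect no serious obstacle beyond verifying that step cleanly from the uniqueness characterization of $P_Q^d$.
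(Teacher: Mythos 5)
Your proof is correct and self-contained. Note that the paper does not actually prove Lemma \ref{Le48}; it simply cites Taibleson and Weiss [TW, p.\,83]. Your argument supplies a complete proof where the paper has only a reference.

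The two steps you rely on are both sound. The affine covariance $P_Q^d g=(P_{Q_0}^d\tilde g)\circ\Phi_Q^{-1}$ follows by change of variables and the uniqueness in the definition of $P_Q^d$: composing with the affine bijection $\Phi_Q$ maps $\cp_d(\rn)$ onto itself, so the orthogonality $\int_{Q_0}[\tilde g-P_{Q_0}^d\tilde g](R\circ\Phi_Q)=0$ for all $R\in\cp_d(\rn)$ is equivalent to the orthogonality defining $P_Q^d g$ on $Q$. On $Q_0$, the expansion in an orthonormal basis $\{\phi_\alpha\}$ of $\cp_d(\rn)\subset L^2(Q_0)$ is exactly the Riesz representation of the projection coefficients, and the $L^\infty(Q_0)$ bound on the fixed, finite collection $\{\phi_\alpha\}$ is the finite-dimensionality input. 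The final constant $M^2\binom{n+d}{d}$ depends only on $n$ and $d$, as required, and the change of variables restores the factor $|Q|^{-1}$ in the claim. One very minor cosmetic point: it would be slightly cleaner to index the orthonormal basis abstractly rather than by multi-indices $\alpha$, since after Gram--Schmidt the basis elements need not correspond to individual monomials; this has no bearing on the validity of the argument.
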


\begin{lemma}\label{Le49}
Let $q\in[1,\infty)$, $d\in\zz_+$ and $X$ be a ball quasi-Banach function space. Assume that there exists
$p\in(0,\infty)$ such that $\cm$ in \eqref{mm} is bounded on $X^{1/p}$.
If $p\in(\frac{n}{n+d+1},\infty)$ and $f\in\cs(\rn)$, then $f\in\cl_{q,X,d}(\rn)$.
\end{lemma}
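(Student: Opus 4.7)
The plan is to bound the quantity
\[
I(Q) := \frac{|Q|}{\|\mathbf{1}_Q\|_X}\lf[\frac{1}{|Q|}\int_Q \lf|f(x) - P_Q^d f(x)\r|^q\,dx\r]^{1/q}
\]
uniformly over all cubes $Q = Q(x_Q,\ell_Q)\subset\rn$, splitting the argument according to whether $\ell_Q\le 1$ or $\ell_Q>1$.

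First I would obtain a pointwise bound on $|f - P_Q^d f|$ through Taylor's theorem. Write $f = T_{x_Q}^d f + R$, where $T_{x_Q}^d f$ is the Taylor polynomial of $f$ of degree $d$ about $x_Q$ and $|R(x)| \ls \ell_Q^{d+1}\max_{|\alpha|=d+1}\sup_{y\in Q}|\partial^\alpha f(y)|$ for every $x\in Q$. Since $T_{x_Q}^d f\in\cp_d(\rn)$ is fixed by $P_Q^d$, one has $f - P_Q^d f = R - P_Q^d R$, and Lemma \ref{Le48} applied to $R$ gives $\sup_{x\in Q}|P_Q^d R(x)| \ls |Q|^{-1}\int_Q |R|\le\sup_Q |R|$. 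Invoking the rapid decay $|\partial^\alpha f(y)| \ls_M (1+|y|)^{-M}$ available for any $M\in\nn$ because $f\in\cs(\rn)$, together with $\sup_{y\in Q}(1+|y|)^{-M} \ls (1+|x_Q|)^{-M}$ valid whenever $\ell_Q\le 1$, I arrive at
\[
\sup_{x\in Q}\lf|f(x) - P_Q^d f(x)\r| \ls \ell_Q^{d+1}(1+|x_Q|)^{-M}
\]
in the small-cube regime.

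The second ingredient is a lower bound on $\|\mathbf{1}_Q\|_X$. The boundedness of $\cm$ on $X^{1/p}$ yields, via the argument of Remark \ref{Refs}(i) applied with $r=p$, the scaling estimate $\|\mathbf{1}_{\alpha B}\|_X \ls \alpha^{n/p}\|\mathbf{1}_B\|_X$ for every ball $B$ and every $\alpha\ge 1$. Combining this with the monotone inclusion $B(0,1)\subset B(x_Q,1+|x_Q|)$ gives $\|\mathbf{1}_{B(x_Q,1)}\|_X \gs (1+|x_Q|)^{-n/p}\|\mathbf{1}_{B(0,1)}\|_X$, and a further application comparing $B(x_Q,1)$ with the ball concentric to $Q$ of radius comparable to $\ell_Q$ yields, for $\ell_Q\le 1$,
\[
\|\mathbf{1}_Q\|_X \gs \ell_Q^{n/p}(1+|x_Q|)^{-n/p}.
\]

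Combining the two estimates in the small-cube regime produces $I(Q) \ls \ell_Q^{n+d+1-n/p}(1+|x_Q|)^{n/p-M}$, which is uniformly bounded once I choose $M>n/p$ and invoke the hypothesis $p > n/(n+d+1)$, ensuring $n+d+1-n/p>0$ so that $\ell_Q^{n+d+1-n/p}$ stays bounded as $\ell_Q\to 0^{+}$. For $\ell_Q>1$ I would bound $\int_Q|f-P_Q^d f|^q \ls \|f\|_{L^q(\rn)}^q + |Q|^{1-q}\|f\|_{L^1(\rn)}^q$ by combining Lemma \ref{Le48} with $\|P_Q^d f\|_{L^\infty(Q)} \ls |Q|^{-1}\|f\|_{L^1(\rn)}$ and $\|f\|_{L^q(Q)}\le\|f\|_{L^q(\rn)}$, and then use the monotone lower bound $\|\mathbf{1}_Q\|_X \ge \|\mathbf{1}_{B(x_Q,1)}\|_X \gs (1+|x_Q|)^{-n/p}$, together with the rapid decay of $f$ to handle cubes far from the origin where the integrals are small. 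The main obstacle I expect is the large-cube case when $Q$ is close to the origin: there one cannot extract extra Schwartz decay from $f$ on $Q$, and uniform boundedness of $I(Q)$ has to be extracted purely from the dilation estimate for $\|\mathbf{1}_Q\|_X$ under the constraint $p > n/(n+d+1)$; this is the delicate point where the hypothesis on $p$ is used to its fullest.
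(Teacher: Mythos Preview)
Your small-cube argument ($\ell_Q\le1$) is correct and matches the paper's use of the Taylor remainder together with the dilation inequality \eqref{Eq44}. The gap is in the large-cube regime, and your diagnosis of where the constraint on $p$ enters is inverted: the hypothesis $p>n/(n+d+1)$ is already fully spent in the small-cube estimate (it is exactly what makes $\ell_Q^{\,n+d+1-n/p}$ bounded as $\ell_Q\to0^+$) and plays no further role for large cubes near the origin. The dilation estimate \eqref{Eq44} only bounds $\|\mathbf 1_{Q_2}\|_X$ from \emph{above} in terms of $\|\mathbf 1_{Q_1}\|_X$ for $Q_1\subset Q_2$, so it cannot by itself force $\|\mathbf 1_Q\|_X$ to grow with $|Q|$; your proposed lower bound $\|\mathbf 1_Q\|_X\gs(1+|x_Q|)^{-n/p}$ is correspondingly too weak when $|x_Q|$ is large but still $\ls\ell_Q$.

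The paper's remedy is a further split of the regime $\ell_Q>1$ according to whether $|x_Q|\le2\ell_Q$ or $|x_Q|>2\ell_Q$. When $|x_Q|>2\ell_Q$ every point of $Q$ satisfies $|x|\sim|x_Q|$, and the Taylor-remainder bound with derivative decay applies exactly as in your small-cube case, giving (with $\epsilon>d+1$) $I(Q)\ls\ell_Q^{\,n+d+1-n/p}(1+|x_Q|)^{n/p-n-\epsilon}\ls|x_Q|^{d+1-\epsilon}$. When $|x_Q|\le2\ell_Q$, one embeds $Q$ into a cube $Q'$ centered at $\vec0_n$ with $|Q'|\sim|Q|$; applying \eqref{Eq44} to the pair $Q\subset Q'$ yields $\|\mathbf 1_Q\|_X\gs\|\mathbf 1_{Q'}\|_X\ge\|\mathbf 1_{Q(\vec0_n,\sqrt n)}\|_X$, a fixed positive constant, and combining this with $\int_Q|f-P_Q^d f|\ls\|f\|_{L^1(\rn)}$ gives $I(Q)\ls1$ when $q=1$. (For $q>1$ this last step only yields $I(Q)\ls|Q|^{1-1/q}$, which is not bounded in general --- consider $X=L^\infty(\rn)$ --- but the paper only applies the lemma with $q=1$.)
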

\begin{proof}
We first claim that there exists a positive constant $C$ such that, for any two cubes $Q_1$ and $Q_2$ with $Q_1\subset Q_2$,
\begin{equation}\label{Eq44}
\frac{\|\mathbf{1}_{Q_2}\|_X}{\|\mathbf{1}_{Q_1}\|_X}\le C\lf[\frac{|Q_2|}{|Q_1|}\r]^{1/p}.
\end{equation}
Indeed, we have $\mathbf{1}_{Q_2}\lesssim[|Q_2|/|Q_1|]^{1/p}[\cm(\mathbf{1}_{Q_1})]^{1/p}$.
By this, Definition \ref{Debqfs}(ii) and the assumption that $\cm$ is bounded on $X^{1/p}$,
we know that there exists a positive constant $C$, independent of $Q_1$ and $Q_2$, such that
\begin{align*}
\lf\|\mathbf{1}_{Q_2}\r\|_{X}\le\lf[\frac{|Q_2|}{|Q_1|}\r]^{1/p}\lf\|[\cm(\mathbf{1}_{Q_1})]^{1/p}\r\|_X
\lesssim\lf[\frac{|Q_2|}{|Q_1|}\r]^{1/p}\lf\|\mathbf{1}_{Q_1}\r\|_{X^{1/p}}^{1/p}
\sim\lf[\frac{|Q_2|}{|Q_1|}\r]^{1/p}\lf\|\mathbf{1}_{Q_1}\r\|_{X}.
\end{align*}
That is, the above claim holds true.

For any $f\in\cs(\rn)$, $x\in\rn$ and cube $Q:=Q(x_0, r)\subset\rn$ with $(x_0, r)\in\rr^{n+1}_+$,
to prove this lemma, let
$$
p_Q(x):= \sum_{|\beta|\le d}\frac{\partial^\beta f(x_0)}{\beta!} (x-x_0)^\beta\in \cp_d(\rn).
$$
Then, from Lemma \ref{Le48} and the H\"older inequality, it follows that
\begin{align}\label{Eq45}
&\lf[\int_Q\lf|f (x)-P^d_Qf(x)\r|^q\,dx\r]^\frac1q\\
&\quad\le\lf[\int_Q\lf|f(x)-p_Q(x)\r|^q\,dx\r]^\frac1q\noz
+\lf[\int_Q\lf|P^d_Q(p_Q-f)(x)\r|^q\,dx\r]^\frac1q\\\noz
&\quad\lesssim\lf[\int_Q\lf|f(x)-p_Q(x)\r|^q\,dx\r]^\frac1q+
\lf\{|Q|\lf[\frac1{|Q|}\int_Q|p_Q(x)-f(x)|\,dx\r]^q\r\}^\frac1q\\
\noz&\quad\lesssim\lf[\int_Q\lf|f(x)-p_Q(x)\r|^q\,dx\r]^\frac1q.
\end{align}
Now, if $|x_0|+r\le1$, namely, $Q\subset Q(\vec0_n,\sqrt n)$, then, by \eqref{Eq45},
the Taylor remainder theorem
 and \eqref{Eq44}, we conclude that
\begin{align}\label{Eq46}
\frac{|Q|}{\|\mathbf{1}_Q\|_{X}}\lf[\frac1{|Q|}\int_Q\lf|f(x)-P_Q^df(x)\r|^q\,dx\r]^\frac1q
&\lesssim\frac{|Q|}{\|\mathbf{1}_Q\|_{X}}\lf[\frac1{|Q|}\int_Q\sum_{|\beta|=d+1}
\lf|\frac{\partial^\beta f(\xi(x))}{\beta!}(x-x_0)^\beta\r|^q\,dx\r]^\frac1q\\\noz
&\lesssim\frac{|Q|}{\|\mathbf{1}_Q\|_{X}}\lf[\frac1{|Q|}\int_Q|x-x_0|^{q(d+1)}\,dx\r]^\frac1q\\\noz
&\lesssim|Q|^{1+(d+1)/n-1/p}\frac{|Q(\vec 0_n,\sqrt n)|^{1/p}}
{\|\mathbf{1}_{Q(\vec 0_n,\sqrt n)}\|_{X}}\lesssim1.
\end{align}
If $|x_0|+r > 1$ and $|x_0|\le 2r$, then $r>1/3$ and $|Q|\sim |Q(\vec0_n,\sqrt n(|x_0|+r))|$. From
Lemma \ref{Le48}, the H\"older inequality, the fact that $|f(x)|\lesssim(1+|x|)^{-n-\epsilon}$
for any $x\in\rn$
and some given $\epsilon\in(1+d,\infty)$, and \eqref{Eq44},
we deduce that
\begin{align}\label{Eq47}
\frac{|Q|}{\|\mathbf{1}_Q\|_{X}}\lf[\frac1{|Q|}\int_Q\lf|f(x)-P_Q^df(x)\r|^q\,dx\r]^\frac1q
&\lesssim\frac{|Q|}{\|\mathbf{1}_Q\|_{X}}\lf[\frac1{|Q|}\int_Q|f(x)|^q\,dx\r]^\frac1q\\\noz
&\lesssim\frac{|Q|}{\|\mathbf{1}_Q\|_{X}}\lf[\frac1{|Q|}\int_{B(\vec0_n,\sqrt n(|x_0|+r))}\frac1{(1+|x|)^{q(n+\epsilon)}}\,dx\r]^\frac1q\\\noz
&\lesssim\lf[\frac{|Q(\vec0_n,\sqrt n(|x_0|+r))|}{|Q|}\r]^{1/p}
\frac1{\|\mathbf{1}_{Q(\vec 0_n,\sqrt n(|x_0|+r)}\|_{X}}\lesssim1.
\end{align}
If $|x_0|+r>1$ and $|x_0|> 2r$, then, for any $x\in Q$, we have
$|x|\sim|x_0|\gtrsim2/3$ and $1+|x_0|\sim|x_0|+r$. By
this, \eqref{Eq45} and the fact that $|\partial^\gamma f(x)|\lesssim(1+|x|)^{-n-\epsilon}$
for any $x\in\rn$, $|\gamma|=d+1$ and some given $\epsilon\in(1+d,\infty)$, and \eqref{Eq44}, we find that
\begin{align}\label{Eq48}
\frac{|Q|}{\|\mathbf{1}_Q\|_{X}}\lf[\frac1{|Q|}\int_Q\lf|f(x)-P_Q^df(x)\r|^q\,dx\r]^\frac1q
&\lesssim\frac{|Q|}{\|\mathbf{1}_Q\|_{X}}\lf[\frac1{|Q|}\int_Q\sum_{|\beta|=d+1}\lf|
\frac{\partial^\beta f(\xi(x))}{\beta!}(x-x_0)^\beta\r|^q\,dx\r]^\frac1q\\\noz
&\lesssim(1+|x_0|)^{-n-\epsilon}\frac{|Q|}
{\|\mathbf{1}_Q\|_{X}}\lf[\frac1{|Q|}\int_Q|x-x_0|^{q(d+1)}\,dx\r]^\frac1q\\\noz
&\lesssim\frac{|Q|^{1+(d+1)/n}}{\|\mathbf{1}_Q\|_{X}}(1+|x_0|)^{-n-\epsilon}\\\noz
&\lesssim\frac{|Q|^{1+(d+1)/n-1/p}}{(|x_0|+r)^{n+\epsilon-n/p}}
\frac1{\|\mathbf{1}_{Q(\vec 0_n,\sqrt n(|x_0|+r))}\|_{X}}\lesssim1.
\end{align}
Combining \eqref{Eq46}, \eqref{Eq47}, and \eqref{Eq48}, we know that $f\in\cl_{q,X,d}(\rn)$, which completes
the proof of Lemma \ref{Le48}.
\end{proof}

Now let us show Theorem \ref{Thad}.
\begin{proof}[Proof of Theorem \ref{Thad}]
Assume that $\psi\in\cs(\rn)$ satisfies $\supp\psi\subset B(\vec 0_n,1)$ and $\int_\rn\psi(x)x^\gamma\,dx=0$
for any $\gamma\in\zz_+^n$ with $|\gamma|\le d$. Then, by Lemma \ref{Le47},
we know that there exists
$\phi\in\cs(\rn)$ such that the support of $\widehat\phi$ is compact and away from the origin and,
for any $x\in\rn\setminus\{\vec 0_n\}$,
$$
\int_0^\infty\widehat\psi(tx)\widehat\phi(tx)\,\frac{dt}{t}=1.
$$
Let $\eta$ be a function on $\rn$ such that
$\widehat\eta(\vec 0_n):=1$ and, for any $x\in\rn\setminus\{\vec 0_n\}$,
$$
\widehat\eta(x):=\int_1^\infty\widehat\psi(tx)\widehat\phi(tx)\,\frac{dt}{t}.
$$
Then, by \cite[p.\,219]{C1977}, we know that
such an $\eta$ exists and $\widehat\eta$ is infinitely differentiable,
has compact support and equals $1$ near the origin.

Let $x_0:=(2,\ldots,2)\in\rn$ and $f\in WH_X(\rn)$. For any $x\in\rn$ and $t\in(0,\infty)$, let
$\widetilde\phi(x):=\phi(x-x_0)$, $\widetilde\psi(x):=\psi(x+x_0)$, $F(x,t):=f\ast\widetilde\phi_t(x)$
and $G(x,t):=f\ast\eta_t(x)$. Then, due to Assumption \ref{xinm} and Theorem \ref{Thmc}(ii), for
any $f\in WH_X(\rn)$ and $x\in\rn$,
we have
\begin{equation}\label{m47}
M_\triangledown(f)(x):=\sup_{t\in(0,\infty),|y-x|\le3(|x_0|+1)t}[|F(y,t)|+|G(y,t)|]\in WX,
\end{equation}
and $\|M_\triangledown(f)\|_{WX}\sim\|f\|_{WH_X(\rn)}$.

Then, by Lemmas \ref{Le64} and \ref{Le47}, we know that
$$
f(x)=\int_0^\infty\int_\rn F(y,t)\widetilde\psi_t(x-y)\,\frac{dy\,dt}{t}\quad\text{in}\quad\cs'(\rn).
$$
For any $i\in\zz$, let $\Omega_i:=\{x\in\rn:\ M_\triangledown(f)(x)>2^i\}$. Then $\Omega_i$ is open and,
by \eqref{Eqde1}, we further find that
\begin{equation}\label{Eq418}
\sup_{i\in\zz}\lf\{2^i\lf\|\mathbf{1}_{\Omega_i}\r\|_{X}\r\}\le\lf\|M_\triangledown(f)\r\|_{WX}
\lesssim\|f\|_{WH_X(\rn)}.
\end{equation}
Since $\Omega_i$ is a proper open subset of $\rn$, by the Whitney decomposition (see, for instance, \cite[p.\,463]{G1}),
we know that there exists a sequence of cubes, $\{Q_{i,j}\}_{j\in\nn}$, such that, for any $i\in\zz$,
\begin{enumerate}
\item[(i)] $\bigcup_{j\in\nn} Q_{i,j}=\Omega_i$ and $\{Q_{i,j}\}_{j\in\nn}$ have disjoint interiors;
\item[(ii)] for any $j\in\nn$, $\sqrt nl_{Q_{i,j}}\le\dist(Q_{i,j},\Omega_i^\complement)
\le4\sqrt nl_{Q_{i,j}}$,
here and hereafter, $l_{Q_{i,j}}$ denotes the side length of the cube $Q_{i,j}$ and $\dist(Q_{i,j},\Omega_i^\complement)
:=\inf\{|x-y|:\ x\in Q_{i,j},\ y\in\Omega_i^\complement\}$;
\item[(iii)] for any $j,\ k\in\nn$, if the boundaries of two cubes $Q_{i,j}$ and $Q_{i,k}$ touch,
then $\frac14\le\frac{l_{Q_{i,j}}}{l_{Q_{i,k}}}\le4$;
\item[(iv)] for any given $j\in\nn$, there exist at most $12^n$ different cubes $\{Q_{i,k}\}_k$ that touch $Q_{i,j}$.
\end{enumerate}
For any $\epsilon\in(0,\infty)$, $i\in\zz$, $j\in\nn$ and $x\in\rn$, let
$$
\dist\lf(x,\Omega_i^\complement\r):=\inf\lf\{|x-y|:\ y\in\Omega_i\r\},
$$
$$
\widetilde\Omega_i:=\lf\{(x,t)\in\rr_+^{n+1}:=\rn\times(0,\infty):\
0<2t(|x_0|+1)<\dist\lf(x,\Omega_i^\complement\r)\r\},
$$
$$
\widetilde Q_{i,j}:=\lf\{(x,t)\in\rr_+^{n+1}:\ x\in Q_{i,j},\ (x,t)\in\widetilde\Omega_i\setminus\widetilde\Omega_{i+1}\r\}
$$
and
$$
b_{i,j}^\epsilon(x):=\int_\epsilon^{1/\epsilon}\int_\rn\mathbf{1}_{\widetilde Q_{i,j}}(y,t)F(y,t)\widetilde\psi_t(x-y)\,\frac{dy\,dt}{t}.
$$
Then, by the proof of \cite[pp.\,221-222]{C1977},
we know that there exist positive constants $C_1$
and $C_2$ such that,
for any $\epsilon\in(0,\infty)$, $i\in\zz$ and $j\in\nn$, $\supp b_{i,j}^\epsilon\subset C_1Q_{i,j}$,
$\|b_{i,j}^\epsilon\|_{L^\infty(\rn)}\le C_22^i$, and
$\int_\rn b_{i,j}^\epsilon(x)x^\gamma\,dx=0$ for any $\gamma\in\zz_+^n$ satisfying $|\gamma|\le d$.
Moreover, for any $\zeta\in\cs(\rn)$, by the Lebesgue dominated convergence theorem
and $\sum_{i\in\zz}\sum_{j\in\nn}\mathbf{1}_{\widetilde Q_{i,j}}=1$, we have
\begin{align*}
\lf\langle\sum_{i\in\zz}\sum_{j\in\nn}b_{i,j}^\epsilon,\zeta\r\rangle
&=\int_\rn\zeta(x)\sum_{i\in\zz}\sum_{j\in\nn}\int_\epsilon^{1/\epsilon}\int_\rn\mathbf{1}_{\widetilde Q_{i,j}}(y,t)F(y,t)
\widetilde\psi_t(x-y)\,\frac{dy\,dt}{t}\,dx\\
&=\int_\rn\zeta(x)\int_\epsilon^{1/\epsilon}\int_\rn F(y,t)
\widetilde\psi_t(x-y)\,\frac{dy\,dt}{t}\,dx
\end{align*}
and hence
\begin{equation}\label{Lefb}
f=\lim_{\epsilon\rightarrow0^+}\sum_{i\in\zz}
\sum_{j\in\nn}b_{i,j}^\epsilon\quad\text{in}\quad\cs'(\rn).
\end{equation}
Moreover, since, for any $\epsilon\in(0,1)$, $i\in\zz$ and $j\in\nn$,
$\|b_{i,j}^\epsilon\|_{L^\infty(\rn)}\le C_22^i$, $\{b_{i,j}^\epsilon\}_{\epsilon\in(0,1)}$
is bounded in $L^\infty(\rn)$. Then, by the Alaoglu theorem
(see, for instance, \cite[Theorem 3.17]{R}),
we find that there exist
$\{b_{i,j}\}_{i\in\zz,j\in\nn}\subset L^\infty(\rn)$ and a sequence $\{\epsilon_k\}_{k\in\nn}\subset(0,\infty)$
such that $\epsilon_k\rightarrow0$ as $k\rightarrow\infty$ and, for any $i\in\zz,\,j\in\nn$ and $g\in L^1(\rn)$,
\begin{equation}
\lim_{k\rightarrow\infty}\langle b_{i,j}^{\epsilon_k},g\rangle=\langle b_{i,j},g\rangle,
\end{equation}
$\supp b_{i,j}\subset C_1Q_{i,j}$, $\|b_{i,j}\|_{L^\infty(\rn)}\le C_22^i$ and,
for any $\gamma\in\zz_+^n$ with $|\gamma|\le d$,
$$
\int_\rn b_{i,j}(x)x^\gamma\,dx=\lf\langle b_{i,j},x^\gamma\mathbf{1}_{C_1Q_{i,j}}\r\rangle
=\lim_{k\rightarrow\infty}\int_\rn b_{i,j}^{\epsilon_k}(x)x^\gamma\,dx=0.
$$
Next we show that
\begin{equation}\label{Eq420}
\lim_{k\rightarrow\infty}\sum_{i\in\zz}\sum_{j\in\nn}b_{i,j}^{\epsilon_k}
=\sum_{i\in\zz}\sum_{j\in\nn}b_{i,j}\quad\text{in}\quad\cs'(\rn).
\end{equation}
Indeed, by the facts that, for any $i\in\zz$ and $j,\,k\in\nn$, $\|b_{i,j}\|_{L^\infty(\rn)}\lesssim2^i$, $\|b_{i,j}^{\epsilon_k}\|_{L^\infty(\rn)}\lesssim2^i$
and, for any $k\in\nn$ and $\gamma\in\zz_+^n$
with $|\gamma|\le d$, $\int_\rn b_{i,j}(x)x^\gamma\,dx
=0=\int_\rn b_{i,j}^{\epsilon_k}(x)x^\gamma\,dx$, we conclude that, for any $N\in\nn$
and $\zeta\in\mathcal{S}(\rn)$,
\begin{align*}
&\sum_{|i|\geq N}\sum_{j\in\nn}\lf[\lf|\lf\langle b_{i,j}^{\epsilon_k},\zeta\r\rangle\r|
+\lf|\lf\langle b_{i,j},\zeta\r\rangle\r|\r]\\
&\hspace*{12pt}=\sum_{i=-\infty}^{-N-1}\sum_{j\in\nn}
\lf[\lf|\lf\langle b_{i,j}^{\epsilon_k},\zeta\r\rangle\r|+\lf|\lf\langle b_{i,j},\zeta\r\rangle\r|\r]
+\sum_{i=N+1}^\infty\sum_{j\in\nn}\lf\{\lf|\int_{C_1Q_{i,j}}b_{i,j}^{\epsilon_k}(x)
\lf[\zeta(x)-P_{C_1Q_{i,j}}^d\zeta(x)\r]\,dx\r|\r.\\
&\hspace*{24pt}+\lf.\lf|\int_{C_1Q_{i,j}}b_{i,j}(x)
\lf[\zeta(x)-P_{C_1Q_{i,j}}^d\zeta(x)\r]\,dx\r|\r\}\\
&\hspace*{12pt}\lesssim\sum_{i=-\infty}^{-N-1}2^i\int_\rn|\zeta(x)|\,dx
+\sum_{i=N+1}^\infty\sum_{j\in\nn}2^i\int_{C_1Q_{i,j}}\lf|\zeta(x)-P_{C_1Q_{i,j}}^d\zeta(x)\r|\,dx.
\end{align*}
Since $d\geq\lfloor n(1/p-1)\rfloor$, it follows that
$p\in(\frac{n}{n+d+1},1)$, which, together with $X^{1/(\vartheta_0 p)}=[X^{1/\vartheta_0}]^{1/p}$,
the assumption that $\cm$ in \eqref{mm}
is bounded on $X^{1/(\vartheta_0 p)}$ and Lemma \ref{Le49},
further implies that, for any $\zeta\in\cs(\rn)$,
$\|\zeta\|_{\cl_{1,X^{1/\vartheta_0},d}(\rn)}<\infty$. By this, the assumption that $X^{1/\vartheta_0}$ is concave and $\vartheta_0>1$,
we further conclude that, for any $k,N\in\nn$
and $\zeta\in\mathcal{S}(\rn)$,
\begin{align}\label{411}
&\sum_{|i|\geq N}\sum_{j\in\nn}\lf[\lf|\lf\langle b_{i,j}^{\epsilon_k},\zeta\r\rangle\r|
+\lf|\lf\langle b_{i,j},\zeta\r\rangle\r|\r]\\ \noz
&\quad\lesssim2^{-N}\lf\|\zeta\r\|_{L^1(\rn)}+\sum_{i=N+1}^\infty\sum_{j\in\nn}2^i
\lf\|\mathbf{1}_{Q_{i,j}}\r\|_{X^{1/\vartheta_0}}
\lf\|\zeta\r\|_{\cl_{1,X^{1/\vartheta_0},d}(\rn)}\\\noz
&\quad\lesssim2^{-N}\lf\|\zeta\r\|_{L^1(\rn)}+\lf\|\zeta\r\|_{\cl_{1,X^{1/\vartheta_0},d}(\rn)}
\sum_{i=N+1}^\infty2^i\lf\|\mathbf{1}_{\Omega_{i}}\r\|_{X^{1/\vartheta_0}}\\ \noz
&\quad\lesssim2^{-N}\|\zeta\|_{L^1(\rn)}+\|\zeta\|_{\cl_{1,X^{1/\vartheta_0},d}(\rn)}
\lf[\sup_{i\in\zz}2^i\lf\|\mathbf{1}_{\Omega_{i}}
\r\|_{X}\r]^{\vartheta_0}\sum_{i=N+1}^\infty
2^{-i(\vartheta_0-1)}\\ \noz
&\quad\lesssim2^{-N}\lf\|\zeta\r\|_{L^1(\rn)}+2^{-N(\vartheta_0-1)}
\lf\|\zeta\r\|_{\cl_{1,X^{1/\vartheta_0},d}(\rn)}
\|f\|_{WH_X(\rn)}^{\vartheta_0},
\end{align}
where the implicit positive constants are independent of $k$, $N$ and $f$.
Similarly, for any given $N\in\nn$ and $\zeta\in\mathcal{S}(\rn)$, there exists a
positive constant $C_{(N,\zeta)}$ such that, for any $k\in\nn$,
\begin{align}\label{412}
\sum_{|i|\leq N}\sum_{j\in\nn}\lf[\lf|\langle b_{i,j}^{\epsilon_k},\zeta\rangle\r|
+\lf|\langle b_{i,j},\zeta\rangle\r|\r]\leq C_{(N,\zeta)}<\infty.
\end{align}
Therefore, using \eqref{411} and \eqref{412}, repeating the
argument similar to that used in \cite[p.\,651]{LYJ}, we find that \eqref{Eq420} holds true.

For any $i\in\zz$ and $j\in\nn$, let $B_{i,j}$ be the ball with the same center as $Q_{i,j}$
and the radius $5\sqrt nC_1l_{Q_{i,j}}$,
$$
a_{i,j}:=\frac{b_{i,j}}{C_22^i\|\mathbf{1}_{B_{i,j}}\|_{X}}
\quad\text{and}\quad\lambda_{i,j}:=C_22^i\lf\|\mathbf{1}_{B_{i,j}}\r\|_{X}.
$$
Then, using the properties of $b_{i,j}$, we know that $a_{i,j}$ is
an $(X,\infty,d)$-atom supported in the ball $B_{i,j}$ satisfying that
$\{cB_{i,j}\}_{j\in\nn}$ is finite overlapping for some $c\in(0,1]$
and, due to \eqref{Eq420} and \eqref{Lefb}, $f=\sum_{i\in\zz}\sum_{j\in\nn}\lambda_{i,j}a_{i,j}$ in $\cs'(\rn)$.
Similarly to \eqref{EqHLMS},
by \eqref{eq}, we conclude that
$$\lf\|\sum_{j\in\nn}\mathbf{1}_{B_{i,j}}\r\|_{X}\lesssim\lf\|\sum_{j\in\nn}\mathbf{1}_{Q_{i,j}}\r\|_{X}.$$
From this and \eqref{Eq418}, we deduce that
$$
\sup_{i\in\zz}\lf\|\sum_{j\in\nn}
\frac{\lambda_{i,j}\mathbf{1}_{B_{i,j}}}{\|\mathbf{1}_{B_{i,j}}\|_X}\r\|_X\sim \sup_{i\in\zz}2^i\lf\|\sum_{j\in\nn}\mathbf{1}_{B_{i,j}}\r\|_{X}
\lesssim \sup_{i\in\zz}2^i\lf\|\sum_{j\in\nn}\mathbf{1}_{Q_{i,j}}\r\|_{X}
\lesssim \sup_{i\in\zz}2^i\lf\|\mathbf{1}_{\Omega_{i}}\r\|_{X}\lesssim\|f\|_{WH_X(\rn)},
$$
which completes the proof of Theorem \ref{Thad}.
\end{proof}

Next we present a reconstruction theorem.

\begin{theorem}\label{Thar}
Let $X$ be a ball quasi-Banach function space satisfying
Assumption \ref{a2.15} for some $p_-\in(0,\infty)$.
Assume that, for any given $r\in(0,\underline{p})$
with $\underline{p}$ as in \eqref{Eqpll},
$X^{1/r}$ is a ball Banach function space.
Assume that there exist $r_0\in(0,\underline{p})$ and $p_0\in(r_0,\infty)$ such that,
for any $f\in(X^{1/r_0})'$,
\begin{equation}\label{Eqdm}
\lf\|\cm^{((p_0/r_0)')}(f)\r\|_{(X^{1/r_0})'}\le C\lf\|f\r\|_{(X^{1/r_0})'},
\end{equation}
where the positive constant $C$ is independent of $f$.
Let $d\in\zz_+$ with $d\geq \lfloor n(1/\underline{p}-1)\rfloor$,
$c\in(0,1]$, $q\in(\max\{1,p_0\},\infty]$
and $A,\ \widetilde A\in(0,\infty)$ and let
$\{a_{i,j}\}_{i\in\zz,j\in\nn}$ be a sequence of $(X,\,q,\,d)$-atoms supported, respectively, in balls
$\{B_{i,j}\}_{i\in\zz,j\in\nn}$
satisfying that, $\sum_{j\in\nn}\mathbf{1}_{cB_{i,j}}\le A$ for any
$i\in\zz$,
$\lambda_{i,j}:=\widetilde A2^i\|\mathbf{1}_{B_{i,j}}\|_{X}$ for any $i\in\zz$ and $j\in\nn$,
the series
$
f:=\sum_{i\in\zz}\sum_{j\in\nn}\lambda_{i,j}a_{i,j}
$
converges in $\cs'(\rn)$
and
$$
\sup_{i\in\zz}\lf\|\sum_{j\in\nn}
\frac{\lambda_{i,j}\mathbf{1}_{B_{i,j}}}{\|\mathbf{1}_{B_{i,j}}\|_{X}}\r\|_{X}<\infty.
$$
Then $f\in WH_X(\rn)$ and
$$
\|f\|_{WH_X(\rn)}\lesssim\sup_{i\in\zz}\lf\|\sum_{j\in\nn}
\frac{\lambda_{i,j}\mathbf{1}_{B_{i,j}}}{\|\mathbf{1}_{B_{i,j}}\|_{X}}\r\|_{X},
$$
where the implicit positive constant is independent of $f$.
\end{theorem}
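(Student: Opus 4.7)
Let $M:=\sup_{i\in\zz}\lf\|\sum_{j\in\nn}\lambda_{i,j}\mathbf 1_{B_{i,j}}/\|\mathbf 1_{B_{i,j}}\|_X\r\|_X$. The target is to show that $\alpha\|\mathbf 1_{\{M_N^0(f)>\alpha\}}\|_X\ls M$ uniformly in $\alpha\in(0,\infty)$. Given such $\alpha$, I choose $i_0\in\zz$ with $2^{i_0}\le\alpha<2^{i_0+1}$ and split $f=F_1+F_2$, where $F_1:=\sum_{i\le i_0}\sum_{j\in\nn}\lambda_{i,j}a_{i,j}$ gathers the \emph{low-level} atoms and $F_2:=\sum_{i>i_0}\sum_{j\in\nn}\lambda_{i,j}a_{i,j}$ gathers the \emph{high-level} atoms. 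By sub-additivity of $M_N^0$ and the Aoki--Rolewicz type inequality of Remark \ref{Re213}, it suffices to bound $\alpha\|\mathbf 1_{\{M_N^0(F_k)>c\alpha\}}\|_X$ by $M$ for $k=1,2$ separately.

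For the high-level piece $F_2$, let $E^\ast:=\bigcup_{i>i_0}\bigcup_{j\in\nn}2B_{i,j}$ and split $\{M_N^0(F_2)>c\alpha\}\subset E^\ast\cup(\{M_N^0(F_2)>c\alpha\}\setminus E^\ast)$. For the geometric part $\|\mathbf 1_{E^\ast}\|_X$, I use the hypothesis that $X^{1/r}$ is a ball Banach function space for every $r\in(0,\underline p)$, which supplies the $1/r$-convexity $\|\sum_i G_i\|_X\le(\sum_i\|G_i\|_X^r)^{1/r}$ for nonnegative $G_i$; combined with the scaling inequality \eqref{EqHLMS} to pass from $2B_{i,j}$ to $B_{i,j}$ and with the identity $\|\sum_j\mathbf 1_{B_{i,j}}\|_X=(\widetilde A\,2^i)^{-1}\|\sum_j\lambda_{i,j}\mathbf 1_{B_{i,j}}/\|\mathbf 1_{B_{i,j}}\|_X\|_X\le M/(\widetilde A\,2^i)$, the convergent geometric series $\sum_{i>i_0}2^{-ir}$ yields $\|\mathbf 1_{E^\ast}\|_X\ls M\,2^{-i_0}$, matched by $\alpha\le 2^{i_0+1}$. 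Off $E^\ast$, the $(X,q,d)$-atom condition with $d\ge\lfloor n(1/\underline p-1)\rfloor$ and $N$ large gives the standard pointwise tail estimate $M_N^0(a_{i,j})(x)\ls\|\mathbf 1_{B_{i,j}}\|_X^{-1}[\cm(\mathbf 1_{B_{i,j}})(x)]^{(n+d+1)/n}$ for $x\notin 2B_{i,j}$. Summing the majorants, applying Chebyshev at a suitable exponent, and invoking Assumption \ref{a2.15} (with $s:=(n+d+1)/n>1$ on $X^{1/p}$ for some $p\in(0,p_-)$) together with the $1/r$-convexity once more absorb the outer geometric series into the bound $\ls M$.

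For the low-level piece $F_1$, the atomic supports can be spread out, so the geometric estimate fails and I argue by duality through \eqref{Eqdm}. Applying Chebyshev with the $p_0$-th power gives $\alpha^{p_0}\|\mathbf 1_{\{M_N^0(F_1)>c\alpha\}}\|_X\le\|[M_N^0(F_1)]^{p_0}\|_X$; by Definition \ref{Debf}(i) this quantity can be rewritten in terms of $\|[M_N^0(F_1)]^{r_0}\|_{X^{1/r_0}}$, and since $X^{1/r_0}$ is a ball Banach function space, Lemma \ref{Lesdual} represents the latter as $\sup\{\int[M_N^0(F_1)]^{r_0}g\,dx:\ \|g\|_{(X^{1/r_0})'}\le1\}$. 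For such a test $g\ge0$, I estimate $\int[M_N^0(F_1)]^{r_0}g$ atom by atom (using $r_0<1$ for sub-additivity of $t\mapsto t^{r_0}$), split the domain into $2B_{i,j}$ and its complement, apply H\"older inside $2B_{i,j}$ with exponent $p_0/r_0$ (legal because $q>p_0$ so each atom lies in $L^{p_0}$ with $\|a_{i,j}\|_{L^{p_0}}\le|B_{i,j}|^{1/p_0}/\|\mathbf 1_{B_{i,j}}\|_X$), and reuse the tail estimate outside $2B_{i,j}$. The boundedness of $\cm^{((p_0/r_0)')}$ on $(X^{1/r_0})'$ supplied by \eqref{Eqdm}, combined with Lemma \ref{LeHolder}, converts the appearing $g$-averages on $B_{i,j}$ back into controlled $(X^{1/r_0})'$-contributions; summing over $j$ by the finite-overlap hypothesis produces $\|\sum_j\mathbf 1_{B_{i,j}}\|_X^{r_0}$, and summing the resulting geometric series in $i\le i_0$ yields a total $\sim\alpha^{p_0}M^{p_0}$, which upon dividing by $\alpha^{p_0}$ gives the desired $M$.

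The main obstacle is the exponent bookkeeping in the $F_1$ argument: one must thread $r_0<\underline p\le 1<q$ with $p_0\in(r_0,q)$ so that H\"older with exponent $p_0/r_0$ applies inside each $B_{i,j}$, so that $(p_0/r_0)'$ is exactly the exponent at which $\cm^{((p_0/r_0)')}$ is bounded on $(X^{1/r_0})'$ by \eqref{Eqdm}, and so that the geometric series $\sum_{i\le i_0}$ weighted by the correct power of $2^i$ telescopes to $\alpha^{p_0}$ and cancels the Chebyshev penalty. A secondary subtlety is the pointwise decay of $M_N^0$ on an atom with $d$-order moment vanishing, which requires $N$ in the definition of $\cf_N(\rn)$ to be large enough to convert the moment-cancellation into the factor $[\cm(\mathbf 1_{B_{i,j}})]^{(n+d+1)/n}$; this is built into the freedom to enlarge $N$ in Definition \ref{DewSH}, justified by Theorem \ref{Thmc}(ii).
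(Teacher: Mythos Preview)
Your global architecture coincides with the paper's: the same dyadic split $f=F_1+F_2$ at level $i_0$, and for $F_2$ the same two-piece estimate (the geometric bound on $E^\ast=\bigcup_{i>i_0}\bigcup_j 2B_{i,j}$ is the paper's $\mathrm{I}_2$, and the tail plus Fefferman--Stein argument off $E^\ast$ is its $\mathrm{I}_3$). The pointwise decay $M_N^0(a_{i,j})\lesssim\|\mathbf 1_{B_{i,j}}\|_X^{-1}[\cm(\mathbf 1_{B_{i,j}})]^{(n+d+1)/n}$ off $2B_{i,j}$ is exactly \eqref{Eq412}.

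There is, however, a genuine exponent gap in your $F_1$ step. The chain ``Chebyshev with the $p_0$-th power gives $\alpha^{p_0}\|\mathbf 1_{\{M_N^0(F_1)>c\alpha\}}\|_X\le\|[M_N^0(F_1)]^{p_0}\|_X$; by Definition~\ref{Debf}(i) this can be rewritten in terms of $\|[M_N^0(F_1)]^{r_0}\|_{X^{1/r_0}}$'' does not connect: by Definition~\ref{Debf}(i) one has $\|[M_N^0(F_1)]^{r_0}\|_{X^{1/r_0}}=\|M_N^0(F_1)\|_X^{r_0}$, which is unrelated to the $p_0$-th power. More seriously, if you dualize at exponent $r_0$ and use $r_0$-sub-additivity to split atom by atom, then for each fixed $i\le i_0$ the contribution is of order $2^{ir_0}\|\sum_j\mathbf 1_{B_{i,j}}\|_{X^{1/r_0}}\sim 2^{ir_0}(M/2^i)^{r_0}=M^{r_0}$, and the sum $\sum_{i\le i_0}M^{r_0}$ diverges. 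So the ``geometric series in $i\le i_0$'' does not telescope to anything finite with your exponents, and the claimed total $\alpha^{p_0}M^{p_0}$ (which, even if correct, would yield $M^{p_0}$ rather than $M$ after dividing by $\alpha^{p_0}$) is not obtained.

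What is missing is the device the paper uses to reconcile ``exponent large enough for the $i$-sum to converge'' with ``exponent small enough for sub-additivity in the atoms'': after splitting the on-ball and off-ball parts of $F_1$ at the level of the indicator (the paper's $\mathrm{I}_{1,1}$ and $\mathrm{I}_{1,2}$), one first applies a H\"older-in-$i$ inequality with a weight parameter $a\in(0,1-1/\widetilde q)$, namely
\[
\sum_{i\le i_0-1}G_i\le \frac{2^{i_0a}}{(2^{a\widetilde q'}-1)^{1/\widetilde q'}}\Bigl[\sum_{i\le i_0-1}2^{-ia\widetilde q}G_i^{\widetilde q}\Bigr]^{1/\widetilde q},
\]
with $\widetilde q:=q/p_0\in(1,1/r_0]$ for the on-ball part (and an analogous pair $(r_1,q_1)$ for the off-ball part). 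Only then does Chebyshev at level $\widetilde q$, followed by passage to $X^{1/r_0}$, $\widetilde qr_0\le 1$ sub-additivity, and Lemma~\ref{Le45} (which is precisely the packaged duality argument via \eqref{Eqdm} that you describe), produce a convergent sum $\sum_{i\le i_0-1}2^{[(1-a)\widetilde q-1]ir_0}\sim 2^{[(1-a)\widetilde q-1]i_0 r_0}$; the prefactor $2^{-i_0\widetilde q(1-a)}$ then cancels correctly to give $\alpha^{-1}M$. Your ``main obstacle'' diagnosis is accurate, but the fix is this H\"older-in-$i$ reweighting, not merely choosing $p_0$ as the Chebyshev exponent.
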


To prove Theorem \ref{Thar}, we need the following useful technical lemma.

\begin{lemma}\label{Le45}
Let $r\in(0,\infty)$, $q\in(r,\infty]$ and $X$ be a ball quasi-Banach function space.
Assume that $X^{1/r}$ is a ball Banach function space and there exists a positive
constant $C$ such that, for any $f\in (X^{1/r})'$,
$\|\cm^{((q/r)')}(f)\|_{(X^{1/r})'}\le C\lf\|f\r\|_{(X^{1/r})'}$.
Then there exists a positive constant $C$
such that, for any sequence $\{B_j\}_{j\in\nn}$ of balls,
numbers $\{\lambda_j\}_{j\in\nn}\subset\cc$ and measurable functions
$\{a_j\}_{j\in\nn}$ satisfying that, for any $j\in\nn$, $\supp(a_j)\subset B_j$ and
$\|a_j\|_{L^q(\rn)}\le|B_j|^{1/q}$,
$$
\lf\|\lf(\sum_{j\in\nn}|\lambda_ja_j|^r\r)^\frac1r\r\|_{X}
\le C\lf\|\lf(\sum_{j\in\nn}|\lambda_j\mathbf{1}_{B_j}|^r\r)^\frac1r\r\|_{X}.
$$
\end{lemma}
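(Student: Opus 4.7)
The plan is to reduce the claim, via the $r$-convexification identity $\|g\|_X=\||g|^r\|_{X^{1/r}}^{1/r}$, to showing that
$$
\lf\|\sum_{j\in\nn}|\lambda_j|^r|a_j|^r\r\|_{X^{1/r}}\lesssim\lf\|\sum_{j\in\nn}|\lambda_j|^r\mathbf{1}_{B_j}\r\|_{X^{1/r}}.
$$
Since $X^{1/r}$ is a ball Banach function space, by Lemma \ref{Lesdual} (Lorentz--Luxemburg) we have $X^{1/r}=(X^{1/r})''$, so its norm can be computed by duality against $(X^{1/r})'$. Thus it suffices to estimate $\int_\rn\sum_j|\lambda_j|^r|a_j(x)|^r|h(x)|\,dx$ for an arbitrary $h\in(X^{1/r})'$ with $\|h\|_{(X^{1/r})'}\le1$.

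The core of the argument is a pointwise-then-maximal estimate on each ball. For each $j\in\nn$, since $\supp(a_j)\subset B_j$ and $\|a_j\|_{L^q(\rn)}\le|B_j|^{1/q}$, the H\"older inequality with exponents $q/r$ and $(q/r)'$ (interpreting $(q/r)'=1$ when $q=\infty$) yields
$$
\int_\rn|a_j(x)|^r|h(x)|\,dx\le\|a_j\|_{L^q(\rn)}^r\lf\|h\mathbf{1}_{B_j}\r\|_{L^{(q/r)'}(\rn)}\le|B_j|^{r/q}|B_j|^{1/(q/r)'}\inf_{x\in B_j}\cm^{((q/r)')}(h)(x),
$$
where the last step uses the defining inequality $\frac1{|B_j|}\int_{B_j}|h|^{(q/r)'}\le[\cm^{((q/r)')}(h)(x)]^{(q/r)'}$ valid for every $x\in B_j$. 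Noting $r/q+1/(q/r)'=1$, the product of the $|B_j|$-powers is exactly $|B_j|$, and the infimum-times-measure bound gives
$$
\int_\rn|a_j(x)|^r|h(x)|\,dx\le\int_\rn\mathbf{1}_{B_j}(x)\cm^{((q/r)')}(h)(x)\,dx.
$$

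Summing against $|\lambda_j|^r$ and applying the H\"older inequality of Lemma \ref{LeHolder} in the pair $X^{1/r}$ and $(X^{1/r})'$ gives
$$
\int_\rn\sum_{j\in\nn}|\lambda_j|^r|a_j(x)|^r|h(x)|\,dx\le\lf\|\sum_{j\in\nn}|\lambda_j|^r\mathbf{1}_{B_j}\r\|_{X^{1/r}}\lf\|\cm^{((q/r)')}(h)\r\|_{(X^{1/r})'},
$$
and the standing hypothesis \eqref{Eqdm}-type bound $\|\cm^{((q/r)')}(h)\|_{(X^{1/r})'}\lesssim\|h\|_{(X^{1/r})'}\le1$ finishes the estimate. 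Taking the supremum over admissible $h$ and then the $r$-th root gives the desired inequality.

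The only real obstacle is justifying the duality step, since $a$ priori $\sum_j|\lambda_j|^r|a_j|^r$ need not lie in $X^{1/r}$ or be integrable against $h$. I would handle this by truncating to finite partial sums $\sum_{j=1}^N$, for which the above chain of estimates is rigorous, and then passing to the limit $N\to\infty$ using Definition \ref{Debqfs}(iii) (the Fatou-type property of the ball quasi-Banach function space $X^{1/r}$) on both sides; the right-hand side remains uniformly bounded by hypothesis, so the left-hand side is finite and the same bound survives in the limit.
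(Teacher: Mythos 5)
Your proof is correct and follows essentially the same route as the paper: express $\|\cdot\|_{X^{1/r}}$ by duality via Lorentz--Luxemburg, use H\"older on each ball together with the size condition on $a_j$ and the pointwise lower bound on $\cm^{((q/r)')}(h)$ over $B_j$ to get $\int|a_j|^r|h|\le\int\mathbf{1}_{B_j}\cm^{((q/r)')}(h)$, then close with the H\"older inequality for the pair $(X^{1/r},(X^{1/r})')$ and the assumed boundedness of $\cm^{((q/r)')}$ on $(X^{1/r})'$. Your final remark about truncation is harmless but not needed, since everything is nonnegative so the interchange of sum and integral is automatic by Tonelli, and the duality identity $\|\cdot\|_{X^{1/r}}=\|\cdot\|_{(X^{1/r})''}$ holds regardless of prior knowledge of finiteness.
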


\begin{proof}
By the definition of the associate space, the assumption that $X^{1/r}$ is a ball Banach function space
and Lemma \ref{Lesdual}, we have
\begin{align*}
\lf\|\lf(\sum_{j\in\nn}|\lambda_ja_j|^r\r)^\frac1r\r\|_{X}^r
&=\lf\|\sum_{j\in\nn}|\lambda_ja_j|^r\r\|_{X^{1/r}}
=\lf\|\sum_{j\in\nn}\lf|\lambda_ja_j\r|^r\r\|_{(X^{1/r})''}\\
&=\sup\lf\{\int_\rn\sum_{j\in\nn}\lf|\lambda_ja_j(x)\r|^rg(x)\,dx:
\ g\in (X^{1/r})'\text{ such that }\lf\|g\r\|_{(X^{1/r})'}=1\r\}.
\end{align*}
Then, from the H\"older inequality, we deduce that,
for any $g\in (X^{1/r})'$ with $\|g\|_{(X^{1/r})'}=1$,
\begin{align*}
\int_\rn\sum_{j\in\nn}|\lambda_ja_j(x)|^r|g(x)|\,dx&
=\sum_{j\in\nn}|\lambda_j|^r\int_{\rn}|a_j(x)|^r|g(x)|\,dx
\le\sum_{j\in\nn}|\lambda_j|^r\|a_j\|_{L^q(\rn)}^r
\lf\|g\mathbf{1}_{B_j}\r\|_{L^{(q/r)'}(\rn)}\\
&\le\sum_{j\in\nn}|\lambda_j|^r\lf|B_j\r|^{r/q}\lf\|g\mathbf{1}_{B_j}\r\|_{L^{(q/r)'}(\rn)}\\
&\le\sum_{j\in\nn}|\lambda_j|^r\int_{\rn}
\mathbf{1}_{B_j}(x)\lf[\cm\lf(g^{(q/r)'}\r)(x)\r]^{1/(q/r)'}\,dx=:K.
\end{align*}
Applying Lemma \ref{LeHolder} and the assumption that $\cm^{((q/r)')}$
is bounded on $(X^{1/r})'$, we conclude that
\begin{align*}
K&\lesssim\lf\|\sum_{j\in\nn}|\lambda_j|^r\mathbf{1}_{B_j}\r\|_{X^{1/r}}
\lf\|\lf[\cm\lf(g^{(q/r)'}\r)\r]^{1/(q/r)'}\r\|_{(X^{1/r})'}\\
&\lesssim\lf\|\sum_{j\in\nn}|\lambda_j|^r\mathbf{1}_{B_j}\r\|_{X^{1/r}}
\lf\|g\r\|_{(X^{1/r})'}
\sim\lf\|\sum_{j\in\nn}|\lambda_j\mathbf{1}_{B_j}|^r\r\|_{X^{1/r}},
\end{align*}
which, together with Definition \ref{Debf}(i), further implies the desired conclusion.
This finishes the proof of Lemma \ref{Le45}.
\end{proof}

Now we show Theorem \ref{Thar}.
\begin{proof}[Proof of Theorem \ref{Thar}]
Let $c\in(0,1]$, $q\in(p_0,p_0/r_0]$ and $\{a_{i,j}\}_{i\in\zz,j\in\nn}$ be a sequence of
$(X,\,q,\,d)$-atoms supported, respectively, in balls
$\{B_{i,j}\}_{i\in\zz,j\in\nn}$
satisfying that, for any
$i\in\zz$, $\sum_{j\in\nn}\mathbf{1}_{cB_{i,j}}\le A$
with $A$ being a positive constant independent of $i$,
$\lambda_{i,j}:=\widetilde A2^i\|\mathbf{1}_{B_{i,j}}\|_{X}$ for any $i\in\zz$ and $j\in\nn$
with $\widetilde A$ being a positive constant independent of
$i$ and $j$,
$$
f:=\sum_{i\in\zz}\sum_{j\in\nn}\lambda_{i,j}a_{i,j}\quad\text{in}\quad\cs'(\rn),
$$
and
$$
\sup_{i\in\zz}2^i\lf\|\sum_{j\in\nn}\mathbf{1}_{B_{i,j}}\r\|_{X}<\infty.
$$
To prove $f\in WH_X(\rn)$, by the definition of $WH_X(\rn)$, it suffices to show that
$$
\sup_{\alpha\in(0,\infty)}\lf\{\alpha\lf\|
\mathbf{1}_{\{x\in\rn:\ M_N^0(f)(x)>\alpha\}}\r\|_{X}\r\}
\lesssim\sup_{i\in\zz}2^i\lf\|\sum_{j\in\nn}\mathbf{1}_{B_{i,j}}\r\|_{X}.
$$
For any fixed $\alpha\in(0,\infty)$, let
$i_0\in\zz$ be such that $2^{i_0}\le\alpha<2^{i_0+1}$. Then we write
$$
f=\sum_{i=-\infty}^{i_0-1}\sum_{j\in\nn}\lambda_{i,j}a_{i,j}
+\sum_{i=i_0}^{\infty}\sum_{j\in\nn}\lambda_{i,j}a_{i,j}=:f_1+f_2.
$$
Then it follows from Definition \ref{Debqfs}(ii) that
\begin{align}\label{EqI123}
&\lf\|\mathbf{1}_{\{x\in\rn:\ M_N^0(f)(x)>\alpha\}}\r\|_{X}\\\noz
&\hs\lesssim \lf\|
\mathbf{1}_{\{x\in\rn:\ M_N^0(f_1)(x)>\frac{\alpha}2\}}\r\|_{X}+\lf\|
\mathbf{1}_{\{x\in A_{i_0}:\ M_N^0(f_2)(x)>\frac{\alpha}2\}}\r\|_{X}+\lf\|
\mathbf{1}_{\{x\in(A_{i_0})^\complement:\ M_N^0(f_2)(x)>\frac{\alpha}2\}}\r\|_{X}\\\noz
&\hs=:\mathrm{I_1}+\mathrm{I_2}+\mathrm{I_3},
\end{align}
where $A_{i_0}:=\bigcup_{i=i_0}^\infty\bigcup_{j\in\nn}(2B_{i,j})$.

For $\mathrm{I_1}$, by Definition \ref{Debqfs}(ii), we further decompose it into
\begin{align}\label{Eq410}
\mathrm{I_1}&\lesssim\lf\|
\mathbf{1}_{\{x\in\rn:\ \sum_{i=-\infty}^{i_0-1}\sum_{j\in\nn}
\lambda_{i,j}M_N^0(a_{i,j})(x)\mathbf{1}_{2B_{i,j}}(x)>\frac{\alpha}4\}}\r\|_{X}\\
&\qquad+\lf\|
\mathbf{1}_{\{x\in\rn:\ \sum_{i=-\infty}^{i_0-1}\sum_{j\in\nn}
\lambda_{i,j}M_N^0(a_{i,j})(x)\mathbf{1}_{(2B_{i,j})^\complement}(x)>\frac{\alpha}4\}}\r\|_{X}\noz\\\noz
&=:\mathrm{I_{1,1}}+\mathrm{I_{1,2}}.
\end{align}
We first estimate $\mathrm{I_{1,1}}$. Let $\widetilde q:=q/p_0\in(1,1/{r_0}]$
and $a\in(0,1-1/{\widetilde q})$. Then, from the H\"older inequality, we deduce that
\begin{align*}
&\sum_{i=-\infty}^{i_0-1}\sum_{j\in\nn}\lambda_{i,j}M_N^0(a_{i,j})\mathbf{1}_{2B_{i,j}}
\le\frac{2^{i_0a}}{(2^{a\widetilde q'}-1)^{1/\widetilde q'}}
\lf\{\sum_{i=-\infty}^{i_0-1}2^{-ia\widetilde q}\lf[\sum_{j\in\nn}
\lambda_{i,j}M_N^0(a_{i,j})\mathbf{1}_{2B_{i,j}}\r]^{\widetilde q}\r\}^{1/\widetilde q},
\end{align*}
where $\widetilde q':={\widetilde q}/{(\widetilde q-1)}$. By this, Definitions \ref{Debqfs}(ii)
and \ref{Debf}(i), $\widetilde qr_0\in(0,1]$ and
the fact that $M_N^0(f)\lesssim\cm(f)$
and the assumption that $X^{1/r_0}$ is a ball Banach function space,
we conclude that
\begin{align*}
\mathrm{I_{1,1}}
&\lesssim\lf\|\mathbf{1}_{\{x\in\rn:\ \frac{2^{i_0a}}
{(2^{a\widetilde q'}-1)^{1/\widetilde q'}}
\{\sum_{i=-\infty}^{i_0-1}2^{-ia\widetilde q}[\sum_{j\in\nn}
\lambda_{i,j}M_N^0(a_{i,j})(x)\mathbf{1}_{2B_{i,j}}(x)]^
{\widetilde q}\}^{1/\widetilde q}>2^{i_0-2}\}}\r\|_{X}\\
&\lesssim2^{-i_0\widetilde q(1-a)}\lf\|\sum_{i=-\infty}^
{i_0-1}2^{-ia\widetilde q}\lf[\sum_{j\in\nn}
\lambda_{i,j}M_N^0(a_{i,j})\mathbf{1}_{2B_{i,j}}\r]^{\widetilde q}\r\|_{X}\\
&\lesssim2^{-i_0\widetilde q(1-a)}
\lf\|\sum_{i=-\infty}^{i_0-1}2^{(1-a)i\widetilde qr_0}\sum_{j\in\nn}\lf[
\lf\|\mathbf{1}_{B_{i,j}}\r\|_{X}M_N^0(a_{i,j})\mathbf{1}_{2B_{i,j}}\r]^{\widetilde qr_0}\r\|_{X^{1/r_0}}^\frac{1}{r_0}\\
&\lesssim2^{-i_0\widetilde q(1-a)}\lf[\sum_{i=-\infty}^{i_0-1}2^{(1-a)i\widetilde qr_0}\lf\|\lf\{\sum_{j\in\nn}\lf[
\lf\|\mathbf{1}_{B_{i,j}}\r\|_{X}\cm(a_{i,j})\mathbf{1}_{2B_{i,j}}\r]^{\widetilde qr_0}\r\}^\frac1{r_0}\r\|_{X}^{r_0}\r]^\frac{1}{r_0}.
\end{align*}
From $q=p_0\widetilde q$ and the boundedness of $\cm$ on $L^q(\rn)$ and Definition \ref{Deatom}(ii),
it follows that, for any $i\in\zz$ and $j\in\nn$,
\begin{align*}
\lf\|\lf[
\|\mathbf{1}_{B_{i,j}}\|_{X}\cm(a_{i,j})\r]^{\widetilde q}\mathbf{1}_{2B_{i,j}}\r\|_{L^{p_0}(\rn)}
\lesssim\lf\|\mathbf{1}_{B_{i,j}}\r\|_{X}^{\widetilde q}\lf\|\cm(a_{i,j})\mathbf{1}_{2B_{i,j}}\r\|_{L^q(\rn)}^{\widetilde q}
\lesssim\lf\|\mathbf{1}_{B_{i,j}}\r\|_{X}^{\widetilde q}\lf\|a_{i,j}\r\|_{L^q(\rn)}^{\widetilde q}
\lesssim\lf|B_{i,j}\r|^\frac1{p_0},
\end{align*}
which, combined with Lemma \ref{Le45}, \eqref{EqHLMS} and $(1-a)\widetilde q>1$, further implies that
\begin{align*}
\mathrm{I_{1,1}}&\lesssim2^{-i_0\widetilde q(1-a)}
\lf[\sum_{i=-\infty}^{i_0-1}2^{(1-a)i\widetilde qr_0}\lf\|\lf(\sum_{j\in\nn}\mathbf{1}_{2B_{i,j}}\r)^\frac1{r_0}\r\|_{X}^{r_0}\r]^{\frac1{r_0}}\\
&\lesssim2^{-i_0\widetilde q(1-a)}
\lf[\sum_{i=-\infty}^{i_0-1}2^{(1-a)i\widetilde qr_0}\lf\|\lf(\sum_{j\in\nn}\mathbf{1}_{cB_{i,j}}\r)^\frac1{r_0}\r\|_{X}^{r_0}\r]^{\frac1{r_0}}\\
&\lesssim2^{-i_0\widetilde q(1-a)}
\lf[\sum_{i=-\infty}^{i_0-1}2^{[(1-a)\widetilde
q-1]ir_0}\r]^{\frac1{r_0}}\sup_{i\in\zz}2^i\lf\|\sum_{j\in\nn}
\mathbf{1}_{B_{i,j}}\r\|_{X}\\
&\lesssim\alpha^{-1}
\sup_{i\in\zz}2^i\lf\|\sum_{j\in\nn}\mathbf{1}_{B_{i,j}}\r\|_{X}.
\end{align*}
This shows that
\begin{equation}\label{Eq411}
\alpha\mathrm{I_{1,1}}\lesssim\sup_{i\in\zz}2^i\lf\|\sum_{j\in\nn}\mathbf{1}_{B_{i,j}}\r\|_{X}.
\end{equation}

To deal with $\mathrm{I_{1,2}}$, we first estimate $M_N^0(f)$ on $(2B_{i,j})^\complement$. Let $\phi\in\cf_N(\rn)$ and,
for any $i\in\zz$ and $j\in\nn$,
let $x_{i,j}$ denote the center of $B_{i,j}$ and $r_{i,j}$ its radius.
Then, using the vanishing moments of $a_{i,j}$ and the Taylor remainder theorem, we have,
for any $i\in\zz\cap[i_0,\infty)$, $j\in\nn$,
$t\in(0,\infty)$ and $x\in\rn$,
\begin{align}\label{EqMNa0}
|a_{i,j}\ast\phi_t(x)|&=\lf|\int_{B_{i,j}}a_{i,j}(y)\lf[\phi\lf(\frac{x-y}{t}\r)-\sum_{|\beta|\le d}\frac{\partial^\beta\phi(\frac{x-x_{i,j}}{t})}
{\beta!}\lf(\frac{x_{i,j}-y}{t}\r)^\beta\r]\,\frac{dy}{t^n}\r|\\\noz
&\lesssim\int_{B_{i,j}}\lf|a_{i,j}(y)\r|\sum_{|\beta|= d+1}\lf|\partial^\beta\phi\lf(\frac{\xi}{t}\r)\r|
\lf|\frac{x_{i,j}-y}{t}\r|^{d+1}\,\frac{dy}{t^n},
\end{align}
where $\xi:=(x-x_{i,j})+\theta(x_{i,j}-y)$ for some $\theta\in[0,1]$.

For any $i\in\zz$, $j\in\nn$, $x\in(2B_{i,j})^\complement$ and $y\in B_{i,j}$,
it is easy to see that $|x-y|\sim|x-x_{i,j}|$
and $|\xi|\geq|x-x_{i,j}|-|x_{i,j}-y|\gtrsim|x-x_{i,j}|$. By this, \eqref{EqMNa0}, the fact that $\phi\in\cs(\rn)$
and the H\"older inequality, we conclude that, for any $i\in\zz\cap[i_0,\infty)$, $j\in\nn$,
$t\in(0,\infty)$ and $x\in(2B_{i,j})^\complement$,
\begin{align}\label{EqMNa}
|a_{i,j}\ast\phi_t(x)|
&\lesssim\int_{B_{i,j}}|a_{i,j}(y)|\frac{|y-x_{i,j}|^{d+1}}{|x-x_{i,j}|^{n+d+1}}\,dy\\\noz
&\lesssim\frac{(r_{i,j})^{d+1}}{|x-x_{i,j}|^{n+d+1}}
\lf[\int_{B_{i,j}}|a_{i,j}(y)|^q\,dy\r]^{1/q}|B_{i,j}|^{1/q'}
\lesssim\lf\|\mathbf{1}_{B_{i,j}}\r\|_{X}^{-1}\lf(\frac{r_{i,j}}{|x-x_{i,j}|}\r)^{n+d+1},
\end{align}
which implies that, for any $x\in(2B_{i,j})^\complement$,
\begin{equation}\label{Eq412}
M_N^0(a_{i,j})(x)\lesssim
\lf\|\mathbf{1}_{B_{i,j}}\r\|_{X}^{-1}\lf[\cm(\mathbf{1}_{B_{i,j}})(x)\r]^\frac{n+d+1}{n}.
\end{equation}
Observe that $d\geq\lfloor n(\frac1{\underline{p}}-1)\rfloor$ implies that $\underline{p}\in(\frac{n}{n+d+1},1]$.
Let $r_1\in(0,\frac{n}{n+d+1})\subset(0,\underline{p})$, $q_1\in(\frac{n}{(n+d+1)r_1},\frac1{r_1})\subset(1,\infty)$ and $a\in(0,1-\frac1{q_1})$.
From the H\"older inequality, it follows that
\begin{align*}
&\sum_{i=-\infty}^{i_0-1}\sum_{j\in\nn}\lambda_{i,j}M_N^0(a_{i,j})\mathbf{1}_{(2B_{i,j})^\complement}
\le\frac{2^{i_0a}}{(2^{aq_1'}-1)^{1/q_1'}}
\lf\{\sum_{i=-\infty}^{i_0-1}2^{-iaq_1}\lf[\sum_{j\in\nn}
\lambda_{i,j}M_N^0(a_{i,j})\mathbf{1}_{(2B_{i,j})^\complement}\r]^{q_1}\r\}^{1/q_1},
\end{align*}
where $q_1':={q_1}/{(q_1-1)}$. By this, Definition \ref{Debqfs}(ii), \eqref{Eq412}, the definition of $\lambda_{i,j}$
and the assumption that $X^{1/r_1}$ is a ball Banach function space, we conclude that
\begin{align*}
\mathrm{I_{1,2}}&\lesssim
\lf\|\mathbf{1}_{\{x\in\rn:\frac{2^{i_0a}}{(2^{aq_1'}-1)^{1/q_1'}}
\{\sum_{i=-\infty}^{i_0-1}2^{-iaq_1}[\sum_{j\in\nn}
\lambda_{i,j}M_N^0(a_{i,j})(x)\mathbf{1}_{(2B_{i,j})^\complement}(x)]^{q_1}\}^{1/q_1}>2^{i_0-2}\}}\r\|_{X}\\
&\lesssim2^{-i_0q_1(1-a)}\lf\|\sum_{i=-\infty}^{i_0-1}2^{-iaq_1}\lf[\sum_{j\in\nn}
\lambda_{i,j}M_N^0(a_{i,j})\mathbf{1}_{(2B_{i,j})^\complement}\r]^{q_1}\r\|_{X}\\
&\lesssim2^{-i_0q_1(1-a)}\lf\{\sum_{i=-\infty}^{i_0-1}2^{(1-a)iq_1r_1}\lf\|\sum_{j\in\nn}\lf[
\cm(\mathbf{1}_{B_{i,j}})\r]^{\frac{(n+d+1)q_1r_1}{n}}\r\|_{X^{\frac1{r_1}}}\r\}^\frac1{r_1}.
\end{align*}
It is easy to see that $\frac{(n+d+1)q_1r_1}{n}\in(1,\infty)$, $\frac{n}{(n+d+1)q_1}\in(0,r_1)\subset(0,\underline{p})$
and $(1-a)q_1\in(1,\infty)$.
Then, from Definition \ref{Debf}(i), \eqref{EqHLMS} and $\sum_{j\in\nn}\mathbf{1}_{cB_{i,j}}\le A$, we further deduce that
\begin{align*}
\mathrm{I_{1,2}}&\lesssim2^{-i_0q_1(1-a)}
\lf[\sum_{i=-\infty}^{i_0-1}2^{(1-a)iq_1r_1}\lf\|\lf\{\sum_{j\in\nn}\lf[
\cm(\mathbf{1}_{B_{i,j}})\r]^{\frac{(n+d+1)q_1r_1}{n}}\r\}
^\frac{n}{(n+d+1)q_1r_1}\r\|_{X^{\frac{(n+d+1)q_1}{n}}}^\frac{(n+d+1)q_1r_1}{n}\r]^\frac1{r_1}\\
&\lesssim2^{-i_0q_1(1-a)}\lf\{\sum_{i=-\infty}^{i_0-1}2^{(1-a)iq_1{r_1}}\lf\|\sum_{j\in\nn}
\mathbf{1}_{B_{i,j}}\r\|_{X^{\frac1{r_1}}}\r\}^\frac1{r_1}\\
&\lesssim2^{-i_0q_1(1-a)}\lf\{\sum_{i=0}^{i_0-1}2^{[(1-a)q_1-1]ir_1}2^{ir_1}
\lf\|\lf(\sum_{j\in\nn}\mathbf{1}_{cB_{i,j}}\r)^\frac1{r_1}\r\|_{X}^{r_1}\r\}^\frac1{r_1}
\lesssim\alpha^{-1}\sup_{i\in\zz}2^i\lf\|\sum_{j\in\nn}\mathbf{1}_{B_{i,j}}\r\|_{X},
\end{align*}
which implies that
\begin{equation}\label{EqI12}
\alpha\mathrm{I_{1,2}}\lesssim\sup_{i\in\zz}2^i\lf\|\sum_{j\in\nn}\mathbf{1}_{B_{i,j}}\r\|_{X}.
\end{equation}
By this, \eqref{Eq410} and \eqref{Eq411}, we find that
\begin{equation}\label{Eq414}
\alpha\mathrm{I_1}\lesssim\sup_{i\in\zz}2^i\lf\|\sum_{j\in\nn}\mathbf{1}_{B_{i,j}}\r\|_{X}.
\end{equation}

Next we deal with $\mathrm{I_2}$. Let $r_2\in(0,\underline p)$.
Then, by \eqref{EqHLMS}, Definition \ref{Debf}(i),
the assumption that $X^{1/r_2}$ is a ball Banach
function space and $\sum_{j\in\nn}\mathbf{1}_{cB_{i,j}}\le A$,
we conclude that
\begin{align*}
\mathrm{I_2}&\lesssim\lf\|\mathbf{1}_{A_{i_0}}\r\|_{X}\lesssim
\lf\|\sum_{i=i_0}^\infty\sum_{j\in\nn}\mathbf{1}_{2B_{i,j}}\r\|_{X}
\lesssim\lf\|\sum_{i=i_0}^\infty\sum_{j\in\nn}\mathbf{1}_{cB_{i,j}}\r\|_{X}
\sim\lf\|\lf\{\sum_{i=i_0}^\infty\sum_{j\in\nn}
\mathbf{1}_{cB_{i,j}}\r\}^{r_2}\r\|_{X^\frac1{r_2}}^\frac1{r_2}\\
&\lesssim\lf[\sum_{i=i_0}^\infty\lf\|\sum_{j\in\nn}
\mathbf{1}_{cB_{i,j}}\r\|_{X^\frac1{r_2}}\r]^\frac1{r_2}
\lesssim\lf[\sum_{i=i_0}^\infty\lf\|\sum_{j\in\nn}
\mathbf{1}_{cB_{i,j}}\r\|_{X}^{r_2}\r]^\frac1{r_2}
\lesssim\lf\{\sum_{i=i_0}^\infty2^{-ir_2}\lf[2^i
\lf\|\sum_{j\in\nn}\mathbf{1}_{B_{i,j}}\r\|_{X}\r]^{r_2}\r\}^{\frac1{r_2}}\\
&\lesssim\sup_{i\in\nn}2^i\lf\|\sum_{j\in\nn}
\mathbf{1}_{B_{i,j}}\r\|_{X}\lf(\sum_{i=i_0}^\infty2^{-ir_2}\r)^\frac1{r_2}
\lesssim\alpha^{-1}\sup_{i\in\zz}2^i\lf\|\sum_{j\in\nn}\mathbf{1}_{B_{i,j}}\r\|_{X},
\end{align*}
which implies that
\begin{equation}\label{Eq415}
\alpha\mathrm{I_2}\lesssim\sup_{i\in\zz}2^i\lf\|\sum_{j\in\nn}\mathbf{1}_{B_{i,j}}\r\|_{X}.
\end{equation}

It remains to estimate $\mathrm{I_3}$. Recall that $\underline{p}\in(\frac{n}{n+d+1},1]$
and hence there exists $r_3\in(\frac{n}{\underline{p}(n+d+1)},1)$.
By Definitions \ref{Debqfs}(ii) and \ref{Debf}(i),
the assumption that $X^\frac{(n+d+1)r_3}{n}$ is a ball Banach function space
and \eqref{Eq412}, we conclude that
\begin{align*}
\mathrm{I_3}&\lesssim\lf\|\mathbf{1}_{\{x\in(A_{i_0})^\complement:\
\sum_{i=i_0}^\infty\sum_{j\in\nn}\lambda_{i,j}M_N^0(a_{i,j})(x)>\frac{\alpha}2\}}\r\|_{X}
\lesssim\alpha^{-r_3}\lf\|\sum_{i=i_0}^\infty\sum_{j\in\nn}
\lf[\lambda_{i,j}M_N^0(a_{i,j})\r]^{r_3}\mathbf{1}_{(A_{i_0})^\complement}\r\|_{X}\\
&\sim\alpha^{-r_3}\lf\|\lf\{\sum_{i=i_0}^\infty\sum_{j\in\nn}
\lf[\lambda_{i,j}M_N^0(a_{i,j})\r]^{r_3}
\mathbf{1}_{(A_{i_0})^\complement}\r\}^\frac{n}{(n+d+1)r_3}
\r\|_{X^\frac{(n+d+1)r_3}{n}}^\frac{(n+d+1)r_3}{n}\\
&\lesssim\alpha^{-r_3}\lf[\sum_{i=i_0}^\infty
\lf\|\lf\{\sum_{j\in\nn}\lf[\lambda_{i,j}M_N^0(a_{i,j})\r]^{r_3}
\mathbf{1}_{(A_{i_0})^\complement}\r\}^\frac{n}{(n+d+1)r_3}
\r\|_{X^\frac{(n+d+1)r_3}{n}}\r]^\frac{(n+d+1)r_3}{n}\\
&\lesssim\alpha^{-r_3}\lf\{\sum_{i=i_0}^\infty2^\frac{in}{(n+d+1)}\lf\|\sum_{j\in\nn}
\lf[\cm(\mathbf{1}_{B_{i,j}})\r]^\frac{(n+d+1)r_3}{n}
\r\|_{X}^\frac{n}{(n+d+1)r_3}\r\}^\frac{(n+d+1)r_3}{n}.
\end{align*}
Since $\frac{n}{(n+d+1)r_3}\in(0,\underline{p})\subset(0,1)$,
from Definition \ref{Debf}(i) and Assumption \ref{a2.15}, it follows that
\begin{align*}
\mathrm{I_3}&\lesssim\alpha^{-r_3}\lf[\sum_{i=i_0}^\infty2^\frac{in}{(n+d+1)}\lf\|\sum_{j\in\nn}
\mathbf{1}_{B_{i,j}}\r\|_{X}^\frac{n}{(n+d+1)r_3}\r]^\frac{(n+d+1)r_3}{n}
\lesssim\alpha^{-r_3}\sup_{i\in\zz}2^i\lf\|\sum_{j\in\nn}\mathbf{1}_{B_{i,j}}\r\|_{X}
\lf[\sum_{i=i_0}^\infty2^\frac{in(r_3-1)}{(n+d+1)r_3}\r]^\frac{(n+d+1)r_3}{n}\\
&\lesssim\alpha^{-1}\sup_{i\in\zz}2^i\lf\|\sum_{j\in\nn}\mathbf{1}_{B_{i,j}}\r\|_{X},
\end{align*}
namely,
\begin{equation}\label{Eq416}
\alpha\mathrm{I_3}\lesssim\sup_{i\in\zz}2^i\lf\|\sum_{j\in\nn}\mathbf{1}_{B_{i,j}}\r\|_{X}.
\end{equation}

By \eqref{EqI123}, \eqref{Eq414}, \eqref{Eq415} and \eqref{Eq416}, we conclude that
\begin{align*}
\|f\|_{WH_X(\rn)}&=\sup_{\alpha\in(0,\infty)}\lf\{
\alpha\lf\|\mathbf{1}_{\{x\in\rn:\ M_N^0(f)(x)>\alpha\}}\r\|_{X}\r\}
\lesssim\sup_{\alpha\in(0,\infty)}\alpha(\mathrm{I_1}+\mathrm{I_2}+\mathrm{I_3})
\lesssim\sup_{i\in\zz}2^i\lf\|\sum_{j\in\nn}\mathbf{1}_{B_{i,j}}\r\|_{X},
\end{align*}
which completes the proof of Theorem \ref{Thar}.
\end{proof}

\section{Molecular characterizations\label{s5}}

In this section, we establish the molecular characterization of $WH_X(\rn)$.
We begin with recalling the notion of molecules (see \cite[Definition 3.8]{SHYY}).

\begin{definition}\label{Demol}
Let $X$ be a ball quasi-Banach function space, $\epsilon\in(0,\infty)$, $q\in[1,\infty]$ and $d\in\zz_+$.
A measurable function $m$ is called an $(X,\,q,\,d,\,\epsilon)$-\emph{molecule} associated with some ball $B\subset\rn$ if
\begin{enumerate}
\item[(i)] for any $j\in\nn$, $\|m\|_{L^q(S_j(B))}\le2^{-j\epsilon}|S_j(B)|^\frac1q\|\mathbf{1}_B\|_{X}^{-1}$,
where $S_0:=B$ and, for any $j\in\nn$, $S_j(B):=(2^jB)\setminus(2^{j-1}B)$;
\item[(ii)] $\int_\rn m(x)x^\beta\,dx=0$ for any $\beta\in\zz_+^n$ with $|\beta|\le d$.
\end{enumerate}
\end{definition}

\begin{theorem}\label{Thmolcha0}
Let $X$ and $p$ be the same as in Theorem \ref{Thad}.
Let $d\geq \lfloor n(1/p-1)\rfloor$ be a fixed nonnegative integer, $\epsilon\in (n+d+1,\infty)$ and $f\in WH_X(\rn)$.
Then $f$ can be decomposed into
$$f=\sum_{i\in\zz}\sum_{j\in\nn}\lambda_{i,j}m_{i,j}\quad\text{in}\quad \cs'(\rn),$$
where $\{m_{i,j}\}_{i\in\zz,j\in\nn}$
is a sequence of $(X,\,\infty,\,d,\,\epsilon)$-molecules
associated, respectively, with balls $\{B_{i,j}\}_{i\in\zz,j\in\nn}$ and
$\{\lambda_{i,j}\}_{i\in\zz,j\in\nn}:=\{\widetilde A2^i\|\mathbf{1}_{B_{i,j}}\|_{X}\}_{i,\in\zz,j\in\nn}$ with $\widetilde A$
being a positive constant independent of $f$, $i$ and $j$, and there exist positive constants $A$ and $c$ such that,
for any $i\in\zz$, $\sum_{j\in\nn}\mathbf{1}_{cB_{i,j}}\le A$. Moreover,
$$
\sup_{i\in\zz}\lf\|\sum_{j\in\nn}
\frac{\lambda_{i,j}\mathbf{1}_{B_{i,j}}}{\|\mathbf{1}_{B_{i,j}}\|_{X}}\r\|_{X}
\lesssim\|f\|_{WX},
$$
where the implicit positive constant is independent of $f$.
\end{theorem}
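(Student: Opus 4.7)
The plan is to obtain the molecular decomposition as a direct corollary of the atomic decomposition Theorem \ref{Thad}; no new construction is needed. Since $f\in WH_X(\rn)$ and the hypotheses on $X$ here are identical to those in Theorem \ref{Thad}, I invoke that theorem to produce a sequence $\{a_{i,j}\}_{i\in\zz,j\in\nn}$ of $(X,\infty,d)$-atoms, each supported in a ball $B_{i,j}$, together with coefficients $\lambda_{i,j}:=\widetilde A\,2^i\|\mathbf{1}_{B_{i,j}}\|_X$, such that $f=\sum_{i\in\zz}\sum_{j\in\nn}\lambda_{i,j}a_{i,j}$ in $\cs'(\rn)$, such that the finite overlap $\sum_{j\in\nn}\mathbf{1}_{cB_{i,j}}\le A$ holds for every $i\in\zz$, and such that
$$
\sup_{i\in\zz}\lf\|\sum_{j\in\nn}\frac{\lambda_{i,j}\mathbf{1}_{B_{i,j}}}{\|\mathbf{1}_{B_{i,j}}\|_X}\r\|_X
\ls\|f\|_{WH_X(\rn)}.
$$

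Next I would verify that every $(X,\infty,d)$-atom $a_{i,j}$ is automatically an $(X,\infty,d,\epsilon)$-molecule associated with the same ball $B_{i,j}$, for any $\epsilon\in(0,\infty)$ (so in particular for the given $\epsilon\in(n+d+1,\infty)$). Indeed, Definition \ref{Deatom}(i) gives $\supp a_{i,j}\subset B_{i,j}=S_0(B_{i,j})$, so combined with Definition \ref{Deatom}(ii) we have
$$
\lf\|a_{i,j}\r\|_{L^\infty(S_0(B_{i,j}))}\le\frac{1}{\|\mathbf{1}_{B_{i,j}}\|_X}=2^{-0\cdot\epsilon}\lf|S_0(B_{i,j})\r|^{1/\infty}\lf\|\mathbf{1}_{B_{i,j}}\r\|_X^{-1},
$$
which matches Definition \ref{Demol}(i) for $j=0$; and for $j\in\nn$ the function $a_{i,j}$ vanishes identically on $S_j(B_{i,j})$, so the bound in Definition \ref{Demol}(i) is trivially satisfied. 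The vanishing moment condition in Definition \ref{Demol}(ii) is literally Definition \ref{Deatom}(iii).

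Setting $m_{i,j}:=a_{i,j}$ for every $i\in\zz$ and $j\in\nn$ therefore converts the atomic decomposition from Theorem \ref{Thad} into a molecular decomposition with the same coefficients, the same balls, the same finite-overlap property and the same norm estimate; the asserted bound
$$
\sup_{i\in\zz}\lf\|\sum_{j\in\nn}\frac{\lambda_{i,j}\mathbf{1}_{B_{i,j}}}{\|\mathbf{1}_{B_{i,j}}\|_X}\r\|_X\ls\|f\|_{WH_X(\rn)}
$$
is inherited verbatim. (The $WX$-norm written in the statement is understood to be the $WH_X(\rn)$-norm, which is how $f$ is measured.) There is no genuine obstacle at this step; the parameter restriction $\epsilon\in(n+d+1,\infty)$ plays no role in this decomposition direction and will only be invoked later, in the reconstruction Theorem \ref{Thmolcha}, where the molecular decay must be strong enough to decompose each molecule into an infinite linear combination of $(X,q,d)$-atoms.
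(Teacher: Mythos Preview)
Your proposal is correct and matches the paper's own proof essentially verbatim: the paper simply observes that every $(X,\infty,d)$-atom is an $(X,\infty,d,\epsilon)$-molecule and cites Theorem~\ref{Thad}. Your remark that the $WX$-norm in the statement should be read as the $WH_X(\rn)$-norm is also on point.
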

\begin{proof}
Observe that every $(X,\,\infty,\,d)$-atom is also an $(X,\,\infty,\,d,\,\epsilon)$-molecule.
Thus, Theorem \ref{Thmolcha0} is a direct corollary of Theorem \ref{Thad}, which completes
the proof of Theorem \ref{Thmolcha0}.
\end{proof}

\begin{theorem}\label{Thmolcha}
Let $X$ be a ball quasi-Banach function space satisfying
Assumption \ref{a2.15} for some $p_-\in(0,\infty)$.
Assume that, for any given $r\in(0,\underline{p})$
with $\underline{p}$ as in \eqref{Eqpll},
$X^{1/r}$ is a ball Banach function space and
assume that there exist $p_+\in[p_-,\infty)$ such that, for any given $r\in(0,\underline{p})$
and $p\in(p_+,\infty)$, and any $f\in(X^{1/r})'$,
\begin{equation*}
\lf\|\cm^{((p/r)')}(f)\r\|_{(X^{1/r})'}\le C\lf\|f\r\|_{(X^{1/r})'},
\end{equation*}
where the positive constant $C$ is independent of $f$.
Let $d\in\zz_+$ with $d\geq \lfloor n(1/\underline{p}-1)\rfloor$.
Let $q\in(\max\{p_+,1\},\infty]$, $\epsilon\in(n+d+1,\infty)$,
$A,\ \widetilde A\in(0,\infty)$ and $c\in(0,1]$, and let
$\{m_{i,j}\}_{i\in\zz,j\in\nn}$ be a
sequence of $(X,\,q,\,d,\,\epsilon)$-molecules
associated, respectively, with balls $\{B_{i,j}\}_{i\in\zz,j\in\nn}$
satisfying that $\sum_{j\in\nn}\mathbf{1}_{cB_{i,j}}\le A$ for any $i\in\zz$, $\{\lambda_{i,j}\}_{i\in\zz,j\in\nn}:=\{\widetilde
A2^i\|\mathbf{1}_{B_{i,j}}\|_{X}\}_{i\in\zz,j\in\nn}$,
$$
\sup_{i\in\zz}\lf\|\sum_{j\in\nn}
\frac{\lambda_{i,j}\mathbf{1}_{B_{i,j}}}{\|\mathbf{1}_{B_{i,j}}\|_{X}}\r\|_{X}<\infty
$$
and the series
$f:=\sum_{i\in \zz}\sum_{j\in\nn}\lambda_{i,j}m_{i,j}$
converges in $\cs'(\rn)$.
Then $f\in WH_X(\rn)$ and
$$
\|f\|_{WH_X(\rn)}\lesssim\sup_{i\in\zz}\lf\|\sum_{j\in\nn}
\frac{\lambda_{i,j}\mathbf{1}_{B_{i,j}}}{\|\mathbf{1}_{B_{i,j}}\|_{X}}\r\|_{X},
$$
where the implicit positive constant is independent of $f$.
\end{theorem}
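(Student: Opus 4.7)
The plan is to reduce Theorem \ref{Thmolcha} to the atomic reconstruction result (Theorem \ref{Thar}) by expressing every $(X,q,d,\epsilon)$-molecule as an infinite linear combination of $(X,q,d)$-atoms, following the strategy of \cite[Theorem 5.3]{YYYZ} indicated in the introduction. Since any $(X,q,d)$-atom is itself an $(X,q,d,\epsilon)$-molecule, such a decomposition identifies the weak molecular Hardy space with a subspace of the weak atomic Hardy space already controlled by Theorem \ref{Thar}.

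First, I would fix one molecule $m_{i,j}$ associated with $B_{i,j}$ and split it along its annular layers, $m_{i,j}=\sum_{k=0}^\infty m_{i,j}\mathbf{1}_{S_k(B_{i,j})}$. By Definition \ref{Demol}(i), the $k$-th piece is supported in $2^kB_{i,j}$ with $L^q$-norm bounded by $2^{-k\epsilon}|S_k(B_{i,j})|^{1/q}\|\mathbf{1}_{B_{i,j}}\|_X^{-1}$; it lacks vanishing moments, so I would subtract a polynomial $P_{i,j,k}\in\cp_d(\rn)$ supported on $S_k(B_{i,j})$ making the remainder orthogonal to $\cp_d(\rn)$, and then redistribute the polynomial tails through a telescoping argument. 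This produces a representation
$$m_{i,j}=\sum_{k=0}^\infty\mu_{i,j,k}\,\widetilde a_{i,j,k}\quad\text{in}\quad\cs'(\rn),$$
where each $\widetilde a_{i,j,k}$ is an $(X,q,d)$-atom associated with $\widetilde B_{i,j,k}:=2^{k+1}B_{i,j}$, and, after controlling the polynomial corrections through Lemma \ref{Le48},
$$|\mu_{i,j,k}|\lesssim 2^{-k\sigma}\frac{\|\mathbf{1}_{\widetilde B_{i,j,k}}\|_X}{\|\mathbf{1}_{B_{i,j}}\|_X}$$
for some $\sigma>0$ inheriting the molecular decay $\epsilon$ after accounting for the polynomial loss.

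Substituting this into $f=\sum_{i,j}\lambda_{i,j}m_{i,j}$ yields an atomic representation of $f$ indexed by triples $(i,j,k)$, with new coefficients $\widetilde\lambda_{i,j,k}:=\lambda_{i,j}\mu_{i,j,k}$. For fixed $i$ and $k$, the dilated balls $\{2^{k+1}B_{i,j}\}_{j\in\nn}$ inherit controlled overlap from the finite overlap of $\{cB_{i,j}\}_j$, and the Fefferman--Stein estimate \eqref{EqHLMS} from Remark \ref{Refs}(i) gives, for any $r\in(0,\underline p)$,
$$\Big\|\sum_{j\in\nn}\mathbf{1}_{2^{k+1}B_{i,j}}\Big\|_X\lesssim 2^{kn/r}\Big\|\sum_{j\in\nn}\mathbf{1}_{B_{i,j}}\Big\|_X.$$
Because $d\ge\lfloor n(1/\underline p-1)\rfloor$ forces $\underline p>n/(n+d+1)$, I can choose $r\in(0,\underline p)$ so close to $\underline p$ that $\sigma>n/r$, making the resulting geometric series in $k$ summable. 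I would then absorb the $k$-dependent factor $\|\mathbf{1}_{\widetilde B_{i,j,k}}\|_X/\|\mathbf{1}_{B_{i,j}}\|_X$ into a shifted integer height $i'(i,j,k)$, regroup the triples sharing the same $i'$, and check that the reorganized family fits the hypotheses of Theorem \ref{Thar} with
$$\sup_{i'\in\zz}\Big\|\sum_{(j,k):\,i'(i,j,k)=i'}\frac{\widetilde\lambda_{i,j,k}\mathbf{1}_{\widetilde B_{i,j,k}}}{\|\mathbf{1}_{\widetilde B_{i,j,k}}\|_X}\Big\|_X\lesssim\sup_{i\in\zz}\Big\|\sum_{j\in\nn}\frac{\lambda_{i,j}\mathbf{1}_{B_{i,j}}}{\|\mathbf{1}_{B_{i,j}}\|_X}\Big\|_X,$$
after which Theorem \ref{Thar} delivers the claimed estimate.

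The main obstacle is this last reorganization step: atoms arising from molecules at different heights $i$ and different annular indices $k$ must be merged into the two-index structure $\widetilde\lambda_{i',j'}=\widetilde A 2^{i'}\|\mathbf{1}_{B_{i',j'}}\|_X$ required by Theorem \ref{Thar}, all while preserving the supremum control. The assumption $\epsilon>n+d+1$ is exactly what is needed to make the decay $2^{-k\sigma}$ beat the worst-case growth $2^{kn/r}$ of $\|\mathbf{1}_{2^{k+1}B}\|_X/\|\mathbf{1}_B\|_X$ uniformly, and the careful bookkeeping between this combinatorial reindexing and the convergence in $\cs'(\rn)$ is the genuine technical heart of the proof.
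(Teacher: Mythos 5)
Your molecular-to-atomic decomposition step matches the paper's: both decompose $m$ along annular layers, subtract the projection polynomial on each layer, and telescope the polynomial tails to produce an infinite sum of $(X,q,d)$-atoms supported in the dilated balls $2^{k+1}B$, with coefficients decaying like $2^{-k\epsilon}\|\mathbf{1}_{2^{k+1}B}\|_X/\|\mathbf{1}_B\|_X$. That part is sound. The gap is precisely the ``reorganization step'' you flag at the end: the paper does \emph{not} reindex the triples $(i,j,k)$ to feed a genuine double-indexed atomic family back into Theorem~\ref{Thar}, and your plan to do so has problems you did not resolve.

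Concretely, Theorem~\ref{Thar} requires three structural properties that the regrouped family does not naturally enjoy. First, the coefficients must have the exact form $\widetilde\lambda_{i',j'}=\widetilde A\,2^{i'}\|\mathbf{1}_{B'}\|_X$ with a \emph{single} constant $\widetilde A$ independent of all indices; after rounding $2^{i}2^{-k\epsilon}$ to the nearest power of $2$ and absorbing $\|\mathbf{1}_{2^{k+1}B_{i,j}}\|_X$, the target height $i'$ becomes a function of $(i,j,k)$ through the quotient $\|\mathbf{1}_{2^{k+1}B_{i,j}}\|_X/\|\mathbf{1}_{B_{i,j}}\|_X$, which is \emph{not} uniform over $j$ for a general ball quasi-Banach $X$. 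Second, and more fatally, the finite-overlap hypothesis $\sum_{j'}\mathbf{1}_{cB_{i',j'}}\le A$ with $A$ independent of $i'$ will generically fail: a fixed level $i'$ collects balls $2^{k+1}B_{i,j}$ across many different scales $k$ (and different source heights $i$), and the overlap of dilated balls $2^{k+1}B_{i,j}$ already grows with $k$ even for a single source level $i$. Third, preserving the supremum bound across the reindexing requires a bookkeeping argument that you outline only heuristically. In the paper, these difficulties are circumvented rather than solved: the pointwise decomposition of $m_{i,j}$ into atoms is used only to obtain pointwise bounds on $M_N^0(m_{i,j})$ (via the terms $\mathrm{J}_{l,k}$ in the paper's proof), and then the estimate on $\alpha\|\mathbf{1}_{\{M_N^0(f)>\alpha\}}\|_X$ is rerun directly, splitting $f$ into $f_1+f_2$ at the height $i_0$ with $2^{i_0}\le\alpha<2^{i_0+1}$ and treating the near/far annuli with arguments parallel to (but not invoking) Theorem~\ref{Thar}. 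You should replace the reorganization step with this direct level-set estimation; as it stands, your proposal leaves the hardest part unresolved.
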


\begin{proof}
Let $m$ be any given $(X,\,q,\,d,\,\epsilon)$-molecule associated
with some ball $B:=B(x_B,r_B)$, where $x_B\in\rn$
and $r_B\in(0,\infty)$. Without loss of generality, we may assume that the center of the ball is the origin.
Then we claim that $m$ is an infinite linear combination of $(X,\,q,\,d)$-atoms both pointwisely
and in $\mathcal{S}'(\rn)$.

To show this, for any $k\in\zz_+$, let $m_k:=m\mathbf{1}_{S_k(B)}$
with $S_k(B)$ as in Definition \ref{Demol}(i), and $\cp_k$
be the linear vector space generated by the set $\{x^\gamma\mathbf{1}_{S_k(B)}\}_{|\gamma|\le d}$ of ``polynomial".
For any given $k\in\zz_+$, we know that there exists a unique polynomial $P_k\in\cp_k$
such that, for any multi-index $\beta$ with $|\beta|\le d$,
\begin{equation}\label{Eq51}
\int_\rn x^\beta\lf[m_k(x)-P_k(x)\r]\,dx=0,
\end{equation}
where $P_k$ is defined by setting
\begin{equation}\label{Eq52}
P_k:=\sum_{\beta\in\zz_+^n,|\beta|\le d}\lf[\frac1{|S_k(B)|}\int_\rn y^\beta m_k(y)\,dy\r]Q_{\beta,k}
\end{equation}
and, for any $\beta\in\zz_+^n$ and $|\beta|\leq d$, $Q_{\beta,k}$ is the unique polynomial
in $\cp_k(\rn)$ satisfying that, for any multi-indices $\beta$ and $\gamma$ with
$|\beta|\le d$ and $|\gamma|\leq d$,

\begin{equation}\label{Eq53}
\int_\rn x^\gamma Q_{\beta,k}(x)\,dx=|S_k(B)|\delta_{\gamma,\beta},
\end{equation}
where $\delta_{\gamma,\beta}$ denotes the \emph{Kronecker delta}, namely, when $\gamma=\beta$, $\delta_{\gamma,\beta}:=1$ and,
when $\gamma\neq\beta$, $\delta_{\gamma,\beta}:=0$ (see, for instance, \cite[p.\,77]{TW}).

Using the polynomials $\{P_k\}_{k=0}^\infty$, we decompose
$$
m=\sum_{k=0}^\infty m_k=\sum_{k=0}^\infty (m_k-P_k)+\sum_{k=0}^\infty P_k
$$
pointwisely. First we show that $\sum_{k=0}^\infty (m_k-P_k)$
can be divided into an infinite linear combination of $(X,\,q,\,d)$-atoms.
For any $k\in\zz_+$, obviously, $\supp(m_k-P_k)\subset S_k(B)$ and it was proved in \cite[p.\,83]{TW} that
$$
\sup_{x\in S_k(B)}|P_k(x)|\lesssim\frac1{|S_k(B)|}\|m_k\|_{L^1(\rn)},
$$
which, together the H\"older inequality and Definition \ref{Demol}(i), implies that
\begin{equation}\label{eq5.4}
\lf\|m_k-P_k\r\|_{L^q(\rn)}\le\lf\|m_k\r\|_{L^q(S_k(B))}+\lf\|P_k\r\|_{L^q(S_k(B))}
\le\widetilde C\lf\|m\r\|_{L^q(S_k(B))}
\le\widetilde C2^{-k\epsilon}\lf|2^kB\r|^\frac1{q}\lf\|\mathbf{1}_B\r\|_{X}^{-1},
\end{equation}
where $\widetilde C$ is a positive constant independent of $m,\ B$ and $k$.

For any $k\in\zz_+$, let
$$
a_k:=\frac{2^{k\epsilon}\|\mathbf{1}_B\|_{X}(m_k-P_k)}{\widetilde C\|\mathbf{1}_{2^kB}\|_{X}}\quad\text{and}\quad
\mu_k:=\widetilde C2^{-k\epsilon}\frac{\|\mathbf{1}_{2^kB}\|_{X}}{\|\mathbf{1}_B\|_{X}}.
$$
By \eqref{eq5.4} and \eqref{Eq51}, it is easy to show that,
for any $k\in\zz_+$, $a_k$ is an $(X,\,q,\,d)$-atom.
Therefore,
\begin{equation}\label{Eq55}
\sum_{k=0}^\infty(m_k-P_k)=\sum_{k=0}^\infty\mu_ka_k
\end{equation}
pointwisely is an infinite linear combination of $(X,\,q,\,d)$-atoms.

Now we prove that $\sum_{k=0}^\infty P_k$ can also be
pointwisely divided into an infinite linear combination of $(X,\,q,\,d)$-atoms.
For any $j\in\zz_+$ and $\ell\in\zz_+^n$, let
$$
N_\ell^j:=\sum_{k=j}^\infty\int_{S_k(B)}m_k(x)x^\ell\,dx.
$$
Then, for any $\ell\in\zz_+^n$ with $|\ell|\le d$, by Definition \ref{Demol}(ii), we have
\begin{equation}\label{Eq56}
N_\ell^0=\sum_{k=0}^\infty\int_{S_k(B)}m_k(x)x^\ell\,dx=\int_\rn m(x)x^\ell\,dx=0.
\end{equation}
Therefore, from the H\"older inequality and the assumption that $\epsilon\in(n+d+1,\infty)$, combined with
Definition \ref{Demol}(i), we deduce that, for any $j\in\zz_+$ and $\ell\in\zz_+^n$ with $|\ell|\le d$,
\begin{align}\label{Eq57}
|N^j_\ell|&\le\sum_{k=j}^\infty\int_{S_k(B)}\lf|m_k(x)x^\ell\r|\,dx\le\sum_{k=j}^\infty
\lf(2^kr_B\r)^{|\ell|}\lf|2^kB\r|^{1/q'}\lf\|m\r\|_{L^q(S_k(B))}\\\noz
&\lesssim\sum_{k=j}^\infty2^{-k(\epsilon-n-|\ell|)}|B|^{1+|\ell|/n}\|\mathbf{1}_B\|_{X}^{-1}
\lesssim2^{-j(\epsilon-n-|\ell|)}|B|^{1+|\ell|/n}\|\mathbf{1}_B\|_{X}^{-1}.
\end{align}
Furthermore, by the proof in \cite[p.\,77]{TW}, we know that, for any $j\in\zz_+$,
$\beta\in\zz_+^n$ with $|\beta|\le d$, $|Q_{\beta,j}|\le(2^jr_B)^{-|\beta|}$, which,
together with
\eqref{Eq57}, implies that, for any $j\in\zz_+$, $\ell\in\zz_+^n$ with $|\ell|\le d$ and $x\in\rn$,
\begin{equation}\label{Eq58}
\lf|S_j(B)\r|^{-1}\lf|N^j_\ell Q_{\ell,j}(x)\mathbf{1}_{S_j(B)}(x)\r|\lesssim 2^{-j\epsilon}\|\mathbf{1}_B\|_{X}^{-1}.
\end{equation}

Moreover, by \eqref{Eq52}, the definition of $N_\ell^j$ and \eqref{Eq56}, we conclude that
\begin{align}\label{Eq59}
\sum_{k=0}^\infty P_k&=\sum_{\ell\in\zz_+^n,|\ell|\le d}
\sum_{k=0}^\infty\lf|S_k(B)\r|^{-1}Q_{\ell,k}\int_\rn m_k(x)x^\ell\,dx\\\noz
&=\sum_{\ell\in\zz_+^n,|\ell|\le d}\sum_{k=0}^\infty
N_\ell^{k+1}\lf[\lf|S_{k+1}(B)\r|^{-1}Q_{\ell,k+1}\mathbf{1}_{S_{k+1}(B)}-
\lf|S_k(B)\r|^{-1}Q_{\ell,k}\mathbf{1}_{S_k(B)}\r]\\\noz
&=:\sum_{\ell\in\zz_+^n,|\ell|\le d}\sum_{k=0}^\infty b_\ell^k
\end{align}
pointwisely. From this, \eqref{Eq58} and \eqref{Eq53},
it follows that there exists a positive constant $C_0$ such that, for any
$k\in\zz_+$ and $\ell\in\zz_+^n$ with $|\ell|\le d$,
\begin{equation}\label{eq510}
\lf\|b_\ell^k\r\|_{L^\infty(\rn)}\le C_02^{-k\epsilon}
\lf\|\mathbf{1}_B\r\|_{X}^{-1}\quad\text{and}\quad\supp b_\ell^k\subset2^{k+1}B;
\end{equation}
moreover, for any $\gamma\in\zz_+^n$ with $|\gamma|\le d$, $\int_\rn b_\ell^k(x)x^\gamma\,dx=0$.
For any $k\in\zz_+$ and $\ell\in\zz_+^n$ with $|\ell|\le d$, let
$$
\mu_\ell^k:=2^{-k\epsilon}\frac{\|\mathbf{1}_{2^{k+1}B}\|_{X}}{\|\mathbf{1}_B\|_{X}}\quad\text{and}\quad
a_\ell^k:=2^{k\epsilon}b_\ell^k\frac{\|\mathbf{1}_{B}\|_{X}}{\|\mathbf{1}_{2^{k+1}B}\|_{X}}.
$$
By \eqref{Eq53} and the definitions of $b^k_\ell$ and $a^k_\ell$, we find that, for any $\gamma\in\zz_+^n$
with $|\gamma|\le d$, $\int_\rn a_\ell^k(x)x^\gamma\,dx = 0$. Obviously, $\supp (a^k_\ell)\subset 2^{k+1}B$.
Thus, $a^k_\ell$ is an $(X,\,\infty,\,d)$-atom and hence an $(X,\,q,\,d)$-atom up to a positive constant multiple.
Moreover, we find that
\begin{equation}\label{Eq511}
\sum_{k=0}^\infty P_k=\sum_{\ell\in\zz_+^n,|\ell|\le d}\sum_{k=0}^\infty\mu_\ell^k a_\ell^k
\end{equation}
pointwisely forms an infinite linear combination of $(X,\,q,\,d)$-atoms.

Combining \eqref{Eq55} and \eqref{Eq511}, we obtain
\begin{equation}\label{Eq512}
m=\sum_{k=0}^\infty m_k=\sum_{k=0}^\infty(m_k-P_k)+\sum_{k=0}^\infty P_k
=\sum_{k=0}^\infty\mu_k a_k
+\sum_{\ell\in\zz_+^n,|\ell|\le d}\sum_{k=0}^\infty\mu_\ell^k a_\ell^k\quad \mathrm{pointwisely,}
\end{equation}
which shows that any $(X,\,q,\,d,\epsilon)$-molecule
is an infinite linear combination of $(X,\,q,\,d)$-atoms both pointwisely
and in $\mathcal{S}'(\rn)$. Therefore, we have proved the above claim.

To show $f\in WH_X(\rn)$, it suffices to prove that, for any $\alpha\in(0,\infty)$,
\begin{equation}\label{Eq514}
\alpha\lf\|\mathbf{1}_{\{x\in\rn:\ M_N^0(f)(x)>\alpha\}}\r\|_{X}\lesssim
\sup_{i\in\zz}\lf\|\sum_{j\in\nn}
\frac{\lambda_{i,j}\mathbf{1}_{B_{i,j}}}{\|\mathbf{1}_{B_{i,j}}\|_{X}}\r\|_{X},
\end{equation}
where the implicit positive constant is independent of $f$ and $\alpha$.

For any given $\alpha\in(0,\infty)$, we know that there exists an $i_0\in\zz$ such that $2^{i_0}\le\alpha<2^{i_0+1}$.
Then we decompose $f$ into
$$
f=\sum_{i=-\infty}^{i_0-1}\sum_{j\in\nn}
\lambda_{i,j}m_{i,j}+\sum_{i=i_0}^{\infty}\sum_{j\in\nn}\lambda_{i,j}m_{i,j}=:f_1+f_2.
$$
By the fact that $\mathbf{1}_{\{x\in\rn:\ M_N^0(f)(x)>\alpha\}}\leq\mathbf{1}_{\{x\in\rn:\ M_N^0(f_1)(x)>\alpha/2\}}+\mathbf{1}_{\{x\in\rn:\ M_N^0(f_2)(x)>\alpha/2\}}$
and Definition \ref{Debqfs}(ii), we have
\begin{equation}\label{Eq515}
\lf\|\mathbf{1}_{\{x\in\rn:\ M_N^0(f)(x)>\alpha\}}\r\|_{X}\lesssim
\lf\|\mathbf{1}_{\{x\in\rn:\ M_N^0(f_1)(x)>\alpha/2\}}\r\|_{X}+
\lf\|\mathbf{1}_{\{x\in\rn:\ M_N^0(f_2)(x)>\alpha/2\}}\r\|_{X}=:\mathrm{I_1}+\mathrm{I_2}.
\end{equation}

To deal with $\mathrm{I_1}$, we first need an estimate of $M_N^0(m_{i,j})$.
By \eqref{eq5.4}, \eqref{eq510} and \eqref{Eq512},
for any $i\in\zz$ and $j\in\nn$, we have a sequence of multiples of $(X,\,q,\,d)$-atoms,
$\{a_{i,j}^l\}_{l\in\zz_+}$, supported, respectively, in balls $\{2^{l+1}B_{i,j}\}_{l\in\zz_+}$ such that
$$
\lf\|a_{i,j}^l\r\|_{L^q(\rn)}\lesssim\frac{2^{-l\epsilon}|2^{l+1}B_{i,j}|^{1/q}}{\|\mathbf{1}_{B_{i,j}}\|_{X}}
$$
and $m_{i,j}=\sum_{l\in\zz_+}a_{i,j}^l$ pointwisely in $\rn$. Then, for any $i\in\zz\cap(-\infty,i_0-1]$ and
$j\in\nn$,
\begin{equation}\label{Eq516}
M_N^0(m_{i,j})\le\sum_{l\in\zz_+}M_N^0(a_{i,j}^l)=
\sum_{l\in\zz_+}\sum_{k\in\zz_+}M_N^0(a_{i,j}^l)\mathbf{1}_{S_k(2^lB_{i,j})}
=:\sum_{l\in\zz_+}\sum_{k\in\zz_+}\mathrm{J}_{l,k},
\end{equation}
where $S_k(2^lB_{i,j}):=(2^{k+l}B_{i,j})\setminus(2^{k+l-1}B_{i,j})$. From this, we deduce that
\begin{align}\label{Eq517}
\mathrm{I_1}&\lesssim\lf\|\mathbf{1}_{\{x\in\rn:\
\sum_{i=-\infty}^{i_0-1}\sum_{j\in\nn}\lambda_{i,j}M_N^0(m_{i,j})(x)>\frac\alpha2\}}\r\|_{X}\\\noz
&\lesssim\lf\|\mathbf{1}_{\{x\in\rn:\
\sum_{i=-\infty}^{i_0-1}\sum_{j\in\nn}\sum_{l\in\zz_+}
\sum_{k=0}^2\lambda_{i,j}\mathrm{J}_{l,k}(x)>\frac\alpha2\}}\r\|_{X}
+\lf\|\mathbf{1}_{\{x\in\rn:\
\sum_{i=-\infty}^{i_0-1}\sum_{j\in\nn}\sum_{l\in\zz_+}
\sum_{k=3}^\infty\lambda_{i,j}\mathrm{J}_{l,k}(x)>\frac\alpha2\}}\r\|_{X}\\\noz
&=:\mathrm{I_{1,1}}+\mathrm{I_{1,2}}.
\end{align}
For $\mathrm{I_{1,1}}$, by a similar argument to that used in the estimation of \eqref{Eq411},
we obtain
\begin{equation}\label{Eq518}
\alpha\mathrm{I_{1,1}}\lesssim\sup_{i\in\zz}\lf\|\sum_{j\in\nn}
\frac{\lambda_{i,j}\mathbf{1}_{B_{i,j}}}{\|\mathbf{1}_{B_{i,j}}\|_{X}}\r\|_{X}.
\end{equation}
For $\mathrm{I_{1,2}}$, we first estimate every term $\mathrm{J}_{l,k}$.
By an argument similar to that used in the estimation of \eqref{EqMNa}, we conclude that,
for any $i\in\zz$, $j\in\nn$, $l\in\zz_+$, $k\in[3,\infty)\cap\zz_+$ and $x\in S_k(2^lB_{i,j})$,
\begin{align}\label{Eq519}
\mathrm{J}_{l,k}(x)&\lesssim\int_{2^{l+1}B_{i,j}}\frac{|y-x_{i,j}|^{d+1}}{|x-x_{i,j}|^{n+d+1}}
|a_{i,j}^l(y)|\,dy\mathbf{1}_{S_k(2^lB_{i,j})}(x)\\\noz
&\lesssim\frac{(2^{l+1}r_{i,j})^{d+1}}{(2^{l}r_{i,j})^{n+d+1}}\|a_{i,j}^l\|_{L^q(\rn)}
|2^{l+1}B_{i,j}|^{1/q'}\mathbf{1}_{S_k(2^lB_{i,j})}(x)\\\noz
&\lesssim\frac{2^{-l(n+\epsilon)-k(n+d+1)}}{r_{i,j}^n
\|\mathbf{1}_{B_{i,j}}\|_{X}}|2^{l+1}B_{i,j}|\mathbf{1}_{S_k(2^lB_{i,j})}(x)
\sim\frac{2^{-l\epsilon-k(n+d+1)}}
{\|\mathbf{1}_{B_{i,j}}\|_{X}}\mathbf{1}_{S_k(2^lB_{i,j})}(x).
\end{align}
Let $r\in(\frac{n}{n+d+1},\underline{p})$. By Definition \ref{Debf}(i), we have
$\|\mathbf{1}_{\{x\in\rn:\ M_N^0(f_2)(x)>\alpha\}}\|_{X}=\|\mathbf{1}_{\{x\in\rn:\ M_N^0(f_2)(x)>\alpha\}}\|_{X^{1/r}}^{1/r}$.
This, together with Definition \ref{Debqfs}(ii), \eqref{Eq516}, \eqref{Eq519}, the assumption that $X^{1/r}$ is a ball Banach function space, \eqref{EqHLMS} and the fact that $\epsilon\in(n+d+1,\infty)$, implies that
\begin{align*}
\alpha\mathrm{I_{1,2}}&\lesssim\alpha^{1-1/r}
\lf\|\sum_{i=-\infty}^{i_0-1}\sum_{j\in\nn}\sum_{l\in\zz_+}\sum_{k=3}^\infty
2^i2^{-l\epsilon}2^{-k(n+d+1)}\mathbf{1}_{S_k(2^lB_{i,j})}\r\|_{X^{1/r}}^{1/r}\\
&\lesssim\alpha^{1-1/r}\lf[\sum_{l\in\zz_+}\sum_{k=3}^\infty2^{-l\epsilon}2^{-k(n+d+1)}
\sum_{i=-\infty}^{i_0-1}2^i\lf\|\sum_{j\in\nn}\mathbf{1}_{S_k(2^lB_{i,j})}\r\|_{X^{1/r}}\r]^{1/r}\\
&\lesssim\alpha^{1-1/r}\lf[\sum_{l\in\zz_+}\sum_{k=3}^\infty2^{-l\epsilon}2^{-k(n+d+1)}2^{\frac{n(k+l)}{r}}
\sum_{i=-\infty}^{i_0-1}2^i\lf\|\sum_{j\in\nn}\mathbf{1}_{B_{i,j}}\r\|_{X^{1/r}}\r]^{1/r}\\
&\lesssim\alpha^{1-1/r}\sup_{i\in\zz}2^i\lf\|\sum_{j\in\nn}\mathbf{1}_{B_{i,j}}\r\|_{X}
\lf[\sum_{i=-\infty}^{i_0-1}2^{i(1-r)}\r]^{1/r}
\lesssim\sup_{i\in\zz}2^i\lf\|\sum_{j\in\nn}\mathbf{1}_{B_{i,j}}\r\|_{X}.
\end{align*}
By this, \eqref{Eq517} and \eqref{Eq518}, we conclude that
\begin{equation}\label{Eq520}
\alpha\mathrm{I_1}\lesssim\sup_{i\in\zz}2^i\lf\|\sum_{j\in\nn}\mathbf{1}_{B_{i,j}}\r\|_{X}.
\end{equation}

Next we turn to estimate $\mathrm{I_2}$. To this end,
by \eqref{Eq516} and Definition \ref{Debqfs}(ii), we know that
\begin{align}\label{Eq521}
\mathrm{I_2}&\lesssim\lf\|\mathbf{1}_{\{x\in\rn:\ \sum_{i=i_0}^\infty\sum_{j\in\nn}\sum_{l\in\zz_+}
\sum_{k=0}^2\lambda_{i,j}\mathrm{J}_{l,k}(x)>\frac\alpha4\}}\r\|_{X}
+\lf\|\mathbf{1}_{\{x\in\rn:\ \sum_{i=i_0}^\infty\sum_{j\in\nn}\sum_{l\in\zz_+}
\sum_{k=3}^\infty\lambda_{i,j}\mathrm{J}_{l,k}(x)>\frac\alpha4\}}\r\|_{X}\\\noz
&=:\mathrm{I_{2,1}}+\mathrm{I_{2,2}}.
\end{align}

We first deal with $\mathrm{I_{2,1}}$. For any $\widetilde q\in(0,1)$, we have
$$
\sum_{i=i_0}^\infty\sum_{j\in\nn}\sum_{l\in\zz_+}
\sum_{k=0}^2\lambda_{i,j}M_N^0(a_{i,j})\mathbf{1}_{S_k(2^lB_{i,j})}
\le \lf\{\sum_{i=i_0}^\infty\sum_{j\in\nn}\sum_{l\in\zz_+}\sum_{k=0}^2
\lf[\lambda_{i,j}M_N^0(a_{i,j})\mathbf{1}_{S_k(2^lB_{i,j})}\r]
^{\widetilde q}\r\}^{1/\widetilde q}.
$$
Let $r\in(\frac{n}{n+d+1},\underline{p})$ and choose $\widetilde q\in(0,1)$ such that
$r\widetilde q>\frac{n}{n+d+1}$.
By Definition \ref{Debqfs}(ii),
$\lambda_{i,j}:=\widetilde A2^i\|\mathbf{1}_{B_{i,j}}\|_{X}$,
and the assumption that $X^{1/r}$ is a ball Banach function space,
we conclude that
\begin{align*}
\mathrm{I_{2,1}}&\lesssim2^{-i_0\widetilde q}
\lf\|\sum_{i=i_0}^\infty\sum_{j\in\nn}\sum_{l\in\zz_+}\sum_{k=0}^2
\lf[\lambda_{i,j}M_N^0(a_{i,j})\mathbf{1}_{S_k(2^lB_{i,j})}\r]^{\widetilde q}\r\|_X\\
&\sim2^{-i_0\widetilde q}\lf\|\sum_{i=i_0}^\infty
2^{i\widetilde q}\sum_{l\in\zz_+}\sum_{k=0}^2\sum_{j\in\nn}
\lf[\|\mathbf{1}_{B_{i,j}}\|_XM_N^0(a_{i,j})\mathbf{1}_{S_k(2^lB_{i,j})}\r]^{\widetilde q}\r\|_X\\
&\lesssim2^{-i_0\widetilde q}\lf\{\sum_{i=i_0}^\infty
2^{i\widetilde q}\sum_{l\in\zz_+}2^{-lr\epsilon\widetilde q}
\sum_{k=0}^2\lf\|\sum_{j\in\nn}
\lf[2^{l\epsilon}\|\mathbf{1}_{B_{i,j}}\|_XM_N^0(a_{i,j})\mathbf{1}_{S_k(2^lB_{i,j})}
\r]^{r\widetilde q}\r\|_{X^{1/r}}\r\}^{1/r}\\
&\sim2^{-i_0\widetilde q}\lf\{\sum_{i=i_0}^\infty2^{i\widetilde q}\sum_{l\in\zz_+}2^{-lr\epsilon\widetilde q}\sum_{k=0}^2
\lf\|\lf\{\sum_{j\in\nn}
\lf[2^{l\epsilon}\|\mathbf{1}_{B_{i,j}}\|_XM_N^0(a_{i,j})\mathbf{1}_{S_k(2^lB_{i,j})}
\r]^{r_0\widetilde q}\r\}^{1/r}\r\|_{X}^{r}\r\}^{1/r}.
\end{align*}
Let $p_0:=q/\widetilde q$. Then $p_0\in(p_+,\infty)$ and, from this and
the boundedness of $\cm$ on $L^q(\rn)$,
we deduce that, for any $i\in\zz$, $j\in\nn$ and $k\in\{0,1,2\}$,
$$
\lf\|\lf[2^{l\epsilon}\lf\|\mathbf{1}_{B_{i,j}}\r\|_X
M_N^0(a_{i,j})\mathbf{1}_{S_k(2^lB_{i,j})}\r]^{\widetilde q}\r\|_{L^{p_0}(\rn)}
\lesssim2^{l\epsilon\widetilde q}\lf\|\mathbf{1}_{B_{i,j}}\r\|_X^{\widetilde q}
\lf\|M_N^0(a_{i,j})\r\|_{L^q(\rn)}^{\widetilde q}\lesssim\lf|\mathbf{1}_{B_{i,j}}\r|^{1/p_0}.
$$
By Lemma \ref{Le45}, \eqref{EqHLMS},
the fact that $\sum_{j\in\nn}\mathbf 1_{cB_{i,j}}\le A$ and $\widetilde q\in(0,1)$, we further conclude that
\begin{align*}
\mathrm{I_{2,1}}&\lesssim2^{-i_0\widetilde q}\lf\{\sum_{i=i_0}^\infty2^{i\widetilde q}\sum_{l\in\zz_+}2^{-lr\epsilon\widetilde q}
\lf\|\lf\{\sum_{j\in\nn}
\mathbf{1}_{2^{l+1}B_{i,j}}\r\}^{1/r}\r\|_{X}^{r}\r\}^{1/r}\\
&\lesssim2^{-i_0\widetilde q}\lf\{\sum_{i=i_0}
^\infty2^{i\widetilde q}\sum_{l\in\zz_+}2^{-lr\epsilon\widetilde q}
2^{(l+1)n}\lf\|\lf\{\sum_{j\in\nn}
\mathbf{1}_{cB_{i,j}}\r\}^{1/r}\r\|_{X}^{r}\r\}^{1/r}\\
&\lesssim2^{-i_0\widetilde q}\lf[\sum_{i=i_0}^\infty2^{ir(\widetilde q-1)}\r]^{1/r}\sup_{i}2^i\lf\|\sum_{j\in\nn}\mathbf{1}_{B_{i,j}}\r\|_X
\lesssim\alpha^{-1}\sup_{i}2^i\lf\|\sum_{j\in\nn}\mathbf{1}_{B_{i,j}}\r\|_X,
\end{align*}
which implies that
\begin{equation}\label{Eq522}
\alpha\mathrm{I_{2,1}}\lesssim\sup_{i}2^i\lf\|\sum_{j\in\nn}\mathbf{1}_{B_{i,j}}\r\|_X.
\end{equation}

To estimate $\mathrm{I_{2,2}}$, for any $a\in(0,1)$, we also have
\begin{align}\label{Eq523}
\sum_{i=i_0}^\infty\sum_{j\in\nn}\sum_{l\in\zz_+}\sum_{k=3}^\infty\lambda_{i,j}\mathrm{J}_{l,k}
\le\lf[\sum_{i=i_0}^\infty\sum_{j\in\nn}\sum_{l\in\zz_+}\sum_{k=3}^\infty\lf(\lambda_{i,j}
\mathrm{J}_{l,k}\r)^a\r]^{1/a}.
\end{align}
Then, by Definition \ref{Debqfs}(ii), \eqref{Eq523}, \eqref{Eq519} and $\lambda_{i,j}:=\widetilde A2^i\|\mathbf{1}_{B_{i,j}}\|_{X}$, we know that
\begin{align*}
\mathrm{I_{2,2}}&\le\lf\|\mathbf{1}_{\{x\in\rn:\
\sum_{i=i_0}^\infty\sum_{j\in\nn}\sum_{l\in\zz_+}\sum_{k=3}^\infty
[\lambda_{i,j}\mathrm{J}_{l,k}(x)]^a>2^{i_0a}\}}\r\|_{X}
\lesssim2^{-i_0a}\lf\|\sum_{i=i_0}^\infty
\sum_{j\in\nn}\sum_{l\in\zz_+}\sum_{k=3}^\infty\lf(\lambda_{i,j}\mathrm{J}_{l,k}\r)^a\r\|_{X}\\
&\lesssim2^{-i_0a}\lf\|\sum_{i=i_0}^\infty2^{ia}
\sum_{l\in\zz_+}\sum_{k=3}^\infty 2^{-la\epsilon}2^{-ka(n+d+1)}\sum_{j\in\nn}\mathbf{1}_{S_k(2^lB_{i,j})}\r\|_{X}.
\end{align*}
Let $r\in(\frac{n}{n+d+1},\underline{p})$. We choose $a\in(0,1)$ such that $ar>\frac{n}{n+d+1}$. From Definition \ref{Debf}(i),
the assumption that $X^{1/r}$ is a ball Banach function space, \eqref{EqHLMS}, $ar(n+d+1)-n>0$ and $\varepsilon>n+d+1$,
we further deduce that
\begin{align*}
\mathrm{I_{2,2}}&\lesssim2^{-i_0a}\lf[\sum_{i=i_0}^\infty2^{ira}
\sum_{l\in\zz_+}\sum_{k=3}^\infty 2^{-lar\epsilon}2^{-kar(n+d+1)}
\lf\|\lf\{\sum_{j\in\nn}\mathbf{1}_{S_k(2^lB_{i,j})}\r\}^r\r\|_{X^{1/r}}\r]^{\frac1r}\\
&\lesssim2^{-i_0a}\lf[\sum_{i=i_0}^\infty2^{ira}
\lf\{\sum_{l\in\zz_+}\sum_{k=3}^\infty 2^{l(n-ar\epsilon)}2^{k[n-ar(n+d+1)]}\r\}
\lf\|\sum_{j\in\nn}\mathbf{1}_{B_{i,j}}\r\|_{X}^{r}\r]^{\frac1r}
\lesssim2^{-i_0a}\lf[\sum_{i=i_0}^\infty2^{ira}
\lf\|\sum_{j\in\nn}\mathbf{1}_{B_{i,j}}\r\|_{X}^r\r]^{\frac1r}\\
&\lesssim2^{-i_0a}\lf[\sum_{i=i_0}^\infty2^{-ir(1-a)}\r]^{\frac1r}
\sup_{i\in\zz}2^i\lf\|\sum_{j\in\nn}\mathbf{1}_{B_{i,j}}\r\|_{X}
\lesssim\alpha^{-1}\sup_{i\in\zz}2^i\lf\|\sum_{j\in\nn}\mathbf{1}_{B_{i,j}}\r\|_{X},
\end{align*}
which implies that
$$
\alpha\mathrm{I_{2,2}}\lesssim\sup_{i\in\zz}2^i\lf\|\sum_{j\in\nn}\mathbf{1}_{B_{i,j}}\r\|_{X}.
$$
This, combined with \eqref{Eq522} and \eqref{Eq521}, implies that
$$
\alpha\mathrm{I_2}\lesssim\sup_{i\in\zz}2^i\lf\|\sum_{j\in\nn}\mathbf{1}_{B_{i,j}}\r\|_{X}.
$$
By this, \eqref{Eq515} and \eqref{Eq520},
we know that \eqref{Eq514} holds true and hence complete the proof of Theorem \ref{Thmolcha}.
\end{proof}

\section{Boundedness of Calder\'on--Zygmund operators\label{s6}}

In this section, as an application of the weak Hardy type space $WH_X(\rn)$,
we establish the boundedness of Calder\'on--Zygmund operators from the
Hardy type space $H_X(\rn)$ to $WH_X(\rn)$. We begin with recalling the notion
of the Hardy type space $H_X(\rn)$ (see \cite[Definition 2.22]{SHYY}).

\begin{definition}\label{DeHX}
Let $X$ be a ball quasi-Banach function space.
The \emph{Hardy space} $H_X(\rn)$ associated with $X$ is defined to be the set of all $f\in\cs'(\rn)$ such that
$$
\lf\|f\r\|_{H_X(\rn)}:=\lf\|M_b^{**}(f,\psi)\r\|_{X}<\infty,
$$
where $M_b^{**}(f,\psi)$ is as in Definition \ref{Dem}(iii) with $b$ sufficiently large and $\psi\in\cs(\rn)$
satisfying $\int_\rn\psi(x)\,dx\neq0$.
\end{definition}

In what follows, we assume that the ball quasi-Banach function space $X$ satisfies the following assumption:
For some $\theta,\ s\in(0,1]$, there exists a positive constant $C$ such that, for any $\{f_j\}_{j=1}^\infty\subset\mathscr M(\rn)$,
\begin{equation}\label{aa}
\lf\|\lf\{\sum_{j=1}^\infty\lf[\cm^{(\theta)}(f_j)\r]^s\r\}^{1/s}\r\|_X\le C\lf\|\lf\{\sum_{j=1}^\infty|f_j|^s\r\}^{1/s}\r\|_X.
\end{equation}

Let $X$ be a ball quasi-Banach function space satisfying \eqref{aa} for some $\theta,\ s\in(0,1]$.
Let $d\geq\lfloor n(1/\theta-1)\rfloor$ be a fixed integer and  $q\in(1,\infty]$.
Assume that, for any $f\in\mathscr M(\rn)$,
\begin{equation}\label{aax}
\lf\|\cm^{((q/s)')}(f)\r\|_{(X^{1/s})'}\lesssim\lf\|f\r\|_{(X^{1/s})'},
\end{equation}
where the implicit positive constant is independent of $f$. The \emph{atomic Hardy space} $H^{X,q,d}_{\atom}(\rn)$ is defined to be the set of
all $f\in\cs'(\rn)$ such that $f=\sum_{j\in\nn}\lambda_ja_j$ in $\cs'(\rn)$, where $\{\lambda_j\}_{j\in\nn}$
is a sequence of non-negative numbers and $\{a_j\}_{j\in\nn}$ is a sequence of $(X,q,d)$ atoms as in Definition \ref{Deatom}, and
$$
\|f\|_{H^{X,q,d}_{\atom}(\rn)}:=\inf\lf\{\lf\|\lf\{\sum_{j\in\nn}
\lf[\frac{\lambda_j\mathbf{1}_{B_j}}{\|\mathbf{1}_{B_j}\|_{X}}\r]^{s}
\r\}^\frac1{s}\r\|_{X}\r\}<\infty,
$$
where the infimum is taken over all the decompositions of $f$ as above.

The following atomic characterization of $H_X(\rn)$ comes
from \cite[Theorems 3.6 and 3.7]{SHYY}.
\begin{lemma}\label{Le72}
Let $\theta,\ s\in(0,1]$, $q\in(1,\infty]$ and $d\geq\lfloor n(1/\theta-1)\rfloor$ be a fixed integer.
Assume that $X$ is a ball quasi-Banach function space satisfying \eqref{aa},
\eqref{aax} and that $X^{1/s}$ is a ball Banach function space.
Then $H_X(\rn)=H^{X,q,d}_{\atom}(\rn)$ with equivalent quasi-norms.
\end{lemma}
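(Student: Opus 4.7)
The plan is to invoke \cite[Theorems 3.6 and 3.7]{SHYY} directly, since Lemma \ref{Le72} is essentially a restatement of those results under the present notation. First I would verify that the hypotheses of those theorems are satisfied under our assumptions on $X$, $\theta$, $s$, $q$ and $d$. The inequality \eqref{aa} is precisely the Fefferman--Stein type vector-valued maximal inequality required in \cite[Theorem 3.6]{SHYY}, with the exponents $\theta$ and $s$ playing the roles needed for the Calder\'on--Zygmund-type decomposition of level sets, while \eqref{aax} is the standard boundedness hypothesis on the associate space $(X^{1/s})'$ that underlies the reconstruction argument of \cite[Theorem 3.7]{SHYY}. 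The condition $d\ge\lfloor n(1/\theta-1)\rfloor$ ensures that the vanishing moment count of $(X,q,d)$-atoms is large enough to compensate for the decay loss when $\theta<1$, and the assumption that $X^{1/s}$ is a ball Banach function space allows one to deploy the H\"older inequality \eqref{12} and the second-associate identification of Lemma \ref{Lesdual} at the scale $X^{1/s}$.

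With these checks in place, \cite[Theorem 3.6]{SHYY} yields the continuous inclusion $H_X(\rn)\hookrightarrow H^{X,q,d}_{\atom}(\rn)$ by performing a Whitney decomposition of each level set $\{M_N^0(f)>2^i\}$, constructing truncated atomic pieces via the Calder\'on reproducing formula of the type stated in Lemma \ref{Le47}, and extracting a limit in $\cs'(\rn)$ through an Alaoglu-type compactness argument in $L^\infty(\rn)$; the control of the $H^{X,q,d}_{\atom}(\rn)$ quasi-norm by $\|f\|_{H_X(\rn)}$ is obtained from \eqref{aa} applied to the characteristic functions of the Whitney cubes (analogous to \eqref{EqHLMS}). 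Conversely, \cite[Theorem 3.7]{SHYY} gives $H^{X,q,d}_{\atom}(\rn)\hookrightarrow H_X(\rn)$ by estimating the grand maximal function of each atomic piece, splitting it into a local contribution (handled through an inequality of the type in Lemma \ref{Le45} that uses \eqref{aax} via duality on $(X^{1/s})'$) and a tail contribution (handled via the size, support and moment conditions of $(X,q,d)$-atoms together with \eqref{aa}).

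Combining these two continuous embeddings yields the equality $H_X(\rn)=H^{X,q,d}_{\atom}(\rn)$ together with the equivalence of the quasi-norms, with equivalence constants depending only on the constants appearing in \eqref{aa} and \eqref{aax}. The main, and essentially only, subtlety is to match the indices and the function-space regularity required in \cite{SHYY} with those given here; in particular, one must keep track of the distinct roles played by $s$ (the exponent controlling the quasi-triangle behaviour of $\|\cdot\|_{X^{1/s}}$) and $\theta$ (the convexification parameter entering the Fefferman--Stein bound in \eqref{aa}). No further arguments beyond those already carried out in \cite{SHYY} are needed.
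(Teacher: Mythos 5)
Your proposal takes the same route as the paper: Lemma \ref{Le72} is recorded directly as a consequence of \cite[Theorems 3.6 and 3.7]{SHYY} once the hypotheses are translated into the present notation, and the paper supplies no further argument. The only inaccuracy is in your narrative gloss of SHYY's internal machinery --- the Calder\'on-reproducing-formula plus Alaoglu compactness scheme you describe is what \emph{this} paper uses for the weak Hardy space in Theorem \ref{Thad}, whereas SHYY's convergence argument for the strong atomic decomposition rests on a Herz-space embedding (as noted in the introduction) --- but since you are citing the theorems rather than re-proving them, this does not affect the validity of your proof.
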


Next, let us recall the notion of absolutely continuous quasi-norms;
see, for instance, \cite[Definition 2.5]{SHYY}.

\begin{definition}
For a ball quasi-Banach function space $X$, its quasi-norm $\|\cdot\|_X$ is called an
\emph{absolutely continuous quasi-norm} if
$\|\mathbf{1}_{E_j}\|_X\downarrow0$ whenever $\{E_j\}_{j=1}^\infty$
is a sequence of measurable sets satisfying
$E_j\supset E_{j+1}$ for any $j\in\nn$ and $\bigcap_{j=1}^\infty E_j=\emptyset$.
\end{definition}
\begin{remark}\label{Re73}
Let $q$ and $X$ be as in Lemma \ref{Le72}. Assume further that $X$ has an absolutely continuous quasi-norm.
Then, by \cite[Remark 3.12]{SHYY},
we know that the subspace $H_X(\rn)\cap L^q(\rn)$ is dense in $H_X(\rn)$.
\end{remark}

Recall that, for any given $\delta\in(0,1]$, a linear operator $T$ is called
a \emph{convolutional $\delta$-type Calder\'on--Zygmund operator} $T$
(see, for instance, \cite{AM1986})
if $T$ is a linear bounded operator on $L^2(\rn)$ with kernel $k\in\cs'(\rn)$ coinciding with a
locally integrable function on $\rn\setminus\{\vec 0_n\}$ and satisfying that, for any
$x,\ y\in\rn$ with
$|x|>2|y|$,
$$
\lf|k(x-y)-k(x)\r|\le C\frac{|y|^\delta}{|x|^{n+\delta}}
$$
and, for any $f\in L^2(\rn)$, $Tf=\text{p.\,v.}\,k\ast f$.

The boundedness from $H_X(\rn)$ to $WH_X(\rn)$ of convolutional
$\delta$-type Calder\'on--Zygmund operators is stated as follows.

\begin{theorem}\label{Thcz}
Let $\theta,\ s,\ \delta\in(0,1]$ and $q\in(1,\infty)$.
Assume that $X$ is a ball quasi-Banach function
space having an absolutely continuous quasi-norm and satisfying \eqref{aa},
\eqref{aax} and Assumption \ref{xinm}. Assume that $X^{1/s}$ is a ball Banach function space.
Let $T$ be a convolutional $\delta$-type Calder\'on--Zygmund
operator. If there exists a positive constant $C_0$ such that,
for any $\alpha\in(0,\infty)$ and any sequence $\{f_j\}_{j\in\nn}\subset\mathscr M(\rn)$,
\begin{equation}\label{Eqfs11}
\alpha\lf\|\mathbf{1}_{\{x\in\rn:\ \{\sum_{j\in\nn}[\cm(f_j)(x)]^\frac{n+\delta}{n}\}
^\frac{n}{n+\delta}>\alpha\}}\r\|_{X^{\frac{n+\delta}{n}}}
\le C_0\lf\|\lf(\sum_{j\in\nn}|f_j|^\frac{n+\delta}{n}\r)
^\frac{n}{n+\delta}\r\|_{X^{\frac{n+\delta}{n}}},
\end{equation}
then $T$ has a unique extension on $H_X(\rn)$ and, moreover,
there exists a positive constant $C$ such that,
for any $f\in H_X(\rn)$,
$$
\lf\|Tf\r\|_{WH_X(\rn)}\le C\lf\|f\r\|_{H_X(\rn)}.
$$
\end{theorem}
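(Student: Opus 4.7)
The plan is to combine the atomic characterization of $H_X(\rn)$ (Lemma \ref{Le72}) with a Calder\'on--Zygmund-type splitting of the atomic sum at each level $\alpha$, and then invoke the weak-type vector-valued Fefferman--Stein inequality \eqref{Eqfs11} at exponent $(n+\delta)/n$, which is precisely tuned to absorb the kernel-decay estimate $[\cm(\mathbf{1}_{B_j})]^{(n+\delta)/n}$ that emerges outside each atom's double. I will not try to route this through the molecular characterization Theorem \ref{Thmolcha}, because in the critical range the $\delta$-type kernel only yields an annular decay $\sim 2^{-k(n+\delta)}$, which falls short of the strict $\epsilon>n+d+1$ required there.

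Since $\|\cdot\|_X$ is absolutely continuous, Remark \ref{Re73} gives density of $H_X(\rn)\cap L^q(\rn)$ in $H_X(\rn)$, so by a standard extension argument it suffices to prove $\|Tf\|_{WH_X(\rn)}\lesssim\|f\|_{H_X(\rn)}$ for $f$ in this dense subspace. Lemma \ref{Le72} then furnishes $f=\sum_j\lambda_j a_j$ in $L^q(\rn)$ and in $\cs'(\rn)$, where the $a_j$ are $(X,q,d)$-atoms in balls $B_j:=B(x_j,r_j)$ with
$$\lf\|\lf\{\sum_j\lf(\lambda_j\mathbf{1}_{B_j}/\|\mathbf{1}_{B_j}\|_X\r)^s\r\}^{1/s}\r\|_X\lesssim\|f\|_{H_X(\rn)}.$$
By the $L^q$-boundedness of $T$, $Tf=\sum_j\lambda_j Ta_j$ converges in $L^q(\rn)$ and hence in $\cs'(\rn)$. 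The basic tail estimate, obtained from the vanishing mean of $a_j$ (available since $d\ge 0$) and the $\delta$-type kernel smoothness, is
$$M_N^0(Ta_j)(x)\lesssim\frac{1}{\|\mathbf{1}_{B_j}\|_X}\lf[\cm(\mathbf{1}_{B_j})(x)\r]^{(n+\delta)/n}\quad\text{for any }x\notin 2B_j.$$

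To bound $\alpha\|\mathbf{1}_{\{M_N^0(Tf)>\alpha\}}\|_X$ uniformly in $\alpha\in(0,\infty)$, I would split the index set as $J_1(\alpha):=\{j:\lambda_j>\alpha\|\mathbf{1}_{B_j}\|_X\}$ and $J_2(\alpha):=\nn\setminus J_1(\alpha)$, with $f_k:=\sum_{j\in J_k(\alpha)}\lambda_j a_j$ and exceptional set $E_\alpha:=\bigcup_{j\in J_1(\alpha)}2B_j$. Then
$$\lf\{M_N^0(Tf)>\alpha\r\}\subset E_\alpha\cup\lf\{x\in E_\alpha^\complement:M_N^0(Tf_1)(x)>\alpha/2\r\}\cup\lf\{M_N^0(Tf_2)>\alpha/2\r\}.$$
For the first piece, I will combine \eqref{EqHLMS} with the quasi-subadditivity $(\sum_j\mathbf{1}_{B_j})^s\le\sum_j\mathbf{1}_{B_j}$ and the defining constraint on $J_1(\alpha)$ to deduce $\alpha\|\mathbf{1}_{E_\alpha}\|_X\lesssim\|f\|_{H_X(\rn)}$, passing through $X^{1/s}$ via $\|g\|_X=\|g^s\|_{X^{1/s}}^{1/s}$. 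For the second piece, the tail estimate on $E_\alpha^\complement$ gives
$$M_N^0(Tf_1)(x)\mathbf{1}_{E_\alpha^\complement}(x)\lesssim\sum_{j\in J_1(\alpha)}\lf[\cm\lf(\lf(\lambda_j/\|\mathbf{1}_{B_j}\|_X\r)^{n/(n+\delta)}\mathbf{1}_{B_j}\r)(x)\r]^{(n+\delta)/n},$$
so \eqref{Eqfs11} applied to $\{(\lambda_j/\|\mathbf{1}_{B_j}\|_X)^{n/(n+\delta)}\mathbf{1}_{B_j}\}_{j\in J_1(\alpha)}$, after unravelling the exponents, reduces the bound to $\|\sum_{J_1(\alpha)}(\lambda_j/\|\mathbf{1}_{B_j}\|_X)\mathbf{1}_{B_j}\|_X$, which via $\sum_j x_j\le(\sum_j x_j^s)^{1/s}$ for $x_j\ge 0$ and $s\in(0,1]$ is dominated by the atomic norm. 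The `good' piece $Tf_2$ is treated analogously: outside $\bigcup_{J_2(\alpha)}2B_j$ the same tail-plus-\eqref{Eqfs11} argument applies, while the contribution inside, where $|\lambda_j a_j|\le\alpha$, is controlled via the $L^q$-boundedness of $T$ and Chebyshev in $WX$.

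The principal obstacle is the careful manipulation of exponents: one must reconcile the $\ell^s$-structure of the atomic norm (with $s\in(0,1]$ from \eqref{aa}) with the $\ell^{(n+\delta)/n}$-structure inherent in \eqref{Eqfs11} (where $(n+\delta)/n>1$). The threshold split at level $\alpha$ is designed precisely to bridge these incompatible scales, and the inequality $\sum_j x_j\le(\sum_j x_j^s)^{1/s}$ together with the identification $\|g\|_X=\|g^s\|_{X^{1/s}}^{1/s}$ will be the technical workhorses. Justifying the pointwise majorisation $M_N^0(Tf)\le\sum_j\lambda_j M_N^0(Ta_j)$ from the $L^q$-convergence of the atomic sum will also require some care, typically via a reduction to finite truncations followed by a Fatou-type passage in $WX$ (Lemma \ref{LeFatou}).
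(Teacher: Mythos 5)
Your proposal is essentially correct and its engine is the same as the paper's: pass to the atomic decomposition of $H_X(\rn)\cap L^q(\rn)$ (Lemma~\ref{Le72}), control the local contribution $\sum_j\lambda_j M(Ta_j,\cdot)\mathbf{1}_{4B_j}$ by Chebyshev, the $L^q$-boundedness of $T$, and Lemma~\ref{Le45}, and for the far contribution use the kernel estimate $M(Ta_j,\cdot)(x)\lesssim\|\mathbf{1}_{B_j}\|_X^{-1}[\cm(\mathbf{1}_{B_j})(x)]^{(n+\delta)/n}$ off $4B_j$ and feed it into \eqref{Eqfs11}. Where you differ is in adding a Calder\'on--Zygmund threshold split of the index set at each level $\alpha$, with $J_1(\alpha):=\{j:\lambda_j>\alpha\|\mathbf{1}_{B_j}\|_X\}$ and exceptional set $E_\alpha$, motivated by the need to ``bridge the $\ell^s$/$\ell^{(n+\delta)/n}$ scales.'' The paper shows this is unnecessary: \eqref{Eqfs11} already encodes a weak-type inequality at the target exponent $(n+\delta)/n$ and can be applied to the \emph{full} tail sum over all $j$ at once, and likewise Chebyshev plus Lemma~\ref{Le45} handles the full local sum without any restriction to $J_2$; the scale-bridging is accomplished solely by $\sum_j x_j\le(\sum_j x_j^s)^{1/s}$, which you also use. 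So your three-way decomposition reduces, on inspection, to two estimates that each hold index-set-free, and the threshold does not buy anything; the paper's two-piece split by $x\in\bigcup_j 4B_j$ versus $x\in(\bigcup_j 4B_j)^\complement$ is cleaner. Two small imprecisions worth flagging: the claim ``$|\lambda_j a_j|\le\alpha$'' inside $\bigcup_{J_2}2B_j$ is not literally true for $(X,q,d)$-atoms with $q<\infty$ (they are only $L^q$-normalized, not $L^\infty$-bounded), but you do not actually use it since the Chebyshev argument makes no pointwise claim; and the pointwise majorisation $M_N^0(Tf)\le\sum_j\lambda_j M_N^0(Ta_j)$ needs no Fatou passage in $WX$ --- it follows directly because $\phi_t\ast Tf=\sum_j\lambda_j\phi_t\ast Ta_j$ pointwise for each $\phi\in\cs(\rn)$, by $L^q$--$L^{q'}$ duality applied to the $L^q$-convergent series.
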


\begin{proof}
Let $\theta$, $s$ and $d$ be as in Lemma \ref{Le72} and $f\in H_X(\rn)\cap L^q(\rn)$.
Then, by the proof of \cite[Theorem 3.7]{SHYY}, we find that there
exist a sequence of $(X,q,d)$-atoms, $\{a_j\}_{j\in\nn}$,
supported, respectively, in balls
$\{B_j\}_{j\in\nn}:=\{B(x_j,r_j):\ x_j\in\rn\ \text{and}\ r_j\in(0,\infty)\}_{j\in\nn}$ and
a sequence $\{\lambda_j\}_{j\in\nn}$ of positive constants
such that
\begin{equation}\label{Eq73}
f=\sum_{j\in\nn}\lambda_ja_j\quad\text{in}\quad L^q(\rn)
\end{equation}
and
$$
\lf\|\lf\{\sum_{j\in\nn}\lf[\frac{\lambda_j\mathbf{1}_{B_j}}{\|\mathbf{1}_{B_j}
\|_{X}}\r]^{s}\r\}^\frac1{s}\r\|_{X}
\lesssim\|f\|_{H_X(\rn)}.
$$
From the fact that
$T$ is bounded on $L^q(\rn)$ (see, for instance, \cite[Theorem 5.1]{d}) and \eqref{Eq73},
we deduce that
$$
T(f)=\sum_{j\in\nn}\lambda_jT(a_j)
$$
holds true in $L^q(\rn)$, namely, $Tf$ for any $f\in H_X(\rn)\cap L^q(\rn)$ is well defined.
Let $\psi\in\cs(\rn)$ satisfy $\int_\rn\psi(x)\,dx\neq0$. Then, to prove Theorem \ref{Thcz},
by Assumption \ref{xinm} and Theorem \ref{Thmc}(ii),
we only need to show that, for any $f\in H_X(\rn)$,
\begin{equation}\label{Eq74}
\lf\|M(Tf,\psi)\r\|_{WX}\lesssim\|f\|_{H_X(\rn)},
\end{equation}
where $M(Tf,\psi)$ is as in Definition \ref{Dem}(i) with $f$ replaced by $Tf$.

For any $\alpha\in(0,\infty)$, by Lemma \ref{Leqn}(iii) and Remark \ref{Rews}(i), we have
\begin{align*}
&\alpha\lf\|\mathbf{1}_{\{x\in\rn:\ M(Tf,\psi)(x)>\alpha\}}\r\|_{X}\\
&\hs\le\alpha\lf\|\mathbf{1}_{\{x\in\rn:\ \sum_{j\in\nn}\lambda_j M(Ta_j,\psi)(x)>\alpha\}}\r\|_{X}\\
&\hs\lesssim\alpha\lf\|\mathbf{1}_{\{x\in4B_j:\ \sum_{j\in\nn}
\lambda_j M(Ta_j,\psi)(x)>\frac{\alpha}2\}}\r\|_{X}
+\alpha\lf\|\mathbf{1}_{\{x\in(4B_j)^\complement:\ \sum_{j\in\nn}
\lambda_j M(Ta_j,\psi)(x)>\frac{\alpha}2\}}\r\|_{X}\\
&\hs\lesssim\lf\|\sum_{j\in\nn}\lambda_jM(Ta_j,\psi)\mathbf{1}_{4B_j}\r\|_{X}
+\alpha\lf\|\mathbf{1}_{\{x\in(4B_j)^\complement:\ \sum_{j\in\nn}
\lambda_j M(Ta_j,\psi)(x)>\frac{\alpha}2\}}\r\|_{X}=:\mathrm{I}+\mathrm{II}.
\end{align*}

We first estimate $\mathrm{I}$. Observing that $M(Ta_j,\psi)\lesssim\cm(Ta_j)$ and $a_j\in L^q(\rn)$,
by the fact that
$T$ is bounded on $L^q(\rn)$ (see, for instance, \cite[Theorem 5.1]{d}) and the size condition of $a_j$, we conclude that
$$
\lf\|M(Ta_j,\psi)\r\|_{L^q(\rn)}\lesssim\lf\|\cm(Ta_j)\r\|_{L^q(\rn)}
\lesssim\lf\|Ta_j\r\|_{L^q(\rn)}
\lesssim\lf\|a_j\r\|_{L^q(\rn)}\lesssim\frac{|B_j|^{1/q}}{\|\mathbf{1}_{B_j}\|_{X}},
$$
which, combined with Lemma \ref{Le45}, \eqref{aa} and \cite[Theorem 3.6]{SHYY}, implies that
\begin{align}\label{Eq75}
\mathrm{I}&\lesssim\lf\|\lf\{\sum_{j\in\nn}\lf[\frac{\lambda_j\mathbf{1}_{4B_j}}
{\|\mathbf{1}_{B_j}\|_{X}}\r]^{s}\r\}^\frac1{s}\r\|_{X}
\lesssim\lf\|\lf\{\sum_{j\in\nn}\lf[\frac{\lambda_j\mathbf{1}_{B_j}}
{\|\mathbf{1}_{B_j}\|_{X}}\r]^{s}\r\}^\frac1{s}\r\|_{X}
\lesssim\|f\|_{H_X(\rn)}.
\end{align}

To deal with the term $\mathrm{II}$, for any $t\in(0,\infty)$, let $k^{(t)}:=k\ast\psi_t$ with
$\psi_t(\cdot):=t^{-n}\psi({\cdot}/{t})$. By \cite[p.\,2881]{YYYZ},
we know that $k^{(t)}$ satisfies the same conditions as $k$.
From this, together with the vanishing moments of $a_j$,
the H\"older inequality and the size condition of $a_j$,
we deduce that, for any $x\in(4B_j)^\complement$,
\begin{align*}
\lf|M(Ta_j,\psi)(x)\r|&=\sup_{t\in(0,\infty)}|\psi_t\ast(k\ast a_j)(x)|=\sup_{t\in(0,\infty)}\lf|k^{(t)}\ast a_j(x)\r|\\
&\le\sup_{t\in(0,\infty)}\int_\rn\lf|k^{(t)}(x-y)-k^{(t)}(x-x_j)\r|\lf|a_j(y)\r|\,dy\\
&\lesssim\int_{B_j}\frac{|y-x_j|^\delta}{|x-x_j|^{n+\delta}}|a_j(x)|\,dy
\lesssim\frac{r_j^\delta}{|x-x_j|^{n+\delta}}\|a_j\|_{L^q(\rn)}|B_j|^{1/q'}\\
&\lesssim\frac{r_j^{n+\delta}}{|x-x_j|^{n+\delta}}\frac{1}{\|\mathbf{1}_{B_j}\|_{X}}
\lesssim\lf[\cm(\mathbf{1}_{B_j})(x)\r]^\frac{n+\delta}{n}\frac{1}{\|\mathbf{1}_{B_j}\|_{X}}.
\end{align*}
This shows that, for any $x\in(4B_j)^\complement$,
$$
\lf|M(Ta_j,\psi)(x)\mathbf{1}_{(4B_j)^\complement}(x)\r|\lesssim
\lf[\cm(\mathbf{1}_{B_j})(x)\r]^\frac{n+\delta}{n}\frac{1}{\|\mathbf{1}_{B_j}\|_{X}}.
$$
Therefore, by this and \eqref{Eqfs11}, we find that
\begin{align}\label{Eq76}
\mathrm{II}&\lesssim\alpha\lf\|\mathbf{1}_{\{x\in\rn:\ \sum_{j\in\nn}\frac{\lambda_j}{\|\mathbf{1}_{B_j}\|_{X}}
[\cm(\mathbf{1}_{B_j})(x)]^\frac{n+\delta}{n}>\frac\alpha2\}}\r\|_{X}\\\noz
&\lesssim\frac{\alpha}{2}\lf\|\mathbf{1}_{\{x\in\rn:\ [\sum_{j\in\nn}\frac{\lambda_j}{\|\mathbf{1}_{B_j}\|_{X}}
\{\cm(\mathbf{1}_{B_j})(x)\}^{\frac{n+\delta}{n}}]^\frac{n}{n+\delta}>(\frac\alpha2)^\frac{n}{n+\delta}\}}
\r\|_{X^{\frac{n+\delta}{n}}}^{\frac{n+\delta}{n}}\\\noz
&\lesssim\lf\|\lf[\sum_{j\in\nn}\frac{\lambda_j\mathbf{1}_{B_j}}{\|\mathbf{1}_{B_j}\|_{X}}
\r]^\frac{n}{n+\delta}\r\|_{X^{\frac{n+\delta}{n}}}^{\frac{n+\delta}{n}}
\lesssim\lf\|\lf\{\sum_{j\in\nn}\lf[\frac{\lambda_j\mathbf{1}_{B_j}}{\|\mathbf{1}_{B_j}\|_{X}}
\r]^{s}\r\}^\frac{1}{s}\r\|_{X}\lesssim\|f\|_{H_X(\rn)}.
\end{align}

Finally, combining \eqref{Eq75} and \eqref{Eq76}, we conclude that, for any $\alpha\in(0,\infty)$,
$$
\alpha\lf\|\mathbf{1}_{\{x\in\rn:\ M(Tf,\psi)(x)>\alpha\}}\r\|_{X}\lesssim\|f\|_{H_X(\rn)},
$$
namely, \eqref{Eq74} holds true. This, together with Remark \ref{Re73} and a dense argument, then finishes the proof of Theorem \ref{Thcz}.
\end{proof}

Recall that, for any given $\gamma\in(0,\infty)$, a linear operator $T$ is called a \emph{non-convolutional} $\gamma$\emph{-order
Calder\'on--Zygmund operator }if $T$ is bounded on $L^2(\rn)$ and its kernel
$$
k:\ (\rn\times\rn)\setminus\{(x,x):\ x\in\rn\}\rightarrow\cc
$$
satisfies that there exists a positive constant $C$ such that, for any $\alpha\in\zz_+^n$ with
$|\alpha|\le\lceil\gamma\rceil-1$ and $x,\ y,\ z\in\rn$ with $|x-y|\geq2|y-z|$,
\begin{equation}\label{Eq71}
\lf|\partial_x^\alpha k(x,y)-\partial_x^\alpha k(x,z)\r|\le C\frac{|y-z|^{\gamma-\lceil\gamma\rceil+1}}{|x-y|^{n+\gamma}}
\end{equation}
and, for any $f\in L^2(\rn)$ having compact support and $x\notin\supp f$,
$$
T(f)(x)=\int_{\supp f}k(x,y)f(y)\,dy.
$$
Here and hereafter, for any $\beta\in(0,\infty)$,
the \emph{symbol
$\lceil\beta\rceil$} denotes the minimal integer not less than $\beta$.

For any given $m\in\nn$, an operator $T$ is said to have the
\emph{vanishing moments up to order} $m$ if, for any $a\in L^2(\rn)$
having compact support and satisfying that, for any $\beta\in\zz_+^n$ with $|\beta|\le m$, $\int_\rn a(x)x^\beta\,dx=0$,
it holds true that $\int_\rn x^\beta T(a)(x)\,dx=0$.

We now have the boundedness of non-convolutional
$\gamma$-order Calder\'on--Zygmund operators from $H_X(\rn)$ to $WH_X(\rn)$ as follows.

\begin{theorem}\label{Thcz1}
Let $\theta,\ s,\ \delta\in(0,1]$. Assume that $X$ is a ball quasi-Banach
function space
having an absolutely continuous quasi-norm and satisfying \eqref{aa}, \eqref{aax} with $q=2$
and Assumption \ref{xinm}.
Assume that $X^{1/s}$ is a ball Banach function space. Let $\gamma\in(0,\infty)$ and $T$ be a non-convolutional $\gamma$-order Calder\'on--Zygmund
operator having the vanishing moments up to order $\lceil\gamma\rceil-1$ satisfying
$\lceil\gamma\rceil-1\leq n(1/\theta-1)$.
If there exists a positive constant $C_0$ such that, for any $\alpha\in(0,\infty)$ and
any sequence
$\{f_j\}_{j\in\nn}\subset\mathscr M(\rn)$,
\begin{equation}\label{Eqfs12}
\alpha\lf\|\mathbf{1}_{\{x\in\rn:\ \{\sum_{j\in\nn}[\cm(f_j)(x)]^\frac{n+\gamma}{n}\}
^\frac{n}{n+\gamma}>\alpha\}}\r\|_{X^{\frac{n+\gamma}{n}}}
\le C_0\lf\|\lf(\sum_{j\in\nn}|f_j|^\frac{n+\gamma}{n}\r)
^\frac{n}{n+\gamma}\r\|_{X^{\frac{n+\gamma}{n}}},
\end{equation}
then $T$ has a unique extension on $H_X(\rn)$ and, moreover,
there exists a positive constant $C$ such that, for any $f\in H_X(\rn)$,
$$
\lf\|Tf\r\|_{WH_X(\rn)}\le C\lf\|f\r\|_{H_X(\rn)}.
$$
\end{theorem}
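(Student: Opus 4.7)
The proof parallels that of Theorem \ref{Thcz}, the principal modification arising from the non-convolutional structure of $T$, which forces a direct pointwise analysis of $T(a_j)$ via the kernel smoothness \eqref{Eq71} in place of the convolutional identity $k^{(t)} := k\ast\psi_t$ used there. By Remark \ref{Re73}, $H_X(\rn)\cap L^2(\rn)$ is dense in $H_X(\rn)$, so it suffices to establish the bound on this dense subspace. For $f\in H_X(\rn)\cap L^2(\rn)$, I would apply Lemma \ref{Le72} with $q=2$ and $d:=\lfloor n(1/\theta-1)\rfloor$ (noting that $d\geq\lceil\gamma\rceil-1$ by the hypothesis $\lceil\gamma\rceil-1\leq n(1/\theta-1)$) to obtain an atomic decomposition $f=\sum_{j\in\nn}\lambda_j a_j$ converging in both $L^2(\rn)$ and $\cs'(\rn)$, with
$$
\lf\|\lf\{\sum_{j\in\nn}\lf[\frac{\lambda_j\mathbf{1}_{B_j}}{\|\mathbf{1}_{B_j}\|_X}\r]^s\r\}^{1/s}\r\|_X\lesssim\|f\|_{H_X(\rn)}.
$$
The $L^2$-boundedness of $T$ then yields $Tf=\sum_{j\in\nn}\lambda_j T(a_j)$ in $L^2(\rn)$ and hence in $\cs'(\rn)$, and Assumption \ref{xinm} together with Theorem \ref{Thmc}(ii) reduces the task to bounding $\|M(Tf,\psi)\|_{WX}$ for some fixed $\psi\in\cs(\rn)$ with $\int_\rn\psi\neq 0$.

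Using the pointwise subadditivity $M(Tf,\psi)\leq\sum_{j\in\nn}\lambda_j M(T(a_j),\psi)$ and splitting according to whether the relevant point lies in $4B_j$ or in $(4B_j)^\complement$, I would write, for any $\alpha\in(0,\infty)$,
$$
\alpha\lf\|\mathbf{1}_{\{x\in\rn:\ M(Tf,\psi)(x)>\alpha\}}\r\|_X\lesssim \lf\|\sum_{j\in\nn}\lambda_j M(T(a_j),\psi)\mathbf{1}_{4B_j}\r\|_X+\alpha\lf\|\mathbf{1}_{\{x\in\rn:\ \sum_{j\in\nn}\lambda_j M(T(a_j),\psi)(x)\mathbf{1}_{(4B_j)^\complement}(x)>\alpha/2\}}\r\|_X=:\mathrm{I}+\mathrm{II}.
$$
The term $\mathrm{I}$ is handled exactly as in Theorem \ref{Thcz}: since $M(T(a_j),\psi)\lesssim\cm(T(a_j))$ and both $T$ and $\cm$ are $L^2$-bounded, the atomic size bound gives $\|M(T(a_j),\psi)\|_{L^2(\rn)}\lesssim|B_j|^{1/2}\|\mathbf{1}_{B_j}\|_X^{-1}$, after which Lemma \ref{Le45} applied with $q=2$ together with \eqref{aa} yields $\mathrm{I}\lesssim\|f\|_{H_X(\rn)}$.

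The crux is the estimate of $\mathrm{II}$, which rests on the pointwise bound
$$
M(T(a_j),\psi)(x)\lesssim\frac{1}{\|\mathbf{1}_{B_j}\|_X}\lf[\cm(\mathbf{1}_{B_j})(x)\r]^{(n+\gamma)/n}\quad\text{for every}\ x\in(4B_j)^\complement.
$$
To obtain it, I would first note that for $x\in(4B_j)^\complement$ and $y\in B_j$ the geometry $|x-y|\sim|x-x_j|\geq 2|y-x_j|$ permits subtracting the degree-$(\lceil\gamma\rceil-1)$ Taylor polynomial of $k(x,\cdot)$ at $x_j$ using the vanishing moments of $a_j$ up to order $d\geq\lceil\gamma\rceil-1$; combining this with the kernel smoothness \eqref{Eq71}, the H\"older inequality and Definition \ref{Deatom}(ii) yields the pointwise estimate $|T(a_j)(y)|\lesssim r_j^{n+\gamma}|y-x_j|^{-(n+\gamma)}\|\mathbf{1}_{B_j}\|_X^{-1}$ for $y\in(2B_j)^\complement$. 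Upgrading this to a uniform-in-$t$ bound on $|\psi_t\ast T(a_j)(x)|$ would then be achieved by a scale decomposition: for small $t$ the Schwartz decay of $\psi$ localizes the convolution to a neighborhood of $x$ on which the pointwise estimate of $T(a_j)$ applies, while for large $t$ the vanishing moments of $T(a_j)$ up to order $\lceil\gamma\rceil-1$ (inherited from the vanishing-moment hypotheses on both $T$ and $a_j$) together with the smoothness and decay of $\psi$ at the corresponding scale provide the required cancellation. Substituting this pointwise bound into $\mathrm{II}$ and invoking the weak Fefferman--Stein vector-valued inequality \eqref{Eqfs12} with exponent $(n+\gamma)/n$ gives
$$
\mathrm{II}\lesssim\lf\|\sum_{j\in\nn}\frac{\lambda_j\mathbf{1}_{B_j}}{\|\mathbf{1}_{B_j}\|_X}\r\|_X\lesssim\lf\|\lf\{\sum_{j\in\nn}\lf[\frac{\lambda_j\mathbf{1}_{B_j}}{\|\mathbf{1}_{B_j}\|_X}\r]^s\r\}^{1/s}\r\|_X\lesssim\|f\|_{H_X(\rn)},
$$
where the middle inequality uses $s\in(0,1]$.

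The main obstacle is precisely the outside pointwise estimate: the non-convolutional structure blocks the clean identification $k^{(t)}=k\ast\psi_t$ that makes Theorem \ref{Thcz} straightforward, and when $\gamma>1$ the Taylor expansion requires the full strength of the vanishing-moment assumptions on both $a_j$ and $T$ together with a delicate scale-by-scale control of the convolution with $\psi_t$. Once this bound is in hand, the remaining pieces run in parallel with the convolutional case, and the density argument from Remark \ref{Re73} extends the estimate to all of $H_X(\rn)$, completing the proof.
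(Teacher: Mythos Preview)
Your overall architecture---atomic decomposition in $L^2$, the split into $\mathrm{I}$ (inside $4B_j$) and $\mathrm{II}$ (outside), and the use of \eqref{Eqfs12} to close $\mathrm{II}$---matches the paper exactly, as does the treatment of $\mathrm{I}$. The difference lies in how the key pointwise bound $M(T(a_j),\psi)(x)\lesssim\|\mathbf{1}_{B_j}\|_X^{-1}[\cm(\mathbf{1}_{B_j})(x)]^{(n+\gamma)/n}$ on $(4B_j)^\complement$ is obtained.

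The paper does \emph{not} perform a small-$t$/large-$t$ dichotomy. Instead, for every $t$ it uses the vanishing moments of $T(a_j)$ to subtract the degree-$(\lceil\gamma\rceil-1)$ Taylor polynomial of $y\mapsto\psi((x-y)/t)$ at $x_j$, and then splits the $y$-integral spatially into three annular regions $|y-x_j|<2r_j$, $2r_j\le|y-x_j|<|x-x_j|/2$, and $|y-x_j|\ge|x-x_j|/2$ (the terms $\mathrm{I_1},\mathrm{I_2},\mathrm{I_3}$ in \eqref{Eq78}--\eqref{Eq711}). The inner region uses only the Taylor remainder of $\psi$ together with $\|T(a_j)\|_{L^2}$; the middle and outer regions open up $T(a_j)(y)=\int k(y,z)a_j(z)\,dz$ and invoke the vanishing moments of $a_j$ with the kernel regularity \eqref{Eq71}. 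Crucially, in the outer region the paper does \emph{not} rely on the Taylor remainder bound for $\psi$ but estimates $\psi_t(x-y)$ and each Taylor term separately, which is what keeps the integral finite.

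Your route via a pointwise estimate of $T(a_j)$ followed by a scale dichotomy in $t$ is plausible but contains a subtle trap you should be aware of: $T(a_j)$ is not compactly supported, and the pointwise decay $|T(a_j)(y)|\lesssim r_j^{n+\gamma}|y-x_j|^{-(n+\gamma)}$ yields $\int|y-x_j|^{\lceil\gamma\rceil}|T(a_j)(y)|\,dy=\infty$ (the integrand has radial exponent $\lceil\gamma\rceil-\gamma-1\in[-1,0)$). Hence for ``large $t$'' you cannot simply subtract the Taylor polynomial of $\psi_t$ and integrate the remainder over all of $\rn$; you will be forced into a further spatial cutoff at scale $\sim|x-x_j|$ and a separate treatment of the tail using the full Schwartz decay of $\psi_t$ and the individual Taylor terms---which is precisely what the paper's $\mathrm{I_3}$ does. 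So your sketch can be completed, but the ``scale decomposition'' ultimately collapses back into the paper's spatial trichotomy; the paper's uniform-in-$t$ organization is cleaner and avoids the dichotomy altogether.
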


\begin{remark}
\begin{itemize}
\item[(i)]
Recall that, for any given $\delta\in(0,1]$,
a linear operator $T$ is called a \emph{non-convolutional $\delta$-type Calder\'on--Zygmund operator} $T$
if $T$ is a linear bounded operator on $L^2(\rn)$ and there exist a kernel $k$ on $(\rn\times\rn)\setminus\{(x,x):\ x\in\rn\}$
and a positive constant $C$ such that, for any $x,\ y,\ z\in\rn$ with $|x-y|>2|y-z|$,
$$
\lf|k(x,y)-k(x,z)\r|\leq C\frac{|y-z|^\delta}{|x-y|^{n+\delta}}
$$
and, for any $f\in L^2(\rn)$ having compact support and $x\notin\supp f$,
$$
T(f)(x)=\int_{\supp f}k(x,y)f(y)\,dy.
$$
Observe that, when $\gamma:=\delta\in(0,1]$, the operator $T$ in Theorem \ref{Thcz1}
coincides with a non-convolutional
$\delta$-type Calder\'on--Zygmund operator.
Therefore, the operators in Theorem \ref{Thcz1} include non-convolutional
$\delta$-type Calder\'on--Zygmund operators as special cases.
By this, we know that the \emph{critical index} of
non-convolutional $\delta$-type Calder\'on--Zygmund operators is $\frac{n}{n+\delta}$
(see Remark \ref{111} for more details).
\item[(ii)]
Theorems \ref{Thcz} and \ref{Thcz1} obtain the boundedness of convolutional
$\delta$-type and non-convolutional $\gamma$-order Calder\'on-Zygmund operators
from $H_X(\rn)$ to $WH_X(\rn)$. Since, for any
$q\in(2,\infty)$, the boundedness of
non-convolutional $\gamma$-order Calder\'on-Zygmund operators
on $L^q(\rn)$ can not be guaranteed by our assumptions on $T$,
Assumption \eqref{aax} for some $q\in(1,\infty)$
in Theorem \ref{Thcz} is weaker than \eqref{aax} with $q=2$
in Theorem \ref{Thcz1}.
\end{itemize}
\end{remark}

\begin{proof}[Proof of Theorem \ref{Thcz1}]
By an argument similar to that used in the proof of Theorem \ref{Thcz},
to show Theorem \ref{Thcz1},
it suffices to prove that, for any $\alpha\in(0,\infty)$
and $f\in H_X(\rn)\cap L^2(\rn)$,
\begin{equation}\label{Eq77}
\alpha\lf\|\mathbf{1}_{\{x\in\rn:\ \sum_{j\in\nn}\lambda_jM(Ta_j,\psi)(x)\mathbf{1}_{(4B_j)^\complement}(x)>\frac{\alpha}{2}\}}\r\|_{X}
\lesssim\|f\|_{H_X(\rn)},
\end{equation}
where, for any $j\in\nn$,
$\lambda_j,\ a_j$ and $B_j$ are the same as in the proof of Theorem \ref{Thcz}.

For any given $j\in\nn$, we
first estimate $M(Ta_j,\psi)$, which is as in Definition \ref{Dem}(i)
with $f$ replaced by $Ta_j$.
By the vanishing moments of $T$ and the fact that $\lceil\gamma\rceil-1\le n(1/\theta-1)$
implies that $\lceil\gamma\rceil-1\le d$, we know that,
for any $j\in\nn$, $t\in(0,\infty)$ and $x\in(4B_j)^\complement$,
\begin{align}\label{Eq78}
\lf|\psi_t\ast T(a_j)(x)\r|&\le\frac1{t^n}\int_\rn\lf|\psi\lf(\frac{x-y}{t}\r)-
\sum_{|\beta|\le \lceil\gamma\rceil-1}
\frac{\partial^\beta\psi(\frac{x-x_j}{t})}{\beta!}\lf(\frac{y-x_j}{t}\r)^\beta\r||T(a_j)(y)|\,dy\\\noz
&=\frac1{t^n}\lf(\int_{|y-x_j|<2r_j}+\int_{2r_j\le|y-x_j|
<\frac{|x-x_j|}{2}}+\int_{|y-x_j|\geq\frac{|x-x_j|}{2}}\r)\\\noz
&\hspace*{12pt}\times\lf|\psi\lf(\frac{x-y}{t}\r)-\sum_{|\beta|\le\lceil\gamma\rceil-1}
\frac{\partial^\beta\psi(\frac{x-x_j}{t})}{\beta!}
\lf(\frac{y-x_j}{t}\r)^\beta\r||T(a_j)(y)|\,dy=:\mathrm{I_1}+\mathrm{I_2}+\mathrm{I_3}.
\end{align}

For $\mathrm{I_1}$, the Taylor remainder theorem guarantees that, for any $j\in\nn$ and $y\in\rn$ with $|y-x_j|<2r_j$, there
exists $\xi_1(y)\in2B_j$ such that
\begin{align*}
\mathrm{I_1}\lesssim\frac1{t^n}\int_{|y-x_j|<2r_j}\lf|\sum_{|\beta|=\lceil\gamma\rceil} \partial^\beta\psi\lf(
\frac{x-\xi_1(y)}{t}\r)\r|\lf(\frac{|y-x_j|}{t}\r)^{\lceil\gamma\rceil}|Ta_j(y)|\,dy,
\end{align*}
which, together with the H\"older inequality and the fact that $T$ is bounded on $L^2(\rn)$, further implies that,
for any $t\in(0,\infty)$ and $x\in(4B_j)^\complement$,
\begin{align}\label{Eq79}
\mathrm{I_1}&\lesssim\frac1{t^n}\int_{|y-x_j|<2r_j}\frac{t^{n+\lceil\gamma\rceil}}{|x-x_j|^{n+\lceil\gamma\rceil}}
\frac{|y-x_j|^{\lceil\gamma\rceil}}{t^{\lceil\gamma\rceil}}|Ta_j(y)|\,dy\\\noz
&\lesssim\frac{r_j^{\lceil\gamma\rceil}}{|x-x_j|^{n+\lceil\gamma\rceil}}\|Ta_j\|_{L^2(\rn)}|B_j|^{1/2}
\lesssim\frac{r_j^{n+\lceil\gamma\rceil}}{|x-x_j|^{n+\lceil\gamma\rceil}}\frac1{\|\mathbf{1}_{B_j}\|_{X}}.
\end{align}

For $\mathrm{I_2}$, from the Taylor remainder theorem, the vanishing moments of $a_j$,
$\lceil\gamma\rceil-1\le \lfloor n(1/\theta-1)\rfloor\le d$, \eqref{Eq71} and the H\"older inequality, it follows that,
for any $z\in B_j$, there exists $\xi_2(z)\in B_j$ such that, for any $t\in(0,\infty)$ and $x\in(4B_j)^\complement$,
\begin{align}\label{Eq710}
\mathrm{I_2}
&\lesssim\int_{2r_j\le|y-x_j|<\frac{|x-x_j|}{2}}
\frac{|y-x_j|^{\lceil\gamma\rceil}}{|x-x_j|^{n+\lceil\gamma\rceil}}\\\noz
&\hs\times
\lf[\int_{B_j}\lf|a_j(z)\r|\lf|k(y,z)-\sum_{|\beta|\le\lceil\gamma\rceil-1}\frac{\partial_y^\beta k(y,x_j)}{\beta!}(z-x_j)^\beta\r|
\,dz\r]\,dy\\\noz
&\sim\frac1{|x-x_j|^{n+\lceil\gamma\rceil}}
\int_{2r_j\le|y-x_j|<\frac{|x-x_j|}{2}}|y-x_j|^{\lceil\gamma\rceil}\\\noz
&\hs\times\int_{B_j}\lf|a_j(z)\r|\lf|\sum_{|\beta|=\lceil\gamma\rceil-1}
\frac{\partial_y^\beta k(y,x_j)-\partial_y^\beta k(y,\xi_2(z))}{\beta!}(z-x_j)^\beta\r|\,dz\,dy\\\noz
&\lesssim\frac1{|x-x_j|^{n+\lceil\gamma\rceil}}\int_{2r_j\le|y-x_j|
<\frac{|x-x_j|}{2}}|y-x_j|^{\lceil\gamma\rceil}
\int_{B_j}\lf|a_j(z)\r|\frac{|z-x_j|^\gamma}{|y-x_j|^{n+\gamma}}\,dz\,dy\\\noz
&\lesssim\frac{r_j^\gamma}{|x-x_j|^{n+\lceil\gamma\rceil}}\int_{2r_j\le|y-x_j|<\frac{|x-x_j|}{2}}
\frac1{|y-x_j|^{n+\gamma-\lceil\gamma\rceil}}\,dy\|a_j\|_{L^2(\rn)}|B_j|^{1/2}
\lesssim\frac{r_j^{n+\gamma}}{|x-x_j|^{n+\gamma}}\frac1{\|\mathbf{1}_{B_j}\|_{X}}.
\end{align}

For $\mathrm{I_3}$, by the vanishing moments of $a_j$,
$\lceil\gamma\rceil-1\le\lfloor n(1/\theta-1)\rfloor\le d$,
\eqref{Eq71} and the H\"older inequality, we know that, for any $z\in B_j$, there exists $\xi_3(z)\in B_j$
such that, for any $t\in(0,\infty)$ and $x\in(4B_j)^\complement$,
\begin{align}\label{Eq711}
\mathrm{I_3}&\le\int_{|y-x_j|\geq\frac{|x-x_j|}{2}}\lf|\frac1{t^n}\lf[\psi\lf(\frac{x-y}{t}\r)
-\sum_{|\beta|\le\lceil\gamma\rceil-1}
\frac{\partial^\beta\psi(\frac{x-x_j}{t})}{\beta!}\lf(\frac{y-x_j}{t}\r)^\beta\r]\r|\\\noz
&\hspace*{12pt}\times\lf\{\int_{B_j}\lf|a_j(z)\r|\lf|k\lf(y,z\r)
-\sum_{|\beta|\le\lceil\gamma\rceil-1}\frac{\partial_y^\beta k(y,x_j)}{\beta!}\lf(z-x_j\r)^\beta\r|\,dz\r\}\,dy\\\noz
&\lesssim\int_{|y-x_j|\geq\frac{|x-x_j|}{2}}\lf|\frac1{t^n}\lf[\psi\lf(\frac{x-y}{t}\r)
-\sum_{|\beta|\le\lceil\gamma\rceil-1}
\frac{\partial^\beta\psi(\frac{x-x_j}{t})}{\beta!}\lf(\frac{y-x_j}{t}\r)^\beta\r]\r|\\\noz
&\hspace*{12pt}\times\int_{B_j}\lf|a_j(z)\r|\lf|\sum_{|\beta|=\lceil\gamma\rceil-1}
\frac{\partial_y^\beta k(y,x_j)-\partial_y^\beta k(y,\xi_3(z))}{\beta!}\lf(z-x_j\r)^\beta\r|\,dz\,dy\\\noz
&\lesssim\int_{|y-x_j|\geq\frac{|x-x_j|}{2}}
\lf[|\psi_t\lf(x-y\r)|+\lf|\frac1{t^n}\sum_{|\beta|\le\lceil\gamma\rceil-1}
\frac{\partial^\beta\psi(\frac{x-x_j}{t})}{\beta!}\lf(\frac{y-x_j}{t}\r)^\beta\r|\r]\\\noz
&\hspace*{12pt}\times\int_{B_j}\lf|a_j(z)\r|\frac{|z-x_j|^\gamma}{|y-x_j|^{n+\gamma}}\,dz\,dy\\\noz
&\lesssim\lf\|a_j\r\|_{L^2(\rn)}\lf|B_j\r|^{1/2}\lf[\frac{r_j^\gamma}{|x-x_j|^{n+\gamma}}
\int_{|y-x_j|\geq\frac{|x-x_j|}{2}}|\psi_t(x-y)|\,dy\r.\\\noz
&\hspace*{12pt}\lf.+\sum_{|\beta|\le\lceil\gamma\rceil-1}r_j^\gamma
\int_{|y-x_j|\geq\frac{|x-x_j|}{2}}\frac 1{|x-x_j|^{n+|\beta|}}
\frac1{|y-x_j|^{n+\gamma-|\beta|}}\,dy\r]
\lesssim\frac{r_j^{n+\gamma}}{|x-x_j|^{n+\gamma}}\frac1{\|\mathbf{1}_{B_j}\|_{X}}.
\end{align}

Combining \eqref{Eq78}, \eqref{Eq79}, \eqref{Eq710} and \eqref{Eq711}, we obtain,
for any $x\in(4B_j)^\complement$,
\begin{align*}
\lf|M(Ta_j,\psi)(x)\r|=\sup_{t\in(0,\infty)}\lf|\psi_t\ast Ta_j(x)\r|\lesssim
\frac{r_j^{n+\gamma}}{|x-x_j|^{n+\gamma}}\frac1{\|\mathbf{1}_{B_j}\|_{X}}
\lesssim\lf[\cm(\mathbf{1}_{B_j})(x)\r]^\frac{n+\gamma}{n}\frac1{\|\mathbf{1}_{B_j}\|_{X}},
\end{align*}
which implies that
$$
\lf|M(Ta_j,\psi)(x)\mathbf{1}_{(4B_j)^\complement}(x)\r|
\lesssim\lf[\cm(\mathbf{1}_{B_j})(x)\r]^\frac{n+\gamma}{n}\frac1{\|\mathbf{1}_{B_j}\|_{X}}.
$$
Therefore, by \eqref{Eqfs12} and an argument similar to that used in the estimation of \eqref{Eq76},
we conclude that
\begin{align*}
\alpha\lf\|\mathbf{1}_{\{x\in(4B_j)^\complement:\ \sum_{j\in\nn}\lambda_jM(Ta_j,\psi)(x)
>\frac{\alpha}{2}\}}\r\|_{X}
&\lesssim\frac{\alpha}2\lf\|\mathbf{1}_{\{x\in\rn:\ [\sum_{j\in\nn}\frac{\lambda_j}
{\|\mathbf{1}_{B_j}\|_{X}}\{\cm(\mathbf{1}_{B_j})(x)\}^\frac{n+\gamma}{n}]^{\frac{n}{n+\gamma}}>
(\frac{\alpha}2)^{\frac{n}{n+\gamma}}\}}\r\|_{X^\frac{n+\gamma}{n}}^{\frac{n+\gamma}{n}}\\
&\lesssim\lf\|\lf[\sum_{j\in\nn}\frac{\lambda_j
\mathbf{1}_{B_j}}{\|\mathbf{1}_{B_j}\|_{X}}\r]^\frac{n+\gamma}{n}
\r\|_{X^\frac{n+\gamma}{n}}^{\frac{n+\gamma}{n}}\lesssim\|f\|_{H_X(\rn)}.
\end{align*}
This shows that \eqref{Eq77} holds true and hence finishes the proof of Theorem \ref{Thcz}.
\end{proof}

\section{Applications}\label{s7}

In this section, we apply all above results to three concrete examples
of ball quasi-Banach function spaces, namely, Morrey spaces, mixed-norm Lebesgue spaces
and Orlicz-slice spaces, respectively, in Subsections \ref{s7.1},
\ref{s7.2} and \ref{s7.3}.

\subsection{Morrey spaces\label{s7.1}}

We begin with recalling the notion of Morrey spaces.

\begin{definition}\label{mmus}
Let $0<q\leq p<\infty$.
The \emph{Morrey space $M_q^p(\rn)$} is defined
to be the set of all measurable functions $f$ such that
$$
\|f\|_{M_q^p(\rn)}:=\sup_{B\in\BB}|B|^{1/p-1/q}\|f\|_{L^q(B)}<\infty,
$$
where $\BB$ is as in \eqref{Eqball} (the set of all balls of $\rn$).
\end{definition}

The space $M_q^p(\rn)$ was introduced by Morrey \cite{m38}.
Furthermore, the following Fefferman--Stein
vector-valued maximal inequalities
for $M_q^p(\rn)$ hold true (see, for instance, \cite{cf,H15}),
which shows that the Morrey space $M_q^p(\rn)$ satisfies Assumption \ref{a2.15}.

\begin{lemma}\label{mo1}
Let $0<q\leq p<\infty$.
Assume that $r\in(1,\infty)$ and $s\in(0,q)$.
Then there exists a positive constant $C$ such that,
for any $\{f_j\}_{j=1}^\infty\subset\mathscr M(\rn)$,
$$
\lf\|\lf\{\sum_{j=1}^\infty\lf[\cm(f_j)\r]^r\r\}^{1/r}\r\|_{[M_q^p(\rn)]^{1/s}}
\le C\lf\|\lf\{\sum_{j=1}^\infty\lf|f_j\r|^r\r\}^{1/r}\r\|_{[M_q^p(\rn)]^{1/s}}.
$$
where $[M_q^p(\rn)]^{1/s}$ denotes the $\frac1s$-convexification of
$M_q^p(\rn)$ as in Definition \ref{Debf}(i) with $X$ and $p$ replaced, respectively,
by $M_q^p(\rn)$ and $1/s$.
\end{lemma}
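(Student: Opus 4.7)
The plan is to reduce this statement to the classical Fefferman--Stein vector-valued maximal inequality on Morrey spaces via a convexification identity. First, I would unwind Definition \ref{Debf}(i) to identify the $1/s$-convexification $[M_q^p(\rn)]^{1/s}$ as another Morrey space with rescaled exponents. Explicitly, for any $f\in\mathscr M(\rn)$,
\begin{align*}
\lf\|f\r\|_{[M_q^p(\rn)]^{1/s}}
&=\lf\||f|^{1/s}\r\|_{M_q^p(\rn)}^{s}
=\sup_{B\in\BB}|B|^{s/p-s/q}\lf[\int_B|f(x)|^{q/s}\,dx\r]^{s/q}
=\lf\|f\r\|_{M_{q/s}^{p/s}(\rn)},
\end{align*}
so the two spaces coincide with equal norms.

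Since $s\in(0,q)$ and $q\le p$, the rescaled exponents satisfy $1<q/s\le p/s<\infty$, which is precisely the regime where the Fefferman--Stein vector-valued maximal inequality on Morrey spaces is already available in the literature (see, for instance, \cite{cf,H15}): for any $1<\widetilde q\le\widetilde p<\infty$ and $r\in(1,\infty)$,
$$
\lf\|\lf\{\sum_{j=1}^\infty\lf[\cm(f_j)\r]^r\r\}^{1/r}\r\|_{M_{\widetilde q}^{\widetilde p}(\rn)}
\lesssim\lf\|\lf\{\sum_{j=1}^\infty|f_j|^r\r\}^{1/r}\r\|_{M_{\widetilde q}^{\widetilde p}(\rn)}.
$$
Plugging $\widetilde q:=q/s$ and $\widetilde p:=p/s$ into this inequality and translating back through the convexification identity above yields the claim.

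I do not expect any real obstacle here: the convexification identity is a direct unpacking of definitions, and the assumption $s<q$ is exactly what pushes the lower Morrey index past $1$ so that the classical vector-valued maximal inequality applies. If one preferred to avoid quoting \cite{cf,H15}, the only alternative work would be to reprove the Morrey-space Fefferman--Stein inequality itself, for instance via the Tang--Xu method (as sketched in the introduction just before Proposition \ref{pro822}) combined with a Hardy--Littlewood maximal boundedness on Morrey spaces; but this is unnecessary here, since the cited results already cover the required range.
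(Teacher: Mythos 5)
Your proof is correct and coincides with what the paper does, which is simply to invoke the Fefferman--Stein vector-valued maximal inequality on Morrey spaces from \cite{cf,H15}; the paper leaves implicit the convexification computation that identifies $[M_q^p(\rn)]^{1/s}$ with $M_{q/s}^{p/s}(\rn)$, while you spell it out. Since $s\in(0,q)\le p$ gives $1<q/s\le p/s<\infty$, the rescaled exponents land in exactly the range covered by those references, so the reduction is complete.
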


Now we recall the notion of the weak Morrey space $WM_q^p(\rn)$.

\begin{definition}
Let $0<q\leq p<\infty$.
The \emph{weak Morrey space $WM_q^p(\rn)$}
is defined to be the set of all measurable functions $f$ such that
$$
\|f\|_{WM_q^p(\rn)}:=\sup_{\alpha\in(0,\infty)}\,\lf\{\alpha\lf\|\mathbf1_{\{x\in\rn:\ |f(x)|>\alpha\}}\r\|_{M_q^p(\rn)}\r\}<\infty.
$$
\end{definition}

\begin{remark}\label{mmo}
Let $0<q\leq p<\infty$. The weak Morrey space $WM_q^p(\rn)$
is just the weak Morrey space $\mathcal{M}_u^{q,\infty}(\rn)$ in \cite{H17} with
$u(B):=|B|^{1/q-1/p}$ for any $B\in\BB$, where $\BB$ is as in \eqref{Eqball}.
\end{remark}

The following Fefferman--Stein
vector-valued maximal inequalities
for $WM_q^p(\rn)$ hold true (see, for instance, \cite[Theorem 3.2]{H17}),
which shows that the Morrey space $M_q^p(\rn)$ satisfies Assumption \ref{xinm}.

\begin{lemma}\label{mwfs}
Let $0<q\leq p<\infty$.
Assume that $r\in(1,\infty)$ and $s\in(0,q)$.
Then there exists a positive constant $C$ such that,
for any $\{f_j\}_{j=1}^\infty\subset\mathscr M(\rn)$,
$$
\lf\|\lf\{\sum_{j=1}^\infty\lf[\cm(f_j)\r]^r\r\}^{1/r}\r\|_{[WM_q^p(\rn)]^{1/s}}
\le C\lf\|\lf\{\sum_{j=1}^\infty\lf|f_j\r|^r\r\}^{1/r}\r\|_{[WM_q^p(\rn)]^{1/s}},
$$
where $[WM_q^p(\rn)]^{1/s}$ denotes the $\frac{1}{s}$-convexification of
$WM_q^p(\rn)$ as in Definition \ref{Debf}(i) with
$X$ and $p$ replaced, respectively, by $WM_q^p(\rn)$ and $\frac{1}{s}$.
\end{lemma}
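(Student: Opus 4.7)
\emph{Proof plan.} The strategy I would follow is to identify the $\frac{1}{s}$-convexification $[WM_q^p(\rn)]^{1/s}$ with another weak Morrey space (of rescaled indices) and then invoke the Fefferman--Stein vector-valued maximal inequality for weak Morrey spaces proved in \cite{H17}.

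First, I would verify the key convexification identity
$$
[WM_q^p(\rn)]^{1/s}=WM_{q/s}^{p/s}(\rn)
$$
with coincident quasi-norms. By Definition \ref{Debf}(i) and \eqref{Eqde1}, for any $f\in\mathscr{M}(\rn)$,
$$
\|f\|_{[WM_q^p(\rn)]^{1/s}}=\lf\||f|^{1/s}\r\|_{WM_q^p(\rn)}^{s}
=\sup_{\alpha\in(0,\infty)}\alpha^{s}\lf\|\mathbf{1}_{\{x\in\rn:\,|f(x)|>\alpha^{s}\}}\r\|_{M_q^p(\rn)}^{s}.
$$
Substituting $\beta:=\alpha^{s}$ and noting that, by Definition \ref{mmus}, for any measurable set $E\subset\rn$,
$$
\|\mathbf{1}_E\|_{M_q^p(\rn)}^{s}=\sup_{B\in\BB}|B|^{s/p-s/q}|E\cap B|^{s/q}=\|\mathbf{1}_E\|_{M_{q/s}^{p/s}(\rn)},
$$
I obtain $\|f\|_{[WM_q^p(\rn)]^{1/s}}=\|f\|_{WM_{q/s}^{p/s}(\rn)}$, as desired.

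Second, since $s\in(0,q)$, the rescaled indices satisfy $1<q/s\le p/s<\infty$, so the pair $(q/s,p/s)$ lies in the regime to which Remark \ref{mmo} and \cite[Theorem 3.2]{H17} apply: namely, $WM_{q/s}^{p/s}(\rn)$ coincides with $\mathcal{M}_u^{q/s,\infty}(\rn)$ for $u(B):=|B|^{s/q-s/p}$, and Ho's Fefferman--Stein vector-valued inequality on this weak Morrey space yields, for any $r\in(1,\infty)$, a positive constant $C$ such that
$$
\lf\|\lf\{\sum_{j=1}^\infty[\cm(f_j)]^{r}\r\}^{1/r}\r\|_{WM_{q/s}^{p/s}(\rn)}
\le C\lf\|\lf\{\sum_{j=1}^\infty|f_j|^{r}\r\}^{1/r}\r\|_{WM_{q/s}^{p/s}(\rn)}.
$$
Combining this with the convexification identity of the previous step gives the claim.

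The main obstacle, in principle, is the verification of the convexification identity, but this reduces to the short computation above once the level-set definition of the weak norm and the scaling of the Morrey functional under the map $f\mapsto|f|^{1/s}$ are matched. Accordingly, no serious difficulty is anticipated; the bulk of the analytic content is outsourced to \cite[Theorem 3.2]{H17}.
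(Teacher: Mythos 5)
Your proof is correct and takes essentially the same route as the paper, which offers no argument beyond the citation ``see, for instance, [H17, Theorem 3.2].'' The reduction you spell out---computing, via the level-set form of the quasi-norm and the scaling $\|\mathbf{1}_E\|_{M_q^p(\rn)}^{s}=\|\mathbf{1}_E\|_{M_{q/s}^{p/s}(\rn)}$, that $[WM_q^p(\rn)]^{1/s}=WM_{q/s}^{p/s}(\rn)$ with coincident quasi-norms, then applying Ho's Fefferman--Stein inequality to the rescaled weak Morrey space where $q/s>1$---is precisely the derivation the paper leaves implicit. Two small notes: the identity $(WX)^{1/s}=W(X^{1/s})$ is already stated as Remark \ref{Rews-1}, so you could cite it in place of the direct computation; and the passage from $\sup$ of a power to power of a $\sup$ relies on $s>0$ making $x\mapsto x^s$ increasing, which you use correctly but silently.
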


Similarly to \cite[Lemma 5.7]{H15} and \cite[Theorem 4.1]{ST}, we can easily show
the following conclusion and we omit
the details here.

\begin{lemma}\label{mM}
Let $0<q\leq p<\infty$,
$r\in (0,q)$ and $s\in(q,\infty]$. Then there exists
a positive constant $C$
such that, for any $f\in\mathscr{M}(\rn)$,
\begin{equation*}
\lf\|\mathcal{M}^{((s/r)')}(f) \r\|_{([M_q^p(\rn)]^{1/r})'}\leq C\lf\|f \r\|_{([M_q^p(\rn)]^{1/r})'},
\end{equation*}
where $([M_q^p(\rn)]^{1/r})'$ is as in \eqref{asso} with
$X:=[M_q^p(\rn)]^{1/r}$.
\end{lemma}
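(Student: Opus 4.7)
The plan is to reduce the claim to a known mapping property of the powered Hardy--Littlewood maximal operator on the associate space of a Morrey space whose first exponent stays strictly above $1$. First I would identify the $1/r$-convexification of $M_q^p(\rn)$: a direct computation from Definition \ref{Debf}(i) and Definition \ref{mmus} gives
\begin{equation*}
[M_q^p(\rn)]^{1/r}=M_{q/r}^{p/r}(\rn)
\end{equation*}
with equal norms, where the hypothesis $r\in(0,q)$ guarantees $1<q/r\le p/r<\infty$. Consequently, setting $Q:=q/r$ and $P:=p/r$, the desired inequality becomes
\begin{equation*}
\|\cm^{((s/r)')}(f)\|_{(M_Q^P(\rn))'}\le C\|f\|_{(M_Q^P(\rn))'}.
\end{equation*}

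Next I would translate the restriction on $s$ into the decisive index condition. The hypothesis $s\in(q,\infty]$ is equivalent to $(s/r)'\in[1,Q')$, where $Q'=q/(q-r)$ is the H\"older conjugate of $Q=q/r$; note that the upper endpoint $s=q$ corresponds exactly to $(s/r)'=Q'$, so strict inequality $s>q$ is exactly what is needed to stay in the admissible range. The associate space $(M_Q^P(\rn))'$ coincides with the block-type predual of $M_Q^P(\rn)$ studied by Sawano and Tanaka. By the identity $\cm^{(\theta)}(g)=[\cm(|g|^\theta)]^{1/\theta}$ in \eqref{mmx}, the boundedness of $\cm^{(\theta)}$ on $(M_Q^P(\rn))'$ for $\theta\in[1,Q')$ reduces, after a convexification bookkeeping, to the boundedness of $\cm$ itself on an associate space of a Morrey space with first index still strictly greater than $1$; this is precisely the content of \cite[Theorem 4.1]{ST}, combined with the index juggling carried out in \cite[Lemma 5.7]{H15} for the weak-type analogue. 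Choosing $\theta:=(s/r)'$ then yields the claim.

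The main obstacle, though a purely routine one, is the bookkeeping needed to verify that the operation of taking the $1/\theta$-convexification commutes appropriately with passing to the associate space on the Morrey scale, so that the threshold for applying Sawano--Tanaka reads exactly $\theta<Q'=(q/r)'$. This is the arithmetic encapsulated in \cite[Lemma 5.7]{H15}; the strong-type version treated here is strictly easier than the weak-type case handled there, which is why the authors regard the proof as a direct adaptation and omit the details.
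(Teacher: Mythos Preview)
Your proposal is correct and follows exactly the route the paper indicates: the paper itself omits the proof entirely, merely stating that it follows ``similarly to \cite[Lemma 5.7]{H15} and \cite[Theorem 4.1]{ST}'', which are precisely the two ingredients you invoke after the convexification identity $[M_q^p(\rn)]^{1/r}=M_{q/r}^{p/r}(\rn)$. You have in fact supplied more detail than the paper does, including the explicit translation of the range $s\in(q,\infty]$ into the condition $(s/r)'\in[1,(q/r)')$ needed to apply the Sawano--Tanaka boundedness on the block-space predual.
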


Now we introduce the notion of the weak Morrey Hardy space $WHM_q^p(\rn)$.

\begin{definition}\label{mmha}
Let $0<q\leq p<\infty$.
The \emph{weak Morrey Hardy
space $WHM_q^p(\rn)$} is defined to be the set of all $f\in\cs'(\rn)$
such that $\|f\|_{WHM_q^p(\rn)}:=\
\|M_N^0(f)\|_{WM_q^p(\rn)}<\infty$,
where $M_N^0(f)$ is as in \eqref{EqMN0} with $N$ sufficiently large.
\end{definition}

\begin{remark}
Let $1<q\leq p<\infty$. By Lemma \ref{mwfs},
we conclude that, for any $r\in(1,q)$,
$\cm$ in \eqref{mm} is bounded on $(WM_q^p(\rn))^{1/r}$, which, combined with Theorem \ref{dayu2}, implies that
$WHM_q^p(\rn)=WM_q^p(\rn)$  with equivalent norms.
\end{remark}

By Lemma \ref{mwfs}  and Theorem \ref{Thmc}(ii),
we obtain the following maximal function characterizations of the weak Morrey Hardy space $WHM_q^p(\rn)$.

\begin{theorem}\label{mmth1}
Let $0<q\leq p<\infty$,
and $\psi\in\mathcal{S}(\rn)$ satisfy $\int_{\rn}\psi(x)\,dx\neq0.$
Assume that $b\in(n/q,\infty)$ and
$N\ge\lfloor b+1\rfloor$. For any $f\in\mathcal{S}'(\rn)$,
if one of the following quantities
$$
\lf\|M_N^0(f)\r\|_{WM_q^p(\rn)},\  \lf\|M(f,\psi)\r\|_{WM_q^p(\rn)},\  \lf\|M_a^*(f,\psi)\r\|_{WM_q^p(\rn)},\ \lf\|M_N(f)\r\|_{WM_q^p(\rn)},
$$
$$
\lf\|M_b^{**}(f,\psi)\r\|_{WM_q^p(\rn)},\ \lf\|M_{b,\ N}^{**}(f)\r\|_{WM_q^p(\rn)}
\quad and \quad\lf\|\cn(f)\r\|_{WM_q^p(\rn)}
$$
is finite, then the others are also finite and mutually equivalent with the implicit
positive equivalence constants independent of $f$.
\end{theorem}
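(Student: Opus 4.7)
The plan is to apply the general maximal function characterizations already established, namely Theorem \ref{Thmc}(ii) and Theorem \ref{Thmp}, to the ball quasi-Banach function space $X := M_q^p(\rn)$. All of the stated equivalences then follow by direct specialization, so the task reduces to verifying that the hypotheses of these two theorems hold for the Morrey setting with the given range of parameters.

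First I would verify Assumption \ref{xinm}. Since $b\in(n/q,\infty)$, one has $n/b<q$, so there exists $r\in(n/b,q)$. Applying Lemma \ref{mwfs} with a single function (equivalently, any fixed exponent in its vector-valued statement) yields that $\cm$ is bounded on $[WM_q^p(\rn)]^{1/r}$ for every $r\in(0,q)$, and in particular for the chosen $r$. This choice also ensures $b>n/r$, which is exactly what Theorem \ref{Thmc}(ii) needs. Invoking that theorem immediately gives the mutual equivalence of all six quantities
$\|M_N^0(f)\|_{WM_q^p(\rn)}$, $\|M(f,\psi)\|_{WM_q^p(\rn)}$, $\|M_a^{\ast}(f,\psi)\|_{WM_q^p(\rn)}$,
$\|M_N(f)\|_{WM_q^p(\rn)}$, $\|M_b^{\ast\ast}(f,\psi)\|_{WM_q^p(\rn)}$ and
$\|M_{b,N}^{\ast\ast}(f)\|_{WM_q^p(\rn)}$, once $N\geq\lfloor b+1\rfloor$.

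Next I would incorporate the Poisson kernel characterization via Theorem \ref{Thmp}, which requires the additional lower-bound condition \eqref{Eqinf}. For any measurable set $E$, one has
\begin{equation*}
\|\mathbf{1}_E\|_{WM_q^p(\rn)}=\sup_{\alpha\in(0,1)}\alpha\|\mathbf{1}_E\|_{M_q^p(\rn)}=\|\mathbf{1}_E\|_{M_q^p(\rn)},
\end{equation*}
and a direct computation from Definition \ref{mmus} shows that $\|\mathbf{1}_{B(x,1)}\|_{M_q^p(\rn)}$ is a positive constant independent of $x\in\rn$ (by translation invariance of the Morrey norm). Hence \eqref{Eqinf} holds, and Theorem \ref{Thmp} adds $\|\cn(f)\|_{WM_q^p(\rn)}$ to the family of equivalent quantities, completing the proof.

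There is no genuine obstacle here beyond bookkeeping: the substantive analytic inputs (Fefferman–Stein inequality on $WM_q^p(\rn)$ and boundedness of $\cm$ on $[WM_q^p(\rn)]^{1/r}$) are already packaged in Lemma \ref{mwfs}, and the lower bound \eqref{Eqinf} is trivial for Morrey spaces because of translation invariance. The only point requiring mild care is matching the threshold $b>n/q$ in the hypothesis with the admissible range $r\in(0,q)$ for the boundedness of $\cm$ on $(WX)^{1/r}$, which is precisely why the theorem is stated with $b\in(n/q,\infty)$.
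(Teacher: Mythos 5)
Your proof is correct and follows the same route as the paper: apply the general maximal function characterizations (Theorem \ref{Thmc}(ii) and Theorem \ref{Thmp}) to $X:=M_q^p(\rn)$, using Lemma \ref{mwfs} to supply the required boundedness of $\cm$ on $[WM_q^p(\rn)]^{1/r}$. You are in fact slightly more careful than the paper's one-line proof, which cites only Lemma \ref{mwfs} and Theorem \ref{Thmc}(ii) and leaves implicit the verification of condition \eqref{Eqinf} (and the appeal to Theorem \ref{Thmp}) needed for the Poisson kernel quantity $\|\cn(f)\|_{WM_q^p(\rn)}$; your observation that $\|\mathbf{1}_E\|_{WM_q^p(\rn)}=\|\mathbf{1}_E\|_{M_q^p(\rn)}$ and translation invariance settle that point cleanly.
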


\begin{remark}\label{remax}
Let $0<q\leq p<\infty$ and $p=q$. Then we know that
$M_q^p(\rn)=L^q(\rn)$ and $WM_q^p(\rn)(\rn)=WH^q(\rn)$,
where $WH^q(\rn)$ denotes the classical weak Hardy space, and
the characterizations of $WH^q(\rn)$ in terms of all
the maximal functions except for $M_b^{**}(f,\psi)$ and $M_{N,b}^{**}(f)$ in Theorems \ref{mmth1}
were obtained in \cite[Theorems 2.10 and 2.11]{LYJ}
or \cite[Theorem 3.7 and Corollary 3.8]{YYYZ} as a special case.
Moreover, in this case, Theorem \ref{mdj} widens the range of
$N\in(\frac nq+n+1,\infty)\cap\nn$ in \cite[Theorem 3.7 and Corollary 3.8]{YYYZ}
into $N\in[\lfloor\frac nq+1\rfloor,\infty)\cap\nn$.
\end{remark}

Using Lemmas \ref{mo1}, \ref{mwfs} and \ref{mM}
and Theorems \ref{Thad} and \ref{Thar},
we immediately obtain the atomic characterization of $WHM_q^p(\rn)$
(see Theorem \ref{Mth2} below) and
the molecular characterization of $WHM_q^p(\rn)$
(see Theorem \ref{mfenzi} below) as follows.

\begin{theorem}\label{Mth2}
Let $0<q\leq p<\infty$. Assume that $r\in(\max\{1,q\},\infty)$
and $d\in\zz_+$ satisfying $d\geq\lfloor n(\frac{1}{\min\{1,q\}}-1)\rfloor$.
Then $f\in WHM_q^p(\rn)$ if and only if
$$
f=\sum_{i\in \zz}\sum_{j\in\nn}\lambda_{i,j}a_{i,j}\quad\text{in}\quad\cs'(\rn)
\quad and\quad
\sup_{i\in\zz}\lf\|\sum_{j\in\nn}
\frac{\lambda_{i,j}\mathbf{1}_{B_{i,j}}}
{\|\mathbf{1}_{B_{i,j}}\|_{M_q^p(\rn)}}\r\|_{M_q^p(\rn)}<\infty,
$$
where $\{a_{i,j}\}_{i\in\zz,j\in\nn}$ is a sequence of $(M_q^p(\rn),\,r,\,d)$-atoms
supported, respectively, in balls
$\{B_{i,j}\}_{\gfz{i\in\zz}{j\in\nn}}$ such that, for any
$i\in\zz$, $\sum_{j\in\nn}\mathbf{1}_{cB_{i,j}}\le A$ with $c\in(0,1]$ and $A$
being a positive constant independent of $f$ and $i$, and,
for any $i\in\zz$ and $j\in\nn$,
$\lambda_{i,j}:=\widetilde A2^i\|\mathbf{1}_{B_{i,j}}\|_{M_q^p(\rn)}$
with $\widetilde A$ being a positive constant independent of $f$ and $i$.

Moreover, for any $f\in WHM_q^p(\rn)$,
$$
\|f\|_{WHM_q^p(\rn)}\sim\inf\lf\{\sup_{i\in\zz}\lf\|\sum_{j\in\nn}
\frac{\lambda_{i,j}\mathbf{1}_{B_{i,j}}}
{\|\mathbf{1}_{B_{i,j}}\|_{M_q^p(\rn)}}\r\|_{M_q^p(\rn)}\r\},
$$
where the infimum is taken over all decompositions of $f$ as above
and the positive equivalence constants are independent of $f$.
\end{theorem}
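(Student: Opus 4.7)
The plan is to deduce Theorem \ref{Mth2} as a direct specialization of the general atomic decomposition result (Theorem \ref{Thad}) and the reconstruction result (Theorem \ref{Thar}) applied to $X := M_q^p(\rn)$. Both directions of the claimed characterization, together with the quasi-norm equivalence, will follow once the hypotheses of those two theorems are verified for the Morrey space with a compatible choice of parameters. No new harmonic-analytic machinery beyond the preparatory Lemmas \ref{mo1}, \ref{mwfs}, and \ref{mM} should be needed.

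First I would verify the structural assumptions on $X = M_q^p(\rn)$. Assumption \ref{a2.15}, with $p_- := q$, is exactly Lemma \ref{mo1}, which furnishes the Fefferman--Stein vector-valued inequality on $[M_q^p(\rn)]^{1/s}$ for any $s\in(0,q)$. Assumption \ref{xinm} is immediate from the scalar case of Lemma \ref{mwfs}: picking any $\widetilde r\in(0,q)$ gives boundedness of $\cm$ on $(WM_q^p(\rn))^{1/\widetilde r}$. For the $\vartheta_0$-concavity required by Theorem \ref{Thad}, I would use the identity $[M_q^p(\rn)]^{1/\vartheta_0}=M_{q/\vartheta_0}^{p/\vartheta_0}(\rn)$: when $q/\vartheta_0\geq 1$ the latter is a genuine ball Banach function space, so in the regime $q>1$ any $\vartheta_0\in(1,q]$ works, while the auxiliary boundedness of $\cm$ on $X^{1/(\vartheta_0\widetilde p)}$ for $\widetilde p$ just below $\min\{1,q\}$ is then a classical Morrey-space maximal estimate of the type covered by \cite{cf}. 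Finally, the associate-space bound \eqref{Eqdm} needed in Theorem \ref{Thar} is exactly Lemma \ref{mM}, applied with $r_0\in(0,\min\{1,q\})$ and some $p_0\in(q,r)$; the standing assumption $r\in(\max\{1,q\},\infty)$ in Theorem \ref{Mth2} is precisely what reserves room for such a $p_0$.

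With all hypotheses in place, Theorem \ref{Thad} applied to any $f\in WHM_q^p(\rn)$ produces an $(M_q^p(\rn),\infty,d)$-atomic decomposition---which is a fortiori an $(M_q^p(\rn),r,d)$-atomic decomposition for any $r\in(\max\{1,q\},\infty)$---with the quantitative control on $\sup_{i\in\zz}\|\sum_{j\in\nn}\lambda_{i,j}\mathbf{1}_{B_{i,j}}/\|\mathbf{1}_{B_{i,j}}\|_{M_q^p(\rn)}\|_{M_q^p(\rn)}$ by a constant multiple of $\|f\|_{WHM_q^p(\rn)}$; conversely, Theorem \ref{Thar} yields the reverse inequality for any decomposition meeting the finite-overlap and supremum hypotheses. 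Assembling the two deliveries gives the claimed equivalence and the quasi-norm comparison.

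The main obstacle will be the careful matching of parameters across the various lemmas and theorems. Specifically: $\vartheta_0$ must lie in $(1,q]$ to secure concavity (hence the case $q\leq 1$ requires one to instead invoke the strictly-$r$-convex version of the hypothesis encoded in Theorem \ref{Thar} via $r_0\in(0,\min\{1,q\})$, together with the enlarged vanishing-moment requirement $d\geq\lfloor n(1/\min\{1,q\}-1)\rfloor$); the parameter $\widetilde p$ controlling $d$ must be chosen just below $\min\{1,q\}$ to align with $d\geq\lfloor n(1/\min\{1,q\}-1)\rfloor$; and the exponent $p_0$ from Lemma \ref{mM} must fit strictly between $q$ and $r$. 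Once this bookkeeping is carried out, the two applications of the general theory close the proof.
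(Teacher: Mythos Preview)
Your overall strategy --- verify the hypotheses of Theorems \ref{Thad} and \ref{Thar} for $X=M_q^p(\rn)$ via Lemmas \ref{mo1}, \ref{mwfs} and \ref{mM} and read off both directions --- is exactly the paper's own argument, which consists of the single sentence preceding Theorem \ref{Mth2}. Your identification of $p_-=q$, $\underline{p}=\min\{1,q\}$, and of $r_0\in(0,\underline{p})$ and $p_0\in(q,r)$ so that Lemma \ref{mM} supplies \eqref{Eqdm}, is correct.

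There is, however, a concrete error in your treatment of the $\vartheta_0$-concavity hypothesis of Theorem \ref{Thad}. You observe that $[M_q^p(\rn)]^{1/\vartheta_0}=M_{q/\vartheta_0}^{p/\vartheta_0}(\rn)$ is a ball Banach function space when $q/\vartheta_0\geq 1$ and conclude concavity from this. But Definition \ref{Debf}(ii) demands the \emph{reverse} of the triangle inequality on $X^{1/\vartheta_0}$, namely $\sum_j\|f_j\|_{X^{1/\vartheta_0}}\le C\lf\|\sum_j|f_j|\r\|_{X^{1/\vartheta_0}}$; the Banach-space property delivers the opposite inequality. In fact that reverse inequality is problematic for Morrey spaces with $q<p$: testing against characteristic functions of $N$ widely separated unit balls makes the left side of order $N$ while the right side stays bounded. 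Your proposed workaround for $q\le 1$ (``instead invoke the strictly-$r$-convex version of the hypothesis encoded in Theorem \ref{Thar}'') does not address this, since concavity is a hypothesis of the decomposition Theorem \ref{Thad}, not of the reconstruction Theorem \ref{Thar}. The paper itself does not spell out this verification either, so you are not departing from its proof; but the specific justification you give for concavity is incorrect, and an honest completion would require either a genuine concavity argument for $M_q^p(\rn)$ or a Morrey-specific substitute for the step in the proof of Theorem \ref{Thad} (around \eqref{411}) where concavity is invoked.
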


\begin{remark} We should point out that, when $q\in(0,1]$ and $r=\fz$,
Theorem \ref{Mth2} was obtained by Ho \cite[Theorems 4.2 and 4.3]{H17}.
\end{remark}

\begin{theorem}\label{mfenzi}
Let $p$, $q$,
$r$ and $d$ be the same as in Theorem \ref{Mth2}, and $\epsilon\in(n+d+1,\infty)$.
Then $f\in WHM_q^p(\rn)$ if and only if
$$
f=\sum_{i\in \zz}\sum_{j\in\nn}\lambda_{i,j}m_{i,j}\quad\text{in}\quad\cs'(\rn)
\quad and\quad
\sup_{i\in\zz}\lf\|\sum_{j\in\nn}
\frac{\lambda_{i,j}\mathbf{1}_{B_{i,j}}}
{\|\mathbf{1}_{B_{i,j}}\|_{M_q^p(\rn)}}\r\|_{M_q^p(\rn)}<\infty,
$$
where $\{m_{i,j}\}_{i\in\zz,j\in\nn}$ is a sequence of
$(M_q^p(\rn),r,d,\epsilon)$-molecules associated, respectively, with balls
$\{B_{i,j}\}_{i\in\zz,j\in\nn}$ such that, for any
$i\in\zz$, $\sum_{j\in\nn}\mathbf{1}_{cB_{i,j}}\le A$ with $c\in(0,1]$ and
$A$ being a positive constant independent of $f$
and $i$, and,
for any $i\in\zz$ and $j\in\nn$,
$\lambda_{i,j}:=\widetilde A2^i\|\mathbf{1}_{B_{i,j}}\|_{M_q^p(\rn)}$
with $\widetilde A$ being a positive constant independent of $f$, $i$ and $j$.

Moreover, for any $f\in WHM_q^p(\rn)$,
$$
\|f\|_{WHM_q^p(\rn)}\sim
\inf\lf[
\sup_{i\in\zz}\lf\|
\sum_{j\in\nn}\frac{  \lambda_{i,j}\mathbf{1}_{B_{i,j}}  }
{\|\mathbf{1}_{B_{i,j}}\|_{M_q^p(\rn)}}
\r\|_{M_q^p(\rn)}
\r],
$$
where the infimum is taken over all decompositions of $f$ as above
and the positive equivalence constants are independent of $f$.
\end{theorem}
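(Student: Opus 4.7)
The plan is to reduce Theorem \ref{mfenzi} to the abstract molecular characterization given by the combination of Theorem \ref{Thmolcha0} (decomposition into molecules) and Theorem \ref{Thmolcha} (reconstruction from molecules), specialized to the ball quasi-Banach function space $X:=M_q^p(\rn)$. The two directions split cleanly: for the necessity, one observes that every $(M_q^p(\rn),r,d)$-atom is, tautologically, an $(M_q^p(\rn),r,d,\epsilon)$-molecule for any $\epsilon\in(n+d+1,\infty)$, so the atomic decomposition supplied by Theorem \ref{Mth2} is automatically a molecular decomposition; this direction also furnishes the estimate
$$
\sup_{i\in\zz}\lf\|\sum_{j\in\nn}\frac{\lambda_{i,j}\mathbf{1}_{B_{i,j}}}{\|\mathbf{1}_{B_{i,j}}\|_{M_q^p(\rn)}}\r\|_{M_q^p(\rn)}\lesssim\|f\|_{WHM_q^p(\rn)}.
$$

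For the sufficiency direction, I would verify that the Morrey space $X:=M_q^p(\rn)$ satisfies every hypothesis of Theorem \ref{Thmolcha}. Concretely: (i) Assumption \ref{a2.15} holds with $p_-:=q$, by Lemma \ref{mo1}, so that $\underline{p}=\min\{q,1\}$ in the sense of \eqref{Eqpll}; (ii) for any $r\in(0,\underline{p})$, the $(1/r)$-convexification $[M_q^p(\rn)]^{1/r}=M_{q/r}^{p/r}(\rn)$ has first exponent $q/r>1$ and is therefore a ball Banach function space (this is the classical fact recalled in Subsection \ref{s7.1}); and (iii) taking $p_+:=q\in[p_-,\infty)$, Lemma \ref{mM} furnishes the boundedness of $\cm^{((p/r)')}$ on $([M_q^p(\rn)]^{1/r})'$ whenever $r\in(0,\underline{p})\subset(0,q)$ and $p\in(p_+,\infty)=(q,\infty)$. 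The index conditions on the molecules in Theorem \ref{mfenzi} match those required by Theorem \ref{Thmolcha}, namely $r>\max\{p_+,1\}=\max\{q,1\}$, $d\ge\lfloor n(1/\underline{p}-1)\rfloor=\lfloor n(1/\min\{q,1\}-1)\rfloor$ and $\epsilon\in(n+d+1,\infty)$.

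With these verifications in hand, Theorem \ref{Thmolcha} applied to $X:=M_q^p(\rn)$ immediately yields that any $f=\sum_{i\in\zz}\sum_{j\in\nn}\lambda_{i,j}m_{i,j}$ in $\cs'(\rn)$, arranged as in the statement, belongs to $WHM_q^p(\rn)$ with
$$
\|f\|_{WHM_q^p(\rn)}\lesssim\sup_{i\in\zz}\lf\|\sum_{j\in\nn}\frac{\lambda_{i,j}\mathbf{1}_{B_{i,j}}}{\|\mathbf{1}_{B_{i,j}}\|_{M_q^p(\rn)}}\r\|_{M_q^p(\rn)}.
$$
Taking the infimum over all admissible molecular decompositions gives one direction of the quasi-norm equivalence, while the necessity argument above yields the other; together they establish the claimed equivalence up to positive multiplicative constants independent of $f$.

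I expect no serious obstacle beyond bookkeeping: the entire content is locating the correct choices of the parameters $p_-$, $p_+$ and $\underline{p}$ for the Morrey scale, and matching them against the hypotheses of the abstract theorems. The only step that requires a modicum of care is (iii), the boundedness of the powered maximal operator on the associate space $([M_q^p(\rn)]^{1/r})'$, but this is precisely the content of Lemma \ref{mM}, which is invoked off the shelf. Once the three verifications are recorded, both directions of Theorem \ref{mfenzi} reduce to direct citations.
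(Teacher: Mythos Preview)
Your proposal is correct and follows exactly the paper's approach: the paper derives Theorem \ref{mfenzi} by citing Lemmas \ref{mo1}, \ref{mwfs} and \ref{mM} to verify the hypotheses of the abstract molecular theorems (Theorems \ref{Thmolcha0} and \ref{Thmolcha}) for $X=M_q^p(\rn)$, with the necessity direction reducing to the atomic decomposition of Theorem \ref{Mth2} via the trivial observation that atoms are molecules. Your parameter choices $p_-=p_+=q$ and $\underline{p}=\min\{q,1\}$ are the intended ones, and your three verifications (i)--(iii) are precisely what the paper has in mind when it writes ``we immediately obtain''.
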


\begin{remark}
Let $0<q\leq p<\infty$ and $p=q$.
In this case,
for any $\tau\in(0,\infty),\ r\in[1,\infty]$ and $d\in\zz_+,$
any $(M_q^p(\rn),r,d)$-atom
and any $(M_q^p(\rn),r,d,\epsilon)$-molecule
just become, respectively, a well-known classical atom (see, for instance, \cite[Definition 1.1]{Lu}
or \cite[p.\,112]{S}) and a well-known classical molecule (see, for instance, \cite[Definition 1.2]{HYZ}
with $X:=L^q(\rn)$).
In this case, Theorem \ref{Mth2} was obtained by \cite[Theorem 3.5]{LYJ} and \cite[Theorem 4.4]{YYYZ} as a special case;
Theorem \ref{mfenzi} was obtained by \cite[Theorem 3.9]{LYJ} and \cite[Theorem 5.3]{YYYZ} as a special case.
\end{remark}

Now, we recall the notion of Morrey Hardy space $HM_p^q(\rn)$ as follows.
\begin{definition}\label{Mhardy}
Let $0<q\leq p<\infty$.
The \emph{Morrey Hardy
space $HM_q^p(\rn)$} is defined to be the set of all $f\in\cs'(\rn)$
such that $\|f\|_{HM_q^p(\rn)}:=\
\|M_N^0(f)\|_{M_q^p(\rn)}<\infty$,
where $M_N^0(f)$ is as in \eqref{EqMN0} with $N$ sufficiently large.
\end{definition}

To obtain the boundedness of Calder\'on--Zygmund operators from $HM_q^p(\rn)$
to $WHM_q^p(\rn)$, we need the following vector-valued inequality of
the Hardy--Littlewood maximal operator $\cm$ in \eqref{mm} from $M_1^p(\rn)$ to $WM_1^p(\rn)$.

\begin{proposition}\label{pro822}
Let  $p\in[1,\infty)$ and $r\in(1,\infty)$. Then there exists a positive constant $C$ such that,
for any $\{f_j\}_{j\in\nn}\subset M_1^p(\rn)$,
$$
\lf\|\lf\{\sum_{j=1}^\infty\lf[\cm(f_j)\r]^r\r\}^\frac1r\r\|_{WM_1^p(\rn)}\le
C\lf\|\lf\{\sum_{j=1}^\infty|f_j|^r\r\}^\frac1r\r\|_{M_1^p(\rn)}.
$$
\end{proposition}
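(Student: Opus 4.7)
The plan is as follows. Setting
\[
F:=\lf(\sum_{j\in\nn}|f_j|^r\r)^{1/r}\quad\text{and}\quad G:=\lf(\sum_{j\in\nn}[\cm(f_j)]^r\r)^{1/r},
\]
the desired inequality reduces, via the definition of the weak Morrey norm, to showing
\[
\alpha|B|^{1/p-1}\lf|\lf\{x\in B:\ G(x)>\alpha\r\}\r|\lesssim\|F\|_{M_1^p(\rn)}
\]
uniformly for every $\alpha\in(0,\infty)$ and every ball $B=B(x_B,r_B)\subset\rn$. Fixing such $\alpha$ and $B$, I would decompose $f_j=f_j^{(1)}+f_j^{(2)}$ with $f_j^{(1)}:=f_j\mathbf{1}_{2B}$ and $f_j^{(2)}:=f_j\mathbf{1}_{(2B)^\complement}$; by sublinearity of $\cm$ and the triangle inequality in $\ell^r$, one has $G\le G^{(1)}+G^{(2)}$, where $G^{(i)}$ is built analogously from $\{f_j^{(i)}\}_j$, so that $\{G>\alpha\}\cap B$ splits into a local and a global piece.

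For the local piece, I would invoke the classical Fefferman--Stein weak-type $(1,1)$ vector-valued maximal inequality (\cite[Theorem~1(b)]{FS}) to obtain
\[
\lf|\lf\{x\in\rn:\ G^{(1)}(x)>\alpha/2\r\}\r|\lesssim\alpha^{-1}\int_{2B}F(y)\,dy,
\]
and then use the defining property of the Morrey norm, $\int_{2B}F\le|2B|^{1-1/p}\|F\|_{M_1^p(\rn)}$. Multiplying through by $\alpha|B|^{1/p-1}$ produces the desired Morrey bound on this piece.

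The global piece is the subtle part. The geometric input is that, for any $x\in B$ and any ball $B'\ni x$ of radius smaller than $r_B/2$, one has $B'\subset2B$, so $f_j^{(2)}$ vanishes on $B'$. Hence only balls $B'\ni x$ of radius at least $r_B/2$ contribute to $\cm(f_j^{(2)})(x)$, and each such $B'$ is contained in some $2^kB$ with $k\ge1$. A dyadic comparison yields, uniformly in $x\in B$,
\[
\cm(f_j^{(2)})(x)\lesssim\sup_{k\ge1}\frac1{|2^kB|}\int_{2^kB}|f_j(y)|\,dy.
\]
Since $r\in(1,\infty)$, I bound $\sup_k$ by $(\sum_k)^{1/r}$; on each fixed dyadic ball $2^kB$, Minkowski's integral inequality in the $j$-variable gives $\sum_{j\in\nn}[\frac1{|2^kB|}\int_{2^kB}|f_j|]^r\le[\frac1{|2^kB|}\int_{2^kB}F]^r$; applying the Morrey norm of $F$ on $2^kB$ and summing the geometric series $\sum_{k\ge1}2^{-knr/p}$ (convergent since $p\in[1,\infty)$ and $r>1$), I arrive at the uniform estimate $G^{(2)}(x)\le C_\ast r_B^{-n/p}\|F\|_{M_1^p(\rn)}$ for all $x\in B$.

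With this uniform bound in hand, I close the argument by a dichotomy in $\alpha$. If $\alpha\ge2C_\ast r_B^{-n/p}\|F\|_{M_1^p(\rn)}$, the global level set $\{x\in B:\ G^{(2)}(x)>\alpha/2\}$ is empty and only the local piece contributes. If $\alpha<2C_\ast r_B^{-n/p}\|F\|_{M_1^p(\rn)}$, the trivial bound $|\{G>\alpha\}\cap B|\le|B|$ yields
\[
\alpha|B|^{1/p-1}|\{G>\alpha\}\cap B|\le\alpha|B|^{1/p}\lesssim r_B^{-n/p}\|F\|_{M_1^p(\rn)}\cdot r_B^{n/p}\sim\|F\|_{M_1^p(\rn)}.
\]
Taking suprema in $B$ and $\alpha$ completes the argument. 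The main obstacle, and the only genuinely vector-valued step, lies in the global piece: a naive pointwise interchange of $\sup$ over balls with the $\ell^r$-sum over $j$ goes in the wrong direction, and the combined use of the dyadic decomposition and Minkowski's integral inequality at each dyadic scale is what resolves it.
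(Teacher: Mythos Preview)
Your proof is correct and follows essentially the same approach as the paper: both fix a ball $B$, split $f_j$ into its local part on $2B$ and its global part, handle the local piece via the Fefferman--Stein weak-type $(1,1)$ vector-valued inequality, and control the global piece via dyadic dilates of $B$ together with Minkowski's inequality in $\ell^r$ and a geometric series in $2^{-kn/p}$. The only cosmetic difference is that the paper further decomposes the global part into dyadic annuli and uses the pointwise Chebyshev bound $\mathbf{1}_{\{g>\alpha/2\}}\le (2/\alpha)g$ in $L^1(B)$, whereas you obtain a uniform pointwise bound $G^{(2)}\le C_\ast r_B^{-n/p}\|F\|_{M_1^p(\rn)}$ on $B$ and close with a threshold dichotomy in $\alpha$; these two packagings are equivalent.
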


\begin{proof}
Let $B:=B(x_0,R)\subset\BB$ with $x_0\in\rn$ and $R\in(0,\infty)$,
where $\BB$ is as in \eqref{Eqball} (the set of all balls of $\rn$).
For any given $j\in\nn$, we decompose $f_j$ into
$$
f_j=f_j^{(0)}+\sum_{k=1}^\infty f_j^{(k)},
$$
where $f_j^{(0)}:=f_j\mathbf1_{2B}$ and, for any $k\in\nn$, $f_j^{(k)}:=f_j\mathbf1_{2^{k+1}B\setminus2^kB}$.
From this and the Minkowski inequality, we deduce that
$$
\lf\{\sum_{j=1}^\infty\lf[\cm(f_j)\r]^r\r\}^\frac1r\leq
\lf\{\sum_{j=1}^\infty\lf[\cm(f_j^{(0)})\r]^r\r\}^\frac1r
+\sum_{k=1}^\infty\lf\{\sum_{j=1}^\infty\lf[\cm(f_j^{(k)})\r]^r\r\}^\frac1r.
$$
For any given $\lambda\in(0,\infty)$, we find that
\begin{align*}
&\lf\|\mathbf1_{\{x\in\rn:\ \{\sum_{j=1}^\infty[\cm(f_j)(x)]^r\}^\frac1r>\lambda\}}\r\|_{L^1(B)}\\
&\quad\leq
\lf\|\mathbf1_{\{x\in\rn:\ \{\sum_{j=1}^\infty[\cm(f_j^{(0)})(x)]^r\}^\frac1r>\lambda/2\}}\r\|_{L^1(B)}
+\lf\|\mathbf1_{\{x\in\rn:\ \sum_{k=1}^\infty\{\sum_{j=1}^\infty[\cm(f_j^{(k)})(x)]^r\}^\frac1r>\lambda/2\}}\r\|_{L^1(B)}\\
&\quad\lesssim
\lf\|\mathbf1_{\{x\in\rn:\ \{\sum_{j=1}^\infty[\cm(f_j^{(0)})(x)]^r\}^\frac1r>\lambda/2\}}\r\|_{L^1(B)}
+\lambda^{-1}\lf\|\sum_{k=1}^\infty\lf\{\sum_{j=1}^\infty\lf[\cm(f_j^{(k)})\r]^r\r\}^\frac1r\r\|_{L^1(B)}\\
&\quad\lesssim
\lf\|\mathbf1_{\{x\in\rn:\ \{\sum_{j=1}^\infty[\cm(f_j^{(0)})(x)]^r\}^\frac1r>\lambda/2\}}\r\|_{L^1(B)}
+\lambda^{-1}\sum_{k=1}^\infty\lf\|\lf\{\sum_{j=1}^\infty\lf[\cm(f_j^{(k)})\r]^r\r\}^\frac1r\r\|_{L^1(B)}\\
&\quad=:\mathrm{I}+\mathrm{II}.
\end{align*}
From the Fefferman--Stein vector-valued inequality (see \cite[Theorem 1(2)]{FS}), it follows that
\begin{align*}
\mathrm{I}\lesssim\lambda^{-1}\lf\|\lf[\sum_{j=1}^\infty\lf|f_j^{(0)}\r|^r\r]^\frac1r\r\|_{L^1(\rn)}
\sim\lambda^{-1}\lf\|\lf[\sum_{j=1}^\infty\lf|f_j\r|^r\r]^\frac1r\r\|_{L^1(2B)}.
\end{align*}
For any given $j,\ k\in\nn$ and $x\in B$, it is easy to find that
\begin{align*}
\cm(f_j^{(k)})(x)&=\sup_{t>0}\frac1{|B(x,t)|}\int_{B(x,t)}|f_j^{(k)}(y)|\,dy\\
&\sim\sup_{t>2^kR}\frac1{|B(x,t)|}\int_{B(x,t)}|f_j^{(k)}(y)|\,dy
\lesssim\lf(2^{k}R\r)^{-n}\int_\rn|f_j^{(k)}(y)|\,dy.
\end{align*}
From this and the Minkowski inequality, we deduce that, for any $k\in\nn$ and $x\in B$,
\begin{align*}
\lf\{\sum_{j=1}^\infty\lf[\cm(f_j^{(k)})(x)\r]^r\r\}^\frac1r&\lesssim \lf\{\sum_{j=1}^\infty\lf[\lf(2^kR\r)^{-n}\int_\rn\lf|f_j^{(k)}(x)\r|\,dx\r]^r\r\}^\frac1r\\
&\lesssim\lf(2^kR\r)^{-n}\int_\rn\lf[\sum_{j=1}^\infty\lf|f_j^{(k)}(x)\r|^r\r]^\frac1r\,dx\lesssim
\lf(2^kR\r)^{-n}\lf\|\lf[\sum_{j=1}^\infty\lf|f_j\r|^r\r]^\frac1r\r\|_{L^1(2^{k+1}B)},
\end{align*}
which implies that
\begin{align*}
\mathrm{II}\lesssim\lambda^{-1}\sum_{k=1}^\infty|B|\lf(2^kR\r)^{-n}\lf\|\lf[\sum_{j=1}^\infty
\lf|f_j\r|^r\r]^\frac1r\r\|_{L^1(2^{k+1}B)}
\sim\lambda^{-1}\sum_{k=1}^\infty2^{-kn}\lf\|\lf[\sum_{j=1}^\infty\lf|f_j\r|^r\r]^\frac1r\r\|_{L^1(2^{k+1}B)}.
\end{align*}
By the estimates of $\mathrm{I}$ and $\mathrm{II}$, we conclude that
\begin{align*}
|B|^{\frac1p-1}\lf\|\mathbf1_{\{x\in\rn:\ \{\sum_{j=1}^\infty[\cm(f_j)(x)]^r\}^\frac1r>\lambda\}}\r\|_{L^1(B)}
&\lesssim\lambda^{-1}\sum_{k=0}^\infty2^{-\frac{kn}{p}}|2^{k+1}B|^{\frac1p-1}\lf\|\lf[\sum_{j=1}^\infty
\lf|f_j\r|^r\r]^\frac1r\r\|_{L^1(2^{k+1}B)}\\
&\lesssim\lambda^{-1}\sum_{k=0}^\infty2^{-\frac{kn}{p}}\lf\|
\lf[\sum_{j=1}^\infty\lf|f_j\r|^r\r]^\frac1r\r\|_{M_1^p(\rn)}\\
&\sim\lambda^{-1}\lf\|\lf[\sum_{j=1}^\infty\lf|f_j\r|^r\r]^\frac1r\r\|_{M_1^p(\rn)}.
\end{align*}
This finishes the proof of Proposition \ref{pro822}.
\end{proof}

Applying Proposition \ref{pro822}, Lemmas \ref{mo1} and \ref{mM}, Theorems \ref{Thcz} and \ref{Thcz1},
we immediately obtain the following boundedness from
$HM_q^p(\rn)$ to $WHM_q^p(\rn)$
of both convolutional $\delta$-type and $\gamma$-type
Calder\'on--Zygmund operators, respectively, as follows.

\begin{theorem}\label{cz81}
Let $q\in(0,1]$ and $p\in(0,\infty)$ with $q\le p$, and $\delta\in(0,1]$.
Let $T$ be a convolutional $\delta$-type Calder\'on--Zygmund
operator. If $q\in[\frac{n}{n+\delta},1]$, then $T$ has a unique extension on $HM_q^p(\rn)$ and,
moreover, there exists a positive constant $C$ such that,
for any $f\in HM_q^p(\rn)$,
$$
\|Tf\|_{WHM_q^p(\rn)}\le C\|f\|_{HM_q^p(\rn)}.
$$
\end{theorem}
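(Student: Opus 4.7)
The plan is to deduce Theorem \ref{cz81} as a direct application of Theorem \ref{Thcz} with $X:=M_q^p(\rn)$. Since $q\in(0,1]$ and $d\geq\lfloor n(1/q-1)\rfloor$, the parameters $\theta:=q$ and $s:=q$ in Theorem \ref{Thcz} can be chosen to match the hypotheses of Theorem \ref{cz81}. What must be verified are the six standing assumptions on $X$: the absolute continuity of the quasi-norm, the Fefferman--Stein inequality \eqref{aa}, the associate-space boundedness \eqref{aax}, Assumption \ref{xinm}, the fact that $X^{1/s}$ is a ball Banach function space, and the critical weak-type inequality \eqref{Eqfs11}.

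Most of these verifications are immediate from the tools already recorded in Subsection \ref{s7.1}. First, \eqref{aa} with $\theta=s=q$ is just Lemma \ref{mo1} applied with the parameter $s$ there equal to the present $q$. Next, Lemma \ref{mM} gives \eqref{aax} for any exponent $r=s\in(0,q)$ and any $q^\ast\in(q,\infty]$, in particular for the integrability exponent used to invoke Theorem \ref{Thcz}. Assumption \ref{xinm} follows from Lemma \ref{mwfs}, since for any $r\in(0,q)$ the powered maximal operator $\cm^{(r)}$ is bounded on $(WM_q^p(\rn))^{1/r}$. Finally, for any $s\in(0,q)$ one has $[M_q^p(\rn)]^{1/s}=M_{q/s}^{p/s}(\rn)$ with $q/s\geq 1$, which is a ball Banach function space (see \cite[p.\,86]{SHYY}). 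The absolute-continuity issue, which is required only to ensure the density argument underlying Theorem \ref{Thcz}, can be handled by working inside $HM_q^p(\rn)\cap L^{q_0}(\rn)$ for a sufficiently large $q_0$, a space which is dense in $HM_q^p(\rn)$ via the atomic decomposition recorded in Lemma \ref{Le72} and the absolute continuity of the $L^{q_0}$-norm of each finite atomic partial sum.

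The key step is \eqref{Eqfs11}, namely the weak-type Fefferman--Stein inequality from $[M_q^p(\rn)]^{(n+\delta)/n}$ to itself at the level $(n+\delta)/n$. Since $[M_q^p(\rn)]^{(n+\delta)/n}=M_{q(n+\delta)/n}^{p(n+\delta)/n}(\rn)$, the hypothesis $q\in[n/(n+\delta),1]$ yields $q(n+\delta)/n\in[1,(n+\delta)/n]$. In the subcritical range $q\in(n/(n+\delta),1]$ one has $q(n+\delta)/n>1$, and Lemma \ref{mo1} (with $r=(n+\delta)/n$ and $s=1$, applied to the Morrey space $M_{q(n+\delta)/n}^{p(n+\delta)/n}(\rn)$) even gives a strong-type vector-valued bound which trivially implies \eqref{Eqfs11}. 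In the critical endpoint $q=n/(n+\delta)$, the space $[M_q^p(\rn)]^{(n+\delta)/n}$ equals $M_1^{p(n+\delta)/n}(\rn)$, where the strong-type inequality fails and one must appeal to Proposition \ref{pro822} with exponent $r:=(n+\delta)/n>1$ and Morrey-upper index $p(n+\delta)/n$; unwinding the definition of the weak Morrey quasi-norm as in \eqref{Eqde1} gives exactly \eqref{Eqfs11}.

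With all hypotheses of Theorem \ref{Thcz} verified for $X=M_q^p(\rn)$, that theorem immediately yields the unique extension of $T$ to $HM_q^p(\rn)$ together with the bound $\|Tf\|_{WHM_q^p(\rn)}\lesssim\|f\|_{HM_q^p(\rn)}$. The main obstacle is the critical endpoint $q=n/(n+\delta)$, which cannot be handled by the strong-type Fefferman--Stein inequality and precisely requires the weak-type vector-valued bound furnished by Proposition \ref{pro822}; this is the essential reason why $WHM_q^p(\rn)$, rather than $HM_q^p(\rn)$, is the correct target space in the critical case.
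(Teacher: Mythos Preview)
Your proposal follows essentially the same route as the paper, which simply records that Theorem \ref{cz81} follows from Proposition \ref{pro822}, Lemmas \ref{mo1} and \ref{mM}, and Theorem \ref{Thcz}; your explicit verification of the hypotheses, and in particular the case split on \eqref{Eqfs11} (strong-type via Lemma \ref{mo1} when $q>n/(n+\delta)$, weak-type via Proposition \ref{pro822} at the endpoint $q=n/(n+\delta)$), is exactly the intended argument.

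One caveat worth naming: Morrey spaces $M_q^p(\rn)$ with $q<p$ do \emph{not} have absolutely continuous quasi-norms, so the hypothesis of Theorem \ref{Thcz} is not literally met, and your proposed workaround does not close the gap as stated---the absolute continuity of the $L^{q_0}$-norm of a finite partial sum says nothing about convergence of the full atomic series in the $HM_q^p(\rn)$-quasi-norm, which is what the density step actually needs. The paper glosses over this point as well; the honest resolution is to observe that in the proof of Theorem \ref{Thcz} absolute continuity is used only through Remark \ref{Re73} to obtain density of $H_X(\rn)\cap L^q(\rn)$, and for Hardy--Morrey spaces that density (or, equivalently, norm convergence of finite atomic sums) is established directly in the literature, e.g., \cite{JW,Sa,H17}.
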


\begin{theorem}\label{cz82}
Let $q\in(0,1]$ and $p\in(0,\infty)$ with $q\le p$, and $\gamma\in(0,\infty)$.
Let $T$ be a $\gamma$-type Calder\'on--Zygmund
operator and have the vanishing moments up to order $\lceil\gamma\rceil-1$.
If $\lceil\gamma\rceil-1\le n(\frac1{q}-1)\le\gamma$, then $T$ has a unique extension on
$HM_q^p(\rn)$ and, moreover, there exists a
positive constant $C$ such that,
for any $f\in HM_q^p(\rn)$,
$$
\|Tf\|_{WHM_q^p(\rn)}\le C\|f\|_{HM_q^p(\rn)}.
$$
\end{theorem}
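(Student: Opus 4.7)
The plan is to verify all hypotheses of Theorem \ref{Thcz1} for $X := M_q^p(\rn)$ and then invoke that theorem directly. Most verifications are routine: $M_q^p(\rn)$ is a ball quasi-Banach function space, $X^{1/s} = M_{q/s}^{p/s}(\rn)$ is a ball Banach function space whenever $s \in (0, q]$, the Fefferman--Stein maximal inequality \eqref{aa} holds by Lemma \ref{mo1} with $\theta := s$ for any $s \in (0, q)$, the powered maximal bound \eqref{aax} on the associate space with exponent $2$ holds by Lemma \ref{mM}, and Assumption \ref{xinm} follows from Lemma \ref{mwfs}. Since $\lceil\gamma\rceil - 1 \le n(1/q - 1)$, we can take $\theta$ close enough to $q$ to ensure the vanishing moment requirement $\lceil\gamma\rceil - 1 \le n(1/\theta - 1)$ of Theorem \ref{Thcz1}.

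The central step is establishing the weak Fefferman--Stein inequality \eqref{Eqfs12} on $X^u$ with $u := (n+\gamma)/n$. Since $X^u = M_{qu}^{pu}(\rn)$ and, via the identity $\sup_{\alpha\in(0,\infty)}\alpha\|\mathbf{1}_{\{F>\alpha\}}\|_Y = \|F\|_{WY}$, \eqref{Eqfs12} is equivalent to
\[
\lf\|\lf(\sum_{j\in\nn}\lf[\cm(f_j)\r]^u\r)^{1/u}\r\|_{WM_{qu}^{pu}(\rn)}
\lesssim \lf\|\lf(\sum_{j\in\nn}|f_j|^u\r)^{1/u}\r\|_{M_{qu}^{pu}(\rn)}.
\]
The hypothesis $n(1/q - 1) \le \gamma$ translates exactly to $qu \ge 1$. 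When $qu > 1$ (subcritical regime), Lemma \ref{mo1} applied to $M_{qu}^{pu}(\rn)$ with convexification index $s := 1 < qu$ yields the \emph{strong}-type Fefferman--Stein inequality, which implies the weak version via the embedding $M_{qu}^{pu}(\rn) \hookrightarrow WM_{qu}^{pu}(\rn)$ from Remark \ref{Rews}(i). When $qu = 1$, equivalently $q = n/(n+\gamma)$ --- the critical endpoint where the strong inequality fails --- Proposition \ref{pro822}, applied with $r := u$ and $p$ replaced by $pu$, supplies exactly the needed weak inequality on $M_1^{pu}(\rn) = X^u$.

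The main obstacle, and the reason Proposition \ref{pro822} was established in the first place, is precisely the critical endpoint $q = n/(n+\gamma)$: at this endpoint the strong Fefferman--Stein inequality genuinely fails on $M_1^{pu}(\rn)$, so the Hardy-to-Hardy boundedness $HM_q^p \to HM_q^p$ does not hold, and $WHM_q^p$ is the correct target. A subsidiary technical point is that Theorem \ref{Thcz1} uses absolute continuity of the quasi-norm of $X$ to invoke the density of $HM_q^p(\rn) \cap L^2(\rn)$ in $HM_q^p(\rn)$, a property that fails for Morrey spaces when $q < p$; the standard way around this is to start from the atomic decomposition of $HM_q^p(\rn)$ due to Jia and Wang \cite{JW}, define $Tf := \sum_j \lambda_j T(a_j)$ atom-by-atom, verify via the pointwise estimate of $T(a_j)$ on $(4B_j)^\complement$ performed in the proof of Theorem \ref{Thcz1} that each $T(a_j)$ is a molecule (up to a harmless constant) in the sense of Definition \ref{Demol}, and then appeal to the molecular characterization of $WHM_q^p(\rn)$ from Theorem \ref{mfenzi} to control the resulting sum. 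Putting these ingredients together delivers the desired boundedness $T : HM_q^p(\rn) \to WHM_q^p(\rn)$.
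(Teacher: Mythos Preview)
Your approach is essentially the paper's: the text preceding Theorems \ref{cz81} and \ref{cz82} says they follow immediately from Proposition \ref{pro822}, Lemmas \ref{mo1} and \ref{mM}, and Theorem \ref{Thcz1}, and your verification of the hypotheses --- in particular the weak vector-valued inequality \eqref{Eqfs12} at the endpoint $qu=1$ via Proposition \ref{pro822} --- is exactly what is intended.

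You go further than the paper in flagging that the absolute-continuity hypothesis of Theorem \ref{Thcz1} fails for $M_q^p(\rn)$ when $q<p$; the paper simply does not address this. Your proposed repair, however, does not quite work as written: Theorem \ref{mfenzi} (equivalently Theorem \ref{Thmolcha}) reconstructs elements of $WHM_q^p(\rn)$ only from molecules indexed by $(i,j)\in\zz\times\nn$ with coefficients of the rigid form $\lambda_{i,j}=\widetilde A\,2^i\|\mathbf{1}_{B_{i,j}}\|_X$ and a bounded-overlap condition $\sum_j\mathbf{1}_{cB_{i,j}}\le A$ at each dyadic level $i$. A single-index atomic decomposition $f=\sum_j\lambda_j a_j$ from \cite{JW} carries no such scale stratification, so the molecules $T(a_j)$ do not fit the hypotheses of Theorem \ref{mfenzi}, and the quantity it controls, $\sup_i\|\sum_j\lambda_{i,j}\mathbf{1}_{B_{i,j}}/\|\mathbf{1}_{B_{i,j}}\|_X\|_X$, is not the atomic norm $\|(\sum_j[\lambda_j\mathbf{1}_{B_j}/\|\mathbf{1}_{B_j}\|_X]^s)^{1/s}\|_X$ that bounds $\|f\|_{HM_q^p}$. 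A route that does work is to run the pointwise estimates in the proof of Theorem \ref{Thcz1} directly on finite partial sums $f_N=\sum_{j\le N}\lambda_j a_j\in L^2(\rn)$ to get $\|Tf_N\|_{WHM_q^p}\lesssim\|f\|_{HM_q^p}$ uniformly in $N$, and then use the molecular decay of $T(a_j)$ together with the atomic coefficient control to show that $\sum_j\lambda_j T(a_j)$ converges in $\cs'(\rn)$, defining $Tf$ as this limit; Lemma \ref{LeFatou} then passes the weak-type bound to $Tf$.
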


\begin{remark}\label{111}
Let $0<q\leq p<\infty$ and $p=q$.
In this case,
we know that $M_q^p(\rn)=L^q(\rn)$ and $WM_q^p(\rn)=WL^q(\rn)$. Thus, by Theorem \ref{cz81},
we recover that the convolutional $\delta$-type Calder\'on--Zygmund operator $T$
is bounded from $H^{\frac{n}{n+\delta}}(\rn)$ to $WH^{\frac{n}{n+\delta}}(\rn)$,
which is just \cite[Theorem 1]{L} (see also \cite[Theorem 5.2]{LYJ} and \cite[Theorem 7.4]{YYYZ}).
Here, $\frac{n}{n+\delta}$ is called the \emph{critical index}.
Also, by Theorem \ref{cz82}, we recover that any $\gamma$-order Calder\'on--Zygmund operator
is bounded from $H^{\frac{n}{n+\gamma}}(\rn)$ to $WH^{\frac{n}{n+\gamma}}(\rn)$, which is a special
case of \cite[Theorem 7.6]{YYYZ}. Yan et al. \cite{YYYZ} pointed out that the \emph{critical index} of
$\gamma$-order Calder\'on--Zygmund operators is $\frac{n}{n+\gamma}$.
\end{remark}

\subsection{Mixed-norm Lebesgue spaces\label{s7.2}}

We begin with recalling the notion of mixed-norm Lebesgue spaces.

\begin{definition}\label{mix}
Let $\vec{p}:=(p_1,\ldots,p_n)\in(0,\infty]^n$.
The \emph{mixed-norm Lebesgue space $L^{\vec{p}}(\rn)$} is defined
to be the set of all measurable functions $f$ such that
$$
\|f\|_{L^{\vec{p}}(\rn)}:=\lf\{\int_{\rr}\cdots\lf[\int_{\rr}|f(x_1,\ldots,x_n)|^{p_1}\,dx_1\r]
^{\frac{p_2}{p_1}}\cdots\,dx_n\r\}^{\frac{1}{p_n}}<\infty
$$
with the usual modifications made when $p_i=\infty$ for some $i\in\{1,\ldots,n\}$.
\end{definition}

The space $L^{\vec{p}}(\rn)$
was studied by Benedek and Panzone \cite{bp} in 1961, which can be traced
back to H\"ormander \cite{H60}.
From the definition
of $\|\cdot\|_{L^{\vec{p}}(\rn)}$, it is easy to deduce that
the mixed-norm Lebesgue space $L^{\vec{p}}(\rn)$
is a ball quasi-Banach function space.
Let $\vec{p}:=(p_1,\ldots,p_n)\in[1,\infty]^n$. Then, for any $f\in L^{\vec{p}}(\rn)$
and $g\in L^{\vec{p'}}(\rn)$, it is easy to know that
$$\int_{\rn}|f(x)g(x)|\, dx \leq \|f\|_{L^{\vec{p}}(\rn)}\|g\|_{L^{\vec{p'}}(\rn)},
$$
where $\vec{p'}$
denotes the \emph{conjugate vector} of $\vec{p}$, namely, for any $i\in\{1,\ldots,n\}$, $1/p_i + 1/p_i'
= 1$.
This implies that $L^{\vec{p}}(\rn)$ with $\vec{p}\in[1,\infty]^n$ is a ball
Banach function space, which is not a Banach function space (see the following remark).

\begin{remark}\label{mbfs-not}
It is worth pointing out that
$L^{\vec{p}}(\rn)$ with $\vec{p}\in[1,\infty]^n$ may not be a Banach function space.
For instance, let $\vec{p}:=(2,1)$ and $n:=2$.
In this case, $L^{\vec{p}}(\rr^n)=L^{(2,1)}(\rr^2)$.
Let
$$E:=\bigcup_{m\in\nn}[m,m+1/m]\times[m,m+1/\sqrt{m}].$$
Then it is easy to show that $|E|<\infty$, but
\begin{align*}
\|\mathbf{1}_{E}\|_{L^{(2,1)}(\rr^2)}
=\int_{\rr}\lf[\int_{\rr}\mathbf{1}_{E}(x_1,x_2)\,dx_1\r]^{\frac 12}\,dx_2
=\sum_{m\in\nn}\int_{m}^{m+1/\sqrt{m}}\lf[\int_{m}^{m+1/m}\,dx_1\r]^{\frac 12}\,dx_2=\infty.
\end{align*}
Thus, $L^{(2,1)}(\rr^2)$ does not satisfy Definition \ref{bn}(iv), which means
that $L^{(2,1)}(\rr^2)$ is not a Banach function space.
\end{remark}

Furthermore, the following Fefferman--Stein
vector-valued maximal inequalities
for $L^{\vec{p}}(\rn)$ hold true (see, for instance, \cite[Lemma 3.7]{hlyy}),
which shows that the mixed-norm Lebesgue space $L^{\vec{p}}(\rn)$ satisfies Assumption \ref{a2.15}.
For any $\vec{p}:=(p_1,\ldots,p_n)\in(0,\infty)^n$, we always let
$p_-:= \min\{p_1, \ldots , p_n\}$ and  $p_+ := \max\{p_1, \ldots , p_n\}$.

\begin{lemma}\label{mixo1}
Let $\vec{p}\in(0,\infty)^n$.
Assume that $r\in(1,\infty)$ and $s\in(0,p_-)$.
Then there exists a positive constant $C$ such that,
for any $\{f_j\}_{j=1}^\infty\subset\mathscr M(\rn)$,
$$
\lf\|\lf\{\sum_{j=1}^\infty\lf[\cm(f_j)\r]^r\r\}^{1/r}\r\|_{[L^{\vec{p}}(\rn)]^{1/s}}
\le C\lf\|\lf\{\sum_{j=1}^\infty\lf|f_j\r|^r\r\}^{1/r}\r\|_{[L^{\vec{p}}(\rn)]^{1/s}},
$$
where $[L^{\vec{p}}(\rn)]^{1/s}$ denotes the $\frac1s$-convexification of
$L^{\vec{p}}(\rn)$ as in Definition \ref{Debf}(i) with $X$ and $p$ replaced, respectively,
by $L^{\vec{p}}(\rn)$ and $1/s$.
\end{lemma}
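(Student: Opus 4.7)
The plan is to reduce Lemma \ref{mixo1} to the classical one-variable vector-valued Fefferman--Stein inequality, iterated across the $n$ coordinate directions by induction on $n$, following the strategy of \cite[Lemma 3.7]{hlyy}.

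First, I would identify $[L^{\vec p}(\rn)]^{1/s}$ with $L^{\vec p/s}(\rn)$, where $\vec p/s := (p_1/s, \ldots, p_n/s)$; this follows from Definitions \ref{Debf}(i) and \ref{mix} by a direct computation, and the hypothesis $s \in (0, p_-)$ ensures $\vec p/s \in (1, \infty)^n$. Writing $\vec q := \vec p/s$, the lemma reduces to proving the Fefferman--Stein bound on $L^{\vec q}(\rn)$ for any $\vec q \in (1, \infty)^n$ and $r \in (1, \infty)$. Using the containment $B(x, \rho) \subset x + [-\rho, \rho]^n$ for every ball $B(x, \rho) \subset \rn$, one obtains the pointwise estimate $\cm(f) \lesssim M_1 M_2 \cdots M_n(|f|)$, where $M_i$ denotes the one-variable Hardy--Littlewood maximal operator acting only in the $i$-th coordinate (with the remaining coordinates frozen). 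Hence it suffices to prove the corresponding inequality with $\cm$ replaced by the iterated maximal $M_1 \cdots M_n$.

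I would then proceed by induction on $n$. The base case $n = 1$ is the classical one-variable vector-valued Fefferman--Stein inequality on $L^{q_1}(\rr; \ell^r)$ for $q_1, r \in (1, \infty)$. For the inductive step from $n$ to $n + 1$, I use the natural factorization of the mixed norm,
\[
\lf\|F\r\|_{L^{\vec q}(\rr^{n+1})} = \lf\|\lf\|F(\cdot, x_{n+1})\r\|_{L^{(q_1, \ldots, q_n)}(\rr^n)}\r\|_{L^{q_{n+1}}(dx_{n+1})}.
\]
Because $M_1, \ldots, M_n$ do not involve the variable $x_{n+1}$, applying the inductive hypothesis on $\rr^n$ pointwise in $x_{n+1}$ yields
\[
\lf\|\lf(\sum_j [M_1 \cdots M_{n+1} f_j]^r\r)^{1/r}\r\|_{L^{\vec q}(\rr^{n+1})} \lesssim \lf\|\lf(\sum_j |M_{n+1} f_j|^r\r)^{1/r}\r\|_{L^{\vec q}(\rr^{n+1})},
\]
which peels off the first $n$ maximal operators at the cost of a constant. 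To remove the remaining $M_{n+1}$, I regard $x_{n+1} \mapsto (f_j(\cdot, x_{n+1}))_j$ as a function with values in the Banach lattice $B := L^{(q_1, \ldots, q_n)}(\rr^n; \ell^r)$ and invoke the Banach-lattice-valued one-variable Fefferman--Stein inequality in $x_{n+1}$: the scalar one-variable Hardy--Littlewood maximal operator acts pointwise in $B$ and is bounded on $L^{q_{n+1}}(\rr; B)$ whenever $q_{n+1} \in (1, \infty)$. Unpacking the norms identifies the left-hand side with the $L^{q_{n+1}}(\rr; B)$-norm of the $B$-valued maximal of $(f_j(\cdot, x_{n+1}))_j$, closing the induction.

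The main obstacle is the Banach-lattice-valued extension of the one-variable Fefferman--Stein inequality appearing in the inductive step. It is handled by a standard Marcinkiewicz interpolation between weak-type $(1, 1)$ and $L^\infty$ estimates for Banach-lattice-valued functions, applicable to $B = L^{(q_1, \ldots, q_n)}(\rr^n; \ell^r)$ since $q_1, \ldots, q_n$ and $r$ all lie in $(1, \infty)$, making $B$ a reflexive Banach lattice; the detailed execution of this step is precisely the argument of \cite[Lemma 3.7]{hlyy}.
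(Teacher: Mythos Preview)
Your proposal is correct and aligns with the paper's treatment: the paper does not supply its own proof of Lemma~\ref{mixo1} but simply cites \cite[Lemma~3.7]{hlyy}, and your sketch follows precisely that reference's iterated one-variable Fefferman--Stein argument. The identification $[L^{\vec p}(\rn)]^{1/s}=L^{\vec p/s}(\rn)$, the pointwise domination $\cm\lesssim M_1\cdots M_n$, and the induction on $n$ using the Banach-lattice-valued maximal inequality are exactly the ingredients used there.
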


Now we introduce the weak mixed-norm Lebesgue space $WL^{\vec{p}}(\rn)$.

\begin{definition}\label{wmix}
Let $\vec{p}\in(0,\infty)^n$.
The \emph{weak mixed-norm Lebesgue space $WL^{\vec{p}}(\rn)$}
is defined to be the set of all measurable functions $f$ such that
$$
\|f\|_{WL^{\vec{p}}(\rn)}:=\sup_{\alpha\in(0,\infty)}\,\lf\{\alpha\lf\|\mathbf1_{\{x\in\rn:\ |f(x)|>\alpha\}}\r\|_{L^{\vec{p}}(\rn)}\r\}<\infty.
$$
\end{definition}

Let $T$ be an operator defined on
$\mathscr M(\rn)$. Then $T$ is called a \emph{sublinear operator} if,
for any $f,\ g\in\mathscr M(\rn)$ and any $\lambda\in \cc$,
$$
|T(f + g)|\le |T(f)| + |T(g)|\quad \text{and}\quad|T(\lambda f)|=|\lambda||T(f)|.
$$
The interpolation theorem of operators on the mixed-norm Lebesgue space $L^{\vec{p}}(\rn)$ is stated as follows.

\begin{theorem}\label{mixTh1}
Let $\vec{p}\in(1,\infty)^n$. Let $r_1\in(\frac1{p_-},1)$ and $r_2\in(1,\infty)$.
Assume that $T$ is a sublinear operator defined on
$L^{r_1\vec{p}}(\rn)+L^{r_2\vec{p}}(\rn)$ satisfying that there exist
positive constants $C_1$ and $C_2$
such that, for any $i\in\{1,2\}$ and $f\in L^{r_i\vec{p}}(\rn)$,
\begin{equation}\label{mixEq1}
\lf\|T(f)\r\|_{WL^{r_i\vec{p}}(\rn)}
\le C_i\|f\|_{L^{r_i\vec{p}}(\rn)},
\end{equation}
where $r_i\vec{p}:=(r_ip_1,\ldots,r_ip_n)$ for any $i\in\{1,2\}$.
Then $T$ is bounded on $WL^{\vec{p}}(\rn)$
and there exists a positive constant $C$ such that, for any $f\in WL^{\vec{p}}(\rn)$,
$$
\|T(f)\|_{WL^{\vec{p}}(\rn)}\le C\|f\|_{WL^{\vec{p}}(\rn)}.
$$
\end{theorem}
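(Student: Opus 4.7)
The plan is a Calder\'on-type splitting adapted to the mixed-norm setting. Fix $f\in WL^{\vec p}(\rn)$ and set $\lambda:=\|f\|_{WL^{\vec p}(\rn)}$. For each $\alpha\in(0,\infty)$, decompose $f=f^\alpha+f_\alpha$, where $f^\alpha:=f\mathbf{1}_{\{x\in\rn:\ |f(x)|>\alpha\}}$ and $f_\alpha:=f-f^\alpha$. By the sublinearity of $T$,
$$
\{x\in\rn:\ |Tf(x)|>2\alpha\}\subset\{x\in\rn:\ |Tf^\alpha(x)|>\alpha\}\cup\{x\in\rn:\ |Tf_\alpha(x)|>\alpha\},
$$
which, together with Definition \ref{Debqfs}(ii) and the triangle inequality of $\|\cdot\|_{L^{\vec p}(\rn)}$ (valid as $\vec p\in(1,\infty)^n$), gives
$$
\lf\|\mathbf{1}_{\{|Tf|>2\alpha\}}\r\|_{L^{\vec p}(\rn)}\le\lf\|\mathbf{1}_{\{|Tf^\alpha|>\alpha\}}\r\|_{L^{\vec p}(\rn)}+\lf\|\mathbf{1}_{\{|Tf_\alpha|>\alpha\}}\r\|_{L^{\vec p}(\rn)}.
$$
Using the scaling identities $\|\mathbf{1}_E\|_{L^{r_i\vec p}(\rn)}^{r_i}=\|\mathbf{1}_E\|_{L^{\vec p}(\rn)}$ and $\|g\|_{L^{r_i\vec p}(\rn)}^{r_i}=\||g|^{r_i}\|_{L^{\vec p}(\rn)}$, together with \eqref{mixEq1} raised to the $r_i$-th power, the proof reduces to the two pointwise-in-$\alpha$ estimates
$$
\lf\||f^\alpha|^{r_1}\r\|_{L^{\vec p}(\rn)}\lesssim\lambda\,\alpha^{r_1-1}\quad\text{and}\quad\lf\||f_\alpha|^{r_2}\r\|_{L^{\vec p}(\rn)}\lesssim\lambda\,\alpha^{r_2-1},
$$
because these two bounds yield $\alpha\|\mathbf{1}_{\{|Tf|>2\alpha\}}\|_{L^{\vec p}(\rn)}\lesssim\lambda$, and taking the supremum in $\alpha\in(0,\infty)$ completes the argument.

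To verify the two key estimates, the plan is to use the layer-cake representations
$$
|f^\alpha|^{r_1}=r_1\int_0^\infty\beta^{r_1-1}\mathbf{1}_{\{|f|>\max(\alpha,\beta)\}}\,d\beta\quad\text{and}\quad |f_\alpha|^{r_2}=r_2\int_0^\alpha\beta^{r_2-1}\mathbf{1}_{\{\beta<|f|\le\alpha\}}\,d\beta.
$$
Since $\vec p\in(1,\infty)^n$, $L^{\vec p}(\rn)$ is a Banach space for which the Minkowski integral inequality holds (obtained by iterating the one-dimensional Minkowski inequality coordinate by coordinate in the sense of \cite{bp}). Pulling $\|\cdot\|_{L^{\vec p}(\rn)}$ inside the $\beta$-integrals and using $\|\mathbf{1}_{\{|f|>\beta\}}\|_{L^{\vec p}(\rn)}\le\lambda/\beta$ for every $\beta\in(0,\infty)$, the first integral splits as $\lambda\alpha^{-1}\int_0^\alpha\beta^{r_1-1}\,d\beta+\lambda\int_\alpha^\infty\beta^{r_1-2}\,d\beta\sim\lambda\alpha^{r_1-1}$, the tail converging because $r_1<1$, while the second becomes $\lambda\int_0^\alpha\beta^{r_2-2}\,d\beta\sim\lambda\alpha^{r_2-1}$, the integrand at $0$ being integrable because $r_2>1$.

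The scheme is the classical Marcinkiewicz interpolation idea transplanted to the mixed-norm weak setting, and I do not expect a deep obstacle. The point requiring genuine care is the Minkowski integral inequality in $L^{\vec p}(\rn)$, and this is exactly where the standing hypothesis $\vec p\in(1,\infty)^n$ is used; the condition $r_1>1/p_-$ ensures $r_1\vec p\in(1,\infty)^n$ so that \eqref{mixEq1} at level $i=1$ is posed on an honest Banach space; finally, the endpoint constraints $r_1<1$ and $r_2>1$ are precisely what is needed to guarantee convergence of the two $\beta$-integrals at infinity and at zero, respectively, so the full range of admissible parameters is dictated by this proof strategy.
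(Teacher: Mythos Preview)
Your proof is correct and follows essentially the same approach as the paper: both use the Calder\'on splitting $f=f^\alpha+f_\alpha$, reduce to the two key estimates $\||f^\alpha|^{r_1}\|_{L^{\vec p}}\lesssim\lambda\alpha^{r_1-1}$ and $\||f_\alpha|^{r_2}\|_{L^{\vec p}}\lesssim\lambda\alpha^{r_2-1}$, and prove these via a layer-cake representation combined with the Minkowski integral inequality on $L^{\vec p}(\rn)$. The only difference is bookkeeping --- the paper parametrizes the layer-cake as $g=\int_0^\infty\mathbf{1}_{\{g>\tau\}}\,d\tau$ applied to a normalized version of $|f^{(\alpha)}|^{r_1}$, whereas you use the differential form $r_i\int_0^\infty\beta^{r_i-1}\mathbf{1}_{\{\cdot\}}\,d\beta$; the two are related by an obvious change of variable and lead to the same split integrals with the same convergence requirements $r_1<1$, $r_2>1$.
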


\begin{proof}
Let $f\in WL^{\vec{p}}(\rn)$ and
$$
\lambda:=\|f\|_{WL^{\vec{p}}(\rn)}=\sup_{\alpha\in(0,\infty)}\lf\{\alpha\lf
\|\mathbf{1}_{\{x\in\rn:\ |f(x)|>\alpha\}}\r\|_{L^{\vec{p}}(\rn)}\r\}.
$$
We need to show that, for any $\alpha\in(0,\infty)$,
$$
\alpha\lf
\|\mathbf{1}_{\{x\in\rn:\ |Tf(x)|>\alpha\}}\r\|_{L^{\vec{p}}(\rn)}
\lesssim\lambda
$$
with the implicit positive constant independent of $\alpha$ and $f$.

To this end, for any $\alpha\in(0,\infty)$, let
$$
f^{(\alpha)}:=f\mathbf{1}_{\{x\in\rn:\ |f(x)|>\alpha\}}\quad\text{and}\quad
f_{(\alpha)}:=f\mathbf{1}_{\{x\in\rn:\ |f(x)|\le\alpha\}}.
$$
We claim that
\begin{equation}\label{mixEq2}
\lf\|f^{(\alpha)}\r\|_{L^{r_1\vec{p}}(\rn)}\lesssim
\alpha\lf(\lambda/\alpha\r)^{1/r_1}
\end{equation}
and
\begin{equation}\label{mixEq3}
\lf\|f_{(\alpha)}\r\|_{L^{r_2\vec{p}}(\rn)}
\lesssim \alpha\lf(\lambda/\alpha\r)^{1/r_2}.
\end{equation}
Assuming that this claim holds true for the moment, then, by the condition
that $T$ is sublinear and \eqref{mixEq1}, we conclude that, for any $\alpha\in(0,\infty)$,
\begin{align*}
&\alpha\lf\|\mathbf{1}_{\{x\in\rn:\ |T(f)(x)|>\alpha\}}\r\|_{L^{\vec{p}}(\rn)}\\
&\hspace*{12pt}\lesssim\alpha\lf\|\mathbf{1}_{\{x\in\rn:\ |T(f^{(\alpha)})(x)|>\alpha/2\}}
\r\|_{L^{\vec{p}}(\rn)}+\alpha\lf\|\mathbf{1}_{\{x\in\rn:\ |T(f_{(\alpha)})(x)|>\alpha/2\}}\r\|_{L^{\vec{p}}(\rn)}\\
&\hspace*{12pt}\sim\alpha\lf\|\mathbf{1}_{\{x\in\rn:\ |T(f^{(\alpha)})(x)|>\alpha/2\}}
\r\|_{L^{r_1\vec{p}}(\rn)}^{r_1}+\alpha\lf\|\mathbf{1}_{\{x\in\rn:\
|T(f_{(\alpha)})(x)|>\alpha/2\}}\r\|_{L^{r_2\vec{p}}(\rn)}^{r_2}\\
&\hspace*{12pt}\lesssim\alpha^{1-r_1}\lf\|f^{(\alpha)}\r\|_{L^{r_1\vec{p}}(\rn)}^{r_1}
+\alpha^{1-r_2}\lf\|f^{(\alpha)}\r\|_{L^{r_2\vec{p}}(\rn)}^{r_2}\lesssim\lambda.
\end{align*}
This implies that $\|T(f)\|_{WL^{\vec{p}}(\rn)}\lesssim \|f\|_{WL^{\vec{p}}(\rn)}$, which is the desired conclusion.

Therefore, it remains to prove the above claim.
To prove \eqref{mixEq2}, by the Minkowski inequality, we have
\begin{align*}
&\lf\|\frac{|f^{(\alpha)}|/\alpha}{(\lambda/\alpha)^{1/r_1}}
\r\|_{L^{r_1\vec{p}}(\rn)}
=\lf\|\int_0^{\infty}
\mathbf{1}_{\{y\in\rn:\
[|f^{(\alpha)}(y)|/\alpha]^{r_1}>\frac{\lambda\tau}{\alpha}\}}\,d\tau
\r\|_{L^{\vec{p}}(\rn)}^{\frac{1}{r_1}}\\
&\hspace*{12pt}\lesssim
\lf\{\int_0^{\infty}\lf\|
\mathbf{1}_{\{y\in\rn:\
[|f^{(\alpha)}(y)|/\alpha]^{r_1}>\frac{\lambda\tau}{\alpha}\}}
\r\|_{L^{\vec{p}}(\rn)}\,d\tau\r\}^\frac1{r_1}\\
&\hspace*{12pt}\lesssim\lf\{\int_0^{\alpha/\lambda}\lf\|
\mathbf{1}_{\{y\in\rn:\
[|f^{(\alpha)}(y)|/\alpha]^{r_1}>\frac{\lambda\tau}{\alpha}\}}
\r\|_{L^{\vec{p}}(\rn)}\, d\tau\r\}^\frac1{r_1}
+\lf\{\int_{\alpha/\lambda}^\infty\cdots d\tau\r\}^\frac1{r_1}
=:\mathrm{I_1}+\mathrm{I_2}.
\end{align*}
By the definition $f^{(\alpha)}$ and Definition \ref{wmix}, it is easy to see that
\begin{align*}
\mathrm{I_1}&\lesssim\lf\{\int_0^{\alpha/\lambda}\lf\|\mathbf{1}_{\{y\in\rn:\
|f(y)|>\alpha\}}\r\|_{L^{\vec{p}}(\rn)}\, d\tau\r\}^\frac1{r_1}
\lesssim\lf\{\frac{\alpha}{\lambda}\lf\|\mathbf{1}_{\{y\in\rn:\
|f(y)|>\alpha\}}\r\|_{L^{\vec{p}}(\rn)}\r\}^\frac1{r_1}
\lesssim 1.
\end{align*}
As for $\mathrm{I_2}$, from the definition $f^{(\alpha)}$,
Definition \ref{wmix} and $\frac1{r_1}>1$, it follows that
\begin{align*}
\mathrm{I_2}&\lesssim\lf\{\int_{\alpha/\lambda}^\infty\lf\|\mathbf{1}_{\{y\in\rn:\
|f(y)|>\alpha[\frac{\lambda\tau}{\alpha}]^{1/r_1}\}}
\r\|_{L^{\vec{p}}(\rn)}\, d\tau\r\}^\frac1{p_1}\\
&\lesssim\lf\{\int_{\alpha/\lambda}^\infty\lf[\alpha^{-1}\lf(\frac{\lambda\tau}{\alpha}\r)^{-\frac1{r_1}}\r]
\lambda\,d\tau\r\}^\frac1{r_1}\lesssim 1.
\end{align*}
Combining the estimates for $\mathrm{I_1}$ and $\mathrm{I_2}$, we then obtain \eqref{mixEq2}.

To prove \eqref{mixEq3}, by a proof similar to the estimation of \eqref{mixEq2}, we have

\begin{align*}
\lf\|
\frac{|f_{(\alpha)}|/\alpha}{(\lambda/\alpha)^{1/r_2}}
\r\|_{L^{r_2\vec{p}}(\rn)}
&\lesssim\lf\{\int_0^{\alpha/\lambda}\lf\|
\mathbf{1}_{\{y\in\rn:\
[|f_{(\alpha)}(y)|/\alpha]^{r_2}>\frac{\lambda\tau}{\alpha}\}}
\r\|_{L^{\vec{p}}(\rn)}\, d\tau\r\}^\frac1{r_2}
+\lf\{\int_{\alpha/\lambda}^\infty\cdots d\tau\r\}^\frac1{r_2}\\
&=:\mathrm{II_1}+\mathrm{II_2}.
\end{align*}
From the definition $f_{(\alpha)}$,
Definition \ref{wmix} and $0<\frac1{r_2}<1$, we deduce that
\begin{align*}
\mathrm{II_1}&\lesssim\lf\{\int_0^{\alpha/\lambda}\lf\|\mathbf{1}_{\{y\in\rn:\
|f(y)|>\alpha[\frac{\lambda\tau}{\alpha}]^{1/r_2}\}}
\r\|_{L^{\vec{p}}(\rn)}\, d\tau\r\}^\frac1{r_2}\\
&\lesssim\lf\{\int_0^{\alpha/\lambda}\lf[\alpha^{-1}\lf(\frac{\lambda\tau}{\alpha}\r)^{-\frac1{r_2}}\r]
\lambda\,d\tau\r\}^\frac1{r_2}\lesssim 1.
\end{align*}

Observe that, when $\tau\in(\frac{\alpha}{\lambda},\infty)$,
$(|f_{(\alpha)}|/\alpha)^{p_2}\le1<\frac{\tau\lambda}{\alpha}$ and hence $\mathrm{II_2}=0$,
which, together with the estimate for $\mathrm{II_1}$, implies \eqref{mixEq3}.
Thus, we complete the proof of our claim and hence of Theorem \ref{mixTh1}.
\end{proof}

We also need the following Fefferman--Stein
vector-valued maximal inequality
on $WL^{\vec{p}}(\rn)$.

\begin{theorem}\label{mixProfs}
Let $\vec{p}\in(1,\infty)^n$ and $s\in(1,\infty)$.
Then there exists a positive constant $C$
such that, for any sequence $\{f_j\}_{j\in\nn}\subset\mathscr{M}(\rn)$,
$$
\lf\|\lf\{\sum_{j\in\nn}[\cm(f_j)]^s\r\}^\frac1s\r\|_{WL^{\vec{p}}(\rn)}
\le C\lf\|\lf\{\sum_{j\in\nn}|f_j|^s\r\}^\frac1s\r\|_{WL^{\vec{p}}(\rn)}.
$$
\end{theorem}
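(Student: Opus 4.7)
The plan is to extend the scalar interpolation argument used to prove Theorem \ref{mixTh1} to a vector-valued setting, with the $\ell^s$-length $F := (\sum_{j\in\nn}|f_j|^s)^{1/s}$ taking the place of the single controlling function. Setting $G := (\sum_{j\in\nn}[\cm f_j]^s)^{1/s}$ and $\lambda := \|F\|_{WL^{\vec{p}}(\rn)}$, the goal is to show that $\alpha\|\mathbf{1}_{\{x\in\rn:\ G(x)>\alpha\}}\|_{L^{\vec{p}}(\rn)} \lesssim \lambda$ uniformly in $\alpha \in (0,\infty)$, which will give the desired weak-type estimate after taking a supremum.

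I will first choose auxiliary exponents $r_1 \in (1/p_-, 1)$ and $r_2 \in (1, \infty)$, so that both $r_1\vec{p}$ and $r_2\vec{p}$ lie in $(1,\infty)^n$. Applying Lemma \ref{mixo1} with its parameter $s$ equal to $1$ and its $r$ equal to the present $s$, specialized to the mixed-norm space $L^{r_i\vec{p}}(\rn)$ for $i \in \{1,2\}$, yields the strong-type vector-valued Fefferman--Stein inequality
\begin{equation*}
\lf\|\lf(\sum_{j\in\nn}[\cm h_j]^s\r)^{1/s}\r\|_{L^{r_i\vec{p}}(\rn)}
\lesssim \lf\|\lf(\sum_{j\in\nn}|h_j|^s\r)^{1/s}\r\|_{L^{r_i\vec{p}}(\rn)}
\end{equation*}
for every sequence $\{h_j\}_{j\in\nn}\subset\mathscr{M}(\rn)$.

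Next, I will mimic the Marcinkiewicz-type splitting of Theorem \ref{mixTh1} by decomposing every coordinate with the common cut-off $\{F>\alpha\}$: set $f_j^{(\alpha)} := f_j\mathbf{1}_{\{x\in\rn:\ F(x)>\alpha\}}$ and $f_{j,(\alpha)} := f_j\mathbf{1}_{\{x\in\rn:\ F(x)\le\alpha\}}$. Since the indicator is independent of $j$, the corresponding $\ell^s$-lengths collapse to $F\mathbf{1}_{\{F>\alpha\}}$ and $F\mathbf{1}_{\{F\le\alpha\}}$, so the scalar level-set bounds derived in the proof of Theorem \ref{mixTh1} apply verbatim with $F$ in place of the scalar function and give
\begin{equation*}
\lf\|\lf(\sum_{j\in\nn}|f_j^{(\alpha)}|^s\r)^{1/s}\r\|_{L^{r_1\vec{p}}(\rn)} \lesssim \alpha\lf(\lambda/\alpha\r)^{1/r_1},
\qquad \lf\|\lf(\sum_{j\in\nn}|f_{j,(\alpha)}|^s\r)^{1/s}\r\|_{L^{r_2\vec{p}}(\rn)} \lesssim \alpha\lf(\lambda/\alpha\r)^{1/r_2}.
\end{equation*}
Because $s>1$, Minkowski's inequality in $\ell^s$ gives $G \le G_1 + G_2$, where $G_1 := (\sum_j[\cm f_j^{(\alpha)}]^s)^{1/s}$ and $G_2 := (\sum_j[\cm f_{j,(\alpha)}]^s)^{1/s}$.

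Finally, combining these pieces through the identity $\|\mathbf{1}_E\|_{L^{\vec{p}}(\rn)} = \|\mathbf{1}_E\|_{L^{r_i\vec{p}}(\rn)}^{r_i}$ (a direct consequence of Definition \ref{mix}), Chebyshev's inequality in $L^{r_i\vec{p}}(\rn)$, and the strong-type bounds recalled above will produce
\begin{equation*}
\alpha\lf\|\mathbf{1}_{\{x\in\rn:\ G(x)>\alpha\}}\r\|_{L^{\vec{p}}(\rn)}
\lesssim \alpha^{1-r_1}\|G_1\|_{L^{r_1\vec{p}}(\rn)}^{r_1} + \alpha^{1-r_2}\|G_2\|_{L^{r_2\vec{p}}(\rn)}^{r_2} \lesssim \lambda
\end{equation*}
uniformly in $\alpha$, from which the theorem follows. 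The principal obstacle is to carry out the Marcinkiewicz splitting with a \emph{single} common cut-off that simultaneously handles all coordinates; once one notices that $\mathbf{1}_{\{F>\alpha\}}$ factors out of the $\ell^s$-norm, the scalar level-set estimates from the proof of Theorem \ref{mixTh1} transfer without modification, and the remaining steps reduce to routine applications of Lemma \ref{mixo1} and the $\ell^s$-triangle inequality.
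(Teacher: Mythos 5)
Your proposal is correct, and once one unfolds the paper's own argument, the two proofs turn out to be the same computation packaged differently. The paper's proof first sublinearizes the vector-valued operator: it defines $\eta_j := f_j / F$ on $\{F \neq 0\}$ (and $\eta_j := 0$ elsewhere) and sets $A(g) := \bigl(\sum_j [\cm(g\eta_j)]^s\bigr)^{1/s}$, so that $A$ is sublinear (by Minkowski in $\ell^s$) and $A(F) = G$; it then verifies the hypotheses of Theorem \ref{mixTh1} for $A$ using the strong-type Lemma \ref{mixo1} and invokes Theorem \ref{mixTh1} as a black box. Your proposal bypasses the auxiliary operator $A$ and instead re-runs the Marcinkiewicz splitting directly in the vector-valued setting, noting that the common cut-off $\mathbf{1}_{\{F>\alpha\}}$ factors out of the $\ell^s$-norm so that the scalar level-set bounds \eqref{mixEq2}--\eqref{mixEq3} apply with $F$ as the controlling function. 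If you trace through Theorem \ref{mixTh1} applied to $A$ at $g = F$, the decomposition $F = F^{(\alpha)} + F_{(\alpha)}$ produces $F^{(\alpha)}\eta_j = f_j^{(\alpha)}$, so $A(F^{(\alpha)})$ is precisely your $G_1$ and $A(F_{(\alpha)})$ is your $G_2$; hence your derivation is exactly the inlined version of the paper's. The advantage of the paper's packaging is that it cleanly reuses Theorem \ref{mixTh1} without re-deriving the level-set estimates; the advantage of yours is that it makes the role of the common cut-off explicit and avoids the sublinearization abstraction. Both routes require the same checks (in particular that $r_1 > 1/p_-$ and $r_2 > 1$ ensure $r_i\vec{p}\in(1,\infty)^n$ so that Lemma \ref{mixo1} applies with its parameter $s=1$), and you have done them.
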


\begin{proof}
Let $\{f_j\}_{j\in\nn}$ be a given arbitrary sequence of measurable functions and,
for any measurable function $g$ and $x\in\rn$, define
$$
A(g)(x):=\lf\{\sum_{j\in\nn}[\cm(g\eta_j)(x)]^s\r\}^\frac1s,
$$
where $s\in(1,\infty)$ and, for any $i\in\nn$ and $y\in\rn$,
$$
\eta_j(y):=\frac{f_j(y)}{[\sum_{j\in\nn}|f_j(y)|^s]^{1/s}}
\quad\text{when}\quad\lf[\sum_{j\in\nn}|f_j(y)|^s\r]^{1/s}\neq0,
$$
and $\eta_j(y):=0$ otherwise. By the Minkowski inequality, we conclude that,
for any $\lambda\in\mathbb{C}$
and $g_1,\ g_2\in\mathscr M(\rn)$,
$$
A(g_1+g_2)\le A(g_1)+A(g_2)\quad\text{and}\quad A(\lambda g)=|\lambda|A(g).
$$
Thus, $A$ is sublinear. For any $\vec{p}\in(1,\infty)^n$ and $s\in(1,\infty)$,
from Lemma \ref{mixo1}, we deduce that
$$
\lf\|\lf\{\sum_{j\in\nn}[\cm(f_j)]^s\r\}^\frac1s\r\|_{L^{\vec{p}}(\rn)}\lesssim \lf\|\lf\{\sum_{j\in\nn}|f_j|^s\r\}^\frac1s\r\|_{L^{\vec{p}}(\rn)}.
$$
Using this, we know that, for any given $r_1\in(\frac1{\min\{p_\Phi^-,q\}},1)$ and
$r_2\in(1,\infty)$ and any $h\in\mathscr M(\rn)$,
\begin{align*}
\|A(h)\|_{WL^{r_i\vec{p}}(\rn)}&=\lf\|\lf\{\sum_{j\in\nn}[\cm(h\eta_j)]^s
\r\}^\frac1s\r\|_{WL^{r_i\vec{p}}(\rn)}
\leq\lf\|\lf\{\sum_{j\in\nn}[\cm(h\eta_j)]^s
\r\}^\frac1s\r\|_{L^{r_i\vec{p}}(\rn)}\\
&\lesssim\lf\|\lf\{\sum_{j\in\nn}|h\eta_j|^s\r\}^\frac1s\r\|_{L^{r_i\vec{p}}(\rn)}
\sim\|h\|_{L^{r_i\vec{p}}(\rn)},
\end{align*}
which implies that the operator $A$ is bounded on $WL^{r_i\vec{p}}(\rn)$, where $i\in\{1,2\}$.
Now, taking $g:=[\sum_{j\in\nn}|f_j|^s]^{1/s}$, then, by Theorem \ref{mixTh1}, we conclude that
\begin{align*}
\lf\|\lf\{\sum_{j\in\nn}[\cm(f_j)]^s\r\}^\frac1s
\r\|_{WL^{\vec{p}}(\rn)}&=\|A(g)\|_{WL^{\vec{p}}(\rn)}
\lesssim\|g\|_{WL^{\vec{p}}(\rn)}\sim
\lf\|\lf\{\sum_{j\in\nn}|f_j|^s\r\}^\frac1s\r\|_{WL^{\vec{p}}(\rn)},
\end{align*}
which completes the proof of Theorem \ref{mixProfs}.
\end{proof}

By \cite[Lemma 3.5]{hlyy} and \cite[Theorem 1.a]{bp}, we can easily obtain
the following conclusion and we omit
the details here.

\begin{lemma}\label{mixmM}
Let $\vec{p}\in(0,\infty)^n$,
$r\in (0,p_-]$ and $s\in(p_+,\infty]$. Then there exists
a positive constant $C$
such that, for any $f\in\mathscr{M}(\rn)$,
\begin{equation*}
\lf\|\mathcal{M}^{((s/r)')}(f) \r\|_{([L^{\vec{p}}(\rn)]^{1/r})'}\leq C\lf
\|f\r\|_{([L^{\vec{p}}(\rn)]^{1/r})'},
\end{equation*}
where $([L^{\vec{p}}(\rn)]^{1/r})'$ is as in \eqref{asso} with
$X:=[L^{\vec{p}}(\rn)]^{1/r}$.
\end{lemma}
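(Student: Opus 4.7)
The plan is to identify $[L^{\vec p}(\rn)]^{1/r}$ concretely as a mixed-norm Lebesgue space, compute its associate space via Benedek--Panzone duality, and then reduce the claimed inequality to a standard maximal inequality for mixed-norm Lebesgue spaces.

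First, I would note that, by Definition \ref{Debf}(i) applied to $X := L^{\vec p}(\rn)$, one has $[L^{\vec p}(\rn)]^{1/r} = L^{\vec p/r}(\rn)$ with equal quasi-norms, where $\vec p/r := (p_1/r,\ldots,p_n/r)$. Since $r\in(0,p_-]$, each component $p_i/r \ge 1$, so $L^{\vec p/r}(\rn)$ is in fact a (ball) Banach function space. By \cite[Theorem 1.a]{bp}, its associate space coincides with $L^{(\vec p/r)'}(\rn)$, where $(\vec p/r)'_i := p_i/(p_i-r)$ (with the usual convention at the endpoint), with equivalent norms. This identifies the target space of the claim explicitly.

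Next, I would reduce the powered maximal bound to an ordinary maximal bound. Setting $\theta := (s/r)'$ and writing
$$\lf\|\cm^{(\theta)}(f)\r\|_{L^{(\vec p/r)'}(\rn)}^{\theta} = \lf\|\cm\lf(|f|^{\theta}\r)\r\|_{L^{(\vec p/r)'/\theta}(\rn)},$$
the claim amounts to the boundedness of $\cm$ on $L^{\vec q}(\rn)$ with $\vec q := (\vec p/r)'/\theta$. A direct calculation gives $q_i = p_i(s-r)/[s(p_i-r)]$, and the inequality $q_i>1$ is equivalent to $s>p_i$; since $s\in(p_+,\infty]$ and $p_+=\max_i p_i$, we have $\vec q\in(1,\infty]^n$. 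Applying \cite[Lemma 3.5]{hlyy}, which states that $\cm$ is bounded on $L^{\vec q}(\rn)$ whenever $\vec q\in(1,\infty]^n$, then yields the desired inequality, after raising back to the $1/\theta$-th power.

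I do not anticipate a genuine obstacle in this plan: the only thing to check carefully is the bookkeeping of the exponent vectors and the boundary cases $s=\infty$ (where $\theta=1$ and the bound is the plain mixed-norm maximal inequality) and $r=p_-$ (where some $p_i/r$ may equal $1$, so that $(\vec p/r)'_i=\infty$, but $\cm$ remains bounded on $L^\infty$ in that coordinate). Everything else is routine arithmetic combined with the two cited results.
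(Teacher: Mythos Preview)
Your proposal is correct and follows essentially the same route the paper indicates: the paper omits the details but simply cites \cite[Theorem 1.a]{bp} (the Benedek--Panzone duality $[L^{\vec p/r}(\rn)]'=L^{(\vec p/r)'}(\rn)$) and \cite[Lemma 3.5]{hlyy} (boundedness of $\cm$ on $L^{\vec q}(\rn)$ for $\vec q\in(1,\infty]^n$), which is exactly what you unpack. Your exponent bookkeeping and handling of the boundary cases $s=\infty$ and $r=p_-$ are accurate.
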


Now we give the notion of the weak mixed-norm Hardy space $WH^{\vec{p}}(\rn)$.

\begin{definition}\label{mixmha}
Let $\vec{p}\in(0,\infty)^n$.
The \emph{weak mixed-norm Hardy space $WH^{\vec{p}}(\rn)$}
is defined to be the set of all $f\in\cs'(\rn)$
such that $\|f\|_{WH^{\vec{p}}(\rn)}:=\
\|M_N^0(f)\|_{WL^{\vec{p}}(\rn)}<\infty$,
where $M_N^0(f)$ is as in \eqref{EqMN0} with $N$ sufficiently large.
\end{definition}

\begin{remark}
Let $\vec{p}\in(1,\infty)^n$. By Theorem \ref{mixProfs},
we conclude that, for any $r\in(1,p_-)$,
$\cm$ in \eqref{mm} is bounded on $(WL^{\vec{p}}(\rn))^{1/r}$,
which, combined with Theorem \ref{dayu2}, implies that
$WH^{\vec{p}}(\rn)=WL^{\vec{p}}(\rn)$ with equivalent norms.
\end{remark}

By Lemma \ref{mixProfs}  and Theorem \ref{Thmc}(ii),
we obtain the following maximal function characterizations of the
weak mixed-norm Hardy space $WH^{\vec{p}}(\rn)$.

\begin{theorem}
Let $\vec{p}\in(0,\infty)^n$
and $\psi\in\mathcal{S}(\rn)$ satisfy $\int_{\rn}\psi(x)\,dx\neq0.$
Assume that $b\in(n/p_-,\infty)$ and
$N\ge\lfloor b+1\rfloor$. For any $f\in\mathcal{S}'(\rn)$,
if one of the following quantities
$$
\lf\|M_N^0(f)\r\|_{WL^{\vec{p}}(\rn)},\  \lf\|M(f,\psi)\r\|_{WL^{\vec{p}}(\rn)},\  \lf\|M_a^*(f,\psi)\r\|_{WL^{\vec{p}}(\rn)},\ \lf\|M_N(f)\r\|_{WL^{\vec{p}}(\rn)},
$$
$$
\lf\|M_b^{**}(f,\psi)\r\|_{WL^{\vec{p}}(\rn)},\ \lf\|M_{b,\ N}^{**}(f)\r\|_{WL^{\vec{p}}(\rn)}
\quad and \quad\lf\|\cn(f)\r\|_{WL^{\vec{p}}(\rn)}
$$
is finite, then the others are also finite and mutually equivalent with the implicit
positive equivalence constants independent of $f$.
\end{theorem}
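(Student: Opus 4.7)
[Proof proposal for Theorem (maximal function characterizations of $WH^{\vec{p}}(\rn)$)]
The plan is to verify that the mixed-norm Lebesgue space $X:=L^{\vec{p}}(\rn)$ meets all the hypotheses of Theorems~\ref{Thmc} and~\ref{Thmp}, and then read the stated characterizations off directly. Since $L^{\vec{p}}(\rn)$ is a ball quasi-Banach function space (as noted in Remark~\ref{mbfs-not} and the discussion preceding it), only two structural checks remain: the existence of an exponent on which $\cm$ is bounded on a convexification of $WL^{\vec{p}}(\rn)$, and the lower bound \eqref{Eqinf} on unit-ball characteristic functions.

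First, I will choose $r\in(\max\{1/p_-,\,n/b\},\,\infty)$; this interval is nonempty, and, because $b>n/p_-$, this choice automatically satisfies $b>n/r$. By Remark~\ref{Rews-1} we have $(WL^{\vec{p}}(\rn))^{1/r}=WL^{r\vec{p}}(\rn)$ with equal quasi-norms, and since $r>1/p_-$ we have $r\vec{p}\in(1,\infty)^n$. Applying Theorem~\ref{mixProfs} to $r\vec{p}$ with the scalar-valued case (taking the singleton sequence) then shows that $\cm$ is bounded on $(WL^{\vec{p}}(\rn))^{1/r}$, which is precisely Assumption~\ref{xinm} for $X=L^{\vec{p}}(\rn)$. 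At this point, Theorem~\ref{Thmc}(ii) (with this $r$ and the chosen $b$, $N$) delivers the mutual equivalence
\[
\|M_N^0(f)\|_{WL^{\vec{p}}(\rn)}\sim\|M(f,\psi)\|_{WL^{\vec{p}}(\rn)}\sim\|M_a^*(f,\psi)\|_{WL^{\vec{p}}(\rn)}
\sim\|M_N(f)\|_{WL^{\vec{p}}(\rn)}\sim\|M_b^{**}(f,\psi)\|_{WL^{\vec{p}}(\rn)}\sim\|M_{b,N}^{**}(f)\|_{WL^{\vec{p}}(\rn)}
\]
with implicit constants independent of $f$, in the sense that finiteness of any one quantity forces the rest to be finite and comparable.

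To incorporate the Poisson-kernel maximal function $\cn(f)$, I will apply Theorem~\ref{Thmp}. The only new hypothesis to check is \eqref{Eqinf}, namely a uniform lower bound for $\|\mathbf{1}_{B(x,1)}\|_{WL^{\vec{p}}(\rn)}$ as $x$ ranges over $\rn$. This reduces to two simple facts: (a) for any characteristic function $\mathbf{1}_E$ with finite $L^{\vec{p}}(\rn)$-norm, the identity $\{y\in\rn:\mathbf{1}_E(y)>\alpha\}=E$ for $\alpha\in(0,1)$ gives $\|\mathbf{1}_E\|_{WL^{\vec{p}}(\rn)}=\|\mathbf{1}_E\|_{L^{\vec{p}}(\rn)}$; and (b) the $L^{\vec{p}}(\rn)$-norm is translation invariant, since the integration in Definition~\ref{mix} is against Lebesgue measure, so
\[
\|\mathbf{1}_{B(x,1)}\|_{WL^{\vec{p}}(\rn)}=\|\mathbf{1}_{B(x,1)}\|_{L^{\vec{p}}(\rn)}=\|\mathbf{1}_{B(\vec0_n,1)}\|_{L^{\vec{p}}(\rn)}=:C_0\in(0,\infty)
\]
for every $x\in\rn$. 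Hence \eqref{Eqinf} holds, and Theorem~\ref{Thmp} (combined with the previous step) adds $\|\cn(f)\|_{WL^{\vec{p}}(\rn)}$ to the list of mutually equivalent quantities.

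The argument is essentially bookkeeping, and the only mildly delicate point is the joint choice of $r$: it must simultaneously be large enough to place $r\vec{p}$ inside $(1,\infty)^n$ so that Theorem~\ref{mixProfs} applies, and large enough to satisfy $rb>n$ so that Theorem~\ref{Thmc}(ii) applies. Since the hypothesis $b>n/p_-$ provides exactly the slack needed to realize both constraints simultaneously, no further technical obstacle arises, and the proof concludes by collecting the equivalences above.
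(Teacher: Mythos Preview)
Your proof is correct and follows essentially the same approach as the paper, which simply cites Theorem~\ref{mixProfs} and Theorem~\ref{Thmc}(ii) without further detail. You actually supply more than the paper does: the paper's one-line justification does not explicitly mention Theorem~\ref{Thmp} for the Poisson-kernel maximal function $\cn(f)$, whereas you correctly identify that this requires the additional hypothesis~\eqref{Eqinf} and verify it via translation invariance of the $L^{\vec{p}}$-norm.
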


Using Lemmas \ref{mixo1}, \ref{mixProfs} and \ref{mixmM},
and Theorems \ref{Thad} and \ref{Thar},
we immediately obtain the atomic characterization of $WH^{\vec{p}}(\rn)$
and the molecular characterization of $WH^{\vec{p}}(\rn)$, respectively, as follows.

\begin{theorem}\label{mixMth2}
Let $\vec{p}\in(0,\infty)^n$, $r\in(\max\{1,p_+\},\infty)$
and $d\in\zz_+$ with $d\geq\lfloor n(\frac{1}{\min\{1,p_-/\max\{1,p_+\}\}}-1)\rfloor$.
Then $f\in WH^{\vec{p}}(\rn)$ if and only if
$$
f=\sum_{i\in \zz}\sum_{j\in\nn}\lambda_{i,j}a_{i,j}\quad\text{in}\quad\cs'(\rn)
\quad \text{and}\quad
\sup_{i\in\zz}\lf\|\sum_{j\in\nn}
\frac{\lambda_{i,j}\mathbf{1}_{B_{i,j}}}
{\|\mathbf{1}_{B_{i,j}}\|_{L^{\vec{p}}(\rn)}}\r\|_{L^{\vec{p}}(\rn)}<\infty,
$$
where $\{a_{i,j}\}_{i\in\zz,j\in\nn}$ is a sequence of $(L^{\vec{p}}(\rn),\,r,\,d)$-atoms
supported, respectively, in balls
$\{B_{i,j}\}_{\gfz{i\in\zz}{j\in\nn}}$ such that, for any
$i\in\zz$, $\sum_{j\in\nn}\mathbf{1}_{cB_{i,j}}\le A$ with $c\in(0,1]$ and $A$
being a positive constant independent of $f$ and $i$, and,
for any $i\in\zz$ and $j\in\nn$,
$\lambda_{i,j}:=\widetilde A2^i\|\mathbf{1}_{B_{i,j}}\|_{L^{\vec{p}}(\rn)}$
with $\widetilde A$ being a positive constant independent of $f$ and $i$.

Moreover, for any $f\in WH^{\vec{p}}(\rn)$,
$$
\|f\|_{WH^{\vec{p}}(\rn)}\sim\inf\lf\{\sup_{i\in\zz}\lf\|\sum_{j\in\nn}
\frac{\lambda_{i,j}\mathbf{1}_{B_{i,j}}}
{\|\mathbf{1}_{B_{i,j}}\|_{L^{\vec{p}}(\rn)}}\r\|_{L^{\vec{p}}(\rn)}\r\},
$$
where the infimum is taken over all decompositions of $f$ as above
and the positive equivalence constant is independent of $f$.
\end{theorem}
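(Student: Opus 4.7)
The plan is to derive Theorem \ref{mixMth2} as a direct application of the general atomic decomposition theorem (Theorem \ref{Thad}) together with the atomic reconstruction theorem (Theorem \ref{Thar}) applied to the choice $X:=L^{\vec p}(\rn)$. Thus the task reduces to checking that every structural hypothesis imposed on the ball quasi-Banach function space in those two theorems is satisfied by $L^{\vec p}(\rn)$, and then combining the two directions to obtain the quasi-norm equivalence.

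For the necessity ("only if") direction, given $f\in WH^{\vec p}(\rn)$, I would invoke Theorem \ref{Thad} with $X=L^{\vec p}(\rn)$. The inequality \eqref{eq} required there, for any $r\in(0,1)\cap(0,p_-)$, is exactly Lemma \ref{mixo1}; Assumption \ref{xinm} is Theorem \ref{mixProfs} (one may take any $r\in(1,p_-)$ when $p_->1$, or otherwise a convexification parameter compatible with $\underline p$ in \eqref{Eqpll}); and the existence of $\vartheta_0\in(1,\infty)$ and $p\in(0,\infty)$ with $L^{\vec p}(\rn)$ being $\vartheta_0$-concave and $\cm$ bounded on $[L^{\vec p}(\rn)]^{1/(\vartheta_0 p)}=L^{\vec p/(\vartheta_0 p)}(\rn)$ is delivered by choosing $\vartheta_0$ close to and strictly less than $p_-$ (so that $L^{\vec p/\vartheta_0}(\rn)$ is a ball Banach function space, hence $L^{\vec p}(\rn)$ is even strictly $\vartheta_0$-concave via Minkowski's inequality) and choosing $p$ small enough that $\vartheta_0 p<p_-$, which is allowed by Lemma \ref{mixo1}. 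The numerical constraint $d\geq\lfloor n(1/\min\{1,p_-/\max\{1,p_+\}\}-1)\rfloor$ in the statement of Theorem \ref{mixMth2} is precisely the form of $d\geq\lfloor n(1/p-1)\rfloor$ in Theorem \ref{Thad} after this selection of $p$. The decomposition $f=\sum_{i\in\zz}\sum_{j\in\nn}\lambda_{i,j}a_{i,j}$ in $\cs'(\rn)$ with the required finite-overlap property of $\{cB_{i,j}\}$ and the quasi-norm estimate $\sup_{i}\|\sum_{j}\lambda_{i,j}\mathbf 1_{B_{i,j}}/\|\mathbf 1_{B_{i,j}}\|_{L^{\vec p}}\|_{L^{\vec p}}\lesssim\|f\|_{WH^{\vec p}(\rn)}$ then follows directly; since every $(L^{\vec p},\infty,d)$-atom is also an $(L^{\vec p},r,d)$-atom for any $r\in(1,\infty)$, we obtain the stated decomposition in terms of $(L^{\vec p},r,d)$-atoms.

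For the sufficiency ("if") direction, I would apply Theorem \ref{Thar} with the same $X=L^{\vec p}(\rn)$. Assumption \ref{a2.15} is again Lemma \ref{mixo1} with $p_-:=\min\{p_1,\ldots,p_n\}$; for any $r\in(0,\underline p)$ the space $L^{\vec p/r}(\rn)$ is a ball Banach function space since $p_i/r>1$ for each $i$; the hypothesis \eqref{Eqdm} is furnished by Lemma \ref{mixmM} upon selecting $r_0\in(0,\underline p)$ small and $p_0\in(p_+,\infty)$ so that $\max\{1,p_0\}<r$, where $r>\max\{1,p_+\}$ is the atom integrability index given in the statement. These choices meet the parameter constraint $\max\{1,p_0\}<r\leq\infty$ of Theorem \ref{Thar}, and the lower bound on $d$ matches the bound $d\geq\lfloor n(1/\underline p-1)\rfloor$ in Theorem \ref{Thar}. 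Theorem \ref{Thar} then yields $\|f\|_{WH^{\vec p}(\rn)}\lesssim\sup_{i}\|\sum_{j}\lambda_{i,j}\mathbf 1_{B_{i,j}}/\|\mathbf 1_{B_{i,j}}\|_{L^{\vec p}}\|_{L^{\vec p}}$, and together with the reverse inequality from the first direction produces the desired two-sided equivalence.

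The main obstacle I anticipate is the bookkeeping of parameters, namely simultaneously verifying the concavity, convexification, and associate-space boundedness constraints of Theorems \ref{Thad} and \ref{Thar} across the full range $\vec p\in(0,\infty)^n$ (in particular when $p_-\leq 1$, where one must pass through the convexification $L^{\vec p/\underline p}$ before applying the Banach-space-level Minkowski inequality to extract concavity). The auxiliary Lemmas \ref{mixo1} and \ref{mixmM} together with Theorem \ref{mixProfs} are designed precisely to supply every analytic input required; no new harmonic-analytic estimate is needed beyond this assembly.
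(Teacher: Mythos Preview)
Your global strategy is exactly the paper's: Theorem \ref{mixMth2} is obtained by applying Theorems \ref{Thad} and \ref{Thar} with $X=L^{\vec p}(\rn)$, the hypotheses being supplied by Lemmas \ref{mixo1}, \ref{mixmM} and Theorem \ref{mixProfs}. Most of your parameter checks are right.

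There is, however, one genuine error in the verification of the $\vartheta_0$-concavity required by Theorem \ref{Thad}. You take $\vartheta_0$ ``close to and strictly less than $p_-$'' so that $L^{\vec p/\vartheta_0}(\rn)$ is a ball Banach function space, and then appeal to Minkowski's inequality. But Minkowski's inequality in a Banach space gives $\|\sum_j|f_j|\|\le\sum_j\|f_j\|$, which is the \emph{opposite} of the concavity condition $\sum_j\|f_j\|_{X^{1/\vartheta_0}}\le C\|\sum_j|f_j|\|_{X^{1/\vartheta_0}}$ in Definition \ref{Debf}(ii). What is actually needed is the \emph{reverse} Minkowski inequality, and that holds in $L^{\vec p/\vartheta_0}(\rn)$ precisely when every component exponent satisfies $p_i/\vartheta_0\le 1$, i.e.\ when $\vartheta_0\ge p_+$. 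Thus the correct choice is $\vartheta_0\in(1,\infty)$ with $\vartheta_0\ge p_+$, i.e.\ essentially $\vartheta_0=\max\{1,p_+\}$ (taken slightly larger than $1$ if $p_+\le 1$). With this $\vartheta_0$ one then needs $\vartheta_0 p<p_-$ for the boundedness of $\cm$ on $L^{\vec p/(\vartheta_0 p)}(\rn)$, forcing $p<p_-/\max\{1,p_+\}$, which is exactly why the quantity $\min\{1,\,p_-/\max\{1,p_+\}\}$ appears in the lower bound for $d$. Your choice $\vartheta_0<p_-$ not only reverses the inequality but can also violate $\vartheta_0>1$ when $p_-\le 1$. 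Once this single parameter choice is corrected, the rest of your argument goes through unchanged.
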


\begin{theorem}\label{mixfenzi}
Let $\vec{p}$,
$r$ and $d$ be the same as in Theorem \ref{mixMth2}, and $\epsilon\in(n+d+1,\infty)$.
Then $f\in WH^{\vec{p}}(\rn)$ if and only if
$$
f=\sum_{i\in \zz}\sum_{j\in\nn}\lambda_{i,j}m_{i,j}\quad\text{in}\quad\cs'(\rn)
\quad\text{and}\quad
\sup_{i\in\zz}\lf\|\sum_{j\in\nn}
\frac{\lambda_{i,j}\mathbf{1}_{B_{i,j}}}
{\|\mathbf{1}_{B_{i,j}}\|_{L^{\vec{p}}(\rn)}}\r\|_{L^{\vec{p}}(\rn)}<\infty,
$$
where $\{m_{i,j}\}_{i\in\zz,j\in\nn}$ is a sequence of $(L^{\vec{p}}(\rn),r,d,\epsilon)$-\emph{molecules} associated, respectively, with balls
$\{B_{i,j}\}_{i\in\zz,j\in\nn}$ such that, for any
$i\in\zz$, $\sum_{j\in\nn}\mathbf{1}_{cB_{i,j}}\le A$ with $c\in(0,1]$ and
$A$ being a positive constant independent of $f$
and $i$, and,
for any $i\in\zz$ and $j\in\nn$, $\lambda_{i,j}:=\widetilde A2^i\|\mathbf{1}_{B_{i,j}}\|_{L^{\vec{p}}(\rn)}$
with $\widetilde A$ being a positive constant independent of $f$, $i$ and $j$.

Moreover, for any $f\in WH^{\vec{p}}(\rn)$,
$$
\|f\|_{WH^{\vec{p}}(\rn)}\sim\inf\lf[\sup_{i\in\zz}\lf\|\sum_{j\in\nn}
\frac{\lambda_{i,j}\mathbf{1}_{B_{i,j}}}{\|\mathbf{1}_{B_{i,j}}\|_{L^{\vec{p}}(\rn)}}
\r\|_{L^{\vec{p}}(\rn)}\r],
$$
where the infimum is taken over all decompositions of $f$ as above
and the positive equivalence constants are independent of $f$.
\end{theorem}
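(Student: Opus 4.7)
The plan is to derive this molecular characterization by combining the general molecular decomposition theorem (Theorem \ref{Thmolcha0}) with the general molecular reconstruction theorem (Theorem \ref{Thmolcha}), specialized to $X:=L^{\vec{p}}(\rn)$. The necessity direction is essentially a free byproduct of the atomic characterization already obtained in Theorem \ref{mixMth2}: since every $(L^{\vec{p}}(\rn),r,d)$-atom is, trivially, an $(L^{\vec{p}}(\rn),r,d,\epsilon)$-molecule for any $\epsilon\in(n+d+1,\infty)$, an atomic decomposition furnished by Theorem \ref{mixMth2} immediately gives a molecular decomposition with the same coefficient control. So the bulk of the work is the sufficiency direction.

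For sufficiency, the strategy is to verify that $X:=L^{\vec{p}}(\rn)$ meets all the structural hypotheses of Theorem \ref{Thmolcha}. First, $L^{\vec{p}}(\rn)$ is a ball quasi-Banach function space (as observed right after Definition \ref{mix}). Second, Assumption \ref{a2.15} is in force with $p_-=\min\{p_1,\ldots,p_n\}$, by Lemma \ref{mixo1}, so one has $\underline{p}=\min\{p_-,1\}$. Third, for any $r\in(0,\underline{p})$ the convexification $[L^{\vec{p}}(\rn)]^{1/r}$ coincides with $L^{\vec{p}/r}(\rn)$, whose exponent vector $\vec{p}/r$ lies in $[1,\infty]^n$; consequently $[L^{\vec{p}}(\rn)]^{1/r}$ is a ball Banach function space. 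Fourth, the boundedness of the powered Hardy--Littlewood maximal operator on the associate space $([L^{\vec{p}}(\rn)]^{1/r})'$ is furnished precisely by Lemma \ref{mixmM}, which therefore allows one to take the abstract parameter $p_+$ of Theorem \ref{Thmolcha} to be $\max\{p_1,\ldots,p_n\}$.

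With these identifications, the admissible ranges for $q$ and $d$ in Theorem \ref{Thmolcha}, namely $q\in(\max\{p_+,1\},\infty]$ and $d\geq \lfloor n(1/\underline{p}-1)\rfloor$, translate verbatim into the hypotheses $r\in(\max\{1,p_+\},\infty)$ and $d\geq\lfloor n(\frac{1}{\min\{1,p_-/\max\{1,p_+\}\}}-1)\rfloor$ stated in Theorem \ref{mixfenzi}. Given an $L^{\vec{p}}$-normalized molecular decomposition $f=\sum_{i\in\zz}\sum_{j\in\nn}\lambda_{i,j}m_{i,j}$ converging in $\cs'(\rn)$ with the supremum-over-$i$ bound on the indexed sum, Theorem \ref{Thmolcha} then yields $f\in WH^{\vec{p}}(\rn)$ together with the quasi-norm estimate
\[
\|f\|_{WH^{\vec{p}}(\rn)}\lesssim \sup_{i\in\zz}\lf\|\sum_{j\in\nn}
\frac{\lambda_{i,j}\mathbf{1}_{B_{i,j}}}{\|\mathbf{1}_{B_{i,j}}\|_{L^{\vec{p}}(\rn)}}\r\|_{L^{\vec{p}}(\rn)},
\]
which, combined with the reverse estimate from the necessity half, completes the proof.

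The step where I expect to have to be most careful is the bookkeeping matching the abstract parameters $(p_-,\underline{p},p_+)$ of Theorem \ref{Thmolcha} with the concrete numerology $\min\{1,p_-/\max\{1,p_+\}\}$ appearing in the hypothesis on $d$ in Theorem \ref{mixfenzi}; in particular, one has to distinguish the case $p_-\geq 1$ (where $\underline{p}=1$ and the convexification exponent must be traced through $p_-/p_+$) from the case $p_-<1$ (where $\underline{p}=p_-$) so as to confirm that the admissible range given in the theorem is in fact contained in the admissible range provided by the abstract framework. No new genuinely analytic input is needed beyond Lemmas \ref{mixo1} and \ref{mixmM}; the proof is a verification that the mixed-norm scale fits the ball quasi-Banach function space axioms assumed throughout Section \ref{s5}.
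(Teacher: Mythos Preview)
Your proposal is correct and follows essentially the same route as the paper: the paper states that the molecular characterization of $WH^{\vec{p}}(\rn)$ is obtained directly from the abstract framework (the general decomposition and reconstruction theorems of Section~\ref{s5}) once Lemmas~\ref{mixo1}, \ref{mixProfs} and~\ref{mixmM} verify the required hypotheses for $X=L^{\vec{p}}(\rn)$. Your observation that necessity is a free byproduct of Theorem~\ref{mixMth2} (atoms being molecules) is exactly the content of Theorem~\ref{Thmolcha0}, and your verification of the hypotheses of Theorem~\ref{Thmolcha} for sufficiency, including the parameter bookkeeping for $\underline{p}$, $p_+$, and $d$, matches the paper's intended argument.
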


Now, we recall the following notion of the mixed-norm Hardy space.

\begin{definition}\label{mixMhardy}
Let $\vec{p}\in(0,\infty)^n$.
The \emph{mixed-norm Hardy
space $H^{\vec{p}}(\rn)$} is defined to be the set of all $f\in\cs'(\rn)$
such that $\|f\|_{H^{\vec{p}}(\rn)}:=\
\|M_N^0(f)\|_{L^{\vec{p}}(\rn)}<\infty$,
where $M_N^0(f)$ is as in \eqref{EqMN0} with $N$ sufficiently large.
\end{definition}

To discuss the boundedness of Clader\'on--Zygmund operators from $H^{\vec{p}}(\rn)$ to $WH^{\vec{p}}(\rn)$,
we need the following vector-valued inequality of the Hardy--Littlewood maximal operator $\cm$ in \eqref{mm}
from  $L^{\vec{p}}(\rn)$ to $WL^{\vec{p}}(\rn)$.

\begin{proposition}\label{mixPro}
Let $\vec{p}\in[1,\infty)^n$ and $r\in(1,\infty)$. Then there exists a positive constant $C$
such that,
for any $\{f_j\}_{j\in\nn}\subset \mathscr{M}(\rn)$,
$$
\lf\|\lf\{\sum_{j\in\nn}\lf[\cm(f_j)\r]^r\r\}^\frac1r\r\|_{WL^{\vec{p}}(\rn)}\le
C\lf\| \lf \{ \sum_{j\in\nn}|f_j|^r \r\}^\frac1r \r\|_{L^{\vec{p}}(\rn)}.
$$
\end{proposition}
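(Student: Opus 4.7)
The plan is to derive Proposition \ref{mixPro} from the extrapolation theorem (Lemma \ref{waicha}), which the excerpt describes as a slight variant of a special case of \cite[Theorem 4.6]{CMP}. The role of the extrapolation is to convert a weighted scalar weak-type inequality into the mixed-norm weak-type bound in the conclusion, and, crucially, it accommodates the endpoint $p_i=1$ that cannot be handled by the strong-type Fefferman--Stein inequality of Lemma \ref{mixo1} alone.

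The first step is to record the following weighted weak-type $(1,1)$ Fefferman--Stein vector-valued maximal inequality: for every $r\in(1,\infty)$ and every $w\in A_1$, there exists a positive constant $C$, depending only on $n$, $r$ and $[w]_{A_1}$ but independent of $\lambda\in(0,\infty)$ and $\{f_j\}_{j\in\nn}\subset\mathscr M(\rn)$, such that
$$w\lf(\lf\{x\in\rn:\ \lf[\sum_{j\in\nn}[\cm(f_j)(x)]^r\r]^{1/r}>\lambda\r\}\r)\le\frac{C}{\lambda}\int_\rn\lf[\sum_{j\in\nn}|f_j(x)|^r\r]^{1/r}w(x)\,dx.$$
This is the classical weighted vector-valued extension of the weak-$(1,1)$ Hardy--Littlewood maximal inequality, obtainable from the unweighted version of \cite{FS} via the standard Rubio de Francia extrapolation machinery.

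The second step is to apply Lemma \ref{waicha} to the pair
$$\lf(G_{\vec{f}},\, g_{\vec{f}}\r):=\lf(\lf\{\sum_{j\in\nn}[\cm(f_j)]^r\r\}^{1/r},\ \lf\{\sum_{j\in\nn}|f_j|^r\r\}^{1/r}\r),$$
with the weighted weak-type $(1,1)$ inequality above serving as the input hypothesis. The extrapolation theorem then immediately produces the desired inequality
$$\|G_{\vec{f}}\|_{WL^{\vec{p}}(\rn)}\le C\|g_{\vec{f}}\|_{L^{\vec{p}}(\rn)}$$
for every $\vec{p}\in[1,\infty)^n$ and $r\in(1,\infty)$, which is precisely the assertion of Proposition \ref{mixPro}.

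The main obstacle lies in carefully matching the hypotheses of Lemma \ref{waicha} in the endpoint coordinates where $p_i=1$, since the target norm is $WL^{\vec{p}}$ rather than $L^{\vec{p}}$. Because the starting weighted $A_1$-inequality is itself a weak-type bound, the extrapolation transfers this weakness precisely to those coordinates that require it, which is the feature that permits the argument to reach all $\vec{p}\in[1,\infty)^n$. For $\vec{p}\in(1,\infty)^n$ the conclusion is in any case immediate from Lemma \ref{mixo1} combined with the embedding $L^{\vec{p}}\hookrightarrow WL^{\vec{p}}$ of Remark \ref{Rews}(i), so the genuinely new content lies at the boundary $p_i=1$, where the extrapolation of Lemma \ref{waicha} is indispensable.
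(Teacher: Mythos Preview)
Your overall strategy matches the paper's: invoke the weighted weak-type $(1,1)$ vector-valued Fefferman--Stein inequality (this is the paper's Lemma \ref{ruom}, due to Andersen and John) and then extrapolate via Lemma \ref{waicha}. However, the execution has a genuine gap in the second step.

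Lemma \ref{waicha}, as stated, takes as input a \emph{strong-type} weighted inequality
\[
\int_\rn [F(x)]^{p_0}\omega(x)\,dx\le C\int_\rn [G(x)]^{p_0}\omega(x)\,dx
\]
and outputs the \emph{strong-type} conclusion $\|F\|_X\le C\|G\|_X$. If you feed in the pair $(G_{\vec f},g_{\vec f})$ directly, the hypothesis you need at $p_0=1$ is $\int_\rn G_{\vec f}\,\omega\le C\int_\rn g_{\vec f}\,\omega$, which is the strong $L^1_\omega$ Fefferman--Stein inequality. That inequality is false for $\omega\in A_1$ in general (already for $\omega\equiv 1$), so the input is not satisfied. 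Likewise the output would be $\|G_{\vec f}\|_{L^{\vec p}}\le C\|g_{\vec f}\|_{L^{\vec p}}$, not a $WL^{\vec p}$ bound; there is no mechanism in Lemma \ref{waicha} that ``transfers weakness'' as you describe.

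The paper's fix is to encode the weak type in the choice of pair: for each $\alpha\in(0,\infty)$ set
\[
F:=\alpha\,\mathbf 1_{\{y:\ (\sum_j[\cm(f_j)(y)]^r)^{1/r}>\alpha\}},\qquad G:=\Big(\sum_j|f_j|^r\Big)^{1/r}.
\]
Then Lemma \ref{ruom} says exactly $\int_\rn F\,\omega\le C\int_\rn G\,\omega$ for all $\omega\in A_1$, so the strong-type hypothesis of Lemma \ref{waicha} with $p_0=1$ is met. The conclusion $\|F\|_{L^{\vec p}}\le C\|G\|_{L^{\vec p}}$ reads $\alpha\|\mathbf 1_{\{\cdots>\alpha\}}\|_{L^{\vec p}}\le C\|g_{\vec f}\|_{L^{\vec p}}$, and taking the supremum over $\alpha$ yields the $WL^{\vec p}$ norm.

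You also need to verify the remaining hypothesis of Lemma \ref{waicha}: with $X=L^{\vec p}(\rn)$ and $q_0=1$, one must check that $\cm$ is bounded on $(L^{\vec p}(\rn))'=L^{\vec p'}(\rn)$ for $\vec p\in[1,\infty)^n$. The paper cites \cite[Theorem 1.a]{bp} and \cite[Lemma 3.5]{hlyy} for this; your proposal omits it entirely.
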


To prove Proposition \ref{mixPro}, we need the following extrapolation theorem,
which is a slight variant of a special case of \cite[Theorem 4.6]{CMP} via
replacing Banach function spaces by ball Banach function spaces.
Recall that an \emph{$A_1(\rn)$-weight $\omega$}
(see, for instance, \cite[Definition 7.1.1]{G1}) is a
locally integrable and nonnegative function satisfying that
$$
[\omega]_{A_1(\rn)}:=\sup_{B\in\BB}\frac1{|B|}\int_B\omega(x)\,dx\lf[\lf\|\omega^{-1}\r\|_{L^\infty(B)}\r]<\infty,
$$
where $\BB$ is as in \eqref{Eqball}.

\begin{lemma}\label{waicha}
Let $X$ be a ball Banach function space and $p_0\in(0,\infty)$.
Let $\mathcal{F}$ be the set of all
pairs of nonnegative measurable
functions $(F,G)$ such that, for any given $\omega\in A_1(\rn)$,
$$
\int_{\rn}[F(x)]^{p_0}\omega(x)\,dx\leq C_{(p_0,[\omega]_{A_1(\rn)})}
\int_{\rn}[G(x)]^{p_0}\omega(x)\,dx,
$$
where $C_{(p_0,[\omega]_{A_1(\rn)})}$ is a positive constant independent
of $(F,G)$, but depends on $p_0$ and $[\omega]_{A_1(\rn)}$.
Assume that there exists $q_0\in[p_0,\infty)$ such that $X^{1/q_0}$ is
a Banach function space and $\cm$ is bounded on $(X^{1/q_0})'$. Then there
exists a positive constant $C$ such that, for any $(F,G)\in\mathcal{F}$,
$$
\|F\|_{X}\leq C\|G\|_{X}.
$$
\end{lemma}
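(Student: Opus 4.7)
The plan is to follow the standard Rubio de Francia extrapolation scheme in the proof of \cite[Theorem 4.6]{CMP}, verifying that the argument goes through when the underlying Banach function space is replaced by a ball Banach function space. The key structural fact used in that proof is the Lorentz--Luxembourg duality identity $X = X''$, and Lemma \ref{Lesdual} supplies this in the ball setting. First I would reduce to a dual formulation: since $X^{1/q_0}$ is a ball Banach function space, Lemma \ref{Lesdual} and Definition \ref{Debf}(i) give
$$
\|F\|_X^{q_0} = \|F^{q_0}\|_{X^{1/q_0}} = \|F^{q_0}\|_{(X^{1/q_0})''} = \sup \int_\rn F(x)^{q_0} h(x)\,dx,
$$
where the supremum runs over nonnegative $h \in (X^{1/q_0})'$ with $\|h\|_{(X^{1/q_0})'} \le 1$. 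Fixing one such $h$, it suffices to bound $\int F^{q_0} h$ by a constant times $\|G\|_X^{q_0}$, uniformly in $h$.

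Next I would apply the Rubio de Francia iteration algorithm inside $(X^{1/q_0})'$. Setting $B_0 := \|\cm\|_{(X^{1/q_0})' \to (X^{1/q_0})'}$, which is finite by hypothesis, define
$$
R(h) := \sum_{k=0}^{\infty} \frac{\cm^{k}(h)}{(2B_0)^{k}},
$$
where $\cm^{k}$ is the $k$-fold iterate of $\cm$ and $\cm^{0}$ is the identity. Standard pointwise and norm estimates give $R(h) \ge h$ almost everywhere, $\cm(R(h)) \le 2B_0\, R(h)$ almost everywhere (so $R(h)$ lies in $A_1(\rn)$ with $[R(h)]_{A_1(\rn)} \le 2B_0$), and $\|R(h)\|_{(X^{1/q_0})'} \le 2\|h\|_{(X^{1/q_0})'} \le 2$. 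I would then combine pointwise domination by $R(h)$, the hypothesis (in a form to be described in the next paragraph) applied with weight $\omega = R(h) \in A_1(\rn)$, and the H\"older inequality of Lemma \ref{LeHolder} to estimate
$$
\int_\rn F^{q_0} h\, dx \le \int_\rn F^{q_0} R(h)\, dx \lesssim \int_\rn G^{q_0} R(h)\, dx \le \|G^{q_0}\|_{X^{1/q_0}} \|R(h)\|_{(X^{1/q_0})'} \lesssim \|G\|_X^{q_0},
$$
which yields the desired conclusion after taking the supremum over $h$.

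The main obstacle lies in justifying the middle inequality in the chain above, since the hypothesis $(F,G) \in \mathcal{F}$ only furnishes a weighted $L^{p_0}$-type bound, whereas the chain is phrased at the exponent $q_0$. When $q_0 = p_0$, this is direct. For $q_0 > p_0$, I would insert the classical Rubio de Francia extrapolation step that upgrades
$$
\int_\rn F^{p_0} \omega\, dx \le C_{(p_0,[\omega]_{A_1(\rn)})} \int_\rn G^{p_0} \omega\, dx \qquad \forall\, \omega \in A_1(\rn)
$$
to the corresponding inequality at the exponent $q_0$ with $A_1$-weights, by means of a second Rubio de Francia iteration in the weighted space $L^{(q_0/p_0)'}(\omega\,dx)$, exploiting that $A_1 \subset A_{q_0/p_0}$ in an appropriate sense and that $[R(\cdot)]_{A_1}$ is controlled. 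This internal extrapolation is purely a statement about scalar weighted Lebesgue norms and does not interact with $X$ at all, so it transfers without change from the Banach function space setting to the ball Banach function space setting; the ball structure only enters through the duality in the first paragraph, which we have already handled via Lemma \ref{Lesdual}.
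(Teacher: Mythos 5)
Your proof is correct and takes essentially the same route as the paper: the paper's argument simply observes that the only structural fact about Banach function spaces used in the proof of \cite[Theorem 4.6]{CMP} is the Lorentz--Luxembourg duality $X = X''$, which Lemma \ref{Lesdual} supplies in the ball setting, and then defers to \cite{CMP}. Your sketch reconstructs the Rubio de Francia machinery behind that proof explicitly (duality in $X^{1/q_0}$, the iteration operator $R$ on $(X^{1/q_0})'$, the $A_1$ membership of $R(h)$, and the scalar extrapolation upgrading $p_0$ to $q_0$) and correctly locates the single point where the ball structure enters, namely the second-associate identity; this is a more detailed version of the same argument.
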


\begin{proof} We observe that a key fact used in the proof of \cite[Theorem 4.6]{CMP}
is that, if $X$ is a Banach function space as in Definition \ref{bn},
then $X=X''$ with the same norms. However, if $X$ is just a ball Banach function space
as in the lemma, by Lemma \ref{Lesdual}, we know that this fact also holds true.
Thus, using this fact and repeating the proof of \cite[Theorem 4.6]{CMP},
we then complete the proof of Lemma \ref{waicha}.
\end{proof}

We still
need the following weak-type weighted Fefferman--Stein vector-valued inequality of
the Hardy--Littlewood maximal operator $\cm$ in \eqref{mm} from \cite[Theorem 3.1(a)]{AJ}.

\begin{lemma}\label{ruom}
Let $\omega\in A_1(\rn)$ and $r\in(1,\infty)$. Then there exists a positive constant $C$,
depending on $p_0$ and $[\omega]_{A_1(\rn)}$,
such that,
for any $\alpha\in(0,\infty)$ and $\{f_j\}_{j\in\nn}\subset \mathscr{M}(\rn)$,
$$
\alpha\int_{\rn}
\mathbf{1}_{\{y\in\rn:\ \{\sum_{j\in\nn}[\cm(f_j)(y)]^r\}^\frac1r>\alpha\}}(x)
\omega(x)\,dx\le
C\int_{\rn} \lf [ \sum_{j\in\nn}\lf|f_j(x)\r|^r \r]^\frac1r \omega(x)\,dx.
$$
\end{lemma}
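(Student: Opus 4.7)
The plan is to prove this $A_1$-weighted vector-valued weak-type Fefferman--Stein inequality by a Calder\'on--Zygmund decomposition adapted to the scalar function $\|F\|_{\ell^r}:=(\sum_j|f_j|^r)^{1/r}$, in the spirit of Andersen and John \cite{AJ}. For fixed $\alpha\in(0,\infty)$, I would decompose $\|F\|_{\ell^r}$ at level $\alpha$ with respect to Lebesgue measure to obtain pairwise disjoint dyadic cubes $\{Q_k\}$, with $\Omega:=\bigcup_k Q_k$, such that $\|F\|_{\ell^r}\le\alpha$ a.e.\ on $\Omega^c$ and $\alpha<|Q_k|^{-1}\int_{Q_k}\|F\|_{\ell^r}\le 2^n\alpha$. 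Split $F=G+B$ componentwise by setting $g_j:=f_j\mathbf 1_{\Omega^c}+\sum_k\lf(|Q_k|^{-1}\int_{Q_k}f_j\r)\mathbf 1_{Q_k}$ and $b_j:=f_j-g_j$; Minkowski's integral inequality in $\ell^r$ then gives $\|G\|_{L^\infty(\ell^r)}\le 2^n\alpha$, while $B=\sum_k B_k$ with $B_k$ supported in $Q_k$ and $|b_{j,k}|$ having $L^1(Q_k)$ norm at most $2\int_{Q_k}|f_j|$.

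For the good part, I would invoke the strong-type weighted vector-valued Fefferman--Stein inequality at some auxiliary exponent $p\in(1,\infty)$: since $\omega\in A_1\subset A_p$, the scalar $\cm$ is bounded on $L^p(\omega)$, and its vector-valued extension (by Rubio de Francia extrapolation or direct Marcinkiewicz--Zygmund linearization) gives
$$
\int_\rn[\mathcal{M}_rG(x)]^p\omega(x)\,dx\lesssim\int_\rn\|G(x)\|_{\ell^r}^p\omega(x)\,dx\lesssim\alpha^{p-1}\|F\|_{L^1(\ell^r,\omega)},
$$
so Chebyshev yields $\omega(\{\mathcal{M}_rG>\alpha/2\})\lesssim\alpha^{-1}\|F\|_{L^1(\ell^r,\omega)}$. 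For the bad part, split the level set into $3\Omega$ and its complement. On $3\Omega$, doubling of $\omega$ and the $A_1$ estimate $\omega(Q_k)\le[\omega]_{A_1}|Q_k|\essinf_{Q_k}\omega$, combined with the CZ bound $|Q_k|<\alpha^{-1}\int_{Q_k}\|F\|_{\ell^r}$, give $\omega(3\Omega)\lesssim\alpha^{-1}\|F\|_{L^1(\ell^r,\omega)}$.

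The key step, and the main technical obstacle, is the pointwise domination
$$
\mathcal{M}_rB(x)\le C_n\cm\lf(\|F\|_{\ell^r}\r)(x)\qquad\text{for }x\notin 3\Omega.
$$
To prove this, fix $x\notin 3\Omega$ and a ball $B\ni x$ of diameter $s$. A short geometric argument using the infinity norm shows that for every $k$ with $B\cap Q_k\neq\emptyset$, the condition $x\notin 3Q_k$ forces $l_{Q_k}<s$ and hence $Q_k\subset\widehat B:=B(x,2s)$. Consequently, for each $j$,
$$
\frac1{|B|}\int_B|b_j(y)|\,dy\le\frac1{|B|}\sum_{k:\,B\cap Q_k\neq\emptyset}\int_{Q_k}|b_{j,k}|\,dy\le\frac{C_n}{|\widehat B|}\int_{\widehat B}|f_j(y)|\,dy,
$$
and applying Minkowski's integral inequality in $\ell^r$ to the sequence indexed by $j$ gives the claimed pointwise bound after taking the supremum over $B\ni x$. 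Once this pointwise bound is in hand, the scalar weighted weak-type $(1,1)$ inequality for $\cm$ (valid since $\omega\in A_1$) applied to $g:=\|F\|_{\ell^r}$ at level proportional to $\alpha$ yields $\omega(\{x\notin 3\Omega:\mathcal{M}_rB(x)>\alpha/2\})\lesssim\alpha^{-1}\|F\|_{L^1(\ell^r,\omega)}$, and the three contributions sum to the lemma, with the constant depending only on $n$, $r$ and $[\omega]_{A_1(\rn)}$.
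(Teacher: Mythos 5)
Your decomposition, the good-part estimate, and the bound for $\omega(3\Omega)$ are all fine, but the step you flag as ``the key step'' is where the proof breaks. You show that for each fixed ball $B\ni x$ with $x\notin3\Omega$, and each $j$, one has $|B|^{-1}\int_B|b_j|\le C_n|\widehat B|^{-1}\int_{\widehat B}|f_j|$, then apply Minkowski in $\ell^r$ and ``take the supremum over $B\ni x$.'' But this only yields
$\sup_{B\ni x}\bigl\|\bigl(|B|^{-1}\int_B|b_j|\bigr)_j\bigr\|_{\ell^r}\le C_n\cm(\|F\|_{\ell^r})(x)$,
whereas $\mathcal{M}_rB(x)=\bigl\|\bigl(\sup_{B\ni x}|B|^{-1}\int_B|b_j|\bigr)_j\bigr\|_{\ell^r}$ has the supremum \emph{inside} the $\ell^r$-norm, which is the \emph{larger} of the two quantities: the balls realizing $\cm b_j(x)$ may be wildly different for different $j$. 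And indeed the claimed pointwise inequality is false. Take $n=1$, $x=0$, $Q_k=(3\cdot2^k,4\cdot2^k)$ for $k=1,\dots,N$, and $f_j$ supported in $Q_j$ with $\int_{Q_j}|f_j|=2^j$ and its mass concentrated near the left endpoint of $Q_j$. Then each $\cm b_j(0)\sim1$ (witnessed by the ball of radius $\approx3\cdot2^j$), so $\mathcal{M}_rB(0)\sim N^{1/r}\to\infty$, while $\cm(\|F\|_{\ell^r})(0)\sim1$ uniformly in $N$ because the mass $2^j$ only ever contributes at distance $\sim2^j$.

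The reason the argument must fail is that it never uses the mean-zero property of the pieces $b_{j,k}$, which is essential here. The standard repair is to replace $\cm$ by a smooth majorant $\widetilde\cm f:=\sup_t|\phi_t*f|\gtrsim\cm f$ so that the cancellation of $b_{j,k}$ gives, for $x\notin3Q_k$, the decaying bound $\widetilde\cm b_{j,k}(x)\lesssim l_{Q_k}|x-c_k|^{-n-1}\int_{Q_k}|b_{j,k}|$. Summing in $k$, taking the $\ell^r$-norm in $j$ by Minkowski, and using $\bigl(\sum_j(\int_{Q_k}|b_{j,k}|)^r\bigr)^{1/r}\le\int_{Q_k}\|B_k\|_{\ell^r}\lesssim\int_{Q_k}\|F\|_{\ell^r}$ yields, for $x\notin3\Omega$,
$\mathcal{M}_rB(x)\lesssim\sum_kl_{Q_k}|x-c_k|^{-n-1}\int_{Q_k}\|F\|_{\ell^r}$.
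One then integrates this against $\omega\,dx$ over $(3\Omega)^\complement$, uses the $A_1$-estimate $\int_{(3Q_k)^\complement}\omega(x)|x-c_k|^{-n-1}\,dx\lesssim l_{Q_k}^{-1}\essinf_{Q_k}\omega$, and finishes with Chebyshev at exponent $1$. So the bad part requires an $L^1(\omega)$ estimate on $(3\Omega)^\complement$ that exploits cancellation, not a pointwise domination by $\cm(\|F\|_{\ell^r})$. (For what it's worth, the paper itself does not give a proof of this lemma; it simply cites Andersen and John \cite[Theorem 3.1(a)]{AJ}, so there is no in-text argument to compare with.)
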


\begin{proof}[Proof of Proposition \ref{mixPro}]
For any given $r\in(1,\infty)$, let
$$
\mathcal{F}:=
\lf\{\lf(\alpha\mathbf{1}_{\{y\in\rn:\ \{\sum_{j\in\nn}[\cm(f_j)(y)]^r\}^\frac1r>\alpha\}},
\lf [ \sum_{j\in\nn}\lf|f_j\r|^r \r]^\frac1r\r):\ \alpha\in(0,\infty),\ \{f_j\}_{j\in\nn}\subset
\mathscr{M}(\rn)\r\}.
$$
Then, by Lemma \ref{ruom}, we conclude that, for any given $\omega\in A_1(\rn)$
and any $(F,G)\in\mathcal{F}$,
\begin{equation}\label{hha}
\int_{\rn}F(x)\omega(x)\,dx\lesssim\int_{\rn}G(x)\omega(x)\,dx.
\end{equation}
Let $\vec{p}\in[1,\infty)^n$. From \cite[Theorem 1.a]{bp} and \cite[Lemma 3.5]{hlyy},
it follows that $\cm$ as in \eqref{mm} is bounded on $(L^{\vec{p}}(\rn))'$.
By this and \eqref{hha}, applying Lemma \ref{waicha} with $p_0:=1$
and the fact that $L^{\vec{p}}(\rn)$ is a Banach function space,
we conclude that, for any $(F,G)\in\mathcal{F}$, $\|F\|_{L^{\vec{p}}(\rn)}\lesssim\|G\|_{L^{\vec{p}}(\rn)}$.
Thus, for any $\{f_j\}_{j\in\nn}\subset \mathscr{M}(\rn)$,
$$
\lf\|\lf\{\sum_{j\in\nn}\lf[\cm(f_j)\r]^r\r\}^\frac1r\r\|_{WL^{\vec{p}}(\rn)}\lesssim
\lf\| \lf \{ \sum_{j\in\nn}|f_j|^r \r\}^\frac1r \r\|_{L^{\vec{p}}(\rn)},
$$
which completes the proof of Proposition \ref{mixPro}.
\end{proof}

Applying Proposition \ref{mixPro}, Lemmas \ref{mixmM} and \ref{mixProfs}, Theorems \ref{Thcz} and \ref{Thcz1},
we immediately obtain the boundedness from
$H^{\vec{p}}(\rn)$ to $WH^{\vec{p}}(\rn)$
of both convolutional $\delta$-type and $\gamma$-type
Calder\'on--Zygmund operators, respectively, as follows.

\begin{theorem}
Let $\vec{p}\in(0,\infty)^n$ and $\delta\in(0,1]$.
Let $T$ be a convolutional $\delta$-type Calder\'on--Zygmund
operator. If $p_-\in[\frac{n}{n+\delta},1]$, then $T$ has a unique extension on $H^{\vec{p}}(\rn)$ and,
moreover, there exists a positive constant $C$ such that,
for any $f\in H^{\vec{p}}(\rn)$,
$$
\|Tf\|_{WH^{\vec{p}}(\rn)}\le C\|f\|_{H^{\vec{p}}(\rn)}.
$$
\end{theorem}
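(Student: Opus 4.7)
The plan is to apply Theorem~\ref{Thcz} with $X:=L^{\vec p}(\rn)$, whence $H_X(\rn)=H^{\vec p}(\rn)$ and $WH_X(\rn)=WH^{\vec p}(\rn)$ by Definitions~\ref{mixMhardy} and~\ref{mixmha}. My task is to check the six hypotheses of Theorem~\ref{Thcz}: that $X$ is a ball quasi-Banach function space with an absolutely continuous quasi-norm; that $X$ satisfies \eqref{aa} and \eqref{aax} for some admissible triple $(\theta,s,q)$; that Assumption~\ref{xinm} holds; that $X^{1/s}$ is a ball Banach function space; and, crucially, that the weak-type Fefferman--Stein inequality \eqref{Eqfs11} holds at the critical exponent $(n+\delta)/n$.

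For the routine hypotheses I would fix $s:=p_-$ and $\theta:=p_-/2$, both in $(0,1]$ because $p_-\in[n/(n+\delta),1]$. Using the convexification identity $\|F^r\|_{X^{1/s}}=\|F\|_{X^{r/s}}^r$ from Definition~\ref{Debf}(i), the condition \eqref{aa} is equivalent to a vector-valued bound on $[L^{\vec p}(\rn)]^{1/\theta}$ with exponent $r:=s/\theta=2$, which is precisely Lemma~\ref{mixo1} (applicable since $\theta<p_-$). For \eqref{aax} I would take any $q\in(\max\{1,p_+\},\infty)$ and invoke Lemma~\ref{mixmM} with its parameters $r:=p_-$ and $s:=q$. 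The convexification $X^{1/s}=L^{\vec p/p_-}(\rn)$ has every component at least $1$ and is therefore a ball Banach function space. Absolute continuity of the quasi-norm on $L^{\vec p}(\rn)$ follows from iterated dominated convergence since $\vec p\in(0,\infty)^n$. Finally, for Assumption~\ref{xinm} I would pick any $r\in(0,p_-)$, use $(WL^{\vec p}(\rn))^{1/r}=W(L^{\vec p/r}(\rn))$ (Remark~\ref{Rews-1}), and note that the scalar case of Theorem~\ref{mixProfs} bounds $\cm$ there because $\vec p/r\in(1,\infty)^n$.

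The main obstacle, and the only step that actually uses the critical-case hypothesis $p_-\ge n/(n+\delta)$, is verifying the weak-type Fefferman--Stein inequality \eqref{Eqfs11}. Setting $r_0:=(n+\delta)/n$ and $\vec q:=r_0\vec p$, the hypothesis gives $q_-=r_0 p_-\ge 1$, so $\vec q\in[1,\infty)^n$, and $r_0>1$ since $\delta>0$. Proposition~\ref{mixPro} applied to $\vec q$ with exponent $r_0$ then yields
\[
\lf\|\lf\{\sum_{j\in\nn}[\cm(f_j)]^{r_0}\r\}^{1/r_0}\r\|_{WL^{\vec q}(\rn)}\lesssim\lf\|\lf\{\sum_{j\in\nn}|f_j|^{r_0}\r\}^{1/r_0}\r\|_{L^{\vec q}(\rn)}
\]
for all $\{f_j\}_{j\in\nn}\subset\mathscr M(\rn)$. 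Since $X^{(n+\delta)/n}=L^{\vec q}(\rn)$ and the $WL^{\vec q}(\rn)$-norm on the left equals $\sup_{\alpha\in(0,\infty)}\alpha\|\mathbf 1_{\{\cdot>\alpha\}}\|_{L^{\vec q}(\rn)}$, this display is precisely \eqref{Eqfs11} (with $C_0$ the implicit constant). All hypotheses of Theorem~\ref{Thcz} being in place, its conclusion delivers both the unique extension of $T$ to $H^{\vec p}(\rn)$ and the desired bound $\|Tf\|_{WH^{\vec p}(\rn)}\le C\|f\|_{H^{\vec p}(\rn)}$.
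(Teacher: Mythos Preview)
Your proof is correct and follows the same route as the paper, which merely states that the result follows by applying Proposition~\ref{mixPro}, Lemmas~\ref{mixmM} and~\ref{mixProfs}, and Theorem~\ref{Thcz}. Your explicit parameter choices $s:=p_-$, $\theta:=p_-/2$, and $q\in(\max\{1,p_+\},\infty)$ are a valid instantiation of the hypotheses of Theorem~\ref{Thcz}, and your identification of \eqref{Eqfs11} with Proposition~\ref{mixPro} applied to $\vec q:=\frac{n+\delta}{n}\vec p\in[1,\infty)^n$ is exactly where the critical condition $p_-\ge n/(n+\delta)$ enters.
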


\begin{theorem}
Let $\vec{p}\in(0,2)^n$ and $\gamma\in(0,\infty)$.
Let $T$ be a $\gamma$-type Calder\'on--Zygmund
operator and have the vanishing moments up to order $\lceil\gamma\rceil-1$.
If $\lceil\gamma\rceil-1\le n(\frac1{p_-}-1)\le\gamma$, then $T$ has a unique extension on
$H^{\vec{p}}(\rn)$ and, moreover, there exists a
positive constant $C$ such that,
for any $f\in H^{\vec{p}}(\rn)$,
$$
\|Tf\|_{WH^{\vec{p}}(\rn)}\le C\|f\|_{H^{\vec{p}}(\rn)},
$$
where $C$ is a positive constant independent of $f$.
\end{theorem}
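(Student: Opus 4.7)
The plan is to apply Theorem \ref{Thcz1} with $X:=L^{\vec{p}}(\rn)$, so the work splits into (a) verifying that $L^{\vec{p}}(\rn)$ satisfies all the structural assumptions of that theorem with suitably chosen parameters $\theta,s\in(0,1]$, and (b) establishing the vector-valued weak-type inequality \eqref{Eqfs12}. The hypothesis $\vec{p}\in(0,2)^n$ is exactly what will let us use $q=2$ in \eqref{aax} (since $p_+<2$), which matches the $L^2$-boundedness of $T$.

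First, I would fix the parameters. Since $n(1/p_--1)\in[\lceil\gamma\rceil-1,\gamma]$, in particular $p_-\in(n/(n+\gamma),n/(n+\lceil\gamma\rceil-1)]$. Choose $s\in(0,\min\{p_-,1\})$ and then $\theta\in(0,s)$. With these choices: (i) $L^{\vec{p}}(\rn)$ is a ball quasi-Banach function space by Definition \ref{mix}, and its quasi-norm is absolutely continuous by iterated dominated convergence in each coordinate; (ii) $X^{1/s}=L^{\vec{p}/s}(\rn)$ is a ball Banach function space because $\vec{p}/s\in[1,\infty)^n$; (iii) \eqref{aa} is obtained from Lemma \ref{mixo1} by the standard substitution $g_j:=|f_j|^\theta$ (which converts $\cm^{(\theta)}$ into $\cm$ and shifts the scale from $X$ to $X^{1/\theta}$) together with $r:=s/\theta>1$ and $\theta\in(0,p_-)$; (iv) \eqref{aax} with $q=2$ follows from Lemma \ref{mixmM} applied with the ``$r$'' there equal to $s\in(0,p_-]$ and the ``$s$'' there equal to $2\in(p_+,\infty]$, which is precisely where $\vec{p}\in(0,2)^n$ enters; (v) Assumption \ref{xinm} is supplied by Theorem \ref{mixProfs}, since for any $r\in(1,p_-)$ one has $\vec{p}/r\in(1,\infty)^n$ and $\cm$ is bounded on $(WL^{\vec{p}})^{1/r}=WL^{\vec{p}/r}(\rn)$. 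The compatibility condition $\lceil\gamma\rceil-1\le n(1/\theta-1)$ required by Theorem \ref{Thcz1} is automatic because $\theta<p_-\le n/(n+\lceil\gamma\rceil-1)$.

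The main obstacle, and really the only nontrivial analytic step, is to establish \eqref{Eqfs12} for $X=L^{\vec{p}}(\rn)$. Setting $r:=(n+\gamma)/n>1$ and $\vec{q}:=r\vec{p}$, observe that $X^{(n+\gamma)/n}=L^{\vec{q}}(\rn)$ and that $\vec{q}\in[1,\infty)^n$: indeed $q_-=p_-(n+\gamma)/n\ge 1$ because $n(1/p_--1)\le\gamma$, while $q_+<2(n+\gamma)/n<\infty$ from $\vec{p}\in(0,2)^n$. For this $\vec{q}$ and this $r$, Proposition \ref{mixPro} yields
\begin{equation*}
\lf\|\lf\{\sum_{j\in\nn}[\cm(f_j)]^{(n+\gamma)/n}\r\}^{n/(n+\gamma)}\r\|_{WL^{\vec{q}}(\rn)}
\lesssim\lf\|\lf\{\sum_{j\in\nn}|f_j|^{(n+\gamma)/n}\r\}^{n/(n+\gamma)}\r\|_{L^{\vec{q}}(\rn)}.
\end{equation*}
Unfolding the weak quasi-norm on the left via Definition \ref{wmix} gives precisely \eqref{Eqfs12} with constant $C_0$ independent of $\alpha$ and $\{f_j\}_{j\in\nn}$. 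Thus every hypothesis of Theorem \ref{Thcz1} is in force, and its conclusion immediately delivers the unique extension of $T$ from $H^{\vec{p}}(\rn)$ to $WH^{\vec{p}}(\rn)$ together with the claimed norm inequality. The delicate point is really buried inside Proposition \ref{mixPro}, whose proof rests on the extrapolation lemma \ref{waicha} applied to the weak-type weighted Fefferman--Stein inequality (Lemma \ref{ruom}); once that machinery is available, the $\gamma$-order result reduces to bookkeeping of parameters as above.
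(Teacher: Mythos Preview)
Your proposal is correct and follows exactly the paper's approach: apply Theorem \ref{Thcz1} with $X:=L^{\vec p}(\rn)$, verifying its hypotheses via Lemma \ref{mixo1} for \eqref{aa}, Lemma \ref{mixmM} for \eqref{aax} with $q=2$, Theorem \ref{mixProfs} for Assumption \ref{xinm}, and Proposition \ref{mixPro} for \eqref{Eqfs12}. One small slip: in step (v) you write ``for any $r\in(1,p_-)$'', but under the hypothesis $\lceil\gamma\rceil-1\le n(1/p_--1)$ one always has $p_-\le 1$, so that interval is empty; you should take $r\in(0,p_-)$, which still gives $\vec p/r\in(1,\infty)^n$ and hence the boundedness of $\cm$ on $(WL^{\vec p})^{1/r}=WL^{\vec p/r}(\rn)$ by Theorem \ref{mixProfs}.
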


\subsection{Orlicz-slice spaces\label{s7.3}}

We begin with the notions of both Orlicz functions and Orlicz spaces (see, for instance, \cite{RR}).

\begin{definition}
A function $\Phi:\ [0,\infty)\ \rightarrow\ [0,\infty)$ is called an \emph{Orlicz function} if it is
non-decreasing and satisfies $\Phi(0)= 0$, $\Phi(t)>0$ whenever $t\in(0,\infty)$ and $\lim_{t\rightarrow\infty}\Phi(t)=\infty$.
\end{definition}

An Orlicz function $\Phi$  is said to be
of \emph{lower} (resp., \emph{upper}) \emph{type} $p$ with
$p\in(-\infty,\infty)$ if
there exists a positive constant $C_{(p)}$, depending on $p$, such that, for any $t\in[0,\infty)$
and $s\in(0,1)$ [resp., $s\in [1,\infty)$],
\begin{equation*}
\Phi(st)\le C_{(p)}s^p \Phi(t).
\end{equation*}
A function $\Phi:\ [0,\infty)\ \rightarrow\ [0,\infty)$ is said to be of
 \emph{positive lower} $p$ (resp., \emph{upper}) \emph{type} if it is of lower
 (resp., upper) type $p$ for some $p\in(0,\infty)$.

\begin{definition}\label{DeO}
Let $\Phi$ be an Orlicz function with positive lower type $p_{\Phi}^-$ and positive upper type $p_{\Phi}^+$.
The \emph{Orlicz space $L^\Phi(\rn)$} is defined
to be the set of all measurable functions $f$ such that
 $$\|f\|_{L^\Phi(\rn)}:=\inf\lf\{\lambda\in(0,\infty):\ \int_{\rn}\Phi\lf(\frac{|f(x)|}{\lambda}\r)\,dx\le1\r\}<\infty.$$
\end{definition}

\begin{remark}
\begin{itemize}
\item[(i)] Let $\Phi$ be an Orlicz function with positive lower type $p_{\Phi}^-$ and positive upper type $p_{\Phi}^+$.
In what follows, for any given $s\in(0,\infty)$, let
$\Phi_s(\tau):=\Phi(\tau^s)$ for any $\tau\in(0,\infty)$. Then $\Phi_s$ is also
an Orlicz function with lower type $sp_{\Phi}^-$ and upper type $sp_{\Phi}^+$. Moreover,
for any measurable function $f$ such that $|f|^s\in L^\Phi(\rn)$, we have
\begin{equation*}
\lf\|\lf|f\r|^s\r\|_{L^\Phi(\rn)}=\lf\|f\r\|_{L^{\Phi_s}(\rn)}^s.
\end{equation*}
\item[(ii)] Let $\Phi$ be as in (i) of this remark.
By \cite[Lemma 2.5]{ZYYW}, we may \emph{always assume} that $\Phi$
is continuous and strictly increasing. Let $\Phi^{-1}$ be the \emph{inverse function} of $\Phi$.
Observe that, for any $x\in\rn$ and $t\in(0,\infty)$,
\begin{equation}\label{Eq22}
\|\mathbf{1}_{B(x,t)}\|_{L^\Phi(\rn)}=\lf[\Phi^{-1}\lf(\lf|B(x,t)\r|^{-1}\r)\r]^{-1}=\lf[\Phi^{-1}
\lf(\varepsilon_nt^{-n}\r)\r]^{-1}=:\widetilde C_{(\Phi,t)}
\end{equation}
is independent of $x\in\rn$, where $\varepsilon_n$ denotes the \emph{volume of the unit ball of $\rn$}.
\end{itemize}
\end{remark}

We also recall some notions on the Young function.
A convex function $\Phi:\ [0,\infty)\  \rightarrow\  [0,\infty)$ is called a
\emph{Young function} if $\Phi$ is non-decreasing, $\Phi(0)= 0$ and
$\lim_{t\rightarrow\infty}\Phi(t)=\infty$. For any Young function $\Phi$,
its \emph{complementary function $\Psi:\ [0,\infty)\rightarrow[0,\infty)$}
is defined by setting, for any $y\in[0,\infty)$
$$
\Psi(y):=\sup\lf\{xy-\Phi(x):\ x\in[0,\infty)\r\}.
$$

\begin{remark}\label{Re83}
Let $\Phi$ be an Orlicz function with lower type $p_{\Phi}^-\in[1,\infty)$ and positive upper type $p_{\Phi}^+$.
By \cite[p.\,67, Theorem 10]{RR}, we know that
$L^\Phi(\rn)$ is a Banach space.
\end{remark}

The following notion of Orlicz-slice spaces
was introduced by Zhang et al. \cite{ZYYW}, which is a generalization of the slice spaces
proposed by Auscher and Mourgoglou \cite{AM2014} and  Auscher and  Prisuelos-Arribas \cite{APA}.

\begin{definition}\label{DeOs}
Let $t,\ q\in(0,\infty)$ and $\Phi$ be an Orlicz function with positive lower type $p_{\Phi}^-$ and
positive upper type $p_{\Phi}^+$. The \emph{Orlicz-slice space} $(E_\Phi^q)_t(\rn)$
is defined to be the set of all measurable functions $f$
such that
$$
\|f\|_{(E_\Phi^q)_t(\rn)}
:=\lf\{\int_{\rn}\lf[\frac{\|f\mathbf{1}_{B(x,t)}\|_{L^\Phi(\rn)}}
{\|\mathbf{1}_{B(x,t)}\|_{L^\Phi(\rn)}}\r]^q\,dx\r\}^{\frac{1}{q}}<\infty.
$$
\end{definition}

\begin{remark}\label{Re25}
Let $t,\ q\in(0,\infty)$ and $\Phi$ be an Orlicz function with positive lower type $p_{\Phi}^-$ and
positive upper type $p_{\Phi}^+$.
\begin{itemize}
\item[(i)] By \cite[Lemma 2.28]{ZYYW}, we know that the Orlicz-slice space $(E_\Phi^q)_t(\rn)$
 is a ball quasi-Banach space. It is worth pointing out that
$(E_\Phi^q)_t(\rn)$ with $q\in(1,\infty)$ and $p_{\Phi}^-\in(1,\infty)$ may not be a Banach function space.
For instance, let $t:=1$, $q:=1$, $n:=1$ and $\Phi(\tau):=\tau^{2}$
for any $\tau\in [0,\fz)$.
In this case, by \cite[Proposition 2.12]{ZYYW}, we know that $(E_\Phi^q)_t(\rr)$
and $\ell^1(L^2)(\rr)$ (see, for instance, \cite{AF}) coincide with equivalent norms.
Let
$$E:=\bigcup_{m\in\nn}[m,m+1/m^2].$$
Then it is easy to show that $|E|<\infty$, but
\begin{align*}
\|\mathbf{1}_{E}\|_{(E_\Phi^q)_t(\rr)}\sim\|\mathbf{1}_{E}\|_{\ell^1(L^2)(\rr)}
\sim\sum_{k\in \mathbb{Z}}\lf\|\mathbf{1}_{E}\r\|_{L^2(Q_{k})}
\sim\sum_{k\in \nn}1/k=\infty,
\end{align*}
where $Q_{k}:=k+[0,1)$ for any $k\in\mathbb{Z}$.
Thus, $\ell^1(L^2)(\rr)$ does not satisfy Definition \ref{bn}(iv), which means
that $\ell^1(L^2)(\rr)$ is not a Banach function space.

\item[(ii)] Let $\Phi(\tau):=\tau^r$ for any $\tau\in[0,\infty)$ with
any given $r\in(0,\infty)$. Then $(E_\Phi^q)_t(\rn)$
and $(E_r^q)_t(\rn)$ from \cite{AM2014,APA} coincide with equivalent quasi-norms. Moreover,
in this case, if $q\in(0,r]$,
for any $f\in(E_r^q)_t(\rn)$, then $f\in L^q(\rn)$ and
$\|f\|_{L^q(\rn)}\le\|f\|_{(E_r^q)_t(\rn)}$; if $r\in(0,q]$,
for any $f\in L^r(\rn)\cup L^q(\rn)$, then $f\in (E_r^q)_t(\rn)$ and
$\|f\|_{(E_r^q)_t(\rn)}\le\min\{\|f\|_{L^r(\rn)}, \|f\|_{L^q(\rn)}\}$.
Thus, $\|f\|_{L^p(\rn)}=\|f\|_{(E_p^p)_t(\rn)}$ for any $p\in(0,\infty)$;
see \cite[Proposition 2.11]{ZYYW}.
\end{itemize}
\end{remark}

\begin{definition}\label{DewOs}
Let $t,\ q\in(0,\infty)$ and $\Phi$ be an Orlicz function with positive lower type $p_{\Phi}^-$ and
positive upper type $p_{\Phi}^+$. The \emph{weak Orlicz-slice space} $(WE_\Phi^q)_t(\rn)$
is defined to be the set of all measurable functions $f$ such that
$$
\|f\|_{(WE_\Phi^q)_t(\rn)}:=\sup_{\alpha\in(0,\infty)}\lf\{\alpha\lf\|
\mathbf{1}_{\{x\in\rn:\ |f(x)|>\alpha\}}\r\|_{(E_\Phi^q)_t(\rn)}\r\}<\infty.
$$
\end{definition}

To establish a Fefferman--Stein vector-valued inequality on $(WE_\Phi^{q})_t(\rn)$,
we need first to establish an interpolation theorem, in the spirit of the Marcinkiewicz interpolation theorem.
To this end, we now establish the following Minkowski type inequality.

\begin{lemma}\label{LeMI}
Let $t\in(0,\infty)$ and $\Phi$ be an Orlicz function with lower type $p_{\Phi}^-\in(1,\infty)$
and positive upper type $p_{\Phi}^+$.
Suppose that a measurable function $F$
is defined on $\rn\times\rr^m$.
If, for almost every $x\in\rn$, $F(x,\cdot)\in L^1(\rr^m)$ and, for almost every
$y\in\rn$, $F(\cdot,y)\in L^\Phi(\rn)$, then
$$
\lf\|\int_{\rr^m}|F(\cdot,y)|\,dy\r\|_{L^\Phi(\rn)}\le\int_{\rr^m}\lf\|F(\cdot,y)\r\|_{L^\Phi(\rn)}\,dy.
$$
\end{lemma}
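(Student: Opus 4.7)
The plan is to deduce the inequality via duality, in the same spirit as the classical Minkowski integral inequality on Banach function spaces. First, I would observe that, since $\Phi$ has lower type $p_\Phi^-\in(1,\infty)$ and positive upper type $p_\Phi^+$, the function $\Phi$ is (equivalent to) a Young function and, in view of Remark \ref{Re83}, $L^\Phi(\rn)$ is a Banach space. A routine check of items (i)--(iv) in Definition \ref{Debqfs} together with the triangle inequality shows that $L^\Phi(\rn)$ is in fact a ball Banach function space, so Lemmas \ref{LeHolder} and \ref{Lesdual} are at our disposal with $X:=L^\Phi(\rn)$ and associate space $X':=(L^\Phi(\rn))'$.

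Next, let $H(x):=\int_{\rr^m}|F(x,y)|\,dy$, which is well defined a.e. by hypothesis. By Lemma \ref{Lesdual},
\begin{equation*}
\|H\|_{L^\Phi(\rn)}=\|H\|_{(L^\Phi(\rn))''}
=\sup\lf\{\int_\rn H(x)|g(x)|\,dx:\ g\in (L^\Phi(\rn))',\ \|g\|_{(L^\Phi(\rn))'}\le 1\r\},
\end{equation*}
where we used that $H\ge 0$ so one may restrict to nonnegative test functions by replacing $g$ with $|g|$. For any such $g$, the integrand $|g(x)||F(x,y)|$ is a nonnegative measurable function on $\rn\times\rr^m$, so the Fubini--Tonelli theorem allows us to switch the order of integration:
\begin{equation*}
\int_\rn|g(x)|H(x)\,dx=\int_{\rr^m}\lf[\int_\rn|g(x)||F(x,y)|\,dx\r]\,dy.
\end{equation*}
Applying the H\"older inequality \eqref{12} in the inner integral yields $\int_\rn|g(x)||F(x,y)|\,dx\le \|F(\cdot,y)\|_{L^\Phi(\rn)}\|g\|_{(L^\Phi(\rn))'}\le\|F(\cdot,y)\|_{L^\Phi(\rn)}$, and integrating in $y$ and taking the supremum over $g$ delivers the desired inequality.

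The only genuine point requiring attention, and the step I expect to be the mild obstacle, is verifying that $L^\Phi(\rn)$ really is a ball Banach function space in the sense of Definition \ref{Debqfs} (so that the duality identity of Lemma \ref{Lesdual} is applicable), because $\Phi$ is only an Orlicz function with lower type $>1$ rather than a literal convex Young function. This is handled by the standard device of replacing $\Phi$, up to equivalence of norms, by a Young function with the same lower and upper types (see, for instance, \cite[Lemma 2.5]{ZYYW} already invoked in the paper), after which the triangle inequality, the lattice property, the Fatou property and local integrability of characteristic functions of balls are all classical. Once this is in place, the duality computation above is entirely routine, and the Fubini step is justified because $|g F|$ is nonnegative and measurable.
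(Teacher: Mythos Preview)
Your proposal is correct and follows essentially the same duality-plus-Fubini argument as the paper. The only difference is packaging: the paper invokes the classical Orlicz duality $\|h\|_{L^\Phi(\rn)}\sim\sup\{\int_\rn h g\,dx:\ \|g\|_{L^\Psi(\rn)}=1\}$ and the Orlicz H\"older inequality directly from \cite{RR}, whereas you route the same computation through the paper's abstract ball Banach function space machinery (Lemmas \ref{LeHolder} and \ref{Lesdual}) with $X:=L^\Phi(\rn)$ and $X'=(L^\Phi(\rn))'$. Both yield the inequality after Tonelli; your version has the mild advantage of being self-contained within the paper's framework and of giving the sharp constant $1$ (since Lemma \ref{Lesdual} is an equality and \eqref{12} has no implicit constant), while the paper's proof as written only delivers a $\lesssim$ because the Rao--Ren norm representation is stated with $\sim$.
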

\begin{proof}
Let $\Phi$ be as in the lemma and $\Psi$ the complementary function of
$\Phi$. By \cite[p.\,61, Proposition 4 and p.\,81, Proposition 10]{RR}, we have
\begin{align*}
&\lf\|\int_{\rr^m}|F(\cdot,y)|\,dy\r\|_{L^\Phi(\rn)}\\
&\hspace*{12pt}\sim
\sup\lf\{\lf|\int_\rn\int_{\rr^m}|F(x,y)|\,dyg(x)\,dx\r|
:\  g\in L^\Psi(\rn)\text{ such that }\|g\|_{L^\Psi(\rn)}=1\r\}.
\end{align*}
From the Fubini theorem and \cite[p.\,58, Proposition 1]{RR}, it follows that
\begin{align*}
\lf|\int_\rn\int_{\rr^m}F(x,y)\,dyg(x)\,dx\r|&\le\int_\rn\int_{\rr^m}|F(x,y)||g(x)|\,dy\,dx
=\int_{\rr^m}\int_\rn|F(x,y)||g(x)|\,dx\,dy\\
&\lesssim\int_{\rr^m}\lf\|F(\cdot,y)\r\|_{L^\Phi(\rn)}\lf\|g\r\|_{L^\Psi(\rn)}\,dy
\sim\int_{\rr^m}\|F(\cdot,y)\|_{L^\Phi(\rn)}\,dy,
\end{align*}
which implies the desired conclusion. This finishes the proof of Lemma \ref{LeMI}.
\end{proof}

The interpolation theorem of operators on Orlicz-slice spaces is stated as follows.
\begin{theorem}\label{Th1}
Let $t\in(0,\infty)$, $q\in(1,\infty)$ and $\Phi$ be an Orlicz function with positive lower type $p_{\Phi}^-\in(1,\infty)$ and
positive upper type $p_{\Phi}^+$. Let $p_1\in(\frac1{\min\{p_\Phi^-,\, q\}},1)$ and $p_2\in(1,\infty)$.
Assume that $T$ is a sublinear operator defined on $(E_{\Phi_{p_1}}^{p_1q})_t(\rn)+(E_{\Phi_{p_2}}^{p_2q})_t(\rn)$ satisfying that there exist
positive constants $C_1$ and $C_2$, independent of $t$,
 such that, for any $i\in\{1,2\}$ and $f\in(E_{\Phi_{p_i}}^{p_iq})_t(\rn)$,
\begin{equation}\label{Eq1}
\lf\|T(f)\r\|_{(WE_{\Phi_{p_i}}^{p_iq})_t(\rn)}
\le C_i\|f\|_{(E_{\Phi_{p_i}}^{p_iq})_t(\rn)},
\end{equation}
where $\Phi_{p_i}(\tau):=\Phi(\tau^{p_i})$ for any $\tau\in[0,\infty)$ and $i\in\{1,2\}$.
Then $T$ is bounded on $(WE_{\Phi}^q)_t(\rn)$
and there exists a positive constant $C$,
independent of $t$, such that, for any $f\in(WE_{\Phi}^q)_t(\rn)$,
$$
\|T(f)\|_{(WE_{\Phi}^q)_t(\rn)}\le C\|f\|_{(WE_{\Phi}^q)_t(\rn)}.
$$
\end{theorem}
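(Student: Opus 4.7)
The plan is to mirror the classical Marcinkiewicz interpolation argument used in the proof of Theorem \ref{mixTh1}, but adapted to the Orlicz-slice setting, where Fubini's theorem does not directly interchange the inner Luxemburg $L^\Phi(\rn)$ norm with the outer integrals. Fix $f\in(WE_\Phi^q)_t(\rn)$, set $\lambda:=\|f\|_{(WE_\Phi^q)_t(\rn)}$, and for each $\alpha\in(0,\infty)$ decompose $f=f^{(\alpha)}+f_{(\alpha)}$ with $f^{(\alpha)}:=f\mathbf{1}_{\{|f|>\alpha\}}$ and $f_{(\alpha)}:=f\mathbf{1}_{\{|f|\le\alpha\}}$. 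Sublinearity of $T$ then gives
$$\alpha\lf\|\mathbf{1}_{\{x\in\rn:\ |Tf(x)|>\alpha\}}\r\|_{(E_\Phi^q)_t(\rn)}\lesssim\alpha\lf\|\mathbf{1}_{\{x\in\rn:\ |Tf^{(\alpha)}(x)|>\alpha/2\}}\r\|_{(E_\Phi^q)_t(\rn)}+\alpha\lf\|\mathbf{1}_{\{x\in\rn:\ |Tf_{(\alpha)}(x)|>\alpha/2\}}\r\|_{(E_\Phi^q)_t(\rn)},$$
and the goal becomes to bound each summand by a multiple of $\lambda$, uniformly in $\alpha$ and $t$.

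The first preparatory step I would carry out is to establish the convexification identity $\|g\|_{(E_{\Phi_{p_i}}^{p_iq})_t(\rn)}^{p_i}=\||g|^{p_i}\|_{(E_\Phi^q)_t(\rn)}$ for $i\in\{1,2\}$. This follows from the Orlicz scaling $\|h\|_{L^{\Phi_s}(\rn)}^s=\||h|^s\|_{L^\Phi(\rn)}$ together with the resulting scaling of $\|\mathbf{1}_{B(x,t)}\|_{L^{\Phi_{p_i}}(\rn)}$ via \eqref{Eq22}, which lets the exponent $p_iq$ in the outer integral of Definition \ref{DeOs} collapse to $q$ after pulling the factor $1/p_i$ out of the inner Orlicz norm. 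Specializing to $g=\mathbf{1}_E$ also gives $\|\mathbf{1}_E\|_{(E_{\Phi_{p_i}}^{p_iq})_t(\rn)}^{p_i}=\|\mathbf{1}_E\|_{(E_\Phi^q)_t(\rn)}$, which will let me transfer the endpoint weak-type quantities back to the $(E_\Phi^q)_t(\rn)$ scale once \eqref{Eq1} is invoked.

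The core step is then to prove the two near-endpoint bounds $\|f^{(\alpha)}\|_{(E_{\Phi_{p_1}}^{p_1q})_t(\rn)}\lesssim\alpha(\lambda/\alpha)^{1/p_1}$ and $\|f_{(\alpha)}\|_{(E_{\Phi_{p_2}}^{p_2q})_t(\rn)}\lesssim\alpha(\lambda/\alpha)^{1/p_2}$. Via the convexification identity, the first reduces to showing
$$\lf\|\frac{(|f^{(\alpha)}|/\alpha)^{p_1}}{\lambda/\alpha}\r\|_{(E_\Phi^q)_t(\rn)}\lesssim1,$$
and I would use the layer-cake representation
$$\frac{(|f^{(\alpha)}|/\alpha)^{p_1}}{\lambda/\alpha}=\int_0^\infty\mathbf{1}_{\{y\in\rn:\ [|f^{(\alpha)}(y)|/\alpha]^{p_1}>\lambda s/\alpha\}}\,ds$$
to express it as an integral of characteristic functions in $s$. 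This is where the main obstacle lies: a direct Fubini argument as in the proof of Theorem \ref{mixTh1} is unavailable because $\|\cdot\|_{L^\Phi(\rn)}$ is a Luxemburg-type norm, and it is precisely to overcome this difficulty that Lemma \ref{LeMI} was established. I would apply Lemma \ref{LeMI} to pull the $s$-integral out of the inner $L^\Phi$-norm on each ball $B(x,t)$, and then apply the ordinary Minkowski integral inequality to pull it out of the outer $L^q$-norm; both operations are legitimate thanks to the hypotheses $p_\Phi^-\in(1,\infty)$ and $q\in(1,\infty)$. Splitting the resulting one-dimensional integral at $s=\alpha/\lambda$ and bounding $\|\mathbf{1}_{\{|f|>\alpha(\lambda s/\alpha)^{1/p_1}\}}\|_{(E_\Phi^q)_t(\rn)}$ via the definition of $\lambda$ (by $\lambda/\alpha$ on the lower part and by $\lambda/[\alpha(\lambda s/\alpha)^{1/p_1}]$ on the upper part), and using $1/p_1>1$ to ensure convergence of the tail integral, will give the first endpoint bound. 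The estimate for $f_{(\alpha)}$ is analogous: the integrand vanishes identically for $s>\alpha/\lambda$ because $|f_{(\alpha)}|/\alpha\le1$, while on $s\le\alpha/\lambda$ the same weak-type inequality combined with $0<1/p_2<1$ gives integrability near the origin.

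Finally, assembling the pieces: the hypothesis \eqref{Eq1} together with the convexification identity transforms each of the two summands in the decomposition of $\alpha\|\mathbf{1}_{\{|Tf|>\alpha\}}\|_{(E_\Phi^q)_t(\rn)}$ into an expression of the form $\alpha^{1-p_i}\|f^{(\alpha)}\|_{(E_{\Phi_{p_i}}^{p_iq})_t(\rn)}^{p_i}$ (respectively for $f_{(\alpha)}$), and the near-endpoint bounds then reduce each to a constant multiple of $\lambda$. Taking the supremum over $\alpha\in(0,\infty)$ will yield $\|Tf\|_{(WE_\Phi^q)_t(\rn)}\lesssim\|f\|_{(WE_\Phi^q)_t(\rn)}$, and the implicit constant is $t$-independent because the scaling identity, Lemma \ref{LeMI}, and \eqref{Eq1} are all uniform in $t$.
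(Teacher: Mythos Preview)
Your proposal is correct and follows essentially the same approach as the paper's proof: the same truncation $f=f^{(\alpha)}+f_{(\alpha)}$, the same two near-endpoint bounds \eqref{Eq2} and \eqref{Eq3}, the same layer-cake representation followed by Lemma \ref{LeMI} to extract the $s$-integral from the inner $L^\Phi$ norm and then Minkowski in $L^q$ for the outer integral, and the same splitting at $s=\alpha/\lambda$. The only cosmetic difference is that you isolate the convexification identity $\|g\|_{(E_{\Phi_{p_i}}^{p_iq})_t(\rn)}^{p_i}=\||g|^{p_i}\|_{(E_\Phi^q)_t(\rn)}$ explicitly at the outset, whereas the paper performs this step inline inside the first displayed chain of inequalities.
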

\begin{proof}
Let $f\in (WE_\Phi^q)_t(\rn)$ and
$$
\lambda:=\|f\|_{(WE_\Phi^q)_t(\rn)}=\sup_{\alpha\in(0,\infty)}\lf\{\alpha\lf
\|\mathbf{1}_{\{x\in\rn:\ |f(x)|>\alpha\}}\r\|_{(E_\Phi^q)_t(\rn)}\r\}.
$$
We need to show that, for any $\alpha\in(0,\infty)$,
$$
\alpha\lf
\|\mathbf{1}_{\{x\in\rn:\ |Tf(x)|>\alpha\}}\r\|_{(E_\Phi^q)_t(\rn)}
\lesssim\lambda
$$
with the implicit positive constant independent of $\alpha$, $f$ and $t$.

To this end, for any $\alpha\in(0,\infty)$, let
$$
f^{(\alpha)}:=f\mathbf{1}_{\{x\in\rn:\ |f(x)|>\alpha\}}\quad\text{and}\quad
f_{(\alpha)}:=f\mathbf{1}_{\{x\in\rn:\ |f(x)|\le\alpha\}}.
$$
We claim that
\begin{equation}\label{Eq2}
\lf\|f^{(\alpha)}\r\|_{(E_{\Phi_{p_1}}^{p_1q})_t(\rn)}\lesssim
\alpha\lf(\lambda/\alpha\r)^{1/p_1}
\end{equation}
and
\begin{equation}\label{Eq3}
\lf\|f_{(\alpha)}\r\|_{(E_{\Phi_{p_2}}^{p_2q})_t(\rn)}
\lesssim \alpha\lf(\lambda/\alpha\r)^{1/p_2}.
\end{equation}
Assuming that this claim holds true for the moment, then, by the condition
that $T$ is sublinear and \eqref{Eq1}, we conclude that, for any $\alpha\in(0,\infty)$,
\begin{align*}
&\alpha\lf\|\mathbf{1}_{\{x\in\rn:\ |T(f)(x)|>\alpha\}}\r\|_{(E_\Phi^q)_t(\rn)}\\
&\hspace*{12pt}\lesssim\alpha\lf\|\mathbf{1}_{\{x\in\rn:\ |T(f^{(\alpha)})(x)|>\alpha/2\}}
\r\|_{(E_\Phi^q)_t(\rn)}+\alpha\lf\|\mathbf{1}_{\{x\in\rn:\ |T(f_{(\alpha)})(x)|>\alpha/2\}}\r\|_{(E_\Phi^q)_t(\rn)}\\
&\hspace*{12pt}\sim\alpha\lf\|\mathbf{1}_{\{x\in\rn:\ |T(f^{(\alpha)})(x)|>\alpha/2\}}
\r\|_{(E_{\Phi_{p_1}}^{p_1q})_t(\rn)}^{p_1}+\alpha\lf\|\mathbf{1}_{\{x\in\rn:\
|T(f_{(\alpha)})(x)|>\alpha/2\}}\r\|_{(E_{\Phi_{p_2}}^{p_2q})_t(\rn)}^{p_2}\\
&\hspace*{12pt}\lesssim\alpha^{1-p_1}\lf\|f^{(\alpha)}\r\|_{(E_{\Phi_{p_1}}^{p_1q})_t(\rn)}^{p_1}
+\alpha^{1-p_2}\lf\|f^{(\alpha)}\r\|_{(E_{\Phi_{p_2}}^{p_2q})_t(\rn)}^{p_2}\lesssim\lambda.
\end{align*}
This implies that $\|T(f)\|_{(WE_\Phi^p)_t(\rn)}\lesssim \|f\|_{(WE_\Phi^p)_t(\rn)}$, which is the desired conclusion.

Therefore, it remains to prove the above claim.
To prove \eqref{Eq2}, by Lemma \ref{LeMI}, we have
\begin{align*}
&\lf\{\int_\rn\lf\|\frac{|f^{(\alpha)}|/\alpha}{(\lambda/\alpha)^{1/p_1}}
\mathbf{1}_{B(x,t)}\r\|_{L^{\Phi_{p_1}}(\rn)}^{p_1q}\,dx\r\}^\frac{1}{p_1q}\\
&\hspace*{12pt}=\lf\{\int_\rn\lf\|\int_0^{\frac{[|f^{(\alpha)}|/\alpha]^{p_1}}{[\lambda/\alpha]}}\,d\tau
\mathbf{1}_{{B(x,t)}}\r\|_{L^{\Phi}(\rn)}^{q}\,dx\r\}^\frac{1}{p_1q}\\
&\hspace*{12pt}\lesssim\lf\{\int_\rn\lf[\int_0^\infty\lf\|\mathbf{1}_{\{y\in\rn:\
[|f^{(\alpha)}(y)|/\alpha]^{p_1}>\frac{\lambda\tau}{\alpha}\}}\mathbf{1}_{B(x,t)}
\r\|_{L^{\Phi}(\rn)}\,d\tau\r]^{q}\,dx\r\}^\frac1{p_1q}\\
&\hspace*{12pt}\lesssim\lf\{\int_0^\infty\lf[\int_\rn\lf\|\mathbf{1}_{\{y\in\rn:\
[|f^{(\alpha)}(y)|/\alpha]^{p_1}>\frac{\lambda\tau}{\alpha}\}}\mathbf{1}_{B(x,t)}
\r\|^q_{L^{\Phi}(\rn)}\,dx\r]^\frac1{q}\, d\tau\r\}^\frac1{p_1}\\
&\hspace*{12pt}\lesssim\lf\{\int_0^{\alpha/\lambda}\lf[\int_\rn\lf\|\mathbf{1}_{\{y\in\rn:\
[|f^{(\alpha)}(y)|/\alpha]^{p_1}>\frac{\lambda\tau}{\alpha}\}}\mathbf{1}_{B(x,t)}
\r\|^q_{L^{\Phi}(\rn)}\,dx\r]^\frac1{q}\, d\tau\r\}^\frac1{p_1}+\lf\{\int_{\alpha/\lambda}^\infty\cdots d\tau\r\}^\frac1{p_1}\\
&\hspace*{12pt}
=:\mathrm{I_1}+\mathrm{I_2}.
\end{align*}
From the definition of $f^{(\alpha)}$, Definition \ref{DewOs} and \eqref{Eq22}, we deduce that
\begin{align*}
\mathrm{I_1}&\lesssim\lf\{\int_0^{\alpha/\lambda}\lf[\int_\rn\lf\|\mathbf{1}_{\{y\in\rn:\
|f(y)|>\alpha\}}\mathbf{1}_{B(x,t)}
\r\|^q_{L^{\Phi}(\rn)}\,dx\r]^\frac1{q}\, d\tau\r\}^\frac1{p_1}\\
&\lesssim\lf\{\frac{\alpha}{\lambda}\lf[\int_\rn\lf\|\mathbf{1}_{\{y\in\rn:\
|f(y)|>\alpha\}}\mathbf{1}_{B(x,t)}
\r\|^q_{L^{\Phi}(\rn)}\,dx\r]^\frac1{q}\r\}^\frac1{p_1}
\lesssim \widetilde C_{(\Phi,t)}^{\frac1{p_1}},
\end{align*}
here and hereafter, $\widetilde{C}_{(\Phi,t)}$ is the same as in \eqref{Eq22}.
As for $\mathrm{I_2}$, by the definition $f^{(\alpha)}$,
Definition \ref{DewOs}, \eqref{Eq22} and $\frac1{p_1}>1$, we conclude that
\begin{align*}
\mathrm{I_2}&\lesssim\lf\{\int_{\alpha/\lambda}^\infty\lf[\int_\rn\lf\|\mathbf{1}_{\{y\in\rn:\
|f(y)|>\alpha[\frac{\lambda\tau}{\alpha}]^{1/p_1}\}}\mathbf{1}_{B(x,t)}
\r\|^q_{L^{\Phi}(\rn)}\,dx\r]^\frac1{q}\, d\tau\r\}^\frac1{p_1}\\
&\lesssim\lf\{\int_{\alpha/\lambda}^\infty\lf[\alpha^{-1}\lf(\frac{\lambda\tau}{\alpha}\r)^{-\frac1{p_1}}\r]
\lf[\lambda\widetilde C_{(\Phi,t)}\r]\,d\tau\r\}^\frac1{p_1}\lesssim \widetilde C_{(\Phi,t)}^{\frac1{p_1}}.
\end{align*}
From \eqref{Eq22} and the estimates for $\mathrm{I_1}$ and $\mathrm{I_2}$, we then deduce \eqref{Eq2}.

To prove \eqref{Eq3}, by a proof similar to the estimation of \eqref{Eq2}, we have
\begin{align*}
&\lf\{\int_\rn\lf\|\frac{|f_{(\alpha)}|/\alpha}{(\lambda/\alpha)^{1/p_2}}
\mathbf{1}_{B(x,t)}\r\|_{L^{\Phi_{p_2}}(\rn)}^{p_2q}\,dx\r\}^\frac{1}{p_2q}\\
&\hspace*{12pt}\lesssim\lf\{\int_0^{\alpha/\lambda}\lf[\int_\rn\lf\|\mathbf{1}_{\{y\in\rn:\
[|f_{(\alpha)}(y)|/\alpha]^{p_2}>\frac{\lambda\tau}{\alpha}\}}\mathbf{1}_{B(x,t)}
\r\|^q_{L^{\Phi}(\rn)}\,dx\r]^\frac1{q}\, d\tau\r\}^\frac1{p_2}
+\lf\{\int_{\alpha/\lambda}^\infty\cdots d\tau\r\}^\frac1{p_2}\\
&\hspace*{12pt}
=:\mathrm{II_1}+\mathrm{II_2}.
\end{align*}
From the definition $f_{(\alpha)}$,
Definition \ref{DewOs}, \eqref{Eq22} and $0<\frac1{p_2}<1$, we deduce that
\begin{align*}
\mathrm{II_1}&\lesssim\lf\{\int_0^{\alpha/\lambda}\lf[\int_\rn\lf\|\mathbf{1}_{\{y\in\rn:\
|f(y)|>\alpha[\frac{\lambda\tau}{\alpha}]^{1/p_2}\}}\mathbf{1}_{B(x,t)}
\r\|^q_{L^{\Phi}(\rn)}\,dx\r]^\frac1{q}\, d\tau\r\}^\frac1{p_2}\\
&\lesssim\lf\{\int_0^{\alpha/\lambda}\lf[\alpha^{-1}\lf(\frac{\lambda\tau}{\alpha}\r)^{-\frac1{p_2}}\r]
\lf[\lambda\widetilde C_{(\Phi,t)}\r]\,d\tau\r\}^\frac1{p_2}\lesssim \widetilde C_{(\Phi,t)}^{\frac1{p_2}}.
\end{align*}

Observe that, when $\tau\in(\frac{\alpha}{\lambda},\infty)$,
$(|f_{(\alpha)}|/\alpha)^{p_2}\le1<\frac{\tau\lambda}{\alpha}$ and hence $\mathrm{II_2}=0$,
which, together with the estimate for $\mathrm{II_1}$ and \eqref{Eq22}, implies \eqref{Eq3}.
Thus, we complete the proof of our above claim and hence of Theorem \ref{Th1}.
\end{proof}

Moreover, we can establish the following vector-valued inequality of
the Hardy--Littlewood operator $\cm$ in \eqref{mm} on $(WE_{\Phi}^q)_t(\rn)$, which
shows that $(E_\Phi^q)_t(\rn)$ satisfies Assumption \ref{xinm}.
\begin{proposition}\label{Profs}
Let $t\in(0,\infty)$, $q,\ s\in(1,\infty)$ and $\Phi$ be an Orlicz function with positive lower type $p_{\Phi}^-\in(1,\infty)$ and
positive upper type $p_{\Phi}^+$.
Then there exists a positive constant $C$, independent of $t$,
such that, for any sequence $\{f_j\}_{j\in\nn}\subset\mathscr{M}(\rn)$,
$$
\lf\|\lf\{\sum_{j\in\nn}[\cm(f_j)]^s\r\}^\frac1s\r\|_{(WE_{\Phi}^q)_t(\rn)}
\le C\lf\|\lf\{\sum_{j\in\nn}|f_j|^s\r\}^\frac1s\r\|_{(WE_{\Phi}^q)_t(\rn)}.
$$
\end{proposition}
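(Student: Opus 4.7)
The plan is to deduce Proposition \ref{Profs} from the interpolation theorem (Theorem \ref{Th1}) by introducing an auxiliary sublinear operator, in direct analogy with the proof of Theorem \ref{mixProfs}. Given $s\in(1,\infty)$ and the fixed sequence $\{f_j\}_{j\in\nn}\subset\mathscr M(\rn)$, first I would define the normalized multipliers
$$
\eta_j(y):=\frac{f_j(y)}{\lf[\sum_{k\in\nn}|f_k(y)|^s\r]^{1/s}}
$$
when the denominator is nonzero and $\eta_j(y):=0$ otherwise, and define the operator
$$
A(g)(x):=\lf\{\sum_{j\in\nn}\lf[\cm(g\eta_j)(x)\r]^s\r\}^{1/s}
$$
on $\mathscr M(\rn)$. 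By the Minkowski inequality in $\ell^s$, $A$ is sublinear. Observing that $g\eta_j=f_j$ when $g:=[\sum_{j\in\nn}|f_j|^s]^{1/s}$, the proposition reduces to the boundedness of $A$ on $(WE_\Phi^q)_t(\rn)$.

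Next I would verify the hypotheses of Theorem \ref{Th1} at two endpoints. Choose $p_1\in(\tfrac{1}{\min\{p_\Phi^-,q\}},1)$ and $p_2\in(1,\infty)$; note that $\Phi_{p_i}(\tau)=\Phi(\tau^{p_i})$ has positive lower type $p_ip_\Phi^-$, and, since $p_i>1/p_\Phi^-$ (using $p_\Phi^->1$), this lower type is strictly greater than $1$, so each $\Phi_{p_i}$ falls into the regime for which the Fefferman--Stein vector-valued inequality of strong type for $\cm$ on $(E_{\Phi_{p_i}}^{p_iq})_t(\rn)$ is available from \cite{ZYYW}, with constant independent of $t$. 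Combining this with the trivial pointwise bound $|\eta_j|\le 1$ and the elementary embedding $(E_{\Phi_{p_i}}^{p_iq})_t(\rn)\hookrightarrow (WE_{\Phi_{p_i}}^{p_iq})_t(\rn)$ from Remark \ref{Rews}(i), one obtains, for any $h\in (E_{\Phi_{p_i}}^{p_iq})_t(\rn)$,
$$
\|A(h)\|_{(WE_{\Phi_{p_i}}^{p_iq})_t(\rn)}\le \|A(h)\|_{(E_{\Phi_{p_i}}^{p_iq})_t(\rn)}\lesssim \lf\|\lf(\sum_{j\in\nn}|h\eta_j|^s\r)^{1/s}\r\|_{(E_{\Phi_{p_i}}^{p_iq})_t(\rn)}\le \|h\|_{(E_{\Phi_{p_i}}^{p_iq})_t(\rn)}.
$$
These are precisely the weak-type endpoint estimates required as input for Theorem \ref{Th1}.

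Finally, applying Theorem \ref{Th1} to $A$ with endpoints $p_1,p_2$ yields $\|A(g)\|_{(WE_\Phi^q)_t(\rn)}\lesssim \|g\|_{(WE_\Phi^q)_t(\rn)}$ with implicit constant independent of $t$ (because both Theorem \ref{Th1}'s constants and the Fefferman--Stein constants from \cite{ZYYW} are $t$-independent). Specializing to $g:=[\sum_{j\in\nn}|f_j|^s]^{1/s}$ closes the argument. The main obstacle will be the bookkeeping needed to invoke the strong-type Fefferman--Stein inequality from \cite{ZYYW} at the two endpoints $\Phi_{p_1},\Phi_{p_2}$: one must check that the parameter constraints there are simultaneously compatible with Theorem \ref{Th1}'s requirements (in particular that $p_1p_\Phi^->1$ and $p_1q>1$, both of which are guaranteed by $p_1>1/\min\{p_\Phi^-,q\}$); once this alignment of parameters is secured, the remainder of the argument is essentially a formal combination of the already-established interpolation theorem with a well-chosen linearization.
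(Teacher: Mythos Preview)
Your proposal is correct and follows essentially the same approach as the paper's proof: both define the same auxiliary sublinear operator $A(g)=\{\sum_j[\cm(g\eta_j)]^s\}^{1/s}$, verify the weak-type endpoint bounds at $p_1\in(1/\min\{p_\Phi^-,q\},1)$ and $p_2\in(1,\infty)$ via the strong-type Fefferman--Stein inequality from \cite[Theorem 2.20]{ZYYW}, and then invoke Theorem \ref{Th1}. Your parameter checks and attention to $t$-independence match the paper's reasoning.
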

\begin{proof}
Let $\{f_j\}_{j\in\nn}$ be a given arbitrary sequence of measurable functions and,
for any measurable function $g$ and $x\in\rn$, define
$$
A(g)(x):=\lf\{\sum_{j\in\nn}[\cm(g\eta_j)(x)]^s\r\}^\frac1s,
$$
where $s\in(1,\infty)$ and, for any $i\in\nn$ and $y\in\rn$,
$$
\eta_j(y):=\frac{f_j(y)}{[\sum_{j\in\nn}|f_j(y)|^s]^{1/s}}
\quad\text{when}\quad\lf[\sum_{j\in\nn}|f_j(y)|^s\r]^{1/s}\neq0,
$$
and $\eta_j(y):=0$ otherwise. It is easy to see that,
by the Minkowski inequality, for any $\lambda\in\mathbb{C}$
and $g_1,\ g_2\in\mathscr M(\rn)$,
$$
A(g_1+g_2)\le A(g_1)+A(g_2)\quad\text{and}\quad A(\lambda g)=|\lambda|A(g).
$$
Thus, $A$ is sublinear. For any $p_\Phi^-,\ q,\ s\in(1,\infty)$,
from \cite[Theorem 2.20]{ZYYW}, we deduce that
$$
\lf\|\lf\{\sum_{j\in\nn}[\cm(f_j)]^s\r\}^\frac1s\r\|_{(E_{\Phi}^q)_t(\rn)}\lesssim \lf\|\lf\{\sum_{j\in\nn}|f_j|^s\r\}^\frac1s\r\|_{(E_{\Phi}^q)_t(\rn)}.
$$
Using this, we know that, for any given $p_1\in(\frac1{\min\{p_\Phi^-,q\}},1)$ and
$p_2\in(1,\infty)$ and any $h\in\mathscr M(\rn)$,
\begin{align*}
\|A(h)\|_{(WE_{\Phi_{p_i}}^{p_iq})_t(\rn)}&=\lf\|\lf\{\sum_{j\in\nn}[\cm(h\eta_j)]^s
\r\}^\frac1s\r\|_{(WE_{\Phi_{p_i}}^{p_iq})_t(\rn)}
\leq\lf\|\lf\{\sum_{j\in\nn}[\cm(h\eta_j)]^s
\r\}^\frac1s\r\|_{(E_{\Phi_{p_i}}^{p_iq})_t(\rn)}\\
&\lesssim\lf\|\lf\{\sum_{j\in\nn}|h\eta_j|^s\r\}^\frac1s\r\|_{(E_{\Phi_{p_i}}^{p_iq})_t(\rn)}
\sim\|h\|_{(E_{\Phi_{p_i}}^{p_iq})_t(\rn)},
\end{align*}
which implies that the operator $A$ is bounded on $(WE_{\Phi_{p_i}}^{p_iq})_t(\rn)$, where $i\in\{1,2\}$.
Now, taking $g:=[\sum_{j\in\nn}|f_j(y)|^s]^{1/s}$, then, by Theorem \ref{Th1}, we conclude that
\begin{align*}
\lf\|\lf\{\sum_{j\in\nn}[\cm(f_j)]^s\r\}^\frac1s
\r\|_{(WE_{\Phi}^q)_t(\rn)}&=\|A(g)\|_{(WE_{\Phi}^q)_t(\rn)}
\lesssim\|g\|_{(WE_{\Phi}^q)_t(\rn)}\sim
\lf\|\lf\{\sum_{j\in\nn}|f_j|^s\r\}^\frac1s\r\|_{(WE_{\Phi}^q)_t(\rn)},
\end{align*}
which completes the proof of Proposition \ref{Profs}.
\end{proof}

Now we introduce the notion of weak Orlicz-slice Hardy spaces.
\begin{definition}
Let $t,\ q\in(0,\infty)$, $N\in\nn$ and $\Phi$ be an Orlicz function with positive lower type $p_{\Phi}^-$ and
positive upper type $p_{\Phi}^+$. The \emph{weak Orlicz-slice Hardy space}
$(WHE_\Phi^q)_t(\rn)$ is defined to be the set of all $f\in\cs'(\rn)$
such that $M_N^0(f)\in(WE_\Phi^{q})_t(\rn)$ and, for any $f\in(WHE_\Phi^q)_t(\rn)$, let
$$
\|f\|_{(WHE_\Phi^q)_t(\rn)}:=\lf\|M_N^0(f)\r\|_{(WE_\Phi^{q})_t(\rn)},
$$
where $M_N^0(f)$ is as in \eqref{EqMN0} with $N$ sufficiently large.
\end{definition}

\begin{remark}
Let $t\in(0,\infty)$, $q\in(1,\infty)$ and
$\Phi$ be an Orlicz function with positive lower type $p_{\Phi}^-\in(1,\infty)$
and positive upper type $p_{\Phi}^+$. By Proposition \ref{Profs},
we conclude that, for any $r\in(1,\min\{q,p_{\Phi}^-\})$,
$\cm$ in \eqref{mm} is bounded on $((WE_\Phi^{q})_t(\rn))^{1/r}$,
which, combined with Theorem \ref{dayu2}, implies that
$(WHE_\Phi^q)_t(\rn)=(WE_\Phi^{q})_t(\rn)$ with equivalent norms.
\end{remark}

Applying Proposition \ref{Profs} and Theorem \ref{Thmc}(ii),
we directly obtain the following maximal function
characterizations of the weak Orlicz-slice Hardy space $(WHE_\Phi^q)_t(\rn)$.
\begin{theorem}\label{mdj}
Let $t,\ a,\ b,\ q\in(0,\infty)$. Let $\Phi$ be an Orlicz function with positive lower type $p_{\Phi}^-$ and positive upper type $p_{\Phi}^+$. Let $\varphi\in\mathcal{S}(\rn)$ satisfy
$\int_{\rn}\varphi(x)\,dx\neq0.$
Assume that $b\in(\frac{n}{\min\{p_\Phi^-,\,q\}},\infty)$ and $N\ge\lfloor b+1\rfloor$. For any $f\in\mathcal{S}'(\rn)$,
if one of the following quantities
$$
\lf\|M_N^0(f)\r\|_{(WE_\Phi^q)_t(\rn)},\  \lf\|M(f,\varphi)\r\|_{(WE_\Phi^q)_t(\rn)},\  \lf\|M_a^*(f,\varphi)\r\|_{(WE_\Phi^q)_t(\rn)},\ \lf\|M_N(f)\r\|_{(WE_\Phi^q)_t(\rn)},
$$
$$
\lf\|M_b^{**}(f,\varphi)\r\|_{(WE_\Phi^q)_t(\rn)},\ \lf\|M_{b,\ N}^{**}(f)\r\|_{(WE_\Phi^q)_t(\rn)}
\quad and \quad\lf\|\cn(f)\r\|_{(WE_\Phi^q)_t(\rn)}
$$
is finite, then the others are also finite and mutually equivalent with the
positive equivalence constants independent of $f$ and $t$.
\end{theorem}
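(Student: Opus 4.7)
The plan is to verify that the Orlicz-slice space $X:=(E_\Phi^q)_t(\rn)$ satisfies Assumption \ref{xinm} with a suitable $r$, together with condition \eqref{Eqinf}, and then to directly invoke Theorem \ref{Thmc}(i), Theorem \ref{Thmc}(ii), and Theorem \ref{Thmp}. The equivalences among $\|M_N^0(f)\|$, $\|M(f,\varphi)\|$, $\|M_a^\ast(f,\varphi)\|$, $\|M_N(f)\|$, $\|M_b^{\ast\ast}(f,\varphi)\|$ and $\|M_{b,N}^{\ast\ast}(f)\|$ will then be obtained from Theorem \ref{Thmc}, while the equivalence with $\|\cn(f)\|$ will come from Theorem \ref{Thmp}.

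First, I would choose a parameter $r\in(0,\infty)$ so that both $n/r<b$ and $\cm$ is bounded on $((WE_\Phi^q)_t(\rn))^{1/r}$. Since $b>n/\min\{p_\Phi^-,q\}$ by hypothesis, one has $n/b<\min\{p_\Phi^-,q\}$, and hence we can pick $r\in(0,\infty)$ satisfying
\begin{equation*}
\frac{n}{b}<\frac{1}{r}<\min\{p_\Phi^-,q\}.
\end{equation*}
The identification $((E_\Phi^q)_t(\rn))^{1/r}=(E_{\Phi_r}^{rq})_t(\rn)$ (with $\Phi_r(\tau):=\Phi(\tau^r)$, which has positive lower type $rp_\Phi^->1$ and positive upper type $rp_\Phi^+$) together with Remark \ref{Rews-1} yields $((WE_\Phi^q)_t(\rn))^{1/r}=W(E_{\Phi_r}^{rq})_t(\rn)$. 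Since $rp_\Phi^->1$ and $rq>1$, Proposition \ref{Profs} applied to $\Phi_r$ and $rq$ (with a single-function ``sequence'' to extract the scalar bound from the vector-valued one) shows that $\cm$ is bounded on $(WE_{\Phi_r}^{rq})_t(\rn)$, which is exactly $((WE_\Phi^q)_t(\rn))^{1/r}$. Thus Assumption \ref{xinm} holds and $n/r<b$, so Theorem \ref{Thmc}(i) and (ii) deliver the mutual equivalences among the six maximal quantities not involving $\cn(f)$.

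Next I would bring $\cn(f)$ into the chain by invoking Theorem \ref{Thmp}, which additionally requires the lower bound \eqref{Eqinf} for $\|\mathbf{1}_{B(x,1)}\|_{(WE_\Phi^q)_t(\rn)}$. By translation invariance of Lebesgue measure, the quantity $\|\mathbf{1}_{B(x,1)}\|_{L^\Phi(\rn)}$ depends only on the radius (not on the center), and similarly $\|\mathbf{1}_{B(x,1)}\mathbf{1}_{B(y,t)}\|_{L^\Phi(\rn)}$ depends only on $|x-y|$ and $t$; combined with the fact that the $(WE_\Phi^q)_t$-quasi-norm of a bounded characteristic function is comparable to its $(E_\Phi^q)_t$-quasi-norm (since $\mathbf{1}_{B(x,1)}$ takes only the values $0$ and $1$), this shows that $\|\mathbf{1}_{B(x,1)}\|_{(WE_\Phi^q)_t(\rn)}$ equals a finite positive constant independent of $x\in\rn$, establishing \eqref{Eqinf}. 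Theorem \ref{Thmp} then adds $\|\cn(f)\|_{(WE_\Phi^q)_t(\rn)}$ to the list of equivalent quantities.

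The main technical point — and the only step where one must do more than cite an earlier result — is the identification $((E_\Phi^q)_t(\rn))^{1/r}=(E_{\Phi_r}^{rq})_t(\rn)$ with equivalent quasi-norms, which follows by unwinding the definitions: for any $f\geq0$,
\begin{equation*}
\bigl\||f|^r\bigr\|_{(E_\Phi^q)_t(\rn)}^{1/r}
=\biggl\{\int_\rn\biggl[\frac{\||f|^r\mathbf{1}_{B(x,t)}\|_{L^\Phi(\rn)}}{\|\mathbf{1}_{B(x,t)}\|_{L^\Phi(\rn)}}\biggr]^q dx\biggr\}^{\frac{1}{rq}},
\end{equation*}
and then using $\||f|^r\mathbf{1}_{B(x,t)}\|_{L^\Phi(\rn)}=\|f\mathbf{1}_{B(x,t)}\|_{L^{\Phi_r}(\rn)}^r$ together with \eqref{Eq22} (which gives that $\|\mathbf{1}_{B(x,t)}\|_{L^\Phi(\rn)}=\widetilde C_{(\Phi,t)}$ is independent of $x$, and analogously for $\Phi_r$) to rearrange this into the $(E_{\Phi_r}^{rq})_t(\rn)$-quasi-norm of $f$ up to a $t$-dependent constant that cancels in the boundedness of $\cm$. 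Everything else is direct verification of hypotheses, so no substantive obstacle should remain.
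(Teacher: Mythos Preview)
Your approach is essentially the paper's: verify Assumption \ref{xinm} for $X=(E_\Phi^q)_t(\rn)$ via Proposition \ref{Profs}, then invoke Theorem \ref{Thmc}(ii); you are in fact more careful than the paper in also checking \eqref{Eqinf} and citing Theorem \ref{Thmp} to bring $\cn(f)$ into the chain.

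There is, however, a systematic swap of $r$ and $1/r$ in your parameter bookkeeping that you should fix. Theorem \ref{Thmc}(ii) needs $b>n/r$ together with $\cm$ bounded on $(WX)^{1/r}$. By Definition \ref{Debf}(i) one has $((E_\Phi^q)_t(\rn))^{1/r}=(E_{\Phi_{1/r}}^{q/r})_t(\rn)$ (not $(E_{\Phi_r}^{rq})_t(\rn)$; your displayed computation of $\||f|^r\|_{(E_\Phi^q)_t}^{1/r}$ is actually computing $\|f\|_{X^r}$, not $\|f\|_{X^{1/r}}$). Thus the correct choice is $r\in(n/b,\min\{p_\Phi^-,q\})$ (not $1/r$ in that range): then $\Phi_{1/r}$ has lower type $p_\Phi^-/r>1$ and the outer exponent is $q/r>1$, so Proposition \ref{Profs} gives $\cm$ bounded on $(WE_{\Phi_{1/r}}^{q/r})_t(\rn)=((WE_\Phi^q)_t(\rn))^{1/r}$, and $b>n/r$ holds by construction. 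With this correction your argument goes through.
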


To establish the atomic characterization of weak Orlicz-slice Hardy spaces,
 although \cite[Lemma 4.3]{ZYYW} and Proposition \ref{Profs}
ensure that $(E_\Phi^q)_t(\rn)$ satisfies Assumption \ref{a2.15} and Assumption \ref{xinm}, we still
need the following three lemmas, which are just, respectively,
\cite[Lemmas 4.3, 4.4 and 5.4]{ZYYW}.

\begin{lemma}\label{Le817}
Let $t$, $q\in(0,\infty)$ and $\Phi$ be an Orlicz function with positive lower type
$p_{\Phi}^-$ and positive upper type $p_{\Phi}^+$. Let $\vartheta\in (0,\min\{p^-_{\Phi},q\}]$.
Then $(E_\Phi^q)_t(\rn)$ is a strictly $\vartheta$-convex
ball quasi-Banach function space as in Definition \ref{Debf}(ii).
\end{lemma}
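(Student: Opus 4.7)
The plan is to establish the inequality
\[
\left\|\left(\sum_{j\in\nn}|f_j|^\vartheta\right)^{1/\vartheta}\right\|_{(E_\Phi^q)_t(\rn)}
\le\left(\sum_{j\in\nn}\|f_j\|_{(E_\Phi^q)_t(\rn)}^\vartheta\right)^{1/\vartheta},
\]
which, after the substitution $f_j\mapsto |h_j|^\vartheta$, is equivalent to the constant-one concavity condition formulated in Definition \ref{Debf}(ii). My strategy is to decouple the Orlicz layer from the outer $L^q$-integration layer: first I would obtain a pointwise-in-$x$ inequality for the localized Orlicz quasi-norms (this is where the hypothesis $\vartheta\le p_\Phi^-$ enters), and then integrate in $x$ by means of the Minkowski integral inequality in $L^{q/\vartheta}(\rn)$ (this is where the hypothesis $\vartheta\le q$ enters).

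For the Orlicz layer, I would first prove the auxiliary strict $\vartheta$-convexity of $L^\Phi(\rn)$: for every $\{g_j\}_{j\in\nn}\subset L^\Phi(\rn)$,
\[
\left\|\left(\sum_{j\in\nn}|g_j|^\vartheta\right)^{1/\vartheta}\right\|_{L^\Phi(\rn)}
\le\left(\sum_{j\in\nn}\|g_j\|_{L^\Phi(\rn)}^\vartheta\right)^{1/\vartheta}.
\]
Using the scaling identity $\||g|^\vartheta\|_{L^{\Phi_{1/\vartheta}}(\rn)}=\|g\|_{L^\Phi(\rn)}^\vartheta$ (a direct consequence of the formula $\||f|^s\|_{L^\Phi(\rn)}=\|f\|_{L^{\Phi_s}(\rn)}^s$ recorded after Definition \ref{DeO}), this reduces, after raising both sides to the $\vartheta$-th power, to the subadditivity
\[
\left\|\sum_{j\in\nn}|g_j|^\vartheta\right\|_{L^{\Phi_{1/\vartheta}}(\rn)}
\le\sum_{j\in\nn}\||g_j|^\vartheta\|_{L^{\Phi_{1/\vartheta}}(\rn)}
\]
of the Luxemburg norm associated with $\Phi_{1/\vartheta}(\tau):=\Phi(\tau^{1/\vartheta})$. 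Since $\vartheta\le p_\Phi^-$, the function $\Phi_{1/\vartheta}$ has lower type $p_\Phi^-/\vartheta\ge 1$ and, invoking Remark~(ii) following Definition \ref{DeO} together with a standard convexification, may be identified with a Young function whose Luxemburg norm obeys the triangle inequality with constant one, yielding the displayed subadditivity.

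Applying this pointwise-in-$x$ with $g_j:=f_j\mathbf{1}_{B(x,t)}$ and dividing through by $\|\mathbf{1}_{B(x,t)}\|_{L^\Phi(\rn)}^\vartheta$ (which is independent of $x$ by \eqref{Eq22}), I would obtain
\[
\left[\frac{\|(\sum_{j\in\nn}|f_j|^\vartheta)^{1/\vartheta}\mathbf{1}_{B(x,t)}\|_{L^\Phi(\rn)}}{\|\mathbf{1}_{B(x,t)}\|_{L^\Phi(\rn)}}\right]^\vartheta
\le\sum_{j\in\nn}\left[\frac{\|f_j\mathbf{1}_{B(x,t)}\|_{L^\Phi(\rn)}}{\|\mathbf{1}_{B(x,t)}\|_{L^\Phi(\rn)}}\right]^\vartheta.
\]
Writing $G_j(x):=\|f_j\mathbf{1}_{B(x,t)}\|_{L^\Phi(\rn)}/\|\mathbf{1}_{B(x,t)}\|_{L^\Phi(\rn)}$, raising both sides to the power $q/\vartheta\ge 1$ and integrating in $x$ gives $\|(\sum_{j\in\nn}|f_j|^\vartheta)^{1/\vartheta}\|_{(E_\Phi^q)_t(\rn)}\le\|(\sum_{j\in\nn}G_j^\vartheta)^{1/\vartheta}\|_{L^q(\rn)}$. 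The Minkowski inequality in $L^{q/\vartheta}(\rn)$ then bounds the right-hand side by $(\sum_{j\in\nn}\|G_j\|_{L^q(\rn)}^\vartheta)^{1/\vartheta}$, and the identity $\|G_j\|_{L^q(\rn)}=\|f_j\|_{(E_\Phi^q)_t(\rn)}$ closes the chain.

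The genuinely delicate point is the first step: upgrading the positive lower-type bound on $\Phi$ into exact subadditivity (with constant one) for the Luxemburg norm attached to $\Phi_{1/\vartheta}$. When $\Phi$ is already a Young function this is automatic, but in general one must either replace $\Phi$ by a pointwise-equivalent convex majorant that preserves the Luxemburg unit ball or verify the modular inequality $\Phi_{1/\vartheta}(a+b)\le\Phi_{1/\vartheta}(a)+\Phi_{1/\vartheta}(b)$ directly from $p_\Phi^-/\vartheta\ge 1$. This is the sole place where the assumption $\vartheta\le p_\Phi^-$ is essential, while $\vartheta\le q$ is used exactly once, namely in the validity of Minkowski's inequality in $L^{q/\vartheta}(\rn)$; consequently the combined hypothesis $\vartheta\le\min\{p_\Phi^-,q\}$ is precisely what is needed for the two halves of the argument to interlock with constant one.
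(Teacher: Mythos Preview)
The paper does not prove this lemma; it is quoted verbatim from \cite[Lemma 4.3]{ZYYW}. Your two-layer argument---strict $\vartheta$-convexity of $L^\Phi(\rn)$ followed by Minkowski in $L^{q/\vartheta}(\rn)$ for the outer integration---is the natural route and is essentially how the result is established in \cite{ZYYW}.

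Two remarks. First, a mislabel in your opening sentence: after the substitution, your displayed inequality becomes the triangle inequality for $X^{1/\vartheta}$, which is the \emph{reverse} of the $\vartheta$-concavity inequality written in Definition~\ref{Debf}(ii). This does not affect the mathematics, since the lemma says ``convex'' and that is exactly what you prove; the cross-reference to Definition~\ref{Debf}(ii) in the lemma statement is only meant to fix the meaning of ``strictly'' (constant one), not the direction of the inequality. Second, the ``delicate point'' you isolate is already dispatched within the present paper: Remark~\ref{Re83} asserts that whenever an Orlicz function has lower type at least $1$, the associated Luxemburg functional is a genuine norm. Since $\Phi_{1/\vartheta}$ has lower type $p_\Phi^-/\vartheta\ge 1$, you may invoke Remark~\ref{Re83} directly for the subadditivity of $\|\cdot\|_{L^{\Phi_{1/\vartheta}}(\rn)}$ and bypass the convex-majorant discussion altogether.
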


\begin{lemma}\label{Le818}
Let $t,\ q\in(0,\infty)$ and $\Phi$ be an Orlicz function with positive lower type $p_{\Phi}^-$
and positive upper type $p_{\Phi}^+$. Let $r\in(\max\{q,\ p_{\Phi}^+\},\infty]$ and $s\in (0,\min\{p^-_{\Phi},q\})$. Then
there exists a positive constant $C_{(s,r)}$, depending on $s$ and $r$, but independent of $t$,
such that, for any $f\in\mathscr{M}(\rn)$,
\begin{equation}\label{jdjd}
\lf\|\mathcal{M}^{((r/s)')}(f) \r\|_{([(E_\Phi^q)_t(\rn)]^{1/s})'}\le C_{(s,r)}\lf\|f \r\|_{([(E_\Phi^q)_t(\rn)]^{1/s})'},
\end{equation}
here and hereafter, $[(E_\Phi^q)_t(\rn)]^{1/s}$ denotes the
$\frac1s$-convexification of $(E_\Phi^q)_t(\rn)$ as in Definition \ref{Debf}(i)
with $X:=(E_\Phi^q)_t(\rn)$ and $p:=1/s$,
and $([(E_\Phi^q)_t(\rn)]^{1/s})'$ denotes its dual space.
\end{lemma}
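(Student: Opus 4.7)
The plan is to identify $[(E_\Phi^q)_t(\rn)]^{1/s}$ as itself an Orlicz-slice space in the Banach range, characterize its associate space by duality, and then invoke boundedness of the (powered) Hardy--Littlewood maximal operator on that associate space.

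First, I would show $[(E_\Phi^q)_t(\rn)]^{1/s}$ coincides, up to a $t$-dependent constant that cancels through the computation, with an Orlicz-slice space $(E_{\Phi_{1/s}}^{q/s})_t(\rn)$, where $\Phi_{1/s}(\tau):=\Phi(\tau^{1/s})$. The key observation is that, by a direct substitution in the Luxemburg norm, $\||f|^{1/s}\mathbf{1}_{B(x,t)}\|_{L^\Phi(\rn)}^s=\|f\mathbf{1}_{B(x,t)}\|_{L^{\Phi_{1/s}}(\rn)}$, and since $\|\mathbf{1}_{B(x,t)}\|_{L^\Phi(\rn)}$ is independent of $x$ (see \eqref{Eq22}), this passes cleanly through the outer $L^{q/s}$ norm of Definition \ref{DeOs}. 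The hypotheses $s<p_\Phi^-$ and $s<q$ guarantee that $\Phi_{1/s}$ has positive lower type $p_\Phi^-/s>1$ and positive upper type $p_\Phi^+/s$, and that $q/s>1$, placing us in the Banach range where Lemma \ref{Le817} also ensures convexity.

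Second, I would establish a duality for Orlicz-slice spaces in the Banach range: for an Orlicz function $\Psi$ of positive lower type greater than $1$ and $p\in(1,\infty)$, one expects $((E_\Psi^p)_t(\rn))'$ to coincide, with equivalent norms, with $(E_{\widetilde\Psi}^{p'})_t(\rn)$, where $\widetilde\Psi$ is the complementary Young function of $\Psi$. The upper bound for $\|g\|_{((E_\Psi^p)_t)'}$ follows from the $L^\Psi$--$L^{\widetilde\Psi}$ H\"older inequality on each ball $B(x,t)$ combined with the $L^p$--$L^{p'}$ H\"older inequality on the outer variable $x$, while the matching lower bound uses a standard test-function argument picking $h=g_1 g_2$ with $g_1$ extremizing the inner Orlicz duality on each ball and $g_2$ extremizing the outer $L^{p'}$ duality. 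Applied with $\Psi:=\Phi_{1/s}$ and $p:=q/s$, this identifies $([(E_\Phi^q)_t(\rn)]^{1/s})'$ with (essentially) $(E_{\widetilde{\Phi_{1/s}}}^{(q/s)'})_t(\rn)$.

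Third, with the associate space identified, the boundedness of $\mathcal{M}^{((r/s)')}$ reduces, via the defining identity $\mathcal{M}^{(\theta)}(f)=[\mathcal{M}(|f|^\theta)]^{1/\theta}$ with $\theta=(r/s)'$ and Definition \ref{Debf}(i), to ordinary boundedness of $\mathcal{M}$ on the $(r/s)'$-convexification of this Orlicz-slice space. Applying the identification of Step 1 once more, this convexification is itself an Orlicz-slice space. The hypothesis $r>\max\{q,p_\Phi^+\}$ is designed precisely so that both the outer exponent and the inner Orlicz lower type of this convexified space remain strictly greater than $1$, whence boundedness of $\mathcal{M}$ there follows from \cite[Theorem 2.20]{ZYYW} applied with a single function, or, more elementarily, from the classical boundedness of $\mathcal{M}$ on Orlicz spaces $L^\Psi(\rn)$ with $\Psi$ of lower type greater than $1$ combined with the $L^{p}$-boundedness of $\mathcal{M}$ for the outer layer.

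\textbf{Main obstacle.} The delicate step is the associate-space duality in Step 2: although the pattern mirrors the familiar $L^p(L^q)$ case, tracking the complementary Young function carefully and, in particular, ensuring that the duality constants are uniform in $t$ (so that the resulting constant $C_{(s,r)}$ in \eqref{jdjd} is independent of $t$, as claimed) requires some care, especially near the endpoints $s=p_\Phi^-$ and $s=q$, which the strict inequality $s<\min\{p_\Phi^-,q\}$ precisely excludes. Once the duality is established with $t$-uniform constants, Steps 1 and 3 are largely bookkeeping along the template developed in \cite{ZYYW}.
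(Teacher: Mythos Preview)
The paper does not prove this lemma at all: it is stated as one of three lemmas that ``are just, respectively, \cite[Lemmas 4.3, 4.4 and 5.4]{ZYYW}'', so Lemma~\ref{Le818} is simply \cite[Lemma 4.4]{ZYYW} quoted verbatim. Your proposed three-step route (identify the $1/s$-convexification as $(E_{\Phi_{1/s}}^{q/s})_t(\rn)$, use the Orlicz-slice duality $((E_\Psi^p)_t)'\cong(E_{\widetilde\Psi}^{p'})_t$, then reduce $\mathcal{M}^{((r/s)')}$-boundedness to ordinary $\mathcal{M}$-boundedness on an Orlicz-slice space with indices $>1$) is exactly the architecture of the proof in \cite{ZYYW}, and your index bookkeeping is correct; the $t$-independence you flag is handled there precisely via \eqref{Eq22}, as you anticipate.
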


\begin{lemma}\label{Le819}
Let $t\in(0,\infty)$, $q\in(0,1]$ and $\Phi$ be an Orlicz function with positive lower type $p_{\Phi}^-$ and positive upper type $p_{\Phi}^+\in(0,1]$.
Then there exists a nonnegative constant $C$ such that, for any sequence $\{f_j\}_{j\in\nn}\subset
(E_\Phi^{q})_t(\rn)$ of nonnegative functions
such that $\sum_{j\in\mathbb{N}}f_j$ converges
in $(E_\Phi^{q})_t(\rn),$
$$
\lf\|\sum_{j\in\mathbb{N}}f_j\r\|_{(E_\Phi^{q})_t(\rn)}
\ge C\sum_{j\in\mathbb{N}}\lf\|f_j\r\|_{(E_\Phi^{q})_t(\rn)}.
$$
\end{lemma}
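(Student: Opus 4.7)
My plan is to exploit the two concavity-type hypotheses in the statement simultaneously: $p_\Phi^+\le 1$ governing the inner Luxemburg norm $\|\cdot\|_{L^\Phi(\rn)}$ and $q\le 1$ governing the outer $L^q$-integration in $x$. The strategy reduces the conclusion to a reverse-Minkowski inequality at each level, with the outer one being classical and the inner one being the main new ingredient.

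The outer step is the classical reverse Minkowski inequality: for $q\in(0,1]$ and any nonnegative sequence $\{h_j\}$ one has $\lf\|\sum_j h_j\r\|_{L^q(\rn)}\ge\sum_j\|h_j\|_{L^q(\rn)}$, proved by applying the Minkowski integral inequality (which reverses for $q<1$) with respect to counting measure on $\nn$.

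The crux is the inner step: I plan to establish that for any sequence $\{g_j\}\subset L^\Phi(\rn)$ of nonnegative functions with $\sum_j g_j\in L^\Phi(\rn)$,
$$
\lf\|\sum_j g_j\r\|_{L^\Phi(\rn)}\ge c_0\sum_j\|g_j\|_{L^\Phi(\rn)},
$$
where $c_0$ depends only on $\Phi$. The underlying pointwise fact is that, for nonnegative $\{a_j\}$ with $S:=\sum_k a_k$, the upper type property yields $\Phi(S)=\Phi((S/a_j)\cdot a_j)\le C(S/a_j)^{p_\Phi^+}\Phi(a_j)$, hence $\Phi(a_j)\ge C^{-1}(a_j/S)^{p_\Phi^+}\Phi(S)$. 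Summing and using the inequality $\sum_j(a_j/S)^{p_\Phi^+}\ge\sum_j(a_j/S)=1$---which is valid precisely because $p_\Phi^+\le 1$ and each $a_j/S\in[0,1]$---one obtains the pointwise subadditivity $\Phi(\sum_j a_j)\le C\sum_j\Phi(a_j)$ with $C$ independent of the number of terms. A Luxemburg-norm scaling argument, balancing this against the lower type property of $\Phi$ (which controls how the Luxemburg integral behaves under multiplicative scaling of $\lambda$ around $\sum_j\|g_j\|_{L^\Phi(\rn)}$), then upgrades the pointwise estimate to the desired reverse Minkowski at the $L^\Phi$-norm level.

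With both steps available, I apply the inner estimate at each $x\in\rn$ to the sequence $\{f_j\mathbf{1}_{B(x,t)}\}_{j\in\nn}$, divide through by the constant $\|\mathbf{1}_{B(x,t)}\|_{L^\Phi(\rn)}$ (which is independent of $x$ by \eqref{Eq22}), and then take the $L^q(\rn)$-quasi-norm in $x$, invoking the outer reverse Minkowski on the resulting nonnegative sequence $x\mapsto \|f_j\mathbf{1}_{B(x,t)}\|_{L^\Phi(\rn)}/\|\mathbf{1}_{B(x,t)}\|_{L^\Phi(\rn)}$. Composing the two inequalities yields the superadditivity of $\|\cdot\|_{(E_\Phi^q)_t(\rn)}$ with a constant depending only on $\Phi$ and $q$. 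The main obstacle is clearly the inner $L^\Phi$ estimate: the pointwise subadditivity of $\Phi$ alone actually produces an inequality in the \emph{wrong} direction at the norm level (an upper bound $\|\sum_j g_j\|_{L^\Phi(\rn)}\lesssim\sum_j\|g_j\|_{L^\Phi(\rn)}$), so extracting the matching \emph{lower} bound requires careful bookkeeping of how the Luxemburg functional interacts quantitatively with the two-sided type conditions on $\Phi$.
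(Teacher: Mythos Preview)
Your overall architecture --- an inner $L^\Phi$ superadditivity followed by the outer $L^q$ reverse Minkowski --- is the right one, and the outer step is indeed classical. The gap lies exactly where you locate it, but the repair you sketch does not close it. Pointwise subadditivity $\Phi(\sum_j a_j)\le C\sum_j\Phi(a_j)$ only yields the \emph{forward} bound $\|\sum_j g_j\|_{L^\Phi(\rn)}\lesssim\sum_j\|g_j\|_{L^\Phi(\rn)}$, and no Luxemburg rescaling against the lower-type estimate converts this into the reverse inequality with a constant independent of the number of summands. Concretely, the natural attempts --- e.g.\ bounding each $\int\Phi(g_j/\mu)\le\int\Phi(G/\mu)=1$ and summing, or deriving $\sum_j\int\Phi(g_j/\lambda)\ge C_1^{-1}\sum_j(\lambda_j/\lambda)^{p_\Phi^+}\ge C_1^{-1}$ from upper type --- all stall at the same point: linking $\sum_j\int\Phi(g_j/\lambda)$ back to $\int\Phi(G/\lambda)$ via your subadditivity goes the wrong way, and monotonicity produces an $N$-dependent factor.

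What is actually needed at the inner step is a \emph{reverse Jensen inequality for convex combinations}: writing $G/\lambda=\sum_j\theta_j(g_j/\lambda_j)$ with $\theta_j:=\lambda_j/\lambda$ and $\sum_j\theta_j=1$, one wants $\Phi\bigl(\sum_j\theta_j u_j\bigr)\ge c\sum_j\theta_j\Phi(u_j)$. This does follow from upper type $p_\Phi^+\le1$, but by a different mechanism than subadditivity. Set $A:=\sum_j\theta_j u_j$ and split into $J_1:=\{j:u_j\le A\}$ and $J_2:=\{j:u_j>A\}$. On $J_1$, monotonicity gives $\sum_{J_1}\theta_j\Phi(u_j)\le\Phi(A)\sum_{J_1}\theta_j\le\Phi(A)$. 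On $J_2$, upper type gives $\Phi(u_j)\le C_1(u_j/A)^{p_\Phi^+}\Phi(A)$, and since $p_\Phi^+\le1$ and $u_j/A>1$ one has $\theta_j(u_j/A)^{p_\Phi^+}\le\theta_j u_j/A$; summing and using $\sum_{J_2}\theta_j u_j\le A$ yields $\sum_{J_2}\theta_j\Phi(u_j)\le C_1\Phi(A)$. Hence $\sum_j\theta_j\Phi(u_j)\le(1+C_1)\Phi(A)$. Integrating gives $\int\Phi(G/\lambda)\ge(1+C_1)^{-1}$, and only \emph{then} does a single lower-type rescaling (the step you allude to) upgrade this modular lower bound to $\|G\|_{L^\Phi(\rn)}\ge c_0\lambda$. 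Equivalently, one may first replace $\Phi$ by an equivalent concave Orlicz function --- possible precisely because upper type $\le1$ forces $\Phi(t)/t$ to be quasi-decreasing --- for which genuine concavity gives reverse Jensen and hence reverse Minkowski with constant~$1$. (The paper does not prove this lemma in-text; it is quoted from \cite[Lemma~5.4]{ZYYW}.)
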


Using Proposition \ref{Profs}, Lemmas \ref{Le817}, \ref{Le818} and \ref{Le819}
and Theorems \ref{Thad}, \ref{Thar}, \ref{Thmolcha0} and \ref{Thmolcha},
we immediately obtain the following atomic characterization of $(WHE_\Phi^q)_t(\rn)$
(see Theorem \ref{yuanzi} below) and
the following molecular characterization of $(WHE_\Phi^q)_t(\rn)$
(see Theorem \ref{fenzi} below).

\begin{theorem}\label{yuanzi}
Let $t,\ q\in(0,\infty)$  and
$\Phi$ be an Orlicz function with positive lower type $p_{\Phi}^-$ and
positive upper type $p_{\Phi}^+$.
Let $p_+:=\max\{1,p_\Phi^+,q\}$ and
assume that $r\in(p_+,\infty)$
and $d\in\zz_+$ with $d\geq\lfloor n(\frac{1}{\min\{p_\Phi^-/p_+,q/p_+,1\}}-1)\rfloor$.
Then $f\in(WHE_\Phi^q)_t(\rn)$ if and only if
$$
f=\sum_{i\in \zz}\sum_{j\in\nn}\lambda_{i,j}a_{i,j}\quad\text{in}\quad\cs'(\rn)
\quad and\quad
\sup_{i\in\zz}\lf\|\sum_{j\in\nn}
\frac{\lambda_{i,j}\mathbf{1}_{B_{i,j}}}
{\|\mathbf{1}_{B_{i,j}}\|_{(E_\Phi^q)_t(\rn)}}\r\|_{(E_\Phi^q)_t(\rn)}<\infty,
$$
where $\{a_{i,j}\}_{i\in\zz,j\in\nn}$ is a sequence of $((E_\Phi^q)_t,r,d)$-atoms
supported, respectively, in balls
$\{B_{i,j}\}_{i\in\zz,j\in\nn}$ such that, for any
$i\in\zz$, $\sum_{j\in\nn}\mathbf{1}_{cB_{i,j}}\le A$ with $c\in(0,1]$ and $A$
being a positive constant independent of $f$ and $i$, and,
for any $i\in\zz$ and $j\in\nn$,
$\lambda_{i,j}:=\widetilde A2^i\|\mathbf{1}_{B_{i,j}}\|_{(E_\Phi^q)_t(\rn)}$
with $\widetilde A$ being a positive constant independent of $f$ and $i$.

Moreover, for any $f\in(WHE_\Phi^q)_t(\rn)$,
$$
\|f\|_{(WHE_\Phi^q)_t(\rn)}\sim\inf\lf\{\sup_{i\in\zz}\lf\|\sum_{j\in\nn}
\frac{\lambda_{i,j}\mathbf{1}_{B_{i,j}}}
{\|\mathbf{1}_{B_{i,j}}\|_{(E_\Phi^q)_t(\rn)}}\r\|_{(E_\Phi^q)_t(\rn)}\r\},
$$
where the infimum is taken over all decompositions of $f$ as above
and the positive equivalence constant is independent of $f$ and $t$.
\end{theorem}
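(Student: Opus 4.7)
The plan is to prove Theorem \ref{yuanzi} by specializing the general atomic decomposition and reconstruction theorems (Theorems \ref{Thad} and \ref{Thar}) to $X := (E_\Phi^q)_t(\rn)$. The two directions of the biconditional come from these two theorems respectively, and the whole exercise is one of parameter-chasing: I need to check that every hypothesis of Theorems \ref{Thad} and \ref{Thar} is a consequence of results already established for the Orlicz-slice space, namely the vector-valued maximal inequality of \cite[Theorem 2.20]{ZYYW} on $(E_\Phi^q)_t(\rn)$, Proposition \ref{Profs} on the weak version, and Lemmas \ref{Le817}, \ref{Le818}, \ref{Le819}.

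First I would handle the decomposition direction via Theorem \ref{Thad}. The vector-valued inequality \eqref{eq} for some $r\in(0,1)$ and Assumption \ref{xinm} follow directly from \cite[Theorem 2.20]{ZYYW} and Proposition \ref{Profs}, respectively. The delicate requirement is that $X$ be $\vartheta_0$-concave for some $\vartheta_0\in(1,\infty)$. To arrange this, I would choose $\vartheta_0 := p_+$ when $p_+>1$ and any $\vartheta_0\in(1,\infty)$ otherwise, so that $X^{1/\vartheta_0}=(E_{\Phi_{1/\vartheta_0}}^{q/\vartheta_0})_t(\rn)$ has $q/\vartheta_0\le1$ and upper type $p_\Phi^+/\vartheta_0\le1$, the exact regime in which Lemma \ref{Le819} supplies the required concavity inequality. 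Then I take $p\in(0,\infty)$ slightly smaller than $\min\{p_\Phi^-/\vartheta_0,\,q/\vartheta_0,\,1\}$; with this choice the $\frac{1}{\vartheta_0 p}$-convexification $X^{1/(\vartheta_0 p)}$ is an Orlicz-slice space whose lower type exponent and integrability exponent both exceed $1$, so $\cm$ is bounded on it by \cite[Theorem 2.20]{ZYYW}, and the requirement $d\ge\lfloor n(1/p-1)\rfloor$ in Theorem \ref{Thad} becomes exactly $d\ge\lfloor n(1/\min\{p_\Phi^-/p_+,q/p_+,1\}-1)\rfloor$ in the limit.

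For the reconstruction direction I would apply Theorem \ref{Thar}. The global Fefferman–Stein hypothesis (Assumption \ref{a2.15}) with $p_-:=\min\{p_\Phi^-,q\}$ is again \cite[Theorem 2.20]{ZYYW}. The condition that $X^{1/r}$ be a ball Banach function space for every $r\in(0,\underline{p})$ follows from Lemma \ref{Le817}, since $\underline{p}=\min\{p_-,1\}\le\min\{p_\Phi^-,q\}$. Finally, the associate-space boundedness \eqref{Eqdm} is precisely Lemma \ref{Le818}: given the atom index $r\in(p_+,\infty)$ from the theorem statement, I pick $r_0\in(0,\underline{p})$ and identify $p_0$ with $r$, so that the $(q_{\text{Le}\ref{Le818}},s_{\text{Le}\ref{Le818}})=(r,r_0)$ matches the hypothesis $s<\min\{p_\Phi^-,q\}$ and $q>\max\{p_\Phi^+,q\}$ in Lemma \ref{Le818}. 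The theorem's series convergence in $\cs'(\rn)$ and the overlap condition on $\{cB_{i,j}\}_{j\in\nn}$ are supplied by Theorem \ref{Thar} itself, and the norm equivalence follows by chaining the two inequalities.

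The main obstacle is bookkeeping: the parameter $p_+:=\max\{1,p_\Phi^+,q\}$ in the statement is engineered precisely so that the $\frac{1}{p_+}$-convexification of $(E_\Phi^q)_t(\rn)$ lands in the range of Lemma \ref{Le819}, and one must check that the resulting condition on $d$ from Theorem \ref{Thad} agrees with the stated condition $d\ge\lfloor n(1/\min\{p_\Phi^-/p_+,q/p_+,1\}-1)\rfloor$. Once the parameters are laid out correctly, both directions reduce to a direct citation of Theorems \ref{Thad} and \ref{Thar}, with no further analytic work required.
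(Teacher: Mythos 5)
Your approach is exactly the one the paper takes: Theorem \ref{yuanzi} is proved by specializing Theorems \ref{Thad} and \ref{Thar} to $X:=(E_\Phi^q)_t(\rn)$ and verifying the hypotheses via \cite[Theorem 2.20]{ZYYW}, Proposition \ref{Profs}, and Lemmas \ref{Le817}, \ref{Le818} and \ref{Le819}; the paper itself offers no further detail beyond citing these results. Your main observations are correct, in particular that the required $\vartheta_0$-concavity comes from applying Lemma \ref{Le819} to the convexification $X^{1/\vartheta_0}=(E_{\Phi_{1/\vartheta_0}}^{q/\vartheta_0})_t(\rn)$ with $\vartheta_0:=p_+=\max\{1,p_\Phi^+,q\}$ (or $\vartheta_0\to1^+$ when $p_+=1$), which puts it in the regime $q/\vartheta_0\le1$, $p_\Phi^+/\vartheta_0\le1$ where that lemma applies, and that taking $p$ just below $\min\{p_\Phi^-/\vartheta_0,q/\vartheta_0,1\}$ then makes $\cm$ bounded on $X^{1/(\vartheta_0 p)}$ while matching the stated lower bound on $d$.

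There is one small but genuine slip in the reconstruction direction. You propose to ``identify $p_0$ with $r$'' (the atom index) when invoking Lemma \ref{Le818}, but Theorem \ref{Thar} requires the atom exponent to lie in the \emph{open} interval $(\max\{1,p_0\},\infty]$, which with $p_0=r$ would demand $r>r$. The correct choice is $p_0\in(\max\{q,p_\Phi^+\},r)$, a nonempty interval because $r>p_+=\max\{1,q,p_\Phi^+\}\ge\max\{q,p_\Phi^+\}$. Then Lemma \ref{Le818} applied with its parameter $r$ set equal to this $p_0$ and its parameter $s$ set equal to $r_0\in(0,\underline{p})\subset(0,\min\{p_\Phi^-,q\})$ gives \eqref{Eqdm}, and the atom index $r$ satisfies $r>\max\{1,p_0\}$ as Theorem \ref{Thar} requires. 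With this one correction the parameter-chasing closes and the biconditional and norm equivalence follow by chaining the two theorems, as you say.
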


We also have the following molecular characterization of $(WHE_\Phi^q)_t(\rn)$.
\begin{theorem}\label{fenzi}
Let $t,\ q$,
$\Phi$, $r$ and $d$ be the same as in Theorem \ref{yuanzi}, and $\epsilon\in(n+d+1,\infty)$.
Then $f\in(WHE_\Phi^q)_t(\rn)$ if and only if
$$
f=\sum_{i\in \zz}\sum_{j\in\nn}\lambda_{i,j}m_{i,j}\quad\text{in}\quad\cs'(\rn)
\quad and\quad
\sup_{i\in\zz}\lf\|\sum_{j\in\nn}
\frac{\lambda_{i,j}\mathbf{1}_{B_{i,j}}}
{\|\mathbf{1}_{B_{i,j}}\|_{(E_\Phi^q)_t(\rn)}}\r\|_{(E_\Phi^q)_t(\rn)}<\infty,
$$
where $\{m_{i,j}\}_{i\in\zz,j\in\nn}$ is a sequence of $((E_\Phi^q)_t,r,d,\epsilon)$-molecules associated, respectively, with balls
$\{B_{i,j}\}_{i\in\zz,j\in\nn}$ such that, for any
$i\in\zz$, $\sum_{j\in\nn}\mathbf{1}_{cB_{i,j}}\le A$ with $c\in(0,1]$ and
$A$ being a positive constant independent of $f$
and $i$, and,
for any $i\in\zz$ and $j\in\nn$, $\lambda_{i,j}:=\widetilde A2^i\|\mathbf{1}_{B_{i,j}}\|_{(E_\Phi^q)_t(\rn)}$
with $\widetilde A$ being a positive constant independent of $f$, $i$ and $j$.

Moreover, for any $f\in(WHE_\Phi^q)_t(\rn)$,
$$
\|f\|_{(WHE_\Phi^q)_t(\rn)}\sim\inf\lf[\sup_{i\in\zz}\lf\|\sum_{j\in\nn}
\frac{\lambda_{i,j}\mathbf{1}_{B_{i,j}}}{\|\mathbf{1}_{B_{i,j}}\|_{(E_\Phi^q)_t(\rn)}}
\r\|_{(E_\Phi^q)_t(\rn)}\r],
$$
where the infimum is taken over all decompositions of $f$ as above
and the positive equivalence constants are independent of $f$ and $t$.
\end{theorem}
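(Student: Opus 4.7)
The plan is to deduce Theorem \ref{fenzi} directly from the abstract molecular decomposition theorem (Theorem \ref{Thmolcha0}) and the abstract molecular reconstruction theorem (Theorem \ref{Thmolcha}), applied with $X:=(E_\Phi^q)_t(\rn)$. Accordingly, the whole task reduces to verifying, once and for all, that the ball quasi-Banach function space $(E_\Phi^q)_t(\rn)$ meets every structural hypothesis appearing in those two theorems, with parameters matching the choices of $r$, $d$ and $\epsilon$ in the statement.

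First, I would read off the relevant indices. Set $p_-:=\min\{p_\Phi^-,q\}$ and $p_+:=\max\{1,p_\Phi^+,q\}$, so that $\underline{p}=\min\{p_-,1\}$. With this choice, the hypothesis $d\ge\lfloor n(\tfrac1{\min\{p_\Phi^-/p_+,q/p_+,1\}}-1)\rfloor$ of Theorem \ref{fenzi} forces $d\ge\lfloor n(1/\underline{p}-1)\rfloor$, and the hypothesis $r\in(p_+,\infty)$ gives $r>\max\{1,p_+\}$, which is exactly the range of $q$ required in Theorem \ref{Thmolcha}. I would then collect the following verified facts: (i) Proposition \ref{Profs} together with \cite[Lemma 4.3]{ZYYW} yields Assumption \ref{a2.15} for $(E_\Phi^q)_t(\rn)$ with this $p_-$; (ii) Lemma \ref{Le817} shows that, for every $s\in(0,\underline{p})$, the $1/s$-convexification $[(E_\Phi^q)_t(\rn)]^{1/s}$ is strictly $s p_-/s=p_-\ge\underline{p}$-convex, hence in particular a ball Banach function space; and (iii) Lemma \ref{Le818} supplies the boundedness of $\cm^{((p/s)')}$ on the associate space $([(E_\Phi^q)_t(\rn)]^{1/s})'$ for every $s\in(0,\underline{p})$ and every $p\in(p_+,\infty)$, which is exactly the analogue of \eqref{Eqdm} in the form demanded by Theorem \ref{Thmolcha}.

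Next, for the ``only if'' direction of Theorem \ref{fenzi}, I would note that every $((E_\Phi^q)_t,\infty,d)$-atom is automatically an $((E_\Phi^q)_t,r,d,\epsilon)$-molecule for the chosen $\epsilon\in(n+d+1,\infty)$ and any $r\in(p_+,\infty]$. Hence, given $f\in (WHE_\Phi^q)_t(\rn)$, Theorem \ref{Thmolcha0} (equivalently, Theorem \ref{Thad} applied to $X:=(E_\Phi^q)_t(\rn)$, whose hypotheses are covered by (i)--(iii) above) produces the required molecular decomposition with $\lambda_{i,j}=\widetilde A 2^i\|\mathbf 1_{B_{i,j}}\|_{(E_\Phi^q)_t(\rn)}$, with the finite-overlap property $\sum_{j}\mathbf 1_{cB_{i,j}}\le A$ for each $i$, and with the control
\[
\sup_{i\in\zz}\lf\|\sum_{j\in\nn}\frac{\lambda_{i,j}\mathbf 1_{B_{i,j}}}{\|\mathbf 1_{B_{i,j}}\|_{(E_\Phi^q)_t(\rn)}}\r\|_{(E_\Phi^q)_t(\rn)}\lesssim\|f\|_{(WHE_\Phi^q)_t(\rn)}.
\]
For the ``if'' direction, I would invoke Theorem \ref{Thmolcha} with $q:=r$: the hypotheses (i)--(iii) together with the assumption $\epsilon>n+d+1$ and the finite-overlap property allow us to conclude that the series $\sum_{i,j}\lambda_{i,j}m_{i,j}$, which converges in $\cs'(\rn)$ by assumption, defines an element of $(WHE_\Phi^q)_t(\rn)$ whose quasi-norm is bounded by the supremum in the right-hand side. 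Taking the infimum over all admissible decompositions then yields the two-sided equivalence of quasi-norms claimed in the theorem.

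The only delicate point I foresee is purely bookkeeping: one must make sure that the single parameter $r$ that the user is allowed to pick in Theorem \ref{fenzi} is simultaneously admissible as the ``atom exponent'' in Theorem \ref{Thad}/Theorem \ref{Thmolcha0} (which needs $r>\max\{1,p_-\}$ in effect) and as the ``molecule/reconstruction exponent'' in Theorem \ref{Thmolcha} (which needs $r>\max\{1,p_+\}$). Since $p_-\le p_+$, the stronger condition $r>p_+$ assumed here covers both, so no obstacle arises. A second mild care-point is checking that the constants in (i)--(iii) are independent of the slice parameter $t\in(0,\infty)$, so that the final positive equivalence constants in the statement can be asserted $t$-independent; this is already embedded in Proposition \ref{Profs} and Lemmas \ref{Le817}--\ref{Le818}, so no extra work is required. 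Apart from this, the proof is a clean black-box application of the abstract machinery of Sections \ref{s4} and \ref{s5}.
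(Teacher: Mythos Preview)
Your approach is exactly the paper's: Theorem \ref{fenzi} is stated as an immediate consequence of Theorems \ref{Thmolcha0} and \ref{Thmolcha} once the structural hypotheses on $X=(E_\Phi^q)_t(\rn)$ have been checked via Proposition \ref{Profs} and Lemmas \ref{Le817}--\ref{Le819}. So the strategy is right.

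There is, however, one concrete omission. Your list (i)--(iii) does not cover the hypothesis of Theorem \ref{Thad} (and hence of Theorem \ref{Thmolcha0}) that $X$ be $\vartheta_0$-\emph{concave} for some $\vartheta_0\in(1,\infty)$. This is precisely where Lemma \ref{Le819} enters: it gives the reverse triangle inequality for $(E_{\Phi}^{q})_t(\rn)$ when $q,p_\Phi^+\in(0,1]$, so applying it to $\Phi_{1/\vartheta_0}$ with any $\vartheta_0>\max\{q,p_\Phi^+,1\}=p_+$ yields the required $\vartheta_0$-concavity of $(E_\Phi^q)_t(\rn)$. Without this step the ``only if'' direction is not justified, because Theorem \ref{Thmolcha0} inherits all hypotheses of Theorem \ref{Thad}.

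A smaller point: your item (ii) is garbled. Lemma \ref{Le817} asserts that $(E_\Phi^q)_t(\rn)$ is strictly $\vartheta$-convex for every $\vartheta\in(0,\min\{p_\Phi^-,q\}]$; the clean way to use this is that strict $s$-convexity of $X$ for $s\in(0,\underline{p})$ is exactly the statement that $\|\cdot\|_{X^{1/s}}$ satisfies the triangle inequality, so $X^{1/s}$ is a ball Banach function space. The expression ``$sp_-/s=p_-\ge\underline{p}$-convex'' does not parse; just invoke Lemma \ref{Le817} directly with $\vartheta:=s$.
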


We now recall the notion of Orlicz-slice Hardy spaces introduced in \cite{ZYYW}.
\begin{definition}
Let $t$, $q\in(0,\infty) $ and $\Phi$ be an Orlicz function with positive lower type $p_{\Phi}^-$ and positive upper type $p_{\Phi}^+$.
Then the \emph{Orlicz-slice Hardy space $(HE_\Phi^q)_t(\rn)$} is defined by setting
$$
(HE_\Phi^q)_t(\rn):=\lf\{f\in\mathcal{S}'(\rn):\ \|f\|_{(HE_\Phi^q)_t(\rn)}:=
\|M(f,\varphi)\|_{(E_\Phi^q)_t(\rn)}<\infty \r\},
$$
where $\varphi\in\mathcal{S}(\rn)$ satisfies
$
\int_{\rn}\varphi(x)\,dx\neq0.
$
In particular, when $\Phi(s):=s^r$ for any $s\in[0,\fz)$ with any given $r\in(0,\fz)$,
the Hardy-type space $(HE_r^q)_t(\rn):=(HE_{\Phi}^q)_t(\rn)$ is called
the \emph{slice Hardy space}.
\end{definition}

Recall that the \emph{centered Hardy--Littlewood maximal operator}
$\mathcal{M}_c$ is defined by setting, for
any locally integrable function $f$ and $x\in\rn$,
\begin{equation}\label{cmm}
\mathcal{M}_c(f)(x):=\sup_{r\in(0,\infty)}\dashint_{B(x,r)}|f(x)|\,dy.
\end{equation}
In what follows, for any $r\in(0,\infty)$, $f\in L^1_\loc(\rn)$ and $x\in\rn$, let
$$
\dashint_{B(x,\,r)}f(y)\,dy:=\frac1{|B(x,r)|}\int_{B(x,r)}f(y)\,dy.
$$
To obtain the boundedness of Calder\'on--Zygmund operators from $(HE_{\Phi}^q)_t(\rn)$
to $(WHE_{\Phi}^q)_t(\rn)$,
we need to establish the following Fefferman--Stein vector-valued inequality
from $(E_{\Phi}^q)_t(\rn)$
to $(WE_{\Phi}^q)_t(\rn)$.

\begin{proposition}\label{Pro822}
Let $t\in(0,\infty)$, $q\in[1,\infty),\ r\in(1,\infty)$ and
$\Phi$ be an Orlicz function with positive lower type $p_{\Phi}^-\in[1,\infty)$ and
positive upper type $p_{\Phi}^+$. Then there exists a positive constant $C$,
independent of $t$,
such that,
for any $\{f_j\}_{j\in\zz}\subset\mathscr{M}(\rn)$,
$$
\lf\|\lf\{\sum_{j\in\zz}\lf[\cm(f_j)\r]^r\r\}^\frac1r\r\|_{(WE_\Phi^{q})_t(\rn)}\le
C\lf\|\lf\{\sum_{j\in\zz}|f_j|^r\r\}^\frac1r\r\|_{(E_\Phi^{q})_t(\rn)}.
$$
\end{proposition}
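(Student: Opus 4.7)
My plan is to run the extrapolation strategy of Proposition \ref{mixPro}. For any fixed $r\in(1,\infty)$, set
\begin{align*}
\mathcal{F}:=\lf\{\lf(\alpha\mathbf{1}_{\{x\in\rn:\,\{\sum_{j\in\zz}[\cm(f_j)(x)]^r\}^{1/r}>\alpha\}},\,\lf\{\sum_{j\in\zz}|f_j|^r\r\}^{1/r}\r):\alpha\in(0,\infty),\,\{f_j\}_{j\in\zz}\subset\mathscr{M}(\rn)\r\}.
\end{align*}
By the weighted weak-type vector-valued Fefferman--Stein inequality in Lemma \ref{ruom}, every pair $(F,G)\in\mathcal{F}$ satisfies $\int_\rn F\omega\le C_{(\omega)}\int_\rn G\omega$ for all $\omega\in A_1(\rn)$, with $C_{(\omega)}$ depending only on $[\omega]_{A_1(\rn)}$. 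This is precisely the hypothesis for extrapolation with $p_0=1$, and I will apply Lemma \ref{waicha} with $X:=(E_\Phi^q)_t(\rn)$ to upgrade it to $\|F\|_X\lesssim\|G\|_X$; the proposition then follows by taking the supremum over $\alpha>0$.

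To invoke Lemma \ref{waicha} I must check that $X$ is a ball Banach function space and exhibit some $q_0\in[1,\infty)$ for which $X^{1/q_0}$ is a (ball) Banach function space with $\cm$ bounded on $(X^{1/q_0})'$. The first point is automatic from $p_\Phi^-,q\in[1,\infty)$: Remark \ref{Re83} yields that $L^\Phi(\rn)$ is a Banach space, and the outer $L^q$-integration in the definition of $(E_\Phi^q)_t(\rn)$ then inherits the triangle inequality, while $\mathbf{1}_B\in X$ for any ball $B\subset\rn$ is immediate. For the second, a direct computation using $\||f|^s\|_{L^\Phi(\rn)}=\|f\|_{L^{\Phi_s}(\rn)}^s$ with $\Phi_s(\tau):=\Phi(\tau^s)$ produces the identification $X^s=(E_{\Phi_s}^{sq})_t(\rn)$ for every $s\in(0,\infty)$. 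When $\min\{q,p_\Phi^-\}>1$ I take $q_0=1$, so that $X^{1/q_0}=X$, and apply Lemma \ref{Le818} with $s:=1$ and with the parameter $r$ of that lemma set equal to $\infty$ (so that $\cm^{((r/s)')}=\cm$) to obtain the required $\|\cm(g)\|_{X'}\lesssim\|g\|_{X'}$.

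The hard part will be the endpoint case $\min\{q,p_\Phi^-\}=1$, where the strict inequality demanded in Lemma \ref{Le818} is not available and extrapolation cannot be applied directly. In that regime my plan is to bypass Lemma \ref{waicha} and instead mimic the direct dyadic-decomposition argument from the proof of Proposition \ref{pro822}: for each fixed $x\in\rn$, split $f_j=f_j\mathbf{1}_{B(x,2t)}+\sum_{k\ge 1}f_j\mathbf{1}_{B(x,2^{k+1}t)\setminus B(x,2^kt)}$; apply the weak-type vector-valued Fefferman--Stein inequality on $L^\Phi(\rn)$ (which still holds for $p_\Phi^-\ge 1$ by Rubio de Francia extrapolation from Lemma \ref{ruom}) to the local piece; and control each annular tail on $B(x,t)$ via the pointwise bound $\cm(f_j\mathbf{1}_{B(x,2^{k+1}t)\setminus B(x,2^kt)})(y)\lesssim(2^kt)^{-n}\int_{B(x,2^{k+1}t)}|f_j|\,dz$, which forces every ball realising the maximal supremum to have radius $\gtrsim 2^kt$. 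The H\"older/Young inequality combined with $\|\mathbf{1}_B\|_{L^\Phi(\rn)}\|\mathbf{1}_B\|_{L^\Psi(\rn)}\sim|B|$ (where $\Psi$ is the complementary function of $\Phi$) then converts each tail into the ratio $\|G\mathbf{1}_{B(x,2^{k+1}t)}\|_{L^\Phi(\rn)}/\|\mathbf{1}_{B(x,2^{k+1}t)}\|_{L^\Phi(\rn)}$, and geometric summability in $k$ should be furnished by the upper-type bound $\|\mathbf{1}_{B(x,t)}\|_{L^\Phi(\rn)}/\|\mathbf{1}_{B(x,2^{k+1}t)}\|_{L^\Phi(\rn)}\lesssim 2^{-(k+1)n/p_\Phi^+}$ together with the standard equivalence of the quasi-norms $\|\cdot\|_{(E_\Phi^q)_{t'}}$ across different scales $t'$; ensuring that this decay strictly beats the rescaling growth is the delicate bookkeeping step on which the endpoint case hinges.
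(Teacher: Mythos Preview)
Your extrapolation route for the case $\min\{q,p_\Phi^-\}>1$ is correct and is a genuinely different (and arguably cleaner) alternative to the paper's direct argument: with $q_0=1$, Lemma~\ref{Le818} (taking $s:=1$, $r:=\infty$) supplies the boundedness of $\cm$ on $X'$ with $t$-independent constant, and Lemma~\ref{waicha} combined with Lemma~\ref{ruom} then yields the proposition exactly as in Proposition~\ref{mixPro}. (Note that $(E_\Phi^q)_t(\rn)$ is only a \emph{ball} Banach function space---see Remark~\ref{Re25}(i)---but the proof of Lemma~\ref{waicha} makes clear that this is enough.)

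The endpoint plan, however, has a real gap. After your H\"older step the $k$-th annular tail, for $y\in B(x,t)$, is bounded by a quantity independent of $y$, namely $A_k(x)\sim\|G\mathbf{1}_{B(x,2^{k+1}t)}\|_{L^\Phi(\rn)}/\|\mathbf{1}_{B(x,2^{k+1}t)}\|_{L^\Phi(\rn)}$, whose $L^q_x$-norm equals $\|G\|_{(E_\Phi^q)_{2^{k+1}t}(\rn)}$. The decay you propose, $\|\mathbf{1}_{B(x,t)}\|_{L^\Phi(\rn)}/\|\mathbf{1}_{B(x,2^{k+1}t)}\|_{L^\Phi(\rn)}\lesssim 2^{-(k+1)n/p_\Phi^+}$, is exactly cancelled (or beaten) by the covering growth $\sim 2^{(k+1)n}$ required to compare $(E_\Phi^q)_{2^{k+1}t}$ with $(E_\Phi^q)_t$, since $p_\Phi^+\ge 1$. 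In Proposition~\ref{pro822} this does not arise because the Morrey norm is a supremum over \emph{all} balls, so $|2^{k+1}B|^{1/p-1}\|G\|_{L^1(2^{k+1}B)}$ is bounded uniformly in $k$ with no scale comparison; the Orlicz-slice norm has no such sup structure, and the ``equivalence of quasi-norms across scales'' comes with $k$-dependent constants that destroy the summation.

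The paper avoids this entirely by a different splitting. Instead of dyadic annuli in space, it splits $\cm_c(f_j)(y)$ according to the radius $s$ of the averaging ball: for $s\le t$ and $y\in B(x,t)$ one has $B(y,s)\subset B(x,2t)$, and the Orlicz weak-type Fefferman--Stein inequality handles this local piece; for $s>t$ one writes $\dashint_{B(y,s)}|f_j(z)|\,dz=\dashint_{B(y,s)}\dashint_{B(z,t)}|f_j(z)|\,d\xi\,dz$, swaps the order, enlarges $B(y,s)$ to $B(y,2s)\ni x$, and thereby dominates the large-scale part pointwise by $\cm(\dashint_{B(\cdot,t)}|f_j|)(x)$. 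The outer $L^q_x$-integration then closes via the scalar $L^q$ Fefferman--Stein inequality (valid for $q\ge 1$) and H\"older in $L^\Phi$, with no infinite sum over scales at all. This argument handles all $q,p_\Phi^-\in[1,\infty)$ uniformly, so you should replace the dyadic-annulus plan with this scale-splitting.
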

\begin{proof}
Let $\alpha\in(0,\infty)$ and $r\in(1,\infty)$. For any sequence
$\{f_j\}_{j\in\zz}\subset\mathscr{M}(\rn)$ and $x\in\rn$, we claim that
\begin{align}\label{Eq27}
&\lf\|\mathbf{1}_{\{y\in B(x,t):\ \{\sum_{j\in\zz}[\cm_c(f_j)(y)]^r\}^\frac1r>\alpha\}}\r\|_{L^\Phi(\rn)}\\\noz
&\hs\lesssim\alpha^{-1}\lf\|\lf\{\sum_{j\in\zz}|f_j|^r\r\}^\frac1r\mathbf{1}_{B(x,2t)}\r\|_{L^\Phi(\rn)}\\\noz
&\hs\hs+\lf\|\mathbf{1}_{B(x,t)}\r\|_{L^\Phi(\rn)}\mathbf{1}_{\{y\in\rn:\
\{\sum_{j\in\zz}[\cm(\dashint_{B(\cdot,t)}|f_j(z)|\,dz)(y)]^r\}^\frac1r>\frac{\alpha}2\}}(x),
\end{align}
where $\cm_c$ is as in \eqref{cmm} and the implicit
positive constant is independent of $\{f_j\}_{j\in\zz}$, $x$,
$\alpha$ and $t\in(0,\infty)$.

To show this, we write
\begin{align*}
&\lf\|\mathbf{1}_{\{y\in B(x,t):\ \{\sum_{j\in\zz}[\cm_c(f_j)(y)]^r\}^\frac1r>\alpha\}}
\r\|_{L^\Phi(\rn)}\\
&\hs\lesssim\lf\|\mathbf{1}_{\{y\in B(x,t):\ \{\sum_{j\in\zz}[\sup_{s\in(0,t]}
\dashint_{B(y,s)}|f_j(z)|\,dz]^r\}^\frac1r>\frac{\alpha}2\}}\r\|_{L^\Phi(\rn)}\\
&\hs\hs+\lf\|\mathbf{1}_{\{y\in B(x,t):\ \{\sum_{j\in\zz}[\sup_{s\in(t,\infty)}
\dashint_{B(y,s)}|f_j(z)|\,dz]^r\}^\frac1r>\frac{\alpha}2\}}\r\|_{L^\Phi(\rn)}=:\mathrm{I}+\mathrm{II}.
\end{align*}
For $\mathrm{I}$, since $B(y,s)\subset B(x,2t)$ whenever $s\in(0,t]$ and $y\in B(x,t)$,
from the Orlicz Fefferman--Stein vector-valued
inequality in \cite[Theorem 1.3.1]{KK} or in \cite[Theorem 2.1.4]{YLK}, it follows that
\begin{align*}
\mathrm{I}&\sim\lf\|\mathbf{1}_{\{y\in B(x,t):\ \{\sum_{j\in\zz}[\sup_{s\in(0,t]}
\dashint_{B(y,s)}|f_j(z)|\mathbf{1}_{B(x,2t)}(z)\,dz]^r\}^\frac1r
>\frac{\alpha}2\}}\r\|_{L^\Phi(\rn)}\\
&\lesssim\lf\|\mathbf{1}_{\{y\in\rn:\ \{\sum_{j\in\zz}[\cm(f_j\mathbf{1}_{B(x,2t)})(y)|]^r\}^\frac1r>\frac{\alpha}2\}}\r\|_{L^\Phi(\rn)}
\lesssim\alpha^{-1}\lf\|\lf\{\sum_{j\in\zz}|f_j|^r\r\}^\frac1r\mathbf{1}_{B(x,2t)}\r\|_{L^\Phi(\rn)}.
\end{align*}
As for $\mathrm{II}$, observe that, for any $\xi,\ z\in\rn$, $\xi\in B(z,t)$ if and only if $z\in B(\xi,t)$ and, moreover,
if $z\in B(y,s)$ and $\xi\in B(z,t)$ with $s\in(t,\infty)$, then $\xi\in B(y,2s)$. Besides, note that $y\in B(x,t)$
and $s\in(t,\infty)$ imply that $x\in B(y,2s)$. From these observations, we deduce that
\begin{align*}
\mathrm{II}&\sim\lf\|\mathbf{1}_{\{y\in B(x,t):\ \{\sum_{j\in\zz}[\sup_{s\in(t,\infty)}
\dashint_{B(y,s)}\dashint_{B(z,t)}|f_j(z)|\,d\xi\,dz]^r\}
^\frac1r>\frac{\alpha}2\}}\r\|_{L^\Phi(\rn)}\\
&\lesssim\lf\|\mathbf{1}_{\{y\in B(x,t):\ \{\sum_{j\in\zz}[\sup_{s\in(t,\infty)}
\dashint_{B(y,2s)}\dashint_{B(\xi,t)}|f_j(z)|\,dz\,d\xi]^r\}
^\frac1r>\frac{\alpha}2\}}\r\|_{L^\Phi(\rn)}\\
&\lesssim\lf\|\mathbf{1}_{\{y\in B(x,t):\
\{\sum_{j\in\zz}[\cm(\dashint_{B(\cdot,t)}|f_j(z)|\,dz)(x)]^r\}
^\frac1r>\frac{\alpha}2\}}\r\|_{L^\Phi(\rn)}\\
&\lesssim\lf\|\mathbf{1}_{B(x,t)}\r\|_{L^\Phi(\rn)}\mathbf{1}_{\{y\in\rn:\
\{\sum_{j\in\zz}[\cm(\dashint_{B(\xi,t)}|f_j(z)|\,dz)(y)]^r\}^\frac1r>\frac{\alpha}2\}}(x).
\end{align*}
This proves the above claim.

Using \eqref{Eq27}, for any $t\in(0,\infty)$ and any given $q\in[1,\infty)$,
we further obtain
\begin{align*}
&\int_\rn\lf[\frac{1}{\|\mathbf{1}_{B(x,t)}\|_{L^\Phi(\rn)}}\lf\|\mathbf{1}_{\{y\in B(x,t):\ \{\sum_{j\in\zz}[\cm_c(f_j)(y)]^r\}^\frac1r>\alpha\}}
\r\|_{L^\Phi(\rn)}\r]^q\,dx\\
&\hs\lesssim\alpha^{-q}\int_\rn\lf[\frac{1}
{\|\mathbf{1}_{B(x,t)}\|_{L^\Phi(\rn)}}\lf\|\lf\{\sum_{j\in\zz}|f_j|^r\r\}^\frac1r
\mathbf{1}_{B(x,2t)}\r\|_{L^\Phi(\rn)}\r]^q\,dx\\
&\hs\hs+\int_\rn\mathbf{1}_{\{y\in\rn:\
\{\sum_{j\in\zz}[\cm(\dashint_{B(\cdot,t)}|f_j(z)|\,dz)(y)]^r\}^\frac1r>\frac{\alpha}2\}}(x)\,dx\\
&\hs=:\mathrm{III}+\mathrm{IV}.
\end{align*}
Since the closures of
both $B(\vec{0}_n,2t)$ and $B(\vec{0}_n,t)$ are compact subsets of $\rn$ with nonempty interiors, it
follows that there exist $N\in\mathbb{N}$ and $\{x_1,\ \ldots,\ x_N\}\subset\rn$, independent of $t$, such that
$N\lesssim1$ and $B(\vec{0}_n,2t)\subseteq\bigcup_{m=1}^NB(x_m,t)$.
Thus, by this, (\ref{Eq22}) and the translation invariance of the Lebesgue measure, we conclude that
\begin{align*}
\mathrm{III}&\sim\frac{\alpha^{-q}}{\widetilde C_{(\Phi,t)}^q}\int_\rn\lf\|\lf\{\sum_{j\in\zz}|f_j|^r\r\}^\frac1r
\mathbf{1}_{B(x,2t)}\r\|_{L^\Phi(\rn)}^q\,dx\\
&\lesssim\frac{\alpha^{-q}}{\widetilde C_{(\Phi,t)}^q}\sum_{m=1}^N\int_\rn
\lf\|\lf\{\sum_{j\in\zz}|f_j|^r\r\}^\frac1r
\mathbf{1}_{B(x+x_m,t)}\r\|_{L^\Phi(\rn)}^q\,dx\\
&\lesssim\frac{\alpha^{-q}}{\widetilde C_{(\Phi,t)}^q}\int_\rn\lf\|\lf\{\sum_{j\in\zz}|f_j|^r\r\}^\frac1r
\mathbf{1}_{B(x,t)}\r\|_{L^\Phi(\rn)}^q\,dx\\
&\lesssim\alpha^{-q}\int_\rn\lf[\frac{1}{\|\mathbf{1}_{B(x,t)}\|_{L^\Phi(\rn)}}\lf\| \lf\{\sum_{j\in\zz}|f_j|^r\r\}^\frac1r
\mathbf{1}_{B(x,t)}\r\|_{L^\Phi(\rn)}\r]^q\,dx,
\end{align*}
where $\widetilde{C}_{(\Phi,t)}$ is the same as in \eqref{Eq22},
which further implies that
\begin{equation}\label{Eq28}
\mathrm{III}^\frac1q\lesssim\alpha^{-1}\lf\|
\lf\{\sum_{j\in\zz}|f_j|^r\r\}^\frac1r\r\|_{(E_\Phi^q)_t(\rn)}.
\end{equation}
It turns to estimate $\mathrm{IV}$. By the Fefferman--Stein vector-valued
inequality from $L^q(\rn)$ to $WL^q(\rn)$ with $q\in[1,\infty)$
(see \cite[(1) and (2) of Theorem 1]{FS}), for any $\alpha\in(0,\infty)$, we have
$$
\mathrm{IV}^\frac1q\lesssim\alpha^{-1}\lf\|
\lf\{\sum_{j\in\zz}\lf[\dashint_{B(\cdot,t)}|f_j(z)|\,dz\r]^r\r\}^\frac1r\r\|_{L^q(\rn)}.
$$
Let $r':=\frac{r}{r-1}$. Then there exists $\{b_j\}_{j\in\mathbb{Z}}\in \ell^{r'}$,
with $\|\{b_j\}_{j\in\mathbb{Z}}\|_{\ell^{r'}}=1,$ such that
$$\int_{\rn}\lf\{\sum_{j\in\mathbb{Z}}\lf[\dashint_{B(x,t)}|f_j(z)|\,dz\r]^r\r\}^{\frac{q}{r}}dx
=\int_{\rn}\lf[\sum_{j\in\mathbb{Z}}b_j\dashint_{B(x,t)}|f_j(z)|\,dz\r]^{q}\,dx.$$
From \cite[p.\,13, Proposition 1]{RR}, we deduce that, for any ball $B(x,t)$, $\Phi^{-1}(|B(x,t)|)\Psi^{-1}(|B(x,t)|)\sim|B(x,t)|$,
where the positive equivalence constants are independent of $x$ and $t$.
This, together with the H\"older inequality and \eqref{Eq22}, further implies that
\begin{align*}
\int_{\rn}\lf[\sum_{j\in\mathbb{Z}}b_j\dashint_{B(x,t)}|f_j(z)|\,dz\r]^{q}\,dx
&\lesssim\int_{\rn}\lf\{\dashint_{B(x,t)}\lf[\sum_{j\in\mathbb{Z}}|f_j(z)|^r\r]^{\frac{1}{r}}
\lf(\sum_{j\in\mathbb{Z}}b_j^{r'}\r)^{\frac{1}{r'}}\,dz\r\}^{q}\,dx\\
&\lesssim\int_{\rn}\lf\{\lf\|\lf[\sum_{j\in\mathbb{Z}}\lf|f_j\r|^r\r]^{\frac{1}{r}}
\mathbf{1}_{B(x,t)}\r\|_{L^\Phi(\rn)}\frac{\|\mathbf{1}_{B(x,t)}\|_{L^\Psi(\rn)}}{|B(x,t)|}\r\}^q\, dx\\
&\lesssim\int_{\rn}\lf\{\frac{1}{\|\mathbf{1}_{B(x,t)}\|_{L^\Phi(\rn)}}\lf\|
\lf[\sum_{j\in\mathbb{Z}}|f_j|^r\r]^{\frac{1}{r}}
\mathbf{1}_{B(x,t)}\r\|_{L^\Phi(\rn)}\r\}^q\, dx.
\end{align*}
Thus,
$$
\mathrm{IV}^\frac1q
\lesssim\alpha^{-1}\lf\|\lf\{\sum _{j\in\mathbb{Z}}|f_j|^r\r\}^{\frac{1}{r}}\r\|_{(E_\Phi^q)_t(\rn)},
$$
which, combined with \eqref{Eq27} and \eqref{Eq28},
then completes the proof of Proposition \ref{Pro822}.
\end{proof}

Applying Proposition \ref{Pro822}, Theorems \ref{Thcz} and \ref{Thcz1},
we directly obtain the following boundedness from
$(HE_{\Phi}^q)_t(\rn)$ to $(WHE_{\Phi}^q)_t(\rn)$
of both convolutional $\delta$-type and $\gamma$-type
Calder\'on--Zygmund operators, respectively, as follows.
\begin{theorem}\label{Thcz81}
Let $t\in(0,\infty)$, $q\in(0,\infty),\ \delta\in(0,1]$ and $\Phi$ be an Orlicz function
with positive lower type $p_{\Phi}^-$ and
positive upper type $p_{\Phi}^+$. Let $T$ be a convolutional $\delta$-type Calder\'on--Zygmund
operator. If $\min\{p_\Phi^-,\ q\}\in[\frac{n}{n+\delta},1]$, then $T$ has a unique extension on $(HE_\Phi^q)_t(\rn)$ and, moreover, there exists a positive constant $C$, independent of
$t$, such that,
for any $f\in (HE_\Phi^q)_t(\rn)$,
$$
\|Tf\|_{(WHE_{\Phi}^q)_t(\rn)}\le C\|f\|_{(HE_\Phi^q)_t(\rn)}.
$$
\end{theorem}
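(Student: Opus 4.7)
The strategy is to apply Theorem~\ref{Thcz} to the ball quasi-Banach function space $X:=(E_\Phi^q)_t(\rn)$. Most of the abstract hypotheses of Theorem~\ref{Thcz} have already been verified for Orlicz-slice spaces in the preceding subsection: the strong vector-valued inequality \eqref{aa} follows from Proposition~\ref{Profs} restricted to the strong norm; the associate-space bound \eqref{aax} is Lemma~\ref{Le818}; the strict convexity guaranteeing that $X^{1/s}$ is a ball Banach function space for $s\in(0,\min\{p_\Phi^-,q\})$ is Lemma~\ref{Le817}; Assumption~\ref{xinm} is Proposition~\ref{Profs}; and the absolute continuity of the quasi-norm of $(E_\Phi^q)_t(\rn)$ is recorded in \cite{ZYYW}. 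Choosing $\theta:=\min\{p_\Phi^-,q\}\in[n/(n+\delta),1]$ and a suitable $s\in(0,\theta)$, the requirement $d\ge\lfloor n(1/\theta-1)\rfloor$ reduces to $d\ge\lfloor\delta\rfloor$, which is harmless for selecting an atomic decomposition of $f\in(HE_\Phi^q)_t(\rn)$ via Lemma~\ref{Le72}.

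The crux of the argument lies in verifying the weak-type Fefferman--Stein inequality~\eqref{Eqfs11} for $X=(E_\Phi^q)_t(\rn)$ with exponent $p:=(n+\delta)/n$. Setting $\Phi_p(\tau):=\Phi(\tau^p)$ for every $\tau\in[0,\infty)$, the identity $\||f|^p\|_{L^\Phi(\rn)}=\|f\|_{L^{\Phi_p}(\rn)}^p$ (valid for every measurable $f$, and in particular for $f\mathbf{1}_{B(x,t)}$) combined with Definitions~\ref{Debf}(i) and~\ref{DeOs} yields the identification
\begin{equation*}
\lf[(E_\Phi^q)_t(\rn)\r]^p=(E_{\Phi_p}^{pq})_t(\rn)
\end{equation*}
with equal quasi-norms. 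Under this identification and Definition~\ref{2.8}, the inequality~\eqref{Eqfs11} becomes
\begin{equation*}
\lf\|\lf\{\sum_{j\in\nn}[\cm(f_j)]^p\r\}^{1/p}\r\|_{(WE_{\Phi_p}^{pq})_t(\rn)}
\lesssim\lf\|\lf\{\sum_{j\in\nn}|f_j|^p\r\}^{1/p}\r\|_{(E_{\Phi_p}^{pq})_t(\rn)}.
\end{equation*}
Since $\Phi_p$ has positive lower type $p\,p_\Phi^-$ and positive upper type $p\,p_\Phi^+$, the critical hypothesis $\min\{p_\Phi^-,q\}\ge n/(n+\delta)=1/p$ is precisely equivalent to the pair of conditions $p\,p_\Phi^-\ge 1$ and $pq\ge 1$. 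Consequently the pair $(\Phi_p,pq)$ meets the hypotheses of Proposition~\ref{Pro822}, whose application with $r:=p>1$ yields the displayed bound and hence \eqref{Eqfs11}.

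The main obstacle is the bookkeeping that tracks how the $(n+\delta)/n$-convexification acts simultaneously on both the Orlicz function $\Phi$ and the outer exponent $q$; once one recognises that the pair $(\Phi_p,pq)$ sits exactly on the boundary of the admissible regime of Proposition~\ref{Pro822} when $\min\{p_\Phi^-,q\}=n/(n+\delta)$, the sharpness of the critical index condition for this method becomes transparent. Uniformity of the final constant in $t$ propagates automatically because every ingredient invoked---Propositions~\ref{Profs} and~\ref{Pro822}, Lemmas~\ref{Le817} and~\ref{Le818}, together with the intermediate constants in Theorem~\ref{Thcz}---has constants independent of $t$. Assembling these verifications and applying Theorem~\ref{Thcz} then yields $\|Tf\|_{(WHE_\Phi^q)_t(\rn)}\le C\|f\|_{(HE_\Phi^q)_t(\rn)}$ with $C$ independent of $t$, completing the proof.
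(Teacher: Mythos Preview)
Your proposal is correct and follows exactly the route the paper takes: the paper's entire argument is the single sentence ``Applying Proposition~\ref{Pro822}, Theorems~\ref{Thcz} and~\ref{Thcz1}, we immediately obtain\ldots'', and you have faithfully unpacked what that sentence means, in particular the identification $[(E_\Phi^q)_t(\rn)]^{(n+\delta)/n}=(E_{\Phi_{(n+\delta)/n}}^{q(n+\delta)/n})_t(\rn)$ that converts \eqref{Eqfs11} into an instance of Proposition~\ref{Pro822} precisely under the hypothesis $\min\{p_\Phi^-,q\}\ge n/(n+\delta)$. One small correction: the strong Fefferman--Stein inequality \eqref{aa} on $(E_\Phi^q)_t(\rn)$ is not a restriction of Proposition~\ref{Profs} (which concerns the weak space) but rather \cite[Theorem~2.20]{ZYYW}, invoked inside the proof of Proposition~\ref{Profs}; this does not affect your argument.
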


\begin{theorem}\label{Thcz82}
Let $t\in(0,\infty)$,  $q\in(0,2),\ \gamma\in(0,\infty)$ and $\Phi$ be an Orlicz function
with positive lower type $p_{\Phi}^-$ and
positive upper type $p_{\Phi}^+\in(0,2)$.  Let $T$ be a $\gamma$-type Calder\'on--Zygmund
operator and have the vanishing moments up to order $\lceil\gamma\rceil-1$.
If $\lceil\gamma\rceil-1\le n(\frac1{\min\{p_\Phi^-,\ q\}}-1)\le\gamma$, then $T$ has a unique extension on
$(HE_\Phi^q)_t(\rn)$ and, moreover, there exists a
positive constant $C$, independent of
$t$, such that,
for any $f\in(HE_\Phi^q)_t(\rn)$,
$$
\|Tf\|_{(WHE_{\Phi}^q)_t(\rn)}\le C\|f\|_{(HE_\Phi^q)_t(\rn)}.
$$
\end{theorem}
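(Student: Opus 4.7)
The plan is to invoke Theorem \ref{Thcz1} with $X:=(E_\Phi^q)_t(\rn)$, choosing the parameter $\theta$ in \eqref{aa} to be $\theta:=\min\{p_\Phi^-,q\}$; the hypothesis $\lceil\gamma\rceil-1\le n(1/\min\{p_\Phi^-,q\}-1)\le\gamma$ in our theorem becomes precisely the index condition $\lceil\gamma\rceil-1\le n(1/\theta-1)$ of Theorem \ref{Thcz1} and, moreover, rewriting it as $\min\{p_\Phi^-,q\}\ge n/(n+\gamma)$ will be the key bound used later. Most structural assumptions of Theorem \ref{Thcz1} are already in hand from Subsection \ref{s7.3}: $(E_\Phi^q)_t(\rn)$ is a ball quasi-Banach function space with absolutely continuous quasi-norm by the results in \cite{ZYYW}; Lemma \ref{Le817} gives strict $s$-convexity for any $s\in(0,\min\{p_\Phi^-,q\}]$, so $X^{1/s}$ is a ball Banach function space; the Fefferman--Stein inequality \eqref{aa} with our chosen $\theta$ and a suitable $s\in(0,1]$ follows from the analogue of Assumption \ref{a2.15} established in \cite[Theorem 2.20]{ZYYW} by passing to the $\theta$-convexification; the associate-space bound \eqref{aax} with $q=2$ is provided by Lemma \ref{Le818}, whose hypothesis $r\in(\max\{q,p_\Phi^+\},\infty]$ with $r:=2$ is satisfied because the theorem assumes both $q<2$ and $p_\Phi^+<2$; finally, Assumption \ref{xinm} follows from Proposition \ref{Profs}.

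The only ingredient that is not directly supplied by an earlier lemma is \eqref{Eqfs12}, and this is where the main work lies. I would first use the identification $(WX)^{s}=W(X^{s})$ from Remark \ref{Rews-1} together with the (elementary) scaling identity
$$\lf[(E_\Phi^q)_t(\rn)\r]^{\rho}=\lf(E_{\Phi_\rho}^{\rho q}\r)_t(\rn),$$
where $\Phi_\rho(\tau):=\Phi(\tau^\rho)$ has positive lower type $\rho p_\Phi^-$ and positive upper type $\rho p_\Phi^+$, to recast \eqref{Eqfs12} with $\rho:=(n+\gamma)/n>1$ as the vector-valued weak-type inequality
$$\lf\|\lf\{\sum_{j\in\nn}[\cm(f_j)]^{\rho}\r\}^{1/\rho}\r\|_{(WE_{\Phi_\rho}^{\rho q})_t(\rn)}\lesssim\lf\|\lf(\sum_{j\in\nn}|f_j|^{\rho}\r)^{1/\rho}\r\|_{(E_{\Phi_\rho}^{\rho q})_t(\rn)}$$
on the Orlicz-slice space built from $\Phi_\rho$ with index $\rho q$. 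The key bound $\min\{p_\Phi^-,q\}\ge n/(n+\gamma)$ guarantees that simultaneously $\rho p_\Phi^-\ge 1$ and $\rho q\ge 1$, so the new Orlicz function $\Phi_\rho$ has lower type at least $1$ and the new index $\rho q$ lies in $[1,\infty)$; these are exactly the hypotheses of Proposition \ref{Pro822}. Applying Proposition \ref{Pro822} with $(\Phi,q,r)$ replaced by $(\Phi_\rho,\rho q,\rho)$ produces the displayed estimate, and unwinding the identifications yields \eqref{Eqfs12} with a constant independent of $t$.

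With all hypotheses of Theorem \ref{Thcz1} verified, its conclusion delivers the unique extension of $T$ together with the desired boundedness $\|Tf\|_{(WHE_\Phi^q)_t(\rn)}\le C\|f\|_{(HE_\Phi^q)_t(\rn)}$, where the $t$-independence of $C$ is inherited from the $t$-independent constants in Proposition \ref{Pro822}, Proposition \ref{Profs}, and Lemmas \ref{Le817} and \ref{Le818}. The main obstacle in this plan is purely bookkeeping: correctly identifying which Orlicz-slice space one must convexify into, and verifying that after convexification both the lower-type index of $\Phi_\rho$ and the outer exponent $\rho q$ are at least $1$, so that Proposition \ref{Pro822} actually applies; once the convexification and the interplay $(WX)^{s}=W(X^{s})$ are set up correctly, the rest is a transparent consequence of results already in this section.
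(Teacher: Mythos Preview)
Your proof is correct and follows exactly the paper's approach, which simply cites Proposition \ref{Pro822} and Theorem \ref{Thcz1}; you have supplied the details, in particular the convexification argument identifying $X^{(n+\gamma)/n}$ with $(E_{\Phi_\rho}^{\rho q})_t(\rn)$ and thereby reducing \eqref{Eqfs12} to Proposition \ref{Pro822}. One minor adjustment: for \eqref{aa} to hold via \cite[Theorem 2.20]{ZYYW} you should take $\theta$ \emph{strictly} less than $\min\{p_\Phi^-,q\}$ (the strong Fefferman--Stein inequality on $X^{1/\theta}$ needs both indices of the convexified space to exceed $1$), but this is harmless since $n(1/\theta-1)$ only increases as $\theta$ decreases, so the moment condition $\lceil\gamma\rceil-1\le n(1/\theta-1)$ is preserved.
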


%%%%%%%%%%%%%%%%%%%%%%%%%%%%%%%%%%%%%%%%%%%%%%%%%%%%%%%%%%%%%%%%%%%%%%%%%%%%%%%%%%

%%%%%%%%%%%%%%%%%%%%%%%%%%%%%%%%%%%%%%%%%%%%%%%%%%%%%%%%%%%%%%%%%%%%%%%%%%%%%%%%%%

\bigskip

\noindent Yangyang Zhang, Dachun Yang (Corresponding author) and Wen Yuan

\smallskip

\noindent  Laboratory of Mathematics and Complex Systems
(Ministry of Education of China),
School of Mathematical Sciences, Beijing Normal University,
Beijing 100875, People's Republic of China

\smallskip

\noindent {\it E-mails}: \texttt{yangyzhang@mail.bnu.edu.cn} (Y. Zhang)

\noindent\phantom{{\it E-mails:}} \texttt{dcyang@bnu.edu.cn} (D. Yang)

\noindent\phantom{{\it E-mails:}} \texttt{wenyuan@bnu.edu.cn} (W. Yuan)

\bigskip

\noindent Songbai Wang

\smallskip

\noindent College of Mathematics and Statistics, Hubei Normal University,
Huangshi 435002, People's Republic of China

\smallskip

\noindent{\it E-mail}: \texttt{haiyansongbai@163.com}

\end{document}